\numberwithin{equation}{section}
\newtheorem{thm}{Theorem}[section]
\newtheorem{cor}[thm]{Corollary}
\newtheorem{lem}[thm]{Lemma}
\newtheorem{prop}[thm]{Proposition}
\theoremstyle{definition}
\newtheorem{dfn}[thm]{Definition}
\newtheorem{ntn}[thm]{Notation}
\newtheorem{setup}[thm]{Set-up}
\theoremstyle{remark}
\newtheorem{rmk}[thm]{Remark}
\newtheorem{example}[thm]{Example}
\setlist[enumerate]{
	font=\normalfont,
	label=(\roman*),
	topsep=3pt,
	itemsep=-0.3ex,
	partopsep=1ex,
	parsep=1ex}
\def\namedlabel#1#2{\begingroup
	#2%
	\def\@currentlabel{#2}%
	\phantomsection\label{#1}\endgroup
}
\g@addto@macro\bfseries{\boldmath}
\DeclareMathOperator{\id}{id}
\DeclareMathOperator{\im}{im}
\DeclareMathOperator{\lsp}{span}
\DeclareMathOperator{\clsp}{\overline{\lsp}}
\DeclareMathOperator{\Hom}{Hom}
\DeclareMathOperator{\MCE}{MCE}
\DeclareMathOperator{\Aut}{Aut}
\DeclareMathOperator{\ran}{ran}
\DeclareMathOperator{\coker}{coker}
\DeclareMathOperator{\sgn}{sgn}
\DeclareMathOperator{\ox}{\otimes}
\DeclareMathOperator{\Tor}{Tor}
\DeclareMathOperator{\Tot}{Tot}
\DeclareMathOperator{\Sh}{Sh}
\DeclareMathOperator{\bowtiemap}{{\bowtie}}
\DeclareMathOperator{\Per}{Per}
\newcommand{\lequiv}{[\![}
\newcommand{\requiv}{]\!]}
\newcommand\restr[2]{{
		\left.\kern-\nulldelimiterspace 
		#1 
		\vphantom{|} 
		\right|_{#2} 
}}
\newcommand{\ol}[1]{\overline{#1}}
\newcommand{\fibre}[2]{\tensor*[^{}_{#1}]{*}{^{}_{#2}}}
\newcommand{\bs}{\backslash}
\newcommand{\la}{\triangleright}
\newcommand{\ra}{\triangleleft}
\newcommand{\bla}{\blacktriangleright}
\newcommand{\bra}{\blacktriangleleft}
\newcommand{\bowtiel}[1]{\leftindex_{#1}{\bowtie{}}}
\newcommand{\cb}{\color{blue}}
\newcommand{\cre}{\color{red}}
\newcommand{\catname}[1]{{\normalfont \mathsf{#1}}}
\newcommand{\Ab}{\catname{Ab}}
\newcommand{\MP}{\catname{MP}}
\newcommand{\Ch}{\catname{Ch}_{\bullet}}
\newcommand{\NN}{\mathbb{N}}
\newcommand{\RR}{\mathbb{R}}
\newcommand{\TT}{\mathbb{T}}
\newcommand{\ZZ}{\mathbb{Z}}
\newcommand{\Aa}{\mathcal{A}}
\newcommand{\Bb}{\mathcal{B}}
\newcommand{\Cc}{\mathcal{C}}
\newcommand{\Dd}{\mathcal{D}}
\newcommand{\Ee}{\mathcal{E}}
\newcommand{\Ff}{\mathcal{F}}
\newcommand{\Gg}{\mathcal{G}}
\newcommand{\Hh}{\mathcal{H}}
\newcommand{\Mm}{\mathcal{M}}
\newcommand{\Tt}{\mathcal{T}}
\newcommand{\Sss}{\mathfrak{s}}
\newcommand{\Ttt}{\mathfrak{t}}
\newcommand{\Uuu}{\mathfrak{u}}
\newcommand{\Www}{\mathfrak{w}}
\newcommand\RPi{\rotatebox[origin=c]{180}{$\Pi$}}
\newcommand\RPsi{\rotatebox[origin=c]{180}{$\Psi$}}
\newcommand{\bone}{\mathbbm{1}}
\begin{document}
	
\DeclareRobustCommand{\subtitle}[1]{\\[1.5em] #1}	

\title[Homology for self-similar actions and Zappa--Sz\'ep products]{Homology and twisted \texorpdfstring{$C^*$}{C*}-algebras for self-similar actions and Zappa--Sz\'ep products}

\author[A. Mundey]{Alexander Mundey}
\email{amundey@uow.edu.au}
\author[A. Sims]{Aidan Sims}
\email{asims@uow.edu.au}

\thanks{This research was supported by Australian Research Council grant DP220101631.
The first author was supported by University of Wollongong AEGiS CONNECT grant 141765.}

\address{School of Mathematics and Applied Statistics \\
	University of Wollongong\\
	NSW  2522\\ Australia}

\date{\today}
\subjclass[2020]{18G15 (primary); 18A32, 46L05 (secondary)}
\keywords{Self-similar action; Zappa--Sz\'ep product; homology; twisted $C^*$-algebra; $k$-graph}

\begin{abstract}
We study the categorical homology of Zappa--Sz\'ep products of small categories, which include all self-similar actions. We prove that the categorical homology coincides with the homology of a double complex, and so can be computed via a spectral sequence involving homology groups of the constituent categories. We give explicit formulae for the isomorphisms involved, and compute the homology of a class of examples that generalise odometers. We define the $C^*$-algebras of self-similar groupoid actions on $k$-graphs twisted by $2$-cocycles arising from this homology theory, and prove some fundamental results about their structure.
\end{abstract}

\maketitle

\renewcommand{\subtitle}[1]{}

\tableofcontents

\section{Introduction}

This paper achieves three main objectives:
\begin{enumerate}[label=(\arabic*), ref=(\arabic*)]
	\item to introduce a unifying framework, which we call matched pairs of categories, for self-similar actions, graphs of groups, Zappa--Sz\'ep products, and $k$-graphs;
	\item to introduce homology and cohomology for matched pairs, and develop practical tools for computing them; and
	\item to associate twisted $C^*$-algebras to self-similar groupoid actions on $k$-graphs, and establish fundamental structure theorems for these $C^*$-algebras.
\end{enumerate}

Self-similar groups of automorphisms of trees were introduced in the early 1980s as models for new classes of groups. Grigorchuk used self-similar groups to describe the first example of a finitely generated group with intermediate growth \cite{Gri80, Gri84}, and Nekrashevych recently used them to produce the first simple groups of intermediate growth \cite{Nek18}. Self-similar groups have been studied intensively ever since Grigorchuk's work, including, since the seminal work of Nekrashevych \cite{Nek05}, via their $C^*$-algebras. Nekrashevych studies $C^*$-algebraic representations not just of a self-similar group, but of the entire self-similar system: a unitary representation of the group and a Cuntz representation of the alphabet being acted upon. The resulting $C^*$-algebra encodes information about the self-similar system through both $K$-theory \cite{Nek05} and KMS-data \cite{LRRW14, EP17, LRRW18}. The former suggests that homological invariants of self-similar actions could be a profitable avenue of study.

Initially, self-similar actions were presented with an apparent asymmetry between the role of the group and the role of the alphabet. But recent generalisations \cite{LRRW14, EP17, LRRW18, LiYang, LV22} make it increasingly clear that the roles of the two objects are symmetric, and that self-similar actions are closely related to Zappa--Sz\'ep products.

Introduced by Zappa \cite{Zap42} and Sz\'ep \cite{Sze50}, Zappa--Sz\'ep products of groups are a generalisation of semidirect products in which each of the two constituent groups acts on the other; so both embed as (not necessarily normal) subgroups of the product. Subsequent generalisations include Zappa--Sz\'ep-style products of increasingly general pairs of algebraic objects: \cite{Bri05,Law08,LRRW14,BPRRW17,BKQS18, LRRW18,LV22,OP23,DL22,DL23}.

Here, we start with a \emph{matched pair of categories}: small categories $\Cc$ and $\Dd$ with common object set, a left action $(c, d) \mapsto c \la d$ of $\Cc$ on $\Dd$ and a right action $(c, d)\mapsto c \ra d$ of $\Dd$ on $\Cc$ satisfying  the compatibility conditions of \cite{Zap42,Sze50}. Each such pair determines a Zappa--Sz\'ep-product category $\Cc \bowtie \Dd$; this can be viewed  either ``externally'' as the fibred product $\Dd \mathbin{{_s*_r}} \Cc$ under a suitable multiplication, or ``internally'' as the universal category containing copies of $\Cc$ and $\Dd$ with a strict factorisation system as in \cite{RW02} that implements $\la$ and $\ra$. All of the algebraic product constructions mentioned above fit into this framework, as do graphs of groups \cite{Bas93,Ser80} and $k$-graphs \cite{KP00}. Their $C^*$-algebraic representations all boil down to representations, in the sense of Spielberg \cite{Spe20}, of the associated Zappa--Sz\'ep-product category.

In the study of $C^*$-algebras associated to algebraic  or combinatorial objects, there is a well-established principle that interesting $C^*$-algebraic properties emerge when we twist the multiplication by a $\TT$-valued $2$-cocycle. The archetypal examples are the noncommutative tori $A_\theta$, which are obtained by twisting the multiplication in unitary representations of $\ZZ^k$ by $\TT$-valued $2$-cocycles---which themselves are simply computed in terms of characters (1-cocycles) on the constituent factors of $\ZZ$ in $\ZZ^k$ \cite{OPT}. To generalise this to matched pairs we need both a suitable definition of cohomology, and effective tools for computing it in terms of the cohomology of the constituent categories.

For topological spaces $X$ and $Y$, the classical \emph{Eilenberg--Zilber Theorem} gives an isomorphism between the (singular) homology of the chain complex $C_\bullet(X \times Y)$ and the total homology of the tensor product double complex $C_{\bullet}(X) \ox_{\ZZ} C_{\bullet}(Y)$. We take this as our inspiration for analysing homology of Zappa--Sz\'ep products. We consider the classical categorical homology of $\Cc \bowtie \Dd$: $n$-chains are $\ZZ$-linear combinations of composable $n$-tuples, and boundary maps are alternating sums of the maps obtained by deleting the first or last entry in a composable tuple, or composing adjacent terms. We show that this homology can be computed in terms of a double complex, called the \emph{matched complex}: its columns are chain complexes for the homology of $\Dd$ with coefficients in modules spanned by composable tuples in $\Cc$; and its rows are chain complexes for the homology of $\Cc$ with coefficients in modules spanned by composable tuples in $\Dd$. The matched complex $C_{\bullet,\bullet}$ is not the tensor product $C_{\bullet}(\Cc) \ox_\ZZ C_{\bullet}(\Dd)$, but its terms are fibred products of a similar form.

The matched complex admits two natural homology theories---diagonal homology $H^\Delta_\bullet(\Cc, \Dd)$ and total homology $H^{\Tot}_\bullet(\Cc, \Dd)$. These are isomorphic via explicit chain equivalences called the Eilenberg--Zilber map and the Alexander--Whitney map. The total homology $H^{\Tot}_\bullet(\Cc, \Dd)$, is defined in terms of the homology of the constituent categories $\Cc$ and $\Dd$. So to see that the categorical homology $H^{\bowtie}_\bullet(\Cc, \Dd)$ of $\Cc  \bowtie \Dd$ suits our purposes, we use the method of acyclic models to construct explicit chain equivalences between the chain complex $C^{\bowtie}_\bullet(\Cc, \Dd)$ defining $H^{\bowtie}_{\bullet}(\Cc, \Dd)$ and the diagonal chain complex $C^\Delta_\bullet(\Cc, \Dd)$. Combined with the Eilenberg--Zilber map, this gives a computable isomorphism $H^{\Tot}_\bullet(\Cc, \Dd) \cong H^{\bowtie}_\bullet(\Cc, \Dd)$. Dualising yields isomorphisms $H^\bullet_{\Tot}(\Cc, \Dd; \TT) \cong H^\bullet_{\bowtie}(\Cc, \Dd; \TT)$ in cohomology.

As an aside, this shows that if a category admits a strict factorisation system, then its categorical homology can be computed in terms of that of the embedded subcategories. This yields, for example, a potential iterative approach to computing homology for $k$-graphs.

We use our results to compute the homology of a class of self-similar groupoid actions on graphs that generalise the odometer. We calculate the homology in terms of the two nonzero homology groups of the underlying graph $E$, and the kernel and cokernel of an $E^0 \times E^1$ matrix encoding the orders of the odometers involved. \emph{En passant}, we establish useful general results about homology for matched pairs in which one factor is the path category of a directed graph or a bundle of monoids, with stronger results when the monoids are copies of $\ZZ$. These results would be well suited to computing the homology of Exel--Pardo systems \cite{EP17}.

The main motivation for our work on homology is to study twisted $C^*$-algebras of matched pairs. The point is that the natural definition of a $C^*$-algebraic representation of a matched pair, as made clear by Spielberg's work \cite{Spe20}, is as a multiplicative map $\zeta \mapsto t_\zeta$ from its Zappa--Sz\'ep product category to a semigroup of partial isometries. Consequently, the natural definition of a twisted representation is in terms of a categorical $2$-cocycle $c$ on the Zappa--Sz\'ep product: we twist by the formula $s_\zeta s_\eta = c(\zeta, \eta) s_{\zeta\eta}$. However, the total homology (and cohomology) is a more computable theory, and clearly reflects the decomposition of the Zappa--Sz\'ep product category into its constituent components. Our main homology theorem allows us to define and analyse the $C^*$-algebras in the natural way via categorical $2$-cocycles, but pass to total cohomology when we wish to identify the possible twists for a given matched pair or produce nontrivial cocycles in concrete examples.

We explore this in the context of matched pairs consisting of a groupoid $\Gg$ and a row-finite $k$-graph $\Lambda$ with no sources (self-similar actions of groupoids on such $k$-graphs). This covers a fairly general class of examples with relatively complex cohomology, for which $C^*$-algebraic representations in the sense of Spielberg of the associated untwisted pair are well understood. Given a categorical $2$-cocycle $c \in C^2_{\bowtie}(\Gg, \Lambda; \TT)$, we define a universal twisted Toeplitz algebra $\Tt C^*(\Gg, \Lambda; c)$ and a universal Cuntz--Krieger algebra $C^*(\Gg, \Lambda; c)$ in both of which all the generators are nonzero. We show that $\Tt C^*(\Lambda, \restr{c}{\Lambda^2})$ embeds in $\Tt C^*(\Gg, \Lambda; c)$ and likewise that $C^*(\Lambda, \restr{c}{\Lambda^2})$ embeds in $C^*(\Gg, \Lambda; c)$. We also show that if $\Gg$ is amenable then $C^*(\Gg, \restr{c}{\Gg^2})$ embeds in $\Tt C^*(\Gg, \Lambda; c)$, and that an addition condition developed by Yusnitha \cite{Yusnitha} ensures that it also embeds in $C^*(\Gg, \Lambda; c)$. We
establish a gauge-invariant uniqueness theorem for $C^*(\Gg, \Lambda; c)$ and prove that cohomologous $2$-cocycles yield isomorphic twisted $C^*$-algebras.

We then construct twisted $C^*$-algebras $\Tt C^*_\varphi(\Gg, \Lambda)$ and $C^*_\varphi(\Gg, \Lambda)$ associated to a total 2-cocycle $\varphi \in C^2_{\Tot}(\Gg, \Lambda; \TT)$. We prove that our cochain equivalence $\Psi^* \colon C^2_{\Tot}(\Gg, \Lambda; \TT) \to C^2_{\bowtie}(\Gg, \Lambda; \TT)$ induces isomorphisms $\Tt C^*_{\varphi} (\Gg, \Lambda) \cong \Tt C^*(\Gg,\Lambda,\Psi^*(\varphi))$ and $ C^*_\varphi(\Gg, \Lambda) \cong C^*(\Gg, \Lambda; \Psi^*(\varphi))$.

\smallskip

The paper is organised as follows. In Section~\ref{sec:prelims} we establish some background: on categories; on actions of one category on another; and on directed graphs and their path categories.

In Section~\ref{sec:matched_pairs} we discuss matched pairs of small categories. We show that each matched pair admits a Zappa--Sz\'ep product, and discuss internal and external descriptions of this object and its relationship to strict factorisation systems. We show how the actions in a matched pair extend to actions on the categories of composable tuples in the categories involved. We give a number of concrete examples of matched pairs, including the key \emph{model matched pairs} that serve as local models for composable tuples in arbitrary matched pairs.

In Section~\ref{sec:homology} we introduce the three homology theories for matched pairs. We first introduce categorical homology of a small category, described in terms of simplicial sets. We then introduce the \emph{matched complex}---a double complex associated to a matched pair---in terms of a bisimplicial group, and show that the assignment of the matched complex to a matched pair is functorial. We then define the diagonal complex, the total complex, and the associated homology theories of a matched pair.

In Section~\ref{sec:main_theorem} we prove our main homology theorem: categorical homology, total homology, and diagonal homology coincide. In Section~\ref{subsec:the_chain_maps}, we describe the three chain maps that appear in our main theorem: the first is the Eilenberg--Zilber map for double complexes---we just give a formula for use in computations; the other two, $\Pi \colon C^\Delta_\bullet \to C^{\bowtie}_\bullet$ and $\Psi \colon C^{\bowtie}_\bullet \to C^{\Tot}_\bullet$, are specific to our situation.  In Section~\ref{subsec:the actual theorem}, we state the main theorem, Theorem~\ref{thm:cohomologies_are_the_same}, describe the Alexander--Whitney map, which induces the inverse of the Eilenberg--Zilber map, and outline the strategy of the proof. In Section~\ref{sec:homology_of_models}, we show that our model matched pairs are acyclic in both diagonal and categorical homology, and describe functors from the Zappa--Sz\'ep-product categories of model matched pairs into $\Cc \bowtie \Dd$ that realise all generators of each chain complex. In Section~\ref{subsec:proof_of_theorem} we invoke the method of acyclic models to characterise chain equivalences between the diagonal and categorical complexes. In Subsection~\ref{subsec:PiPsidef} we show that the concrete chain maps described in Section~\ref{subsec:the_chain_maps} are such chain equivalences and describe their inverses. Finally, in Section~\ref{subsec:consequences}, we describe a spectral sequence that computes the homology of a matched pair, and a K\"unneth theorem for matched pairs of monoids.

In Section~\ref{sec:further_examples}, we compute the homology of a concrete class of examples: ``graphs of odometers.'' We consider a finite directed graph $E$ together with a labelling $p \colon  E^1 \to \{1, 2, \dots\}$ of its edges by strictly positive integers. We build the augmented graph, $F$ that has a bundle $\{e\} \times \ZZ/p(e)\ZZ$ of $p(e)$ parallel edges for each edge $e \in E$. We consider a matched pair $(E^0 \times \ZZ, F^*)$ in which the copies of $\ZZ$ behave, collectively, like odometers. In Section~\ref{subsec:mp_path_categories}, we show that in a matched pair where the second factor is the path category of a graph, only the first two rows of the second page of the spectral sequence obtained above are nonzero. In Section~\ref{subsec:mp_monoid_bundles}, we show that for matched pairs where the first factor is a bundle of monoids, the homology groups each decompose as the direct sum of the corresponding homology groups (with appropriate coefficients) of the monoids. In Section~\ref{subsec:mp_integer_bundles}, we prove that if the first factor is a bundle of copies of $\ZZ$, only the first two columns of the spectral sequence are nonzero, and the homology of each column is computable via a chain complex very similar to the bar resolution of $\ZZ$. In Section~\ref{subsec:odometer_graphs} we restrict to graphs of odometers, and write down an $E^0 \times E^1$ matrix over $\ZZ$ whose kernel and cokernel, together with the homology of the graph $E$, compute the homology of the system (Theorem~\ref{thm:example homology} and Corollary~\ref{cor:strongly connected}).

In Section~\ref{sec:CStars}, we consider twisted $C^*$-algebras associated to matched pairs.  Section~\ref{subsec:categorical twists} deals with twists by categorical cocycles, and establishes some fundamental results about the associated $C^*$-algebras: we prove that the generators are all nonzero and give sufficient conditions under which the twisted $C^*$-algebra of $\Gg$ embeds in each of $\Tt C^*(\Gg, \Lambda; c)$ and $C^*(\Gg, \Lambda; c)$ in Proposition~\ref{prop:faithfulness}; we prove our gauge-invariant uniqueness theorem, Corollary~\ref{cor:giut}; and we show that the isomorphism classes of $\Tt C^*(\Gg, \Lambda; c)$ and $C^*(\Gg, \Lambda; c)$ only depend on the cohomology class of $c$. Section~\ref{subsec:total twists} describes twists by total cocycles, and shows that these correspond to twists by categorical 2-cocycles via the isomorphism of cohomology induced by our main theorem above (Theorem~\ref{thm:total twist universal}).

\section{Preliminaries}
\label{sec:prelims}

Throughout this article, $\Cc$ and $\Dd$ denote small categories.
We identify $\Cc$ with its set of morphisms and write $\Cc^0 \subseteq \Cc$ for the set of identity morphisms (identified with objects).
We write $r,s \colon \Cc \to \Cc^0$ for the maps assigning to $c \in \Cc$ (the identity morphisms at) its codomain and domain.
For $n \ge 1$, we write $\Cc^n$ for the set of composable $n$-tuples in $\Cc$ and define $r,s \colon \Cc^n \to \Cc^0$ by $r(c_1,\ldots,c_n) = r(c_1)$ and $s(c_1,\ldots, c_n) = s(c_n)$. For $x,\,y \in \Cc^0$ we write $x\Cc^n \coloneqq \{ (c_1,\ldots,c_n) \mid r(c_1) = x\}$, $\Cc^ny \coloneqq \{ (c_1,\ldots,c_n) \mid s(c_n) = y\}$, and $x \Cc^n y \coloneqq x \Cc^n \cap \Cc^n y$.

If $\Cc^0 = \Dd^0$ we define
\[
\Cc *\Dd \coloneqq \Cc \fibre{s}{r} \Dd = \{ (c,d) \in \Cc \times \Dd \mid s(c) = r(d)\}.
\]

If $\Cc$, $\Cc'$, and $\Dd$ have the same objects and $f \colon \Cc \to \Cc'$ satisfies $f \circ s = s \circ f$, then $f * 1_{\Dd} \colon \Cc * \Dd \to \Cc' * \Dd$ is the map $(f * 1_{\Dd})(c,d) \coloneqq (f(c), d)$. Similarly, if $r \circ f = f \circ r$, then  $1_{\Dd} * f \colon \Dd * \Cc \to \Dd * \Cc'$ is the map$ (1_{\Dd} * f)(d,c) \coloneqq (d, f(c))$.

An \emph{action} of a category $\Cc$ on the left of a set $X$ consists of maps $a \colon X \to \Cc^0$ and $\la \colon \Cc \fibre{s}{a} X \to X$ such that $a(x) \la x = x$, $a(c_2 \la x) = r(c_2)$, and $(c_1c_2) \la x = c_1 \la(c_2 \la x)$  for all $(c_1,c_2) \in \Cc^2$ and $x \in X$ with $s(c_2) = a(x)$. If $X = \Dd$ is a category such that $\Cc^0 = \Dd^0$ we only consider left actions for which $a= r \colon \Dd \to \Cc^0$, so $\la$ is a map $\la \colon \Cc * \Dd \to \Dd$. We also then require that the action of $\Cc$ on $\Dd$ restricts to an action of $\Cc$ on $\Dd^0 = \Cc^0$. That is, $c \la s(c) = r(c)$ for all $c \in \Cc$.
 
A right action of $\Cc$ is defined similarly: it is a left action of the opposite category $\Cc^{\operatorname{op}}$ on $X$ (on $\Dd^{\operatorname{op}}$ if $X = \Dd$ is a category with $\Cc^0 = \Dd^0$).

A \emph{groupoid} $\Gg$ is a small category in which every morphism $g \in \Gg$ has an inverse $g^{-1} \in \Gg$ such that $g g^{-1} = r(g)$ and $g^{-1}g = s(g)$. The set of identity morphisms is called the \emph{unit space} of $\Gg$. In this paper, $\Gg$ always denotes a countable discrete groupoid.

A \emph{directed graph} is a quadruple $E = (E^0,E^1,r,s)$, consisting of countable sets $E^0$ of \emph{vertices} and $E^1$ of \emph{edges}, and maps $r \colon E^1 \to E^0$ and $s \colon E^1 \to E^0$ called the \emph{range} and \emph{source} maps.
For $n \ge 1$, we denote by $E^n \coloneqq \{\mu = \mu_1\cdots \mu_n \mid \mu_i \in E^1,\, s(e_i) = r(e_{i+1})\}$, the \emph{paths of length $n$} in the graph $E$. If $\mu \in E^m$ and $\nu \in E^n$ with $s(\mu) = r(\nu)$, then $\mu\nu \coloneqq \mu_1 \cdots \mu_m \nu_1 \cdots \nu_n \in E^{m+n}$ is the \emph{concatenation} of $\mu$ and $\nu$.
The range and source maps extend
to $E^n$: $r(\mu) = r(\mu_1) $ and $s(\mu) = s(\mu_n)$. We regard elements $v$ of $E^0$ as paths of length $0$ with $r(v) = s(v) = v$, and we extend concatenation by the formula $r(\mu)\mu = \mu = \mu s(\mu).$

The \emph{path category} of a directed graph $E$ is the collection $E^* \coloneqq \bigsqcup_{n=0}^\infty E^n$ of all finite paths in $E$. The objects of $E^*$ are $E^0$, and the range and source maps on the $E^n$ extend to domain and codomain maps $r \colon E^* \to E^0$ and $s \colon E^* \to E^0$. Composition is concatenation. We use $|\mu|$ to denote the length of $\mu \in E^*$, so $|\mu| = n$ if and only if $\mu \in E^n$.

\section{Matched pairs, Zappa--Sz\'ep products, and factorisation systems}
\label{sec:matched_pairs}

\subsection{Matched pairs}

In this subsection we introduce matched pairs of small categories and associated Zappa--Sz\'ep-product categories. We also examine how factorisation rules and strict factorisation systems are related to these constructions. Our notation parallels~\cite{DL23}.

\begin{dfn}\label{dfn:matched_pair}
A \emph{matched pair} is a quadruple $(\Cc,\Dd, \la , \ra)$ consisting of small categories $\Cc$, $\Dd$ with $\Cc^0 = \Dd^0$, a left action $\la \colon \Cc  * \Dd \to \Dd$ of $\Cc$ on $\Dd$, and a right action $\ra  \colon  \Cc * \Dd \to \Cc$ of $\Dd$
on $\Cc$ such that for all $(c_1,c_2,d_1,d_2) \in \Cc^2 * \Dd^2$,
	\begin{enumerate}[label=(MP\arabic*), ref=(MP\arabic*)]
		\item\label{itm:ZS-rs} $s(c_2 \la d_1) = r(c_2 \ra d_1)$,
		\item\label{itm:ZS-dot} $c_2 \la (d_1d_2) = (c_2 \la d_1) ((c_2 \ra d_1) \la d_2)$, and
		\item\label{itm:ZS-diamond} $(c_1c_2) \ra d_1 = (c_1 \ra (c_2 \la d_1)) (c_2 \ra d_1)$.	
	\end{enumerate}
	We often just say that $(\Cc,\Dd)$ is a matched pair, and suppress the actions $\la, \ra$.

	The category $\MP$ of matched pairs has matched pairs as objects and morphisms $f = (f^L,f^R) \colon (\Cc,\Dd) \to (\Cc',\Dd')$ consisting of pairs of functors $f^L \colon \Cc \to \Cc'$ and $f^R \colon \Dd \to \Dd'$ such that for all $(c,d)
\in \Cc * \Dd$,
	\begin{enumerate}[label=(\roman*), ref=(\roman*)]
	\item $(f^L(c),f^R(d)) \in \Cc' * \Dd'$,
	\item $f^L(c)\la f^R(d) = f^R(c \la d)$, and
	\item $f^L(c) \ra f^R(d) = f^L(c \ra d).$
	\end{enumerate}

\end{dfn}

\begin{rmk}
	We are unsure of the provenance of the term \emph{matched pair}. It is used for various related notions: matched pairs of groupoids in \cite{AA05}; and matched pairs of Hopf algebras in \cite{Sin72}. For previous work on matched pairs with $\Cc^0 \ne \Dd^0$, see \cite[Definition~2.2]{DL23}.

In general, we could define and study matched pairs of categories that do not have the same object set as follows. Let $\Cc$ and $\Dd$ be small categories. Let $\Xi$ be a set and let $\phi : \Xi \to \Cc^0$ and $\psi : \Xi \to \Dd^0$ be surjections. The ampliation category $\phi^*\Cc \coloneqq \{(x, c, y) \in \Xi \times \Cc \times \Xi : \phi(x) = r(c)\text{ and }\phi(y) = s(c)\}$ has objects $(\phi^*\Cc)^0 = \{(x, \phi(x), x) : x \in \Xi\}$, which we can identify with $\Xi$, and structure maps $r(x, c, y) = x$, $s(x, c, y) = y$ and $(x, c, y)(y, c, z) = (x, cc', z)$. The ampliations $\phi^*\Cc$ and $\psi^*\Dd$ are small categories with $(\phi^* \Cc)^0 = (\psi^*\Dd)^0$. So we could define a matched-pair structure on $(\Cc, \Dd)$ to be a tuple $(\Cc, \Dd, \Xi, \phi, \psi, \rhd, \lhd)$ in which $\phi : \Xi \to \Cc^0$ and $\psi : \Xi \to \Dd^0$ are surjections and $(\phi^*\Cc, \psi^*\Dd, \rhd, \lhd)$ is a matched pair as above. As (non-topologised) categories, the matched pairs of \cite[Definition~2.2]{DL23} have this form. In their notation, $\Xi \coloneqq \mathcal{X}^{(0)}$, $\psi \coloneqq \id_{\Xi}$ and $\phi \coloneqq \rho_{\mathcal{X}}^{(0)}$.
\end{rmk}

Given a matched pair $(\Cc,\Dd)$, we define $r \colon \Dd * \Cc \to \Cc^0$ and $s \colon \Dd * \Cc \to \Cc^0$ by $r(d,c) \coloneqq r(d)$ and $s(d,c) \coloneqq s(c)$.

\begin{dfn}
	Let $\Cc$ and $\Dd$ be small categories with $\Cc^0 = \Dd^0$.  A \emph{factorisation rule} on $(\Cc, \Dd)$ is a map $\bowtie \colon \Cc * \Dd \to \Dd * \Cc$ such that
	\begin{enumerate}[label={(FR\arabic*)}]
		\item \label{itm:matching} $r(c \bowtie d) = r(c)$ and $s(c \bowtie d) = s(d)$ for all $(c,d) \in \Cc * \Dd$, 
		\item \label{itm:units} $c \bowtie s(c) = (r(c),c)$ and $r(d) \bowtie d = (d ,s(d))$ for all $c \in \Cc$ and $d \in \Dd$, 
		and
		\item \label{itm:twist} if $\mu_\Cc \colon \Cc^2 \to \Cc$ and $\mu_\Dd \colon \Dd^2 \to \Dd$ denote the composition maps, then the following diagrams commute:
		\[
		\begin{tikzcd}
			\Cc^2 * \Dd \arrow[d, "\mu_\Cc * 1_\Dd"] \arrow[r, "1_\Cc * \bowtiemap"] & \Cc * \Dd * \Cc \arrow[r, "\bowtiemap*1_\Cc"]
			& \Dd * \Cc^2 \arrow[d, "1_\Dd * \mu_\Cc"] \\
			\Cc * \Dd \arrow[rr, "\bowtiemap"]
			&
			& \Dd * \Cc
		\end{tikzcd}
		\quad \quad
		\begin{tikzcd}
			\Cc * \Dd^2 \arrow[d, "1_{\Cc} * \mu_\Dd"] \arrow[r, "\bowtiemap * 1_\Dd"]
			& \Dd * \Cc * \Dd \arrow[r, "1_\Dd\\ * \bowtiemap"]
			& \Dd^2 * \Cc \arrow[d, "\mu_\Dd * 1_\Cc"]  \\
			\Cc * \Dd \arrow[rr, "\bowtiemap"]
			&
			& \Dd * \Cc\hbox to 0pt{.}
		\end{tikzcd}
		\]
	\end{enumerate}
\end{dfn}


The reason for the name \emph{factorisation rule} becomes clear in the context of Zappa--Sz\'ep products (Definition~\ref{dfn:zappa_szep_product}).
Matched pairs and factorisation rules are equivalent in the following sense.

\begin{lem}\label{lem:intertwiner_is_pair}
	Let $\Cc$ and $\Dd$ be small categories with the same object set.
If $(\Cc,\Dd, \la ,\ra)$ is a matched pair, then the formula
	\begin{equation}\label{eq:mp_to_bowtie}
		c \bowtie d \coloneqq (c \la d, c \ra d)
	\end{equation}
determines a factorisation rule $\bowtie \colon \Cc * \Dd \to \Dd * \Cc$. Conversely, if $\bowtie \colon \Cc * \Dd \to \Dd * \Cc$ is a factorisation rule and $p_\Dd \colon \Dd * \Cc \to \Dd$ and $p_\Cc \colon \Dd * \Cc \to \Cc$ are the coordinate projections, then $\la \colon \Cc * \Dd \to \Dd$ and $\ra \colon \Cc * \Dd \to \Cc$ given by
\begin{equation}\label{eq:bowtie_to_mp}
	c \la d \coloneqq p_\Dd (c \bowtie d)
	\qquad \text{and} \qquad
	c \ra d \coloneqq p_\Cc (c \bowtie d)
\end{equation}
make $(\Cc,\Dd,\la,\ra)$ a matched pair.
\end{lem}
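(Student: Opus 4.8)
The plan is to verify the two directions separately, each time checking the relevant axioms term by term. In the forward direction, assume $(\Cc,\Dd,\la,\ra)$ is a matched pair and set $c\bowtie d := (c\la d, c\ra d)$. First I would confirm this is well-defined as a map into $\Dd * \Cc$: axiom \ref{itm:ZS-rs} says $s(c\la d) = r(c\ra d)$, which is exactly the fibre-product condition defining $\Dd * \Cc$, so $c\bowtie d$ lands in $\Dd*\Cc$. Next, \ref{itm:matching} follows because $\la$ is a left action with anchor map $r$, so $r(c\la d) = r(c)$, and $\ra$ is a right action with anchor map $s$, so $s(c\ra d) = s(d)$. The content is \ref{itm:twist}: chasing the left-hand square, the top-right composite sends $(c_1,c_2,d) \mapsto (c_1, c_2\la d, c_2\ra d) \mapsto (c_1\la(c_2\la d),\, c_1\ra(c_2\la d),\, c_2\ra d) \mapsto (c_1\la(c_2\la d),\, (c_1\ra(c_2\la d))(c_2\ra d))$, while the bottom-left composite sends $(c_1,c_2,d)\mapsto (c_1c_2, d)\mapsto ((c_1c_2)\la d,\, (c_1c_2)\ra d)$. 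The first coordinates agree because $\la$ is a left action: $(c_1c_2)\la d = c_1\la(c_2\la d)$. The second coordinates agree by \ref{itm:ZS-diamond}. The right-hand square is dual: the first coordinates match by \ref{itm:ZS-dot}, and the second coordinates match because $\ra$ is a right action, i.e.\ $c\ra(d_1d_2) = (c\ra d_1)\ra d_2$ — here I should be slightly careful that the "right action" axiom for $\ra$ with the correct anchor conditions gives precisely this identity after inserting the intermediate $\la$-terms, so tracing the diagram explicitly is worthwhile.

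For the converse, assume $\bowtie$ is a factorisation rule and define $\la = p_\Dd\circ\bowtie$ and $\ra = p_\Cc\circ\bowtie$. First I would check these are actions with the correct anchors: \ref{itm:matching} gives $r(c\la d) = r(c)$ and $s(c\ra d) = s(d)$, which fixes the anchor maps as $r$ and $s$ respectively; and since $\bowtie$ has image in $\Dd * \Cc$, we get $s(c\la d) = r(c\ra d)$, which is \ref{itm:ZS-rs}. The unit and composition laws for $\la$ as a left action, and for $\ra$ as a right action, come from \ref{itm:twist}: reading the first coordinate of the left square gives $(c_1c_2)\la d = c_1\la(c_2\la d)$ once one records that $\bowtie(c_2,d) = (c_2\la d,\, c_2\ra d)$ and then $\bowtie(c_1, c_2\la d)$ has first coordinate $c_1\la(c_2\la d)$; the second coordinate of the right square similarly gives $c\ra(d_1d_2) = (c\ra d_1)\ra d_2$. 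Unitality of the actions should follow by feeding identity morphisms into \ref{itm:matching} together with \ref{itm:twist} (or can be imposed/checked directly from the definition of "action" — one should confirm the paper's action axioms only require associativity-type identities plus $a(x)\la x = x$, and derive the latter from the factorisation-rule axioms, possibly needing a small extra argument). Then \ref{itm:ZS-dot} and \ref{itm:ZS-diamond} are exactly the remaining (first coordinate of right square, second coordinate of left square) identities read off from \ref{itm:twist}.

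The main obstacle, such as it is, is bookkeeping rather than conceptual: keeping straight which coordinate of which commuting square yields which matched-pair axiom, and ensuring the anchor/domain conditions line up so that all the composites in the diagrams are actually defined on the stated fibre products. A secondary subtlety is the unitality of the actions — the factorisation-rule axioms as stated involve only the composition maps $\mu_\Cc,\mu_\Dd$, so recovering $r(c)\la d = d$ and $c\ra s(c) = c$ may require plugging identities into \ref{itm:matching} and \ref{itm:twist} and a short computation; this is the one place where the equivalence is not a pure "read off the diagram" matter. Finally, I would remark that the two constructions are mutually inverse, since $p_\Dd(c\la d, c\ra d) = c\la d$ and conversely $(p_\Dd(c\bowtie d), p_\Cc(c\bowtie d)) = c\bowtie d$, so no information is lost.
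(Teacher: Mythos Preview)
Your proposal is correct and follows essentially the same approach as the paper: direct verification that each axiom on one side corresponds to a coordinate of a commuting square (or the fibre-product condition) on the other, with the two directions simply handled in the opposite order. Your flagged concern about unitality of the actions is a genuine bookkeeping point that the paper's proof also does not spell out explicitly; it is not a divergence in method.
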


\begin{proof}
	First suppose that $\bowtie \colon \Cc * \Dd \to \Dd * \Cc$ is a factorisation rule. Define $\la,\ra$ by \eqref{eq:bowtie_to_mp}.
	Since $c \bowtie d \in \Dd * \Cc$, \ref{itm:ZS-rs} holds.  The left-hand diagram of \ref{itm:twist} implies that	
	\begin{align*}
		((c_1c_2) \la d, (c_1c_2) \ra d)
		=(c_1c_2) \bowtie d
		= (c_1 \la (c_2 \la d), (c_1 \ra (c_2 \la d)) (c_2 \ra d)).
	\end{align*}
	Together with \ref{itm:units} this implies that $\la$ is a left action and \ref{itm:ZS-diamond} holds.
	Symmetrically, $\ra$ is a right action and \ref{itm:ZS-dot} holds.

	Now suppose that $(\Cc,\Dd,\la,\ra)$ is a matched pair and define $\bowtie \colon \Cc * \Dd \to \Dd * \Cc$ by \eqref{eq:mp_to_bowtie}.
	Then $c \bowtie d \in \Dd * \Cc$ by
\ref{itm:ZS-rs}.
Since $\la,\,\ra$ are actions $r(c \la d) = r(c)$ and $s(c \ra d) = s(d)$, giving \ref{itm:matching}. Also, $r(c) \la c = c$, $r(c) \ra c = s(c)$, $d \la s(d) = r(d)$, and $d \ra s(d) = d$ giving~\ref{itm:units}.
 For \ref{itm:twist} we use \ref{itm:ZS-diamond} at the second equality to compute,
	\begin{align*}
		(c_1c_2) \bowtie d
		&= (c_1c_2 \la d , c_1c_2 \ra d)
		=(c_1 \la (c_2 \la d), (c_1 \ra (c_2 \la d))(c_2 \ra d))\\
		&= (1_\Dd * \mu_\Cc) \circ (\bowtie * 1_\Cc) \circ (1_\Cc * \bowtie) (c_1,c_2,d),
	\end{align*}
	and symmetrically $c \bowtie (d_1d_2) = (\mu_d * 1_\Cc) \circ (1_\Dd * \bowtie) \circ (\bowtie \circ 1_{\Dd}) (c_1,d_2,d_2)$.
\end{proof}

We use Lemma~\ref{lem:intertwiner_is_pair} without comment to move between matched pairs and factorisation rules. Importantly, \ref{itm:ZS-rs}--\ref{itm:ZS-diamond} give the fibre product $\Dd * \Cc$ the structure of a category.

\begin{lem}\label{lem:zappa_szep_category}
Suppose that $(\Cc,\Dd)$ is a matched pair and let $\mu_\Cc$ and $\mu_\Dd$ denote the composition maps on $\Cc$ and $\Dd$ respectively. Define $\mu_{\bowtie} \colon (\Dd * \Cc)^2 \to \Dd * \Cc$ by
	\[
	\mu_{\bowtie} \coloneqq (\mu_\Dd * \mu_\Cc) \circ (1_\Dd * \bowtiemap * 1_\Cc ).
	\]
Then for $(d_1,c_1),\,(d_2,c_2) \in \Dd * \Cc$ such that $s(c_1) = r(d_2)$,
	\begin{equation}\label{eq:mu_bowtie}
			\mu_{\bowtie} ((d_1,c_1),(d_2,c_2))
		=(d_1(c_1 \la d_2), (c_1 \ra d_2) c_2).
	\end{equation}
	Moreover, $\Dd * \Cc$ is a small category with $(\Dd*\Cc)^0 = \Cc^0 = \Dd^0$, $r(d,c) \coloneqq r(d)$, $s(d,c) \coloneqq s(c)$, and composition $(d_1,c_1)(d_2,c_2) \coloneqq \mu_{\bowtie} ((d_1,c_1),(d_2,c_2))$.
	 The maps $\iota_\Cc \colon \Cc \to \Dd * \Cc$ and $\iota_\Dd \colon \Dd \to \Dd * \Cc$ defined by $\iota_\Cc(c) = (r(c),c)$ and $\iota_\Dd(d) = (d,s(d))$ are faithful functors.
\end{lem}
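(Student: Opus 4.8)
The plan is to verify, in order, the three claims in the statement: the explicit formula \eqref{eq:mu_bowtie} for $\mu_{\bowtie}$, the category axioms for $\Dd * \Cc$, and finally the functoriality and faithfulness of $\iota_\Cc$ and $\iota_\Dd$. The first claim is a direct unwinding of the definition $\mu_{\bowtie} = (\mu_\Dd * \mu_\Cc) \circ (1_\Dd * \bowtiemap * 1_\Cc)$: applying $1_\Dd * \bowtiemap * 1_\Cc$ to $((d_1,c_1),(d_2,c_2))$ uses the factorisation rule on the middle pair $(c_1,d_2)$ — legitimate since $s(c_1) = r(d_2)$ — to produce $(d_1, c_1 \la d_2, c_1 \ra d_2, c_2)$, and then $\mu_\Dd * \mu_\Cc$ composes the first two and the last two entries, giving $(d_1 (c_1 \la d_2), (c_1 \ra d_2) c_2)$. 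One only needs to observe that these composites are defined: $s(d_1) = r(c_1) = r(c_1 \la d_2)$ since $\la$ is an action, and $s(c_1 \ra d_2) = s(d_2) = r(c_2)$ since $\ra$ is an action and $(d_2,c_2) \in \Dd * \Cc$; moreover $s(c_1 \la d_2) = r(c_1 \ra d_2)$ by \ref{itm:ZS-rs}, so the output lies in $\Dd * \Cc$.

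Next I would check the category axioms. Compatibility of $r$ and $s$ with $\mu_{\bowtie}$ is immediate from the formula and the matching condition \ref{itm:matching}: $r(\mu_{\bowtie}((d_1,c_1),(d_2,c_2))) = r(d_1) = r(d_1,c_1)$ and similarly for $s$. For the units, one computes $(d,c)(s(c), s(c)) = (d(c \la s(c)), (c \ra s(c)) s(c))$; since $\la, \ra$ are actions, $c \la s(c) = c \la a(\cdot)$ acts as a unit so $c \la s(c) = s(c)$ (as $s(c) \in \Dd^0$ and the action restricted to units is trivial), and $c \ra s(c) = c$, giving $(d, c)$; the left unit law is symmetric. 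The main work is associativity. Given three composable elements, I would expand $\big((d_1,c_1)(d_2,c_2)\big)(d_3,c_3)$ and $(d_1,c_1)\big((d_2,c_2)(d_3,c_3)\big)$ using \eqref{eq:mu_bowtie} twice each, and reconcile the two expressions. The $\Dd$-coordinate of the first is $d_1 (c_1 \la d_2) \big(((c_1 \ra d_2) c_2) \la d_3\big)$, and expanding the last factor via \ref{itm:ZS-diamond} (applied to $(c_1 \ra d_2, c_2)$ acting on $d_3$) and then \ref{itm:ZS-dot} (applied to $c_1 \ra d_2$ acting on $d_2 \cdot (c_2 \la d_3)$ — wait, rather $c_1$ acting on $d_2 (c_2 \la d_3)$) should match the $\Dd$-coordinate of the second, namely $d_1 \big(c_1 \la (d_2 (c_2 \la d_3))\big)$; this is exactly where \ref{itm:ZS-dot} does its job. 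The $\Cc$-coordinate is handled symmetrically, using \ref{itm:ZS-diamond} to move the composition in $\Cc$ past the action. This bookkeeping is the principal obstacle — not deep, but one must be careful to apply the right axiom to the right triple and keep track of which action is acting on what. Alternatively, and more cleanly, one can deduce associativity abstractly from the two commuting squares in \ref{itm:twist} together with a diagram chase, since those squares are precisely the pentagon-type coherence needed; I would mention this as the conceptually preferred route and carry out the direct computation only as far as needed.

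Finally, for $\iota_\Cc(c) = (r(c), c)$ and $\iota_\Dd(d) = (d, s(d))$: these clearly respect $r$, $s$, and units. For multiplicativity of $\iota_\Cc$, compute $\iota_\Cc(c_1)\iota_\Cc(c_2) = (r(c_1), c_1)(r(c_2), c_2) = \big(r(c_1)(c_1 \la r(c_2)), (c_1 \ra r(c_2)) c_2\big)$; since $r(c_2) = s(c_1) \in \Dd^0$ and actions are trivial on units, $c_1 \la r(c_2) = r(c_1)$ (a unit in $\Dd$) and $c_1 \ra r(c_2) = c_1$, so this equals $(r(c_1), c_1 c_2) = \iota_\Cc(c_1 c_2)$. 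Multiplicativity of $\iota_\Dd$ is symmetric. Faithfulness is immediate: $\iota_\Cc$ and $\iota_\Dd$ are visibly injective, being sections of the coordinate projections $p_\Cc$ and $p_\Dd$ restricted to the appropriate subsets. This completes the argument.
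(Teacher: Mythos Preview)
Your proposal is correct and follows essentially the same approach as the paper: verify the formula \eqref{eq:mu_bowtie} by unwinding the definition, check associativity by a direct computation using \ref{itm:ZS-dot} and \ref{itm:ZS-diamond}, and verify functoriality of $\iota_\Cc$, $\iota_\Dd$ by computing on generators. One small slip: in your unit-law check you write $c \la s(c) = s(c)$, but since $r(c \la d) = r(c)$ the correct identity is $c \la s(c) = r(c)$ (which is $s(d)$, as needed for $d(c \la s(c)) = d$); your conclusion is right, only the intermediate value is misstated.
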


\begin{proof}
	Equation~\eqref{eq:mu_bowtie} follows from~\eqref{eq:mp_to_bowtie}.
	For associativity, we calculate:
	\begin{align*}
		\mu_{\bowtie}\big((d_1,c_1) &\mu_{\bowtie}((d_2,c_2),(d_3,c_3))\big)\\
		&=\mu_{\bowtie}\big((d_1,c_1 ),(d_2 (c_2 \la d_3), (c_2 \ra d_3) c_3)\big)\\
		&= \big(d_1 (c_1 \la (d_2 (c_2 \la d_3)) ), c_1 \ra (d_2 (c_2 \la d_3)) (c_2 \ra d_3)c_3 \big)\\
		&= \big( d_1 (c_1 \la d_2) ((c_1 \ra d_2)c_2 \la d_3),
		((c_1 \ra d_2) \ra (c_2 \la d_3)) (c_2 \ra d_3)c_3 \big)\\
		&= \big(d_1 (c_1 \la d_2) ((c_1 \ra d_2)c_2 \la d_3),
		(c_1 \ra d_2(c_2\la d_3)) (c_2 \ra d_3)c_3  \big)\\
		&= \mu_{\bowtie}\big( d_1 (c_1 \la d_2) , (c_1 \ra d_2) c_2, d_3,c_3\big)\\
		&= \mu_{\bowtie}\big(\mu_{\bowtie}((d_1,c_1),(d_2,c_2)),(d_3,c_3)\big).
	\end{align*}
	For functoriality of $\iota_\Cc$ we calculate
	\begin{align*}
	\mu_{\bowtie}	(\iota_\Cc (c_1),\iota_\Cc(c_2) ) = (r(c_1)(c_1 \la s(c_1)),(c_1 \ra s(c_1)) c_2) = (r(c_1),c_1c_2) = \iota_{\Cc}(c_1c_2).
	\end{align*}
Functoriality of $\iota_\Dd$ follows analogously. Faithfulness is clear.
\end{proof}

\begin{dfn}\label{dfn:zappa_szep_product}
	We call the small category $\Dd * \Cc$ with the composition $\mu_{\bowtie}$ of Lemma~\ref{lem:zappa_szep_category} the \emph{Zappa--Sz\'ep product} of $\Cc$ and $\Dd$, and denote it by $\Cc \bowtie \Dd$.
\end{dfn}

We identify $\Cc$ and $\Dd$ with the subcategories $\iota_\Cc(\Cc)$ and $\iota_\Dd(\Dd)$ of $\Cc \bowtie \Dd$. In particular, for $(d,c) \in \Dd * \Cc$ we write $dc \coloneqq (d,c)\in \Cc \bowtie \Dd$. So, for example, for $c_i \in \Cc$ and $d_i \in \Dd$,  \[d_1c_1d_2c_2 = d_1(c_1 \la d_2)(c_1 \ra d_2)c_2.\]

\begin{example}
	 If $\Cc$ is a small category, then $(\Cc,\Cc^0)$ is a matched pair with actions $c \la s(c) = r(c)$ and $c \ra s(c) = c$. We have $\Cc \bowtie \Cc^0 \cong \Cc \cong \Cc^0 \bowtie \Cc$.
\end{example}

\begin{example}
	Suppose that $G$ and $H$ are groups and suppose that $(G,H, \la ,\ra)$ is a matched pair. Then $G \bowtie H$ is the Zappa--Sz\'ep product of $G$ and $H$ from \cite{Zap42,Sze50}.
	If $\ra$ is the trivial right action of $H$ on $G$, then for $h_i \in H$ and $g_i \in G$, we have
	\[
	(h_1,g_1)(h_2,g_2) = (h_1(g_1 \la h_2), g_1g_2),
	\]
	so $G \bowtie H$ is the semidirect product $G \ltimes H$.
\end{example}

Zappa--Sz\'ep products have the following universal property.

\begin{prop}	\label{prop:Zappa--Szep_universality}
	Suppose that $(\Cc,\Dd)$ is a matched pair, let $\Aa$ be a small category such that $\Aa^0 = \Cc^0 = \Dd^0$, and suppose that $j_\Cc \colon \Cc \to \Aa$ and $j_\Dd \colon \Dd \to \Aa$ are functors satisfying
	\begin{equation}\label{eq:jCjD behaviour}
		j_\Cc(c)j_\Dd(d) = j_\Dd(c \la d) j_\Cc (c \ra d)
	\end{equation}
	for all $(c, d) \in \Cc * \Dd$. Then there exists a unique functor $j_\Cc \bowtie j_\Dd \colon \Cc \bowtie \Dd \to \Aa$ such that $(j_\Cc \bowtie j_\Dd) \circ \iota_\Cc = j_\Cc$ and $(j_\Cc \bowtie j_\Dd) \circ \iota_\Dd = j_\Dd$. If $\Bb$ is a small category with $\Bb^0 = \Cc^0$ and $k_\Cc : \Cc \to \Bb$ and $k_\Dd : \Dd \to \Bb$ are functors satisfying~\eqref{eq:jCjD behaviour} and with the same universal property, then $k_\Cc \bowtie k_\Dd$ is an isomorphism $\Cc \bowtie \Dd \to \Bb$.
\end{prop}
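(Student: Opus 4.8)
The plan is to construct $j_\Cc \bowtie j_\Dd$ directly on morphisms, using the description of $\Cc \bowtie \Dd$ as the fibre product $\Dd * \Cc$ from Lemma~\ref{lem:zappa_szep_category}: define $(j_\Cc \bowtie j_\Dd)(d,c) \coloneqq j_\Dd(d)\,j_\Cc(c)$ for $(d,c) \in \Dd * \Cc$. First I would observe that~\eqref{eq:jCjD behaviour}, applied with $c$ and $d$ both equal to an identity $x \in \Cc^0 = \Dd^0$, is only well-posed if $j_\Cc(x) = j_\Dd(x)$; so $j_\Cc$ and $j_\Dd$ agree on objects, and in particular $j_\Dd(d)\,j_\Cc(c)$ is a composable pair in $\Aa$ whenever $(d,c) \in \Dd * \Cc$, so the formula makes sense. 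Since the identity at $x$ in $\Cc \bowtie \Dd$ is $(x,x) = \iota_\Dd(x) = \iota_\Cc(x)$, the map preserves identities.

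Functoriality is where the hypothesis does its work. By Lemma~\ref{lem:zappa_szep_category}, composition in $\Cc \bowtie \Dd$ is $(d_1,c_1)(d_2,c_2) = (d_1(c_1 \la d_2),\,(c_1 \ra d_2)c_2)$ whenever $s(c_1) = r(d_2)$. Applying the proposed map and using that $j_\Cc$ and $j_\Dd$ are functors yields $j_\Dd(d_1)\,j_\Dd(c_1 \la d_2)\,j_\Cc(c_1 \ra d_2)\,j_\Cc(c_2)$; now~\eqref{eq:jCjD behaviour} applied to the composable pair $(c_1,d_2)$ rewrites the two middle factors as $j_\Cc(c_1)\,j_\Dd(d_2)$, and the product collapses to $\big(j_\Dd(d_1)j_\Cc(c_1)\big)\big(j_\Dd(d_2)j_\Cc(c_2)\big)$, which is $(j_\Cc \bowtie j_\Dd)(d_1,c_1)\,(j_\Cc \bowtie j_\Dd)(d_2,c_2)$. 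That $(j_\Cc \bowtie j_\Dd)\circ\iota_\Cc = j_\Cc$ and $(j_\Cc \bowtie j_\Dd)\circ\iota_\Dd = j_\Dd$ is immediate from $\iota_\Cc(c) = (r(c),c)$, $\iota_\Dd(d) = (d,s(d))$, and the fact that $j_\Cc, j_\Dd$ preserve identities and agree on objects.

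For uniqueness, I would use that every morphism of $\Cc \bowtie \Dd$ is $dc = \iota_\Dd(d)\,\iota_\Cc(c)$ in the notation following Definition~\ref{dfn:zappa_szep_product} (a direct computation from the formula for $\mu_{\bowtie}$): any functor agreeing with $j_\Dd$ on $\iota_\Dd(\Dd)$ and with $j_\Cc$ on $\iota_\Cc(\Cc)$ must send $dc$ to $j_\Dd(d)j_\Cc(c)$, hence equals $j_\Cc \bowtie j_\Dd$. Finally, the comparison with $\Bb$ is the standard uniqueness-up-to-isomorphism argument: $\iota_\Cc$ and $\iota_\Dd$ themselves satisfy~\eqref{eq:jCjD behaviour} (the relation $\iota_\Cc(c)\iota_\Dd(d) = \iota_\Dd(c \la d)\iota_\Cc(c \ra d)$ follows from the formula for $\mu_{\bowtie}$ and~\ref{itm:ZS-rs}), so the universal property of $\Bb$ yields a functor $\Bb \to \Cc \bowtie \Dd$; the two composites $\Cc \bowtie \Dd \to \Bb \to \Cc \bowtie \Dd$ and $\Bb \to \Cc \bowtie \Dd \to \Bb$ again satisfy the relevant universal properties, so by the uniqueness clauses they are the identity functors, and $k_\Cc \bowtie k_\Dd$ is an isomorphism.

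I do not expect a real obstacle; the proof is a direct verification. The only two points requiring care are noticing that~\eqref{eq:jCjD behaviour} already forces $j_\Cc$ and $j_\Dd$ to agree on $\Cc^0 = \Dd^0$ (so that the defining formula lands in composable pairs), and exploiting the normal form $(d,c) = \iota_\Dd(d)\iota_\Cc(c)$ for morphisms of $\Cc \bowtie \Dd$, which reduces both functoriality and uniqueness to a single use of the hypothesis.
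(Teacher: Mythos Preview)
Your proposal is correct and follows essentially the same approach as the paper's proof: define $(j_\Cc \bowtie j_\Dd)(d,c) = j_\Dd(d)j_\Cc(c)$, verify functoriality via~\eqref{eq:jCjD behaviour}, deduce uniqueness from the factorisation $dc = \iota_\Dd(d)\iota_\Cc(c)$, and obtain the isomorphism by the standard universal-property argument. You are slightly more careful than the paper in noting that well-posedness of~\eqref{eq:jCjD behaviour} for $c = d = x \in \Cc^0$ forces $j_\Cc$ and $j_\Dd$ to agree on objects (so that $j_\Dd(d)j_\Cc(c)$ is composable), a point the paper leaves implicit.
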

\begin{proof}
	Define $j_\Cc \bowtie j_\Dd \colon \Cc \bowtie \Dd \to \Aa$ by $(j_\Cc \bowtie j_\Dd)(d,c) = j_\Dd(d) j_\Cc(c)$. Clearly, $(j_\Cc \bowtie j_\Dd) \circ \iota_\Cc = j_\Cc$ and $(j_\Cc \bowtie j_\Dd) \circ \iota_\Dd = j_\Dd$. For functoriality we compute,
\begin{align*}
	(j_\Cc \bowtie j_\Dd)(d_1c_1d_2c_2) &= j_\Dd(d_1)j_\Dd(c_1 \la d_2) j_\Cc(c_1 \ra d_2) j_\Cc(c_2) \\ &= j_\Dd(d_1) j_\Cc(c_1) j_\Dd(d_2) j_\Cc(c_2)
	= (j_\Cc \bowtie j_\Dd)(d_1c_1)\,(j_\Cc \bowtie j_\Dd)(d_2c_2).
\end{align*}
If $f \colon \Cc \bowtie \Dd \to \Aa$ is a functor satisfying $f \circ \iota_\Cc = j_\Cc$ and $f \circ \iota_\Dd = j_\Dd$, then $f(dc) = f(\iota_\Dd(d) \iota_\Cc(c)) = j_\Dd(d)j_\Cc(c) = (j_\Cc \bowtie j_\Dd)(dc)$. If $(\Bb, k_\Cc, k_\Dd)$ has the same universal property, then that universal property applied to $\iota_\Cc$ and $\iota_\Dd$ yields a functor $\theta : \Bb \to \Cc \bowtie \Dd$ inverse to $k_\Cc \bowtie k_\Dd$.
\end{proof}

\begin{cor}\label{cor:matched_pair_morphisms}
The assignment $(\Cc,\Dd) \mapsto \Cc \bowtie \Dd$ is functorial: given a matched-pair morphism $(h^L,h^R) \colon (\Cc,\Dd) \to (\Cc',\Dd')$, there is a functor $h \colon \Cc \bowtie \Dd \to \Cc' \bowtie \Dd'$ such that $h(dc) = h^R(d)h^L(c)$ for all $(d,c) \in \Dd * \Cc$. This functor satisfies $h \circ \iota_{\Cc} = \iota_{\Cc'} \circ  h^L$ and $h \circ \iota_\Dd = \iota_{\Dd'} \circ h^R$.
Conversely, if $h \colon \Cc \bowtie \Dd \to \Cc' \bowtie \Dd'$ is a functor such that $h(\Cc) \subseteq\Cc'$ and $h (\Dd) \subseteq \Dd'$, then $(h|_{\Cc}, h|_{\Dd}) \colon (\Cc,\Dd) \to (\Cc',\Dd')$ is a matched pair morphism.
\end{cor}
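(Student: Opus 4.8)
The plan is to deduce both directions from the universal property of the Zappa--Sz\'ep product (Proposition~\ref{prop:Zappa--Szep_universality}), once we record two elementary identities valid in any Zappa--Sz\'ep product $\Cc' \bowtie \Dd'$: for $c' \in \Cc'$ and $d' \in \Dd'$ with $s(c') = r(d')$,
\[
\iota_{\Cc'}(c')\,\iota_{\Dd'}(d') = \bigl(c' \la d',\, c' \ra d'\bigr) \qquad\text{and}\qquad \iota_{\Dd'}(d')\,\iota_{\Cc'}(c') = (d',\, c').
\]
Both are immediate from the formula~\eqref{eq:mu_bowtie} for $\mu_{\bowtie}$ and the fact that identity morphisms act trivially under $\la$ and $\ra$; note the first says $\iota_{\Cc'}(c')\iota_{\Dd'}(d') = \iota_{\Dd'}(c' \la d')\iota_{\Cc'}(c' \ra d')$ in the notation $dc \coloneqq (d,c)$.

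For the forward implication, given a matched-pair morphism $(h^L,h^R)\colon(\Cc,\Dd)\to(\Cc',\Dd')$, I apply Proposition~\ref{prop:Zappa--Szep_universality} with $\Aa = \Cc' \bowtie \Dd'$, $j_\Cc \coloneqq \iota_{\Cc'}\circ h^L$, and $j_\Dd \coloneqq \iota_{\Dd'}\circ h^R$; these are functors as composites of functors. The hypothesis~\eqref{eq:jCjD behaviour} is the one thing to check: the first identity above gives $j_\Cc(c)\,j_\Dd(d) = (h^L(c)\la h^R(d),\, h^L(c)\ra h^R(d))$, conditions (ii) and (iii) of Definition~\ref{dfn:matched_pair} rewrite this as $(h^R(c\la d),\, h^L(c\ra d))$, and the second identity identifies the latter with $j_\Dd(c\la d)\,j_\Cc(c\ra d)$ (condition (i) is used to see the products involved are defined). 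Proposition~\ref{prop:Zappa--Szep_universality} then yields the functor $h \coloneqq j_\Cc \bowtie j_\Dd$ with $h\circ\iota_\Cc = \iota_{\Cc'}\circ h^L$ and $h\circ\iota_\Dd = \iota_{\Dd'}\circ h^R$, and the formula in the proof of that proposition gives $h(dc) = h(\iota_\Dd(d)\iota_\Cc(c)) = j_\Dd(d)\,j_\Cc(c) = h^R(d)\,h^L(c)$. Functoriality of $(\Cc,\Dd)\mapsto\Cc\bowtie\Dd$ is then immediate from the uniqueness clause of the proposition: the identity matched-pair morphism induces the identity functor, and for composable matched-pair morphisms $(h^L,h^R)$, $(g^L,g^R)$ both the functor induced by the composite and the composite of the induced functors send $dc$ to $g^R h^R(d)\,g^L h^L(c)$, hence coincide.

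For the converse, let $h\colon\Cc\bowtie\Dd\to\Cc'\bowtie\Dd'$ be a functor with $h(\iota_\Cc(\Cc))\subseteq\iota_{\Cc'}(\Cc')$ and $h(\iota_\Dd(\Dd))\subseteq\iota_{\Dd'}(\Dd')$. Since $\iota_{\Cc'}$ and $\iota_{\Dd'}$ are injective (indeed $\iota_{\Cc'}(c') = (r(c'),c')$) and fix objects, corestriction produces functors $h^L\colon\Cc\to\Cc'$ and $h^R\colon\Dd\to\Dd'$ with $\iota_{\Cc'}\circ h^L = h\circ\iota_\Cc$, $\iota_{\Dd'}\circ h^R = h\circ\iota_\Dd$, sharing the object map $h^0$ of $h$ (here $(\Cc\bowtie\Dd)^0 = \Cc^0 = \Dd^0$). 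For $(c,d)\in\Cc*\Dd$ we then get $s(h^L(c)) = h^0(s(c)) = h^0(r(d)) = r(h^R(d))$, i.e.\ condition (i) of Definition~\ref{dfn:matched_pair}. For (ii) and (iii), apply $h$ to the relation $\iota_\Cc(c)\iota_\Dd(d) = \iota_\Dd(c\la d)\iota_\Cc(c\ra d)$ in $\Cc\bowtie\Dd$ (a case of $d_1c_1d_2c_2 = d_1(c_1\la d_2)(c_1\ra d_2)c_2$), obtaining $\iota_{\Cc'}(h^L(c))\iota_{\Dd'}(h^R(d)) = \iota_{\Dd'}(h^R(c\la d))\iota_{\Cc'}(h^L(c\ra d))$; by the two identities above the left side is $(h^L(c)\la h^R(d),\, h^L(c)\ra h^R(d))$ and the right side is $(h^R(c\la d),\, h^L(c\ra d))$, and comparing the coordinates yields exactly (ii) and (iii).

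I do not expect a real obstacle here: granting the two identities in $\Cc'\bowtie\Dd'$, the argument is bookkeeping around the universal property and the product formula~\eqref{eq:mu_bowtie}. The only slightly delicate point is the converse's condition (i) --- equivalently, that $h^L$ and $h^R$ carry a common object map --- which is precisely where one uses that a functor between Zappa--Sz\'ep products respects the source and range maps on the shared object set.
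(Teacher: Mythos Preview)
Your proof is correct and takes essentially the same approach as the paper: apply the universal property of Proposition~\ref{prop:Zappa--Szep_universality} to $j_\Cc = \iota_{\Cc'}\circ h^L$ and $j_\Dd = \iota_{\Dd'}\circ h^R$ for the forward direction, and verify the matched-pair morphism axioms by a direct calculation for the converse. The paper's proof is a terse two-line sketch of exactly this; you have simply filled in the verifications (the check of~\eqref{eq:jCjD behaviour}, the functoriality of the assignment, and the coordinate comparison for conditions (ii) and (iii)) that the paper leaves implicit.
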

\begin{proof}
	To obtain $h$, apply Proposition~\ref{prop:Zappa--Szep_universality} to $\iota_\Cc \circ h^L$ and $\iota_\Dd \circ h^R$. The second statement follows from a one-line calculation.
\end{proof}

As with groups, we can take either an ``external'' or an ``internal'' view of Zappa--Sz\'ep products of categories. Recall that a \emph{wide subcategory} of a category $\Ee$ is a subcategory containing $\Ee^0$.
\begin{dfn}
	A \emph{strict factorisation system} for a category $\Ee$ is a pair $[\Dd,\Cc]$ of wide subcategories of $\Ee$ such that for every $e \in \Ee$ there are unique $d \in \Dd$ and $c \in \Cc$ satisfying $e = dc$.
\end{dfn}

\begin{rmk}
	In~\cite[Theorem~3.8]{RW02} it is shown that strict factorisations systems in categories correspond to distributive laws on monads. 
\end{rmk}

\begin{prop}\label{prop:factorisation_system_is_matched_pair}
Let $(\Cc,\Dd)$ be a matched pair. Then $[\Dd,\Cc]$ is a strict factorisation system for $\Cc \bowtie \Dd$. Conversely, let $[\Dd,\Cc]$ be a strict factorisation system for a small category $\Ee$. For $(c,d) \in \Cc * \Dd$,  let $c \la d \in \Dd$ and $c \ra d \in \Cc$ be the unique elements such that $cd = (c \la d) (c \ra d)$. Then $(\Cc,\Dd,\la,\ra)$ is a matched pair and $(d,c) \mapsto dc$ is an isomorphism $\Cc \bowtie \Dd \cong \Ee$.
\end{prop}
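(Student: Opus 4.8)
The plan is to prove the two implications separately, in each case pushing everything down to the uniqueness clause in the definition of a strict factorisation system, used either inside $\Cc\bowtie\Dd$ or inside $\Ee$.

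For the forward implication I would first note that, by Lemma~\ref{lem:zappa_szep_category}, $\iota_\Cc$ and $\iota_\Dd$ are faithful functors, so $\iota_\Cc(\Cc)$ and $\iota_\Dd(\Dd)$ are subcategories of $\Cc\bowtie\Dd$, and they are wide because $\iota_\Cc(x)=(x,x)=\iota_\Dd(x)$ for $x\in\Cc^0$ and $(\Cc\bowtie\Dd)^0=\{(x,x):x\in\Cc^0\}$. Then for $(d,c)\in\Dd * \Cc$ I would compute, straight from~\eqref{eq:mu_bowtie} and the fact that the identities $s(d)\in\Cc$ and $r(c)\in\Dd$ act trivially, that $\iota_\Dd(d)\,\iota_\Cc(c)=(d,c)$; this is just the identification $dc=(d,c)$ already in force after Definition~\ref{dfn:zappa_szep_product}. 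Hence every morphism of $\Cc\bowtie\Dd$ factors as an element of $\iota_\Dd(\Dd)$ followed by an element of $\iota_\Cc(\Cc)$, and if $(d,c)=\iota_\Dd(d')\,\iota_\Cc(c')$ the same computation forces $(d',c')=(d,c)$, giving uniqueness. So $[\iota_\Dd(\Dd),\iota_\Cc(\Cc)]$ --- which is $[\Dd,\Cc]$ under the identifications --- is a strict factorisation system for $\Cc\bowtie\Dd$.

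For the converse, suppose $[\Dd,\Cc]$ is a strict factorisation system for $\Ee$. Wideness gives $\Cc^0=\Ee^0=\Dd^0$, and for $(c,d)\in\Cc * \Dd$ (equivalently $s(c)=r(d)$ in $\Ee$) the morphism $cd\in\Ee$ has a unique factorisation $cd=(c\la d)(c\ra d)$ with $c\la d\in\Dd$, $c\ra d\in\Cc$; reading off the ranges and sources of the two factors gives~\ref{itm:ZS-rs} and the source/range behaviour required of $\la$ and $\ra$, and taking $c$ an identity and using uniqueness gives $r(d)\la d=d$. The remaining axioms I would obtain two at a time: I factorise $(c_1c_2)d$ in $\Ee$ once as $((c_1c_2)\la d)\,((c_1c_2)\ra d)$ and once, via $c_1(c_2d)=c_1(c_2\la d)(c_2\ra d)$, as $(c_1\la(c_2\la d))\,\big((c_1\ra(c_2\la d))(c_2\ra d)\big)$ --- the second factor lying in $\Cc$ because $\Cc$, being wide, is closed under composition --- and compare using uniqueness; this yields the left-action identity $(c_1c_2)\la d=c_1\la(c_2\la d)$ together with~\ref{itm:ZS-diamond}. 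The symmetric pair of factorisations of $c(d_1d_2)$ yields that $\ra$ is a right action together with~\ref{itm:ZS-dot}, so $(\Cc,\Dd,\la,\ra)$ is a matched pair. This middle step is the one I expect to need the most care: each action axiom and each of \ref{itm:ZS-rs}--\ref{itm:ZS-diamond} has to be extracted from a \emph{single} two-fold factorisation in $\Ee$, which forces one to track carefully which composite belongs to $\Cc$ and which to $\Dd$.

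Finally, to produce the isomorphism $\Cc\bowtie\Dd\cong\Ee$ I would apply Proposition~\ref{prop:Zappa--Szep_universality} with $\Aa=\Ee$ and $j_\Cc\colon\Cc\hookrightarrow\Ee$, $j_\Dd\colon\Dd\hookrightarrow\Ee$ the inclusions: hypothesis~\eqref{eq:jCjD behaviour} is exactly the defining relation $cd=(c\la d)(c\ra d)$, so there is a functor $\Phi:=j_\Cc\bowtie j_\Dd\colon\Cc\bowtie\Dd\to\Ee$ with $\Phi(d,c)=dc$ for $(d,c)\in\Dd * \Cc$. It is the identity on objects, surjective on morphisms since every $e\in\Ee$ factors as $dc$ with $d\in\Dd$ and $c\in\Cc$, and injective by the uniqueness of that factorisation; a functor bijective on objects and on morphisms is an isomorphism of categories, so $\Phi$ is the required isomorphism.
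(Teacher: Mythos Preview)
Your proposal is correct and follows essentially the same approach as the paper: both directions hinge on the uniqueness clause of the strict factorisation system, and your two-at-a-time extraction of the action axioms together with \ref{itm:ZS-diamond} (resp.\ \ref{itm:ZS-dot}) from the two factorisations of $(c_1c_2)d$ (resp.\ $c(d_1d_2)$) is exactly what the paper does. Your treatment is in fact slightly more complete than the paper's, since you explicitly construct the isomorphism $\Cc\bowtie\Dd\cong\Ee$ via Proposition~\ref{prop:Zappa--Szep_universality}, whereas the paper leaves this implicit. One minor wording issue: closure of $\Cc$ under composition comes from $\Cc$ being a subcategory, not from wideness.
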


\begin{proof}
Suppose that $(\Cc,\Dd)$ is a matched pair. Since $\Cc \bowtie \Dd=\Dd * \Cc$ as sets, each $e \in \Cc \bowtie \Dd$ factors uniquely as $e = dc$.
	
Conversely, suppose that $[\Dd,\Cc]$ is a unique factorisation system for $\Ee$. Since $c \in \Cc$ uniquely factors as $cs(c) = r(c) c$ we have $c \la s(c) = r(c)$ and $c \ra r(c) = c$. Similarly, $r(d) \la d = d$ and $r(d) \ra d = s(d)$ for all $d \in \Dd$. 
Fix $(c_1,c_2,d_1,d_2) \in \Cc^2 * \Dd^2$. Let $d',\,d'',\,d''' \in \Dd$ and $c',\,c'',\,c''' \in \Cc$ be the unique elements such that $c_1c_2d_1 =d'c'$, $c_2d_1 =d''c''$, and $c_1d'' = d'''c'''$. Then $d'''(c'''c'') = c_1d''c'' = c_1c_2d_1 = d'c'$, so uniqueness of factorisations gives $(c_1c_2) \la d_1 = d' = d''' = c_1 \la d'' = c_1 \la (c_2 \la d_1)$. So $\la$ is an action of $\Cc$ on $\Dd$.
	
Now let $d',\,d'',\,d''' \in \Dd$ and $c',\,c'',\,c''' \in \Cc$ be the unique elements such that $c_2d_1d_2 = d'c'$, $c_2d_1 = d''c''$, and $c''d_2 = d'''c'''$. Then $d''d'''c''' = d'c'$, so uniqueness of factorisations gives
$c_2 \la (d_1d_2) = d' = d'' d''' = (c_2 \la d_1)(c'' \la d_2) = (c_2 \la d_1)((c_2 \ra d_1) \la d_2)$, verifying \ref{itm:ZS-dot}.
	
Symmetrically, $\ra$ defines a right action of $\Dd$ on $\Cc$ satisfying \ref{itm:ZS-diamond}. Condition~\ref{itm:ZS-rs} follows from the composition laws in $\Ee$.
\end{proof}

\begin{rmk}
	Proposition~\ref{prop:factorisation_system_is_matched_pair} says that the internal and external views of Zappa--Sz\'ep products are equivalent. Given a matched pair $(\Cc,\Dd)$ we can equivalently: (a) build the concrete product $\Cc \bowtie \Dd$; or (b) say that $\Ee$ is a Zappa--Sz\'ep product if it contains copies of $\Dd$ and $\Cc$ as wide subcategories such that $[\Dd,\Cc]$ is a strict factorisation system implementing the given actions.
\end{rmk}

For $C^*$-algebraic representations \`a la Spielberg \cite{Spe20} it is important to know when a small category $\Cc$ is \emph{left cancellative} in the sense that if $c_1c_2 = c_1c_3$, then $c_2 = c_3$. The following lemma provides a sufficient condition under which Zappa--Sz\'ep products are left cancellative.
\begin{lem}
	If $(\Cc, \Dd)$ is matched pair in which $\Cc$ and $\Dd$ are both left cancellative and for each $c \in \Cc$ the map $c \la {\cdot} \colon s(c)\Dd \to r(c)\Dd$ is injective, then  $\Cc \bowtie \Dd$ is left cancellative.
\end{lem}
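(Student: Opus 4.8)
The plan is to unwind the formula~\eqref{eq:mu_bowtie} for $\mu_\bowtie$ and reduce left cancellation in $\Cc \bowtie \Dd$ to left cancellation in the two factors, using the strict factorisation system $[\Dd,\Cc]$ of Proposition~\ref{prop:factorisation_system_is_matched_pair} to split an equation in $\Cc\bowtie\Dd$ into its $\Dd$- and $\Cc$-components.

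First I would fix $d_1c_1,\,d_2c_2,\,d_3c_3 \in \Cc \bowtie \Dd$ with $(d_1c_1)(d_2c_2) = (d_1c_1)(d_3c_3)$. Composability forces $r(d_2) = s(c_1) = r(d_3)$, so $d_2,d_3 \in s(c_1)\Dd$ and both $c_1 \la d_2$ and $c_1 \la d_3$ are defined. Expanding via~\eqref{eq:mu_bowtie} turns the hypothesis into
\[
d_1(c_1 \la d_2)\,(c_1 \ra d_2)c_2 = d_1(c_1 \la d_3)\,(c_1 \ra d_3)c_3 ,
\]
an equality in $\Cc \bowtie \Dd$ in which $d_1(c_1 \la d_2), d_1(c_1\la d_3)$ lie in $\iota_\Dd(\Dd)$ and $(c_1 \ra d_2)c_2, (c_1 \ra d_3)c_3$ lie in $\iota_\Cc(\Cc)$. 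Since $[\Dd,\Cc]$ is a strict factorisation system for $\Cc\bowtie\Dd$, uniqueness of factorisations gives the two identities $d_1(c_1 \la d_2) = d_1(c_1 \la d_3)$ in $\Dd$ and $(c_1 \ra d_2)c_2 = (c_1 \ra d_3)c_3$ in $\Cc$.

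Next I would exploit the hypotheses. Left cancellativity of $\Dd$ applied to the first identity gives $c_1 \la d_2 = c_1 \la d_3$; injectivity of $c_1 \la {\cdot} \colon s(c_1)\Dd \to r(c_1)\Dd$ then gives $d_2 = d_3$. Feeding this into the second identity yields $(c_1 \ra d_2)c_2 = (c_1 \ra d_2)c_3$, and left cancellativity of $\Cc$ gives $c_2 = c_3$; hence $d_2c_2 = d_3c_3$. There is no real obstacle here beyond bookkeeping: the only place where all three hypotheses are genuinely needed is the chain $d_1(c_1\la d_2) = d_1(c_1\la d_3) \Rightarrow c_1\la d_2 = c_1 \la d_3 \Rightarrow d_2 = d_3$, while the separation into $\Dd$- and $\Cc$-components is automatic from the strict factorisation system.
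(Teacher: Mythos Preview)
Your proof is correct and follows essentially the same approach as the paper's: expand the product via~\eqref{eq:mu_bowtie}, use unique factorisation to separate into $\Dd$- and $\Cc$-components, then apply left cancellativity of $\Dd$, injectivity of $c_1\la\cdot$, and left cancellativity of $\Cc$ in turn. The only difference is that you are more explicit about invoking the strict factorisation system to justify the splitting, whereas the paper takes this for granted since $\Cc\bowtie\Dd = \Dd * \Cc$ as a set.
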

\begin{proof}
	Suppose that $d_1c_1, d_2c_2 \in \Cc \bowtie \Dd$ satisfy $d_1c_1d_2c_2 = d_1c_1d_3c_3$. Then
	$
	d_1(c_1 \la d_2) = d_1(c_1 \la d_3)$ in $\Dd$  and   $(c_1 \ra d_2)c_2 = (c_1 \ra d_3) c_3$ in $\Cc.
	$
	Since $\Dd$ is left cancellative, $c_1 \la d_2 = c_1 \la d_3$, and so injectivity of the left action gives $d_2 = d_3$. Consequently, $(c_1 \ra d_2)c_2 = (c_1 \ra d_2)c_3$. Left cancellation in $\Cc$ implies that $c_2 = c_3$, so $\Cc \bowtie \Dd$ is left cancellative.
\end{proof}

\begin{example}\label{ex:left_canc}
	If $\Cc$ is a groupoid then it acts cancellatively on both itself and $\Dd$ because it has inverses. So if $\Dd$ is left-cancellative then so is $\Cc \bowtie \Dd$.
\end{example}

\subsection{Extending matched pairs to composable tuples}

We define homology for matched pairs in terms of associated categories of composable tuples, so it is important to understand how $\la$ and $\ra$ extend to these categories.

\begin{dfn}
 The \emph{free category} (or \emph{path category}) of a small category $\Cc$ is the category $\Cc^*$ with morphisms $\bigcup_{k\ge 0} \Cc^k$, identity morphisms $\Cc^0$, and composition (for non-identity morphisms) given by concatenation.
\end{dfn}

\begin{rmk}
	There is a subtlety here. The set $\Cc^1$ of 1-tuples in $\Cc^*$ contains the set $\{(v) \mid v \in \Cc^0\}$ of 1-tuples, but this set is disjoint from $\Cc^0 \subseteq \Cc^*$. This is reflected in the composition law: for $v \in \Cc^0$ and $(c_1,\ldots,c_k) \in \Cc^k$ with $r(c_1) = v$, we have $v (c_1,\ldots,c_k) = (c_1,\ldots,c_k)$ while $(v) (c_1,\ldots,c_k) = (v,c_1,\ldots,c_k)$.
\end{rmk}

\begin{lem}\label{lem:bowtie_to_bowtie*}
	Let $(\Cc,\Dd)$ be a matched pair. Define $\bowtie_{n} \colon \Cc * \Dd^n \to \Dd^n * \Cc$ inductively by $\bowtie_1 \coloneqq \bowtie$, and
	\[
	\bowtie_n \coloneqq (1_{\Dd^{n-1}} * \bowtie) \circ (\bowtie_{n-1} * 1_\Dd)
	\]

	for $n \ge 2$. 	Define $\bowtie_* \colon \Cc * \Dd^* \to \Dd^* * \Cc$ by $\restr{{{}\bowtiemap_*{}}\!}{\Cc * \Dd^n} \coloneqq {}\bowtiemap_n{}$. Then
	\begin{enumerate}[label=(\roman*), ref=(\roman*)]
		\item\label{itm:bowtie_order_doesnt_matter} for each $n \ge 1$ and $1 \le p < n$,
			\begin{equation}\label{eq:bowtie_order_doesnt_matter}
				\bowtiemap_n = (1_{\Dd^{n-p}} * \bowtiemap_{p}) \circ (\bowtiemap_{n-p} * 1_{\Dd^{p}}),
			\end{equation}
		\item\label{itm:bowtie*_fact_rule} $\bowtie_*$ is a factorisation rule, and
		\item\label{itm:bowtie*_mult}
		if $\mu_\Dd \colon \Dd^* \to \Dd$ is the map $\mu_\Dd (d_1,d_2,\ldots,d_n) = d_1 d_2\cdots d_n$, then  $(1_\Cc, \mu_\Dd) \colon (\Cc,\Dd^*) \to (\Cc,\Dd)$ is a matched-pair morphism.
	\end{enumerate}
\end{lem}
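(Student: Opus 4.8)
The plan is to deduce all three parts from a single identity. Write $\mathbf d=(d_1,\dots,d_n)\in\Dd^n$, let $\la_*,\ra_*$ denote the coordinate projections of $\bowtiemap_*$ (so $c\la_*\mathbf d=p_{\Dd^*}(c\bowtiemap_*\mathbf d)$ and $c\ra_*\mathbf d=p_\Cc(c\bowtiemap_*\mathbf d)$), and recall that $\mu_\Dd\colon\Dd^*\to\Dd$ is a functor sending concatenation of tuples to composition in $\Dd$. The identity I would establish, by induction on $n$, is
\[
(\star)\qquad c\ra_*\mathbf d=c\ra\mu_\Dd(\mathbf d)\qquad\text{and}\qquad\mu_\Dd(c\la_*\mathbf d)=c\la\mu_\Dd(\mathbf d);
\]
informally, $\ra_*$ is $\ra$ pulled back along $\mu_\Dd$, and $\mu_\Dd$ intertwines $\la_*$ with $\la$. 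Apart from $(\star)$ the only tools needed are functoriality of $f\mapsto f*1$ and $f\mapsto 1*f$, the action axioms for $\la$ and $\ra$, and the strict associativity of the fibre product $*$.

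For part~\ref{itm:bowtie_order_doesnt_matter} I would induct on $n$. The case $p=1$ of \eqref{eq:bowtie_order_doesnt_matter} is the defining recursion $\bowtiemap_n=(1_{\Dd^{n-1}}*\bowtiemap)\circ(\bowtiemap_{n-1}*1_\Dd)$. For $2\le p<n$, substitute the inductive hypothesis for $\bowtiemap_{n-1}$ (at the split $p-1$) into this recursion; using $(g\circ h)*1_\Dd=(g*1_\Dd)\circ(h*1_\Dd)$, the factorisation $1_{\Dd^{n-1}}=1_{\Dd^{n-p}}*1_{\Dd^{p-1}}$, and functoriality of $1_{\Dd^{n-p}}*(-)$, one collects the two leftmost maps into $1_{\Dd^{n-p}}*\big((1_{\Dd^{p-1}}*\bowtiemap)\circ(\bowtiemap_{p-1}*1_\Dd)\big)=1_{\Dd^{n-p}}*\bowtiemap_p$, which is \eqref{eq:bowtie_order_doesnt_matter}. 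Conceptually this is just associativity of multiplication in $\Cc\bowtie\Dd$: by Lemma~\ref{lem:zappa_szep_category} and Proposition~\ref{prop:factorisation_system_is_matched_pair}, $\bowtiemap_n(c,\mathbf d)$ records the unique $[\Dd,\Cc]$-factorisation of $\iota_\Cc(c)\iota_\Dd(d_1)\cdots\iota_\Dd(d_n)$, and the two sides of \eqref{eq:bowtie_order_doesnt_matter} evaluate two bracketings of that product---so one could argue part~\ref{itm:bowtie_order_doesnt_matter} that way instead.

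Next I would prove $(\star)$ by induction on $n$, simultaneously confirming that $\bowtiemap_n$ does land in $\Dd^n*\Cc$ and that \ref{itm:ZS-rs} holds for $(\Cc,\Dd^*)$, namely $s(c\la_*\mathbf d)=r(c\ra_*\mathbf d)$. For $n=1$ this is $\bowtiemap_1=\bowtiemap$. For the step, split $\mathbf d=\mathbf d'd_n$ with $\mathbf d'=(d_1,\dots,d_{n-1})$; the recursion gives $c\ra_*\mathbf d=(c\ra_*\mathbf d')\ra d_n$ and presents $c\la_*\mathbf d$ as $c\la_*\mathbf d'$ concatenated with the single morphism $(c\ra_*\mathbf d')\la d_n$. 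Then the $\ra_*$-half of $(\star)$ follows from the inductive hypothesis, the right-action law, and multiplicativity of $\mu_\Dd$; the $\la_*$-half follows by applying $\mu_\Dd$ to that concatenation, the inductive hypothesis, and \ref{itm:ZS-dot} in the form $c\la(\mu_\Dd(\mathbf d')\,d_n)=(c\la\mu_\Dd(\mathbf d'))\big((c\ra\mu_\Dd(\mathbf d'))\la d_n\big)$; and $s(c\la_*\mathbf d)=s\big((c\ra_*\mathbf d')\la d_n\big)=r\big((c\ra_*\mathbf d')\ra d_n\big)=r(c\ra_*\mathbf d)$ by \ref{itm:ZS-rs} and the right-action law. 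With $(\star)$ available, part~\ref{itm:bowtie*_fact_rule} follows from Lemma~\ref{lem:intertwiner_is_pair}: it suffices to show $(\Cc,\Dd^*,\la_*,\ra_*)$ is a matched pair, since then $\bowtiemap_*$---which tautologically is the rule \eqref{eq:mp_to_bowtie} attached to $(\la_*,\ra_*)$---is a factorisation rule. Here \ref{itm:ZS-rs} was just verified; \ref{itm:ZS-dot} for $(\Cc,\Dd^*)$ is the $\Dd^*$-coordinate of part~\ref{itm:bowtie_order_doesnt_matter}, read with tuple concatenation as composition in $\Dd^*$ (the cases where a factor is an identity being immediate); and \ref{itm:ZS-diamond} for $(\Cc,\Dd^*)$ drops out of \ref{itm:ZS-diamond} and $(\star)$:
\[
(c_1c_2)\ra_*\mathbf d=(c_1c_2)\ra\mu_\Dd(\mathbf d)=\big(c_1\ra(c_2\la\mu_\Dd(\mathbf d))\big)\big(c_2\ra\mu_\Dd(\mathbf d)\big)=\big(c_1\ra_*(c_2\la_*\mathbf d)\big)\big(c_2\ra_*\mathbf d\big).
\]

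Finally, for part~\ref{itm:bowtie*_mult}: $1_\Cc$ and $\mu_\Dd\colon\Dd^*\to\Dd$ are functors, and the three conditions in Definition~\ref{dfn:matched_pair} become $s(c)=r(\mathbf d)=r(\mu_\Dd(\mathbf d))$ (so $(c,\mu_\Dd(\mathbf d))\in\Cc*\Dd$), then $c\la\mu_\Dd(\mathbf d)=\mu_\Dd(c\la_*\mathbf d)$, then $c\ra\mu_\Dd(\mathbf d)=c\ra_*\mathbf d$---the last two being exactly $(\star)$. (In the length-$0$ case, where $\bowtiemap_0(c,1_{s(c)})=(1_{r(c)},c)$, the middle condition reads $c\la s(c)=r(c)$, which is the relation $\iota_\Cc(c)\,1_{s(c)}=\iota_\Cc(c)$ in $\Cc\bowtie\Dd$; see Lemma~\ref{lem:zappa_szep_category}.) I expect the only genuine difficulty to be organisational: in part~\ref{itm:bowtie_order_doesnt_matter}, keeping the iterated fibre products correctly re-associated while running the $*$-calculus; and throughout, never conflating concatenation of tuples in $\Dd^*$ with composition of their images in $\Dd$. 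There is no conceptual obstacle---once $(\star)$ is in hand, parts~\ref{itm:bowtie*_fact_rule} and~\ref{itm:bowtie*_mult} are formal, the matched-pair axioms for $(\Cc,\Dd^*)$ being pulled back from those for $(\Cc,\Dd)$ along $\mu_\Dd$.
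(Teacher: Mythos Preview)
Your argument for part~\ref{itm:bowtie_order_doesnt_matter} matches the paper's (the paper draws a commuting triangle, you describe the same rearrangement verbally), and your identity $(\star)$ gives a clean direct proof of~\ref{itm:bowtie*_mult}; the paper instead establishes $(\mu_\Dd*1_\Cc)\circ\bowtiemap_n=\bowtiemap\circ(1_\Cc*\mu_\Dd)$ via a separate inductive diagram chase, which is essentially $(\star)$ repackaged.

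There is a gap in your treatment of~\ref{itm:bowtie*_fact_rule}. To apply Lemma~\ref{lem:intertwiner_is_pair} you must show that $(\Cc,\Dd^*,\la_*,\ra_*)$ is a matched pair, and Definition~\ref{dfn:matched_pair} requires $\la_*$ to be a left action and $\ra_*$ a right action \emph{in addition to} \ref{itm:ZS-rs}--\ref{itm:ZS-diamond}. The right-action law $(c\ra_*\mathbf d')\ra_*\mathbf d''=c\ra_*(\mathbf d'\mathbf d'')$ is the $\Cc$-coordinate of~\ref{itm:bowtie_order_doesnt_matter}, so you have it implicitly. But the left-action law $(c_1c_2)\la_*\mathbf d=c_1\la_*(c_2\la_*\mathbf d)$ is not established, and it does \emph{not} follow from $(\star)$: applying $\mu_\Dd$ to both sides only gives equality of their images in $\Dd$, and $\mu_\Dd$ is not injective. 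This is fixable by a short induction on $|\mathbf d|$, using the already-established \ref{itm:ZS-dot} for $(\Cc,\Dd^*)$ to split off the first coordinate and then \ref{itm:ZS-diamond} for $(\Cc,\Dd)$ together with the inductive hypothesis on the tail. The paper avoids this issue by verifying the factorisation-rule axioms \ref{itm:matching} and \ref{itm:twist} for $\bowtiemap_*$ directly rather than passing through Lemma~\ref{lem:intertwiner_is_pair}: the first diagram of \ref{itm:twist} packages the left-action law and \ref{itm:ZS-diamond} together in a single inductive diagram chase, and the second diagram of \ref{itm:twist} reduces immediately to~\ref{itm:bowtie_order_doesnt_matter}.
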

\begin{proof}
	\ref{itm:bowtie_order_doesnt_matter}
	When $n = 1$, we have $p=1$, so~\eqref{eq:bowtie_order_doesnt_matter} is vacuous. For $n \ge 2$, equation~\eqref{eq:bowtie_order_doesnt_matter} holds for $p = 1$ by definition of $\bowtie_n$. Fix $n_0 \ge 2$ and $1 \le p_0 \le n_0$, and suppose inductively, that~\eqref{eq:bowtie_order_doesnt_matter}
holds for all $n < n_0$ and $1 \le  p < p_0$. In the diagram
	%
	\[\begin{tikzcd}[column sep = 80pt, row sep = 40pt, ampersand replacement=\&]
		{\Cc * \Dd^{n_0}} \& {\Dd^{n_0-p_0}*\Cc*\Dd^{p_0}} \& {\Dd^{n_0}*\Cc} \\
		\& {\Dd^{n_0 - p_0 + 1} *\Cc * \Dd^{p_0 - 1}}
		\arrow["{\bowtiemap_{n_0-p_0} * 1_{\Dd^{p_0}}}", from=1-1, to=1-2]
		\arrow["{1_{\Dd^{n_0 - p_0}}* \bowtiemap_{p_0}}", from=1-2, to=1-3]
		\arrow["{\bowtiemap_{n_0 - p_0 +1} * 1_{\Dd^{p-1}}}"', from=1-1, to=2-2]
		\arrow["{1_{\Dd^{n_0 - p_0 +1}} * \bowtiemap_{p_0-1}}"', from=2-2, to=1-3]
		\arrow[pos=0.3, "1_{\Dd^{n_0-p_0}} * \bowtiemap * 1_{\Dd^{p_0 - 1}}", from=1-2, to=2-2]
	\end{tikzcd}\]
	the left-hand triangle commutes by the inductive definition of $\bowtie_{n_0 - p_0}$, and the right-hand triangle commutes by induction since $p_0 < n_0$. Since $p_0 - 1 < p_0$, the composition of the maps along the bottom of the triangle is
$\bowtie_{n_0}$ by induction. So \eqref{eq:bowtie_order_doesnt_matter} holds for all $n$ and $p$.
	
	\ref{itm:bowtie*_fact_rule} A routine induction verifies~\ref{itm:matching}, and~\ref{itm:units} holds since $\restr{{{}\bowtiemap_*{}}\!}{\Cc * \Dd} = \bowtiemap$. To see that the first diagram of \ref{itm:twist} for $\bowtie_*$ commutes, consider the following diagram.
		\[
	%
	\begin{tikzcd}[column sep = 15pt]
		\Cc^2 * \Dd^{n+1} \arrow[ddd, "\mu_\Cc * 1_{\Dd^{n+1}}"] \arrow[rr, "1_\Cc *\bowtie_{n+1}"] \arrow[rd, "1_\Cc * \bowtie_n * 1_\Dd"] &                                                                                                            & \Cc *
\Dd^{n+1} * \Cc \arrow[rr, "\bowtie_{n+1} * 1_{\Cc}"] \arrow[rd, "\bowtie_n * 1_{\Dd * \Cc}"'] &                                                           & \Dd^{n+1}* \Cc^2 \arrow[ddd, "1_{\Dd^{n+1}} * \mu_\Cc"'] \\
		& \Cc * \Dd^n * \Cc * \Dd \arrow[ru, "1_{\Cc*\Dd^n} * \bowtie"'] \arrow[rd, "\bowtie_n *1_{\Cc * \Dd}"'] &                                                                                                              & \Dd^n * \Cc * \Dd * \Cc
\arrow[ru, "1_{\Dd^n} * \bowtie"] &                                                         \\
		&                                                                                                            & \Dd^n * \Cc^2 * \Dd \arrow[d, "1_{\Dd^{n}} * \mu_\Cc * 1_\Dd"] \arrow[ru, "1_{\Dd^n*\Cc} * \bowtie"']        &
&                                                         \\
		\Cc * \Dd^{n+1} \arrow[rr, "\bowtie_n * 1_\Dd"]                                                                                           &                                                                                                            &
\Dd^n * \Cc * \Dd \arrow[rr, "1_{\Dd^n} * \bowtie"]                                                          &                                                           & \Dd^{n+1} * \Cc
	\end{tikzcd}
	\]
	The central diamond clearly commutes, the top-left and top-right triangles commute by the definition of $\bowtie_{n+1}$, and the bottom-right pentagon commutes by \ref{itm:twist} for $\bowtie$. The composition of the maps along the bottom
row of the diagram is $\bowtie_{n+1}$ by definition. So the whole diagram commutes if and only if the bottom-left pentagon commutes. An induction now shows that the first diagram of \ref{itm:twist} commutes for $\bowtie_*$.

For $m,n \ne 0$, the multiplication map $\mu_{\Dd^*} \colon \Dd^{m} * \Dd^{n} \to \Dd^{m+n}$ is the obvious bijection. So the second diagram of \ref{itm:twist} commutes by \ref{itm:bowtie_order_doesnt_matter}. Hence, $\bowtie_*$ is a
factorisation rule.
	
	\ref{itm:bowtie*_mult} By Lemma~\ref{lem:intertwiner_is_pair} and Corollary~\ref{cor:matched_pair_morphisms}, it suffices to show that $(\mu_\Dd * 1_\Cc) \circ \bowtie_n \colon \Cc * \Dd^n \to \Dd * \Cc$ and $\bowtie \circ (1_\Cc * \mu_\Dd) \colon \Cc * \Dd^n \to \Dd * \Cc$ are equal for all $n$. For $n = 1$ this is trivial, so suppose equality holds for $n-1$, and consider the following diagram.
	\[\begin{tikzcd}[column sep = 70pt, ampersand replacement=\&]
		{\Cc * \Dd^{n}} \& {\Dd^{n-1}*\Cc*\Dd} \& {\Dd^n*\Cc} \\
		{\Cc*\Dd^2} \& {\Dd*\Cc*\Dd} \& {\Dd^2*\Cc} \\
		{\Cc*\Dd} \&\& {\Dd*\Cc}
		\arrow["{\bowtiemap_{n-1}*1_\Dd}", from=1-1, to=1-2]
		\arrow["{1_{\Dd^{n-1}}*\bowtie}", from=1-2, to=1-3]
		\arrow["{1_\Cc * \mu_\Dd * 1_\Dd}"', from=1-1, to=2-1]
		\arrow["{\mu_\Dd * 1_{\Cc*\Dd}}"', from=1-2, to=2-2]
		\arrow["{\mu_\Dd * 1_{\Dd * \Cc}}", from=1-3, to=2-3]
		\arrow["{\bowtiemap * 1_\Dd}", from=2-1, to=2-2]
		\arrow["{1_\Dd * \bowtie}", from=2-2, to=2-3]
		\arrow["{1_\Cc * \mu_\Dd}"', from=2-1, to=3-1]
		\arrow["\bowtiemap", from=3-1, to=3-3]
		\arrow["{\mu_\Dd * 1_\Cc}", from=2-3, to=3-3]
	\end{tikzcd}\]
The bottom pentagon commutes by \ref{itm:twist} for $\bowtie$. The top-right square clearly commutes. The top left square commutes by the inductive hypothesis, and so the whole diagram commutes. The composition along the top row is equal to $\bowtie_n$, and the compositions along the left and right columns are $1_\Cc *\mu_\Dd$ and $\mu_\Dd * 1_\Cc$. So  $(\mu_\Dd * 1_\Cc) \circ \bowtie_n = \bowtie \circ (1_\Cc * \mu_\Dd)$.
\end{proof}

Lemmas~\ref{lem:bowtie_to_bowtie*}~and~\ref{lem:intertwiner_is_pair} imply that $(\Cc, \Dd^*)$ is a matched pair.
The left action of $\Cc$ on $\Dd^{k}$ is given explicitly by
\begin{align*}
	c \la (d_1,\ldots,d_k)
	&\coloneqq (c \la d_1 , (c \ra d_1) \la (d_2,\ldots,d_k) )\\
	&=(c_1 \la d_1,(c_1 \ra d_1) \la d_2, (c_1 \ra (d_1d_2)) \la d_3,\ldots, (c_1 \ra (d_1 \cdots d_{k-1})) \la d_k).
\end{align*}
and the right action of $\Dd^k$ on $\Cc$ is given by
$
	c \ra (d_1,\ldots,d_k) = c \ra (d_1\cdots d_k).
$

We can also define ${{}\bowtiel{n}} \colon \Cc^n * \Dd \to \Dd * \Cc^n$ inductively by ${{}\bowtiel{1}} \coloneqq \bowtie$, and
\[
\bowtiel{n}  \coloneqq  (\bowtie * 1_{\Cc^{n-1}}) \circ (1_{\Cc} * \bowtiel{n-1})
\]
and then $\bowtiel{*} \colon \Cc^* * \Dd \to \Dd * \Cc^*$ by $\restr{\bowtiel{*}}{\Cc^n * \Dd} = \bowtiel{n}$. Lemma~\ref{lem:bowtie_to_bowtie*} applied to opposite categories implies that $\bowtiel{*}$ is a factorisation rule with properties analogous
to those of $\bowtie_*$. The left action of $\Cc^k$ on $\Dd$ is given by
$
(c_1,\ldots,c_k) \la d = (c_1\cdots c_k) \la d
$
and the right action of $\Dd$ on $\Cc^k$ is given by
\begin{align*}
	(c_1,\ldots,c_k) \ra d &= ((c_1,\ldots,c_{k-1}) \ra (c_k \la d)  , c_k \ra d)\\
	&= (c_1 \ra ((c_2 \cdots c_k) \la d),\ldots,c_{k-2} \ra ((c_{k-1}c_{k}) \la d),c_{k-1} \ra (c_k \la d), c_k \ra d).
\end{align*}

Since $(\Cc^*,\Dd)$ is itself a matched pair, $(\Cc^*,\Dd^*)$ can also be equipped with the structure of a matched pair via Lemma~\ref{lem:bowtie_to_bowtie*}.

\begin{prop}\label{prop:bowtie_**}
	Let $(\Cc,\Dd)$ be a matched pair. For $m,n \ge 1$ define $\bowtie_{m,n} \colon \Cc^m * \Dd^n \to \Dd^n * \Cc^m$ inductively by $\bowtie_{1,n} \coloneqq {}\bowtie_n \colon \Cc * \Dd^n \to \Dd^n *\Cc$ and
	\[
	\bowtie_{m,n} \coloneqq (\bowtie_n * 1_{\Cc^{m-1}}) \circ  (1_\Cc * \bowtie_{m-1,n}).
	\]
	Define $\bowtie_{*,*} \colon \Cc^* * \Dd^* \to \Dd^* * \Cc^*$ by ${\bowtie_{*,*}}|^{\mbox{}}_{\Cc^m * \Dd^n} = {\bowtie_{m,n}}$. Then
	\begin{enumerate}[label=(\roman*), ref=(\roman*)]
		\item\label{itm:bowtie**_fact_rule} $\bowtie_{*,*}$ is a factorisation rule, and
		\item\label{itm:bowtie**_mult} $(\mu_\Cc, \mu_\Dd) \colon (\Cc^*,\Dd^*) \to (\Cc,\Dd)$ is a matched-pair morphism.
	\end{enumerate}
\end{prop}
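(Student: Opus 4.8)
The plan is to avoid re-deriving the factorisation-rule axioms from scratch, and instead recognise $\bowtie_{*,*}$ as an instance of the $\bowtiel{*}$-construction already analysed (in opposite-category form) in Lemma~\ref{lem:bowtie_to_bowtie*}. By Lemmas~\ref{lem:intertwiner_is_pair} and~\ref{lem:bowtie_to_bowtie*}, $(\Cc,\Dd^*)$ is a matched pair whose factorisation rule $\bowtie_*$ restricts on $\Cc * \Dd^n$ to $\bowtie_n$. Applying the opposite-category form of Lemma~\ref{lem:bowtie_to_bowtie*} to this matched pair yields a factorisation rule $\rho \colon \Cc^* * \Dd^* \to \Dd^* * \Cc^*$; since for the pair $(\Cc,\Dd^*)$ one has $c \ra (d_1,\dots,d_n) \in \Cc$ and $c \la (d_1,\dots,d_n)$ an $n$-tuple in $\Dd$, the associated actions of $\Cc^m$ on $\Dd^*$ and of $\Dd^*$ on $\Cc^m$ preserve degrees, so $\rho$ sends $\Cc^m * \Dd^n$ into $\Dd^n * \Cc^m$.

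First I would prove that $\rho = \bowtie_{*,*}$, by induction on $m$. The base case $m=1$ is the identity $\rho|_{\Cc * \Dd^n} = \bowtie_*|_{\Cc*\Dd^n} = \bowtie_n = \bowtie_{1,n}$; for the inductive step, the recursion defining $\bowtiel{m}$ for the pair $(\Cc,\Dd^*)$, restricted to $\Cc^m * \Dd^n$, reads
\[
\rho|_{\Cc^m * \Dd^n} = (\bowtie_n * 1_{\Cc^{m-1}}) \circ (1_\Cc * \rho|_{\Cc^{m-1} * \Dd^n}),
\]
which by the inductive hypothesis equals $(\bowtie_n * 1_{\Cc^{m-1}}) \circ (1_\Cc * \bowtie_{m-1,n}) = \bowtie_{m,n}$. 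Granting this identification, part~\ref{itm:bowtie**_fact_rule} is immediate: the opposite-category form of Lemma~\ref{lem:bowtie_to_bowtie*}\ref{itm:bowtie*_fact_rule}, applied to $(\Cc,\Dd^*)$, says exactly that $\rho$ is a factorisation rule.

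For part~\ref{itm:bowtie**_mult} I would factor $(\mu_\Cc,\mu_\Dd)$ as a composite of matched-pair morphisms
\[
(\Cc^*,\Dd^*) \xrightarrow{\,(\mu_\Cc,\, 1_{\Dd^*})\,} (\Cc,\Dd^*) \xrightarrow{\,(1_\Cc,\, \mu_\Dd)\,} (\Cc,\Dd),
\]
where $(\Cc,\Dd^*)$ carries its Lemma~\ref{lem:bowtie_to_bowtie*} matched-pair structure and $(\Cc^*,\Dd^*)$ carries the structure defined by $\bowtie_{*,*}$. The right-hand map is a matched-pair morphism by Lemma~\ref{lem:bowtie_to_bowtie*}\ref{itm:bowtie*_mult}; the left-hand map is a matched-pair morphism by the opposite-category form of Lemma~\ref{lem:bowtie_to_bowtie*}\ref{itm:bowtie*_mult} applied to $(\Cc,\Dd^*)$ — this is precisely where the identification $\rho = \bowtie_{*,*}$ is needed, to know that the matched pair called ``$(\Cc^*,\Dd^*)$'' there is the one defined by $\bowtie_{*,*}$. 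Since a composite of matched-pair morphisms is again one (directly from Definition~\ref{dfn:matched_pair}, or because $\MP$ is a category), $(\mu_\Cc,\mu_\Dd)$ is a matched-pair morphism. The only genuine work is the inductive bookkeeping in $\rho = \bowtie_{*,*}$ — keeping track of which factor is being freely extended and in which order, and matching the degree-$0$ conventions for $\bowtie_{*,*}$ — all of which is routine; choosing the factorisation of $(\mu_\Cc,\mu_\Dd)$ that passes through $(\Cc,\Dd^*)$ rather than through $(\Cc^*,\Dd)$ is exactly what lets us sidestep proving an ``order of extension does not matter'' statement in the style of Lemma~\ref{lem:bowtie_to_bowtie*}\ref{itm:bowtie_order_doesnt_matter}.
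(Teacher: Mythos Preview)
Your approach is correct and essentially identical to the paper's: apply (the opposite-category form of) Lemma~\ref{lem:bowtie_to_bowtie*} to the matched pair $(\Cc,\Dd^*)$ to obtain~\ref{itm:bowtie**_fact_rule}, and use two applications of Lemma~\ref{lem:bowtie_to_bowtie*}\ref{itm:bowtie*_mult} (factoring through $(\Cc,\Dd^*)$) for~\ref{itm:bowtie**_mult}. You are simply more explicit than the paper about the inductive verification that the $\bowtiel{*}$-construction applied to $(\Cc,\Dd^*)$ recovers the given recursion for $\bowtie_{m,n}$, which the paper leaves to the reader.
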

\begin{proof}
	
	That $(\Cc,\Dd^*)$ is a matched pair, together with an additional application of
	Lemma~\ref{lem:bowtie_to_bowtie*}, gives~\ref{itm:bowtie**_fact_rule}.
Two applications of Lemma~\ref{lem:bowtie_to_bowtie*}\ref{itm:bowtie*_mult} give~\ref{itm:bowtie**_mult}.
\end{proof}

We often write $\bowtie \colon \Cc^* * \Dd^* \to \Dd^* * \Cc^*$ for the map $\bowtie_{*,*}$ of Proposition~\ref{prop:bowtie_**}, which implies that $\Cc^* \bowtie \Dd^*$ is a category with strict factorisation system $[\Dd^*,\Cc^*]$. We identify $\Cc^* \bowtie \Dd^*$ as a set with $\Dd^* * \Cc^*$. For each composable $k$-tuple
$\gamma \in \Cc^* \bowtie \Dd^*$ there exist $p_i, q_i \ge 0$ such that $\gamma \in \Dd^{p_1}*\Cc^{q_1}* \cdots * \Dd^{p_k} * \Cc^{q_k}$; its product belongs to $\Dd^{p_1+\cdots+p_k} * \Cc^{q_1 + \cdots + q_k}$.

The map $\bowtie_{m,n}$ can be computed in any order in the following sense.

\begin{cor}\label{cor:bowtie**_order_doesnt_matter}
	Let $(\Cc,\Dd)$ be a matched pair.  For each $1 \le p < m$ and $1 \le q < n$, the diagram
\begin{equation}\label{eq:bowtie**_order_doesnt_matter}
	\begin{tikzcd}[ampersand replacement=\&, column sep=40pt]
	{\Cc^m*\Dd^n} \& {} \& {\Dd^n*\Cc^m} \\
	{\Cc^{m-p}*\Dd^{q}*\Cc^{p}*\Dd^{n-q}} \&\& {\Dd^{q}*\Cc^{m-p}*\Dd^{n-q}*\Cc^p}
	\arrow["{\bowtiemap_{m,n}}", from=1-1, to=1-3]
	\arrow["{1_{\Cc^{m-p}}*\bowtiemap_{p,q}*1_{\Dd^{n-q}}}"', from=1-1, to=2-1]
	\arrow["{1_{\Dd^q}*\bowtiemap_{m-p,n-q}*1_{\Cc^p}}"', from=2-3, to=1-3]
	\arrow["{\bowtiemap_{m-p,q} * \bowtiemap_{p,n-q}}", from=2-1, to=2-3]
\end{tikzcd}
\end{equation}
 commutes.
\end{cor}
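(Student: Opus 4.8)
The plan is to deduce the result from two ``one-coordinate'' instances of Lemma~\ref{lem:bowtie_to_bowtie*}\ref{itm:bowtie_order_doesnt_matter} and then combine them by a diagram chase. The first ingredient is a splitting of $\bowtie_{m,n}$ in the $\Cc$-coordinate: for $1\le p<m$, writing $\Cc^m=\Cc^{m-p}*\Cc^p$,
\[
\bowtie_{m,n}=(\bowtie_{m-p,n}*1_{\Cc^p})\circ(1_{\Cc^{m-p}}*\bowtie_{p,n}).
\]
To get this I would first note that $\bowtie_{m,n}$ is the restriction to $\Cc^m*\Dd^n$ of the left-iterated map $\bowtiel{m}$ built from the factorisation rule $\bowtie_*$ of the matched pair $(\Cc,\Dd^*)$ --- an easy induction on $m$ comparing the two defining recursions --- and then invoke the analogue of Lemma~\ref{lem:bowtie_to_bowtie*}\ref{itm:bowtie_order_doesnt_matter} for that construction, which holds by applying the lemma to the opposite matched pair.

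The second ingredient is the corresponding splitting in the $\Dd$-coordinate: for $1\le q<n$, writing $\Dd^n=\Dd^q*\Dd^{n-q}$,
\[
\bowtie_{m,n}=(1_{\Dd^q}*\bowtie_{m,n-q})\circ(\bowtie_{m,q}*1_{\Dd^{n-q}}).
\]
I would prove this by induction on $m$, with base case $m=1$ being exactly Lemma~\ref{lem:bowtie_to_bowtie*}\ref{itm:bowtie_order_doesnt_matter}. For the inductive step I would substitute the case-$(m-1)$ identity and the base case into the recursion $\bowtie_{m,n}=(\bowtie_n*1_{\Cc^{m-1}})\circ(1_\Cc*\bowtie_{m-1,n})$ and reorganise the resulting composite: the two interior factors that appear act on disjoint ``blocks'' of the fibred product --- one built from the leading copy of $\Cc$ and one from the trailing $\Cc^{m-1}$ --- hence they commute, and this reshuffle is precisely what converts the expression into the claimed form. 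This is a formal manipulation of $*$-products of identities with $\bowtie$-maps, in the same spirit as the diagram chases in the proof of Lemma~\ref{lem:bowtie_to_bowtie*}.

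To finish, apply the $\Cc$-splitting at $p$ and then the $\Dd$-splitting at $q$ to each of the factors $\bowtie_{m-p,n}$ and $\bowtie_{p,n}$, and substitute. This writes $\bowtie_{m,n}$ as a four-fold composite whose outermost maps are already the right-hand map $1_{\Dd^q}*\bowtie_{m-p,n-q}*1_{\Cc^p}$ and the left-hand map $1_{\Cc^{m-p}}*\bowtie_{p,q}*1_{\Dd^{n-q}}$ of the square~\eqref{eq:bowtie**_order_doesnt_matter}. The two interior maps are $\bowtie_{m-p,q}$ acting on the initial $\Cc^{m-p}*\Dd^q$ block and $\bowtie_{p,n-q}$ acting on the terminal $\Cc^p*\Dd^{n-q}$ block; these blocks are disjoint and non-interacting, so the composite of the two (in either order) coincides with their parallel product $\bowtie_{m-p,q}*\bowtie_{p,n-q}$, which is the bottom map of~\eqref{eq:bowtie**_order_doesnt_matter}. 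Hence the square commutes.

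The only genuine difficulty is bookkeeping: there is no idea beyond Lemma~\ref{lem:bowtie_to_bowtie*}\ref{itm:bowtie_order_doesnt_matter} and the triviality that operations on disjoint fibred-product factors commute, but one must keep careful track of which identity maps pad which $\bowtie_{a,b}$ and of the left/right conventions for the decompositions of $\Cc^m$ and $\Dd^n$. Presenting the inductive step of the $\Dd$-splitting, and the final combination, as large commuting diagrams in the style already used in this section is probably the cleanest way to control this.
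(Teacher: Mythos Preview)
Your approach is correct but takes a different and considerably longer route than the paper. The paper exploits Proposition~\ref{prop:bowtie_**}, which immediately precedes this corollary and establishes that $\bowtie_{*,*}$ is a factorisation rule, so that $\Cc^*\bowtie\Dd^*$ is a Zappa--Sz\'ep product with strict factorisation system $[\Dd^*,\Cc^*]$. The observation is then simply that an element of $\Cc^m*\Dd^n=\Cc^{m-p}*\Cc^p*\Dd^q*\Dd^{n-q}$ is a composable $4$-tuple in $\Cc^*\bowtie\Dd^*$, and the three-map path around the bottom of the square is just an iterated product of this $4$-tuple in $\Cc^*\bowtie\Dd^*$; since $\bowtie_{m,n}$ also computes this product, uniqueness of the $\Dd^n*\Cc^m$ factorisation forces the diagram to commute. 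No induction or splitting identities are needed.

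Your approach, by contrast, works directly with the recursive definitions and Lemma~\ref{lem:bowtie_to_bowtie*}\ref{itm:bowtie_order_doesnt_matter}, deriving the two ``one-coordinate'' splittings of $\bowtie_{m,n}$ and then combining them. This is sound, and has the minor virtue of not invoking the category structure on $\Cc^*\bowtie\Dd^*$, but it effectively re-proves by hand the associativity that Proposition~\ref{prop:bowtie_**} already packages. Since the corollary appears immediately after that proposition, the paper's two-line argument via unique factorisation is the natural choice here.
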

\begin{proof}
Elements of $\Cc^m * \Dd^n = \Cc^{m-p} * \Cc^p * \Dd^q * \Dd^{n-q}$ may considered as composable 4-tuples in $\Cc^* \bowtie \Dd^*$. Since the 3-map composition around the bottom of \eqref{eq:bowtie**_order_doesnt_matter} is an iterated product in $\Cc^* \bowtie \Dd^*$, uniqueness of factorisation implies that the diagram commutes.
\end{proof}

Corollary~\ref{cor:bowtie**_order_doesnt_matter} gives
$
\bowtie_{m,n} = (1_{\Dd^{n-1}}*\bowtie_{m,1}) \circ (\bowtie_{m,n-1} * 1_\Dd).
$ So we could also have applied Lemma~\ref{lem:bowtie_to_bowtie*} to $(\Cc,\Dd^*)$ to obtain the matched-pair structure on $(\Cc^*,\Dd^*)$ of Proposition~\ref{prop:bowtie_**}.

\subsection{Model matched pairs}

We introduce a class of model categories that will play a central role in our computation of homology (Theorem~\ref{thm:cohomologies_are_the_same}) below.

Let $X_n \coloneqq \{(p,q) \in \NN \times \NN \mid 0\le p+q \le n\}$. We denote elements of $X_n$ using bold font. Given $\mathbf{a} \in X_n$ we write $\mathbf{a} = (a_L,a_R)$ to indicate the left and right coordinates of $\mathbf{a}$.

\begin{dfn}
	Let $\Gamma_n = \{(\mathbf{a},\mathbf{b}) \in X_n \times X_n \mid a_L \le b_L \text{ and } a_R \ge b_R\}$. Define $r,s \colon \Gamma_n \to X_n$ by
	$
	r(\mathbf{a},\mathbf{b}) = \mathbf{a}$ and $s(\mathbf{a},\mathbf{b}) = \mathbf{b}.
	$
	Identify $X_n$ with $\{(\mathbf{a},\mathbf{a}) \mid \mathbf{a} \in X_n\}$.
\end{dfn}
With composition defined by $(\mathbf{a},\mathbf{b}) (\mathbf{b},\mathbf{c}) \coloneqq (\mathbf{a},\mathbf{c})$, the set $\Gamma_n$ is a small category. It can also be realised as the Zappa--Sz\'ep product of the path categories of two graphs. Let $E_n$ be the directed graph with $E_n^0 = X_n$ and $E_n^1 = \{e_{p,q}\colon (p,q) \in X_n \text{ and } p+q < n\}$, with $r(e_{p,q}) = (p,q)$ and $s(e_{p,q}) = (p+1,q)$. Let $F_n$ be the directed graph with $F_n^0 = X_n$ and $F_n^1 = \{f_{p,q}\colon (p,q) \in X_n \text{ and } p+q < n\}$, with $s(f_{p,q}) = (p,q)$ and $r(f_{p,q}) = (p,q+1)$. We draw $E_n$ and $F_n$ using coloured arrows (blue and solid for $E$, red and dashed for $F$).
\[
\begin{tikzpicture}
	[vertex/.style={circle, fill=black, inner sep=1pt},
	edge/.style={-latex,thick,blue},
	edgev/.style={-latex,thick,dashed,red},
	scale = 1.2]
	
	\node at (1.5,-0.4) {$E_3$};%
	
	
	\node[vertex, label={ left:{\scriptsize{$(0,0)$}}}] (0-0) at (0,0) {};%
	\node[vertex] (1-0) at (1,0) {};%
	\node[vertex] (2-0) at (2,0) {};%
	\node[vertex, label={right:{\scriptsize{$(3,0)$}}}] (3-0) at (3,0) {};%
	
	\node[vertex] (0-1) at (0,1) {};%
	\node[vertex] (1-1) at (1,1) {};%
	\node[vertex] (2-1) at (2,1) {};%
	
	\node[vertex] (0-2) at (0,2) {};%
	\node[vertex] (1-2) at (1,2) {};%
	
	\node[vertex, label={left:{\scriptsize{$(0,3)$}}}] (0-3) at (0,3) {};%

	
	\draw[edge] (1-0) to node[midway, anchor=south,inner sep=2.5pt] {\scriptsize{$e_{0,0}$}} (0-0);
	\draw[edge] (2-0) to node[midway, anchor=south,inner sep=2.5pt] {\scriptsize{$e_{1,0}$}} (1-0);
	\draw[edge] (3-0) to node[midway, anchor=south,inner sep=2.5pt] {\scriptsize{$e_{2,0}$}} (2-0);
	
	\draw[edge] (1-1) to node[midway, anchor=south,inner sep=2.5pt] {\scriptsize{$e_{0,1}$}} (0-1);
	\draw[edge] (2-1) to node[midway, anchor=south,inner sep=2.5pt] {\scriptsize{$e_{1,1}$}} (1-1);
	
	\draw[edge] (1-2) to node[midway, anchor=south,inner sep=2.5pt] {\scriptsize{$e_{0,2}$}} (0-2);
	
\end{tikzpicture}
\qquad
\qquad
\begin{tikzpicture}
	[vertex/.style={circle, fill=black, inner sep=1pt},
	edge/.style={-latex,thick,blue},
	edgev/.style={-latex,thick,dashed,red},
	scale = 1.2]
	
	\node at (1.5,-0.4) {$F_3$};%
	
	
	\node[vertex, label={ left:{\scriptsize{$(0,0)$}}}] (0-0) at (0,0) {};%
	\node[vertex] (1-0) at (1,0) {};%
	\node[vertex] (2-0) at (2,0) {};%
	\node[vertex, label={right:{\scriptsize{$(3,0)$}}}] (3-0) at (3,0) {};%
	
	\node[vertex] (0-1) at (0,1) {};%
	\node[vertex] (1-1) at (1,1) {};%
	\node[vertex] (2-1) at (2,1) {};%
	
	\node[vertex] (0-2) at (0,2) {};%
	\node[vertex] (1-2) at (1,2) {};%
	
	\node[vertex, label={left:{\scriptsize{$(0,3)$}}}] (0-3) at (0,3) {};%

	
	\draw[edgev] (0-0) to node[midway, anchor=east,inner sep=2.5pt] {\scriptsize{$f_{0,0}$}} (0-1);
	\draw[edgev] (0-1) to node[midway, anchor=east,inner sep=2.5pt] {\scriptsize{$f_{0,1}$}} (0-2);
	\draw[edgev] (0-2) to node[midway, anchor=east,inner sep=2.5pt] {\scriptsize{$f_{0,2}$}} (0-3);
	
	\draw[edgev] (1-0) to node[midway, anchor=east,inner sep=2.5pt] {\scriptsize{$f_{1,0}$}} (1-1);
	\draw[edgev] (1-1) to node[midway, anchor=east,inner sep=2.5pt] {\scriptsize{$f_{1,1}$}} (1-2);
	
	\draw[edgev] (2-0) to node[midway, anchor=east,inner sep=2.5pt] {\scriptsize{$f_{2,0}$}} (2-1);
\end{tikzpicture}
\]
Let $\Ff_n \coloneqq F_n^*$ and $\Ee_n \coloneqq E_n^*$ denote the path categories of $F_n$ and $E_n$, respectively.

\begin{lem}
	The subcategory $\{((p,q),(p',q)) \mid p \le p' \le n, \, q \le n\}$ of $\Gamma_n$ is isomorphic to $\Ee_n$ and the subcategory $\{((p,q),(p,q'))\mid p \le n, \, n \ge q \ge q' \}$ is isomorphic to $\Ff_n$. Moreover, $[\Ff_n, \Ee_n]$ is a strict factorisation system for $\Gamma_n$; the pair $(\Ee_n,\Ff_n)$ is a matched pair, with
	\[
	e_{p,q} \la f_{p+1,q-1} = f_{p,q-1} \quad \text{and} \quad 	e_{p,q} \ra f_{p+1,q-1} = e_{p,q-1};
	\]
	and $\Gamma_n \cong \Ee_n \bowtie \Ff_n$.
\end{lem}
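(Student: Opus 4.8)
The plan is to build the two isomorphisms onto path categories by hand, then exhibit the unique factorisation explicitly, and finally invoke Proposition~\ref{prop:factorisation_system_is_matched_pair}, reading the actions off the explicit factorisation.

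First I would define a functor $\phi_E \colon \Ee_n \to \Gamma_n$ that is the identity on objects $X_n$ and sends each generating edge $e_{p,q}$ to $((p,q),(p+1,q))$. Since $\Ee_n = E_n^*$ is free on $E_n$ this determines $\phi_E$, and the composition rule $(\mathbf a,\mathbf b)(\mathbf b,\mathbf c)=(\mathbf a,\mathbf c)$ forces $\phi_E(e_{p,q}e_{p+1,q}\cdots e_{p'-1,q})=((p,q),(p',q))$. One checks that $\phi_E$ is injective (the image determines $p,p',q$) and that its image is exactly the stated subcategory; the intermediate edges $e_{i,q}$, $p\le i<p'$, all exist in $E_n$ because $(p',q)\in X_n$. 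Symmetrically, the functor $\phi_F \colon \Ff_n \to \Gamma_n$ sending $f_{p,q}\mapsto((p,q+1),(p,q))$ is an isomorphism onto $\{((p,q),(p,q'))\mid q\ge q'\}$. Both subcategories are wide, containing every identity $((p,q),(p,q))$.

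Next I would verify the unique factorisation. Given $(\mathbf a,\mathbf b)\in\Gamma_n$ with $\mathbf a=(a_L,a_R)$, $\mathbf b=(b_L,b_R)$, so $a_L\le b_L$ and $a_R\ge b_R$, put $\mathbf m\coloneqq(a_L,b_R)$; then $a_L+b_R\le a_L+a_R\le n$, so $\mathbf m\in X_n$, and $(\mathbf a,\mathbf b)=(\mathbf a,\mathbf m)(\mathbf m,\mathbf b)$ with $(\mathbf a,\mathbf m)\in\Ff_n$ (common first coordinate, $a_R\ge b_R$) and $(\mathbf m,\mathbf b)\in\Ee_n$ (common second coordinate, $a_L\le b_L$). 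For uniqueness, in any factorisation $(\mathbf a,\mathbf b)=(\mathbf a,\mathbf m')(\mathbf m',\mathbf b)$ the first factor lying in $\Ff_n$ forces the first coordinate of $\mathbf m'$ to equal $a_L$, and the second factor lying in $\Ee_n$ forces the second coordinate of $\mathbf m'$ to equal $b_R$, so $\mathbf m'=\mathbf m$. Hence $[\Ff_n,\Ee_n]$ is a strict factorisation system for $\Gamma_n$, and Proposition~\ref{prop:factorisation_system_is_matched_pair} gives that $(\Ee_n,\Ff_n)$ is a matched pair with $(d,c)\mapsto dc$ an isomorphism $\Ee_n\bowtie\Ff_n\cong\Gamma_n$.

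Finally I would compute the actions on generators. Under the identifications above, $e_{p,q}=((p,q),(p+1,q))$ and $f_{p+1,q-1}=((p+1,q),(p+1,q-1))$ are composable in $\Ee_n*\Ff_n$ (common object $(p+1,q)$), and their product in $\Gamma_n$ is $((p,q),(p+1,q-1))$; its unique factorisation as an element of $\Ff_n$ followed by an element of $\Ee_n$ passes through $\mathbf m=(p,q-1)$ and reads $((p,q),(p,q-1))\,((p,q-1),(p+1,q-1))=\phi_F(f_{p,q-1})\,\phi_E(e_{p,q-1})$. By the definition of $\la,\ra$ from the factorisation system this says $e_{p,q}\la f_{p+1,q-1}=f_{p,q-1}$ and $e_{p,q}\ra f_{p+1,q-1}=e_{p,q-1}$; the relevant edges all exist whenever $p+q<n$. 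There is no deep difficulty here: the only thing requiring genuine care is keeping range/source conventions straight — that $E_n$-edges raise the first coordinate of the source while $F_n$-edges lower the second — and checking that each intermediate object lies in $X_n$, which is exactly what the inequality $a_L+b_R\le a_L+a_R\le n$ handles.
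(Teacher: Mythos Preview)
Your proof is correct and follows essentially the same approach as the paper: define the functors on generating edges using freeness of the path categories, exhibit the unique factorisation through the midpoint $(a_L,b_R)$, and invoke Proposition~\ref{prop:factorisation_system_is_matched_pair}. You supply more detail than the paper does---in particular you explicitly verify the actions on generators, whereas the paper simply asserts that the remaining statements follow from Proposition~\ref{prop:factorisation_system_is_matched_pair}.
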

\begin{proof}
	Since $\Ee_n$ is freely generated by $E_n$, the map $e_{p,q} \mapsto ((p,q),(p+1,q))$ identifies $\Ee_n$ with $\{((p,q),(p',q)) \mid p \le p'\}$. Similarly, $\Ff_n \cong \{((p,q),(p,q'))\mid q \ge q'\}$ via $f_{p,q} \mapsto ((p,q+1),(p,q))$.
	Both $\Ee_n$ and $\Ff_n$ are wide subcategories. For each $(\mathbf{a},\mathbf{b}) \in \Gamma_n$, the elements $\alpha \coloneqq ((a_L,a_R),(a_L,b_R))$ and $\beta \coloneqq ((a_L,b_R),(b_L,b_R))$ are the unique elements of $\Ff_n$ and $\Ee_n$, respectively such that $(\mathbf{a},\mathbf{b}) = \alpha \beta$. So $[\Ff_n, \Ee_n]$ is a strict factorisation system for $\Gamma_n$.
	
	The remaining statements follow from Proposition~\ref{prop:factorisation_system_is_matched_pair}.
\end{proof}

\begin{dfn}
	We refer to the matched pairs $(\Ee_n, \Ff_n)$ as \emph{model matched pairs}.
\end{dfn}

Each $\Gamma_n$ can be visualised as a commuting diagram incorporating both $E_n$ and $F_n$.
\begin{equation}
	\label{eq:Gammas}
	\begin{tikzpicture}
		[vertex/.style={circle, fill=black, inner sep=1pt},
		edge/.style={-latex,thick,blue},
		edgev/.style={-latex,thick,dashed,red},
		scale = 1.1,
		baseline=0pt]
		
		\node at (0,-0.4) {$\Gamma_0$};%
		
		
		\node[vertex, label={left:{\scriptsize{$(0,0)$}}}] (0-0) at (0,0) {};%
	\end{tikzpicture}
	\,\,
	\begin{tikzpicture}
		[vertex/.style={circle, fill=black, inner sep=1pt},
		edge/.style={-latex,thick,blue},
		edgev/.style={-latex,thick,dashed,red},
		scale = 1.1,
		baseline=0pt]
		
		\node at (0.5,-0.4) {$\Gamma_1$};%
		
		
		\node[vertex, label={ left:{\scriptsize{$(0,0)$}}}] (0-0) at (0,0) {};%
		\node[vertex, label={right:{\scriptsize{$(1,0)$}}}] (1-0) at (1,0) {};%
		
		\node[vertex, label={left:{\scriptsize{$(0,1)$}}}] (0-1) at (0,1) {};%
		
		
		\draw[edge] (1-0) to node[midway, anchor=south,inner sep=2.5pt] {\scriptsize{$e_{0,0}$}} (0-0);
		
		
		\draw[edgev] (0-0) to node[midway, anchor=east,inner sep=2.5pt] {\scriptsize{$f_{0,0}$}} (0-1);
	\end{tikzpicture}
	\,\,
	\begin{tikzpicture}
		[vertex/.style={circle, fill=black, inner sep=1pt},
		edge/.style={-latex,thick,blue},
		edgev/.style={-latex,thick,dashed,red},
		scale = 1.1,
		baseline=0pt]
		
		\node at (1,-0.4) {$\Gamma_2$};%
		
		
		\node[vertex, label={ left:{\scriptsize{$(0,0)$}}}] (0-0) at (0,0) {};%
		\node[vertex] (1-0) at (1,0) {};%
		\node[vertex, label={right:{\scriptsize{$(2,0)$}}}] (2-0) at (2,0) {};%
		
		\node[vertex] (0-1) at (0,1) {};%
		\node[vertex] (1-1) at (1,1) {};%
		
		\node[vertex, label={left:{\scriptsize{$(0,2)$}}}] (0-2) at (0,2) {};%
		
		
		\draw[edge] (1-0) to node[midway, anchor=south,inner sep=2.5pt] {\scriptsize{$e_{0,0}$}} (0-0);%
		\draw[edge] (2-0) to node[midway, anchor=south,inner sep=2.5pt] {\scriptsize{$e_{1,0}$}} (1-0);%
		
		\draw[edge] (1-1) to node[midway, anchor=south,inner sep=2.5pt] {\scriptsize{$e_{0,1}$}} (0-1);%

		
		\draw[edgev] (0-0) to node[midway, anchor=east,inner sep=2.5pt] {\scriptsize{$f_{0,0}$}} (0-1);%
		\draw[edgev] (0-1) to node[midway, anchor=east,inner sep=2.5pt] {\scriptsize{$f_{0,1}$}} (0-2);%
		
		\draw[edgev] (1-0) to node[midway, anchor=east,inner sep=2.5pt] {\scriptsize{$f_{1,0}$}} (1-1);%
	\end{tikzpicture}
	\,\,
	\begin{tikzpicture}
		[vertex/.style={circle, fill=black, inner sep=1pt},
		edge/.style={-latex,thick,blue},
		edgev/.style={-latex,thick,dashed,red},
		scale = 1.1,
		baseline=0pt]
		
		\node at (1.5,-0.4) {$\Gamma_3$};%
		
		
		\node[vertex, label={ left:{\scriptsize{$(0,0)$}}}] (0-0) at (0,0) {};%
		\node[vertex] (1-0) at (1,0) {};%
		\node[vertex] (2-0) at (2,0) {};%
		\node[vertex, label={right:{\scriptsize{$(3,0)$}}}] (3-0) at (3,0) {};%
		
		\node[vertex] (0-1) at (0,1) {};%
		\node[vertex] (1-1) at (1,1) {};%
		\node[vertex] (2-1) at (2,1) {};%
		
		\node[vertex] (0-2) at (0,2) {};%
		\node[vertex] (1-2) at (1,2) {};%
		
		\node[vertex, label={left:{\scriptsize{$(0,3)$}}}] (0-3) at (0,3) {};%
		
		
		\draw[edge] (1-0) to node[midway, anchor=south,inner sep=2.5pt] {\scriptsize{$e_{0,0}$}} (0-0);%
		\draw[edge] (2-0) to node[midway, anchor=south,inner sep=2.5pt] {\scriptsize{$e_{1,0}$}} (1-0);%
		\draw[edge] (3-0) to node[midway, anchor=south,inner sep=2.5pt] {\scriptsize{$e_{2,0}$}} (2-0);%
		
		\draw[edge] (1-1) to node[midway, anchor=south,inner sep=2.5pt] {\scriptsize{$e_{0,1}$}} (0-1);%
		\draw[edge] (2-1) to node[midway, anchor=south,inner sep=2.5pt] {\scriptsize{$e_{1,1}$}} (1-1);%
		
		\draw[edge] (1-2) to node[midway, anchor=south,inner sep=2.5pt] {\scriptsize{$e_{0,2}$}} (0-2);%
		
		
		\draw[edgev] (0-0) to node[midway, anchor=east,inner sep=2.5pt] {\scriptsize{$f_{0,0}$}} (0-1);%
		\draw[edgev] (0-1) to node[midway, anchor=east,inner sep=2.5pt] {\scriptsize{$f_{0,1}$}} (0-2);%
		\draw[edgev] (0-2) to node[midway, anchor=east,inner sep=2.5pt] {\scriptsize{$f_{0,2}$}} (0-3);%
		
		\draw[edgev] (1-0) to node[midway, anchor=east,inner sep=2.5pt] {\scriptsize{$f_{1,0}$}} (1-1);%
		\draw[edgev] (1-1) to node[midway, anchor=east,inner sep=2.5pt] {\scriptsize{$f_{1,1}$}} (1-2);%
		
		\draw[edgev] (2-0) to node[midway, anchor=east,inner sep=2.5pt] {\scriptsize{$f_{2,0}$}} (2-1);%
	\end{tikzpicture}
\end{equation}

For each $n$, we draw $E_n$ and $F_n$ on the same vertex set. Each picture in~\eqref{eq:Gammas} is a commuting
diagram in the corresponding $\Gamma_n$. A morphism $(\mathbf{a}, \mathbf{b}) \in \Gamma_n$ is equal to the composition
of any of the paths in~\eqref{eq:Gammas} from the vertex at $\mathbf{b}$ to the one at $\mathbf{a}$.

The matched pairs $(\Ee_n,\Ff_n)$ are---in the following sense---\emph{free} in the category $\MP$.
\begin{lem}\label{lem:matched_pair_universality}
	Let $(\Cc,\Dd)$ be a matched pair. For every $\gamma = (d_0c_0,\ldots,d_{n-1}c_{n-1}) \in (\Cc \bowtie \Dd)^{n}$, there is a unique matched pair morphism $h_{\gamma} \colon (\Ee_n, \Ff_n) \to (\Cc,\Dd)$ such that for all $0 \le k < n$,
	\begin{equation}\label{eq:free_h_definition}
	h_\gamma^L(e_{k,n-1-k}) = c_k \quad \text{and} \quad h_{\gamma}^R(f_{k,n-1-k}) = d_k.
	\end{equation}
	Moreover, every matched pair morphism $(\Ee_n,\Ff_n) \to (\Cc,\Dd)$ is of this form.
\end{lem}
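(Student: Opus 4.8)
The plan is to exploit that $\Ee_n=E_n^*$ and $\Ff_n=F_n^*$ are free on the graphs $E_n,F_n$, so a functor out of either is the same as a graph morphism; hence a matched-pair morphism $h=(h^L,h^R)\colon(\Ee_n,\Ff_n)\to(\Cc,\Dd)$ is the datum of graph morphisms $E_n\to\Cc$ and $F_n\to\Dd$ satisfying (i)--(iii) of Definition~\ref{dfn:matched_pair}. Using multiplicativity of $h^L,h^R$, of the extended actions (the explicit formulas after Lemma~\ref{lem:bowtie_to_bowtie*}), and the identity laws, conditions (ii) and (iii) need only be checked on composable pairs of \emph{single} edges; since the only such pair involving $e_{p,q}$ is $(e_{p,q},f_{p+1,q-1})$, and $e_{p,q}\la f_{p+1,q-1}=f_{p,q-1}$, $e_{p,q}\ra f_{p+1,q-1}=e_{p,q-1}$ in the model, and since (i) amounts to $h^L,h^R$ having a common object map $X_n\to\Cc^0$ (the path categories have identities at every object), a matched-pair morphism is precisely a pair of functors $h^L\colon\Ee_n\to\Cc$, $h^R\colon\Ff_n\to\Dd$ over a common object map obeying the \emph{commuting-square identities}
\[
h^L(e_{p,q})\la h^R(f_{p+1,q-1}) = h^R(f_{p,q-1}),\qquad h^L(e_{p,q})\ra h^R(f_{p+1,q-1}) = h^L(e_{p,q-1})
\]
for all valid $p,q$. (Equivalently, via Corollary~\ref{cor:matched_pair_morphisms} and $\Gamma_n\cong\Ee_n\bowtie\Ff_n$, one is classifying functors $\Gamma_n\to\Cc\bowtie\Dd$ carrying $\Ee_n$ into $\Cc$ and $\Ff_n$ into $\Dd$.)

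\textbf{Existence.} Given $\gamma=(d_0c_0,\dots,d_{n-1}c_{n-1})$, build $h_\gamma$ by downward recursion on the level $p+q$. At the top level $p+q=n-1$ set $h_\gamma^L(e_{k,n-1-k}):=c_k$, $h_\gamma^R(f_{k,n-1-k}):=d_k$, and define the object map on the level-$(n-1)$ and level-$n$ vertices from the ranges and sources of the $c_k,d_k$; these fit together precisely because $\gamma$ is composable, i.e. $s(d_k)=r(c_k)$ and $s(c_k)=r(d_{k+1})$. Descending, set $h_\gamma^R(f_{p,q-1}):=h_\gamma^L(e_{p,q})\la h_\gamma^R(f_{p+1,q-1})$, $h_\gamma^L(e_{p,q-1}):=h_\gamma^L(e_{p,q})\ra h_\gamma^R(f_{p+1,q-1})$, and extend the object map to the next level of vertices by $(p,q)\mapsto s\big(h_\gamma^R(f_{p,q})\big)$. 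The inductive step must verify that $s\big(h_\gamma^L(e_{p,q})\big)=r\big(h_\gamma^R(f_{p+1,q-1})\big)$ (so the $\la,\ra$ are legal) and that the new edge values are compatible with the object map one level up; here \ref{itm:ZS-rs} is used, since it identifies $r(c\ra d)$ with $s(c\la d)$. Once this is done, $h_\gamma^L,h_\gamma^R$ are graph morphisms over a common object map, hence extend uniquely to functors; the commuting-square identities hold by construction, so $h_\gamma$ is a matched-pair morphism, and it satisfies \eqref{eq:free_h_definition} by fiat.

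\textbf{Uniqueness and the last clause.} If $h$ is any matched-pair morphism satisfying \eqref{eq:free_h_definition}, then it obeys the commuting-square identities, which express $h^L(e_{p,q-1})$ and $h^R(f_{p,q-1})$ in terms of level-$(p+q)$ data; downward induction from the top level (fixed by \eqref{eq:free_h_definition}) then forces $h$ to agree with $h_\gamma$ on every edge, hence on the common object map since every vertex of $X_n$ is a range or source of some edge, so $h=h_\gamma$. For the final clause, given an arbitrary matched-pair morphism $h$, put $c_k:=h^L(e_{k,n-1-k})$, $d_k:=h^R(f_{k,n-1-k})$; matching of ranges and sources (using the common object map) gives $s(d_k)=r(c_k)$ and $s(c_k)=r(d_{k+1})$, so $\gamma:=(d_0c_0,\dots,d_{n-1}c_{n-1})\in(\Cc\bowtie\Dd)^n$, and both $h$ and $h_\gamma$ satisfy \eqref{eq:free_h_definition} for this $\gamma$, whence $h=h_\gamma$.

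\textbf{Main obstacle.} The only real work is the bookkeeping in the existence step: tracking sources and ranges as one descends through the grid $\Gamma_n$ and checking that the recursively defined edge values and object map stay mutually consistent at every node. Once that is organised, the reduction of (i)--(iii) to the commuting-square identities and the uniqueness argument are essentially forced.
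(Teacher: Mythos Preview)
Your argument is correct and follows essentially the same approach as the paper: both define $h_\gamma^L(e_{p,q})$ and $h_\gamma^R(f_{p,q})$ by downward recursion on the level $p+q$ using the formulas $h^R(f_{p,q})=h^L(e_{p,q+1})\la h^R(f_{p+1,q})$ and $h^L(e_{p,q})=h^L(e_{p,q+1})\ra h^R(f_{p+1,q})$, extend to functors by freeness of path categories, and deduce uniqueness from these same recursions. Your version is more explicit about why it suffices to verify the matched-pair conditions on edge pairs and about the object-map bookkeeping, while the paper's proof simply records the recursion and asserts that the commuting-square identities make $h_\gamma$ a matched-pair morphism.
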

\begin{proof}
	For each $0 \le k < n$ let $d_{k,n-1-k} \coloneqq d_k$ and $c_{k,n-1-k} \coloneqq c_k$, and for each $0 \le p + q < n-1$ define $d_{p,q} \in \Dd$ and $c_{p,q} \in \Cc$ inductively by
	$d_{p,q} \coloneqq c_{p,q+1} \la d_{p+1,q}$ and $c_{p,q} \coloneqq c_{p,q+1} \ra d_{p+1,q}$.
	Since $\Ee_n$ is freely generated by edges, there is a unique functor $h_\gamma^L \colon \Ee_n \to \Cc$ satisfying $h_\gamma^L(e_{p,q}) = c_{p,q}$ for all $0 \le p+q < n$. Similarly there is a unique functor $h_\gamma^R \colon \Ff_n \to \Dd$ satisfying $h_\gamma^R(f_{p,q}) = d_{p,q}$. Since $d_{p,q}c_{p,q} = c_{p,q+1}d_{p+1,q}$, it follows that $h_\gamma = (h_\gamma^L,h_\gamma^R)$ is a morphism of matched pairs.
	
	For uniqueness, fix a matched pair morphism $h \colon (\Ee_n, \Ff_n) \to (\Cc,\Dd)$. Then $h^L (e_{p,q}) = h^L(e_{p,q+1} \ra f_{p+1,q}) = h^L(e_{p,q+1}) \ra h^R(f_{p+1,q})$ and similarly $h^R (f_{p,q}) = h^L(e_{p,q+1}) \la h^R(f_{p+1,q})$. Since
	$h^L \colon \Ee_n \to \Cc$ and $h^R \colon \Ff_n \to \Dd$ are functors, $h^L$ and $h^R$ are determined by the values $h^L(e_{k,n-1-k})$ and $h^R(f_{k,n-1-k})$ for $0 \le k < n$. So $h_\gamma$ is uniquely determined by \eqref{eq:free_h_definition}.
\end{proof}

Corollary~\ref{cor:matched_pair_morphisms} says that $h_\gamma$ is the unique functor $\Gamma_n \to \Cc \bowtie \Dd$ such that $h_{\gamma}(f_{k,n-1-k}e_{k,n-1-k}) = d_kc_k$ for all $0 \le k < n$.

\subsection{Further examples}\label{subsection:examples}

\subsubsection{$k$-graphs}
Here we describe $k$-graphs \cite{KP00} using matched pairs. The generalisations of higher-rank graphs of \cite{LV22} also fit into our framework, but we do not discuss them here.

\begin{dfn}
	A \emph{$k$-graph} is a countable category $\Lambda$ together with a functor $d \colon \Lambda \to \NN^k$, called the \emph{degree map}, which satisfies the following factorisation property: if $d(\lambda) = m+n$, then there exist
	unique elements  $\mu,\nu \in \Lambda$ such that $\lambda = \mu\nu$, $d(\mu) = m$ and $d(\nu) = n$. For each $n \in \NN^k$, we define $\Lambda^n \coloneqq d^{-1}(n)$.
\end{dfn}

We show that every $(k_1 + k_2)$-graph is a Zappa--Sz\'ep product of a $k_1$-graph and a $k_2$-graph.

\begin{lem}\label{lem:k+lgraph->ZSp}
	Fix $k_1, k_2 \in \NN$ and let $\Sigma$ be a $(k_1 + k_2)$-graph. Let $\Lambda \coloneqq d^{-1}(\NN^{k_1} \times \{0\})$ regarded as a $k_1$-graph, and let $\Gamma \coloneqq  d^{-1}(\{0\} \times \NN^{k_2})$ regarded as a $k_2$-graph. There are unique actions $\la$ of $\Lambda$ on $\Gamma$ and $\ra$ of $\Gamma$ on $\Lambda$ such that $\lambda \gamma = (\lambda \la \gamma) (\lambda \ra \gamma)$ in $\Sigma$ for all composable pairs $(\lambda,\gamma) \in \Lambda * \Gamma$. These make $(\Lambda, \Gamma,\la,\ra)$ a matched pair, and $(\gamma,\lambda) \mapsto \gamma\lambda$ is an isomorphism $\Lambda \bowtie \Gamma \to \Sigma$. We have $d(\lambda \la \gamma) = d(\gamma)$ and $d(\lambda \ra \gamma) = d(\lambda)$ for all $\lambda,\gamma$.
\end{lem}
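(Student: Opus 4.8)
The plan is to realise $[\Gamma,\Lambda]$ as a strict factorisation system for $\Sigma$ and then read off everything from Proposition~\ref{prop:factorisation_system_is_matched_pair}.

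First I would record that $\Lambda$ and $\Gamma$ are wide subcategories of $\Sigma$ with $\Lambda^0 = \Gamma^0 = \Sigma^0$: every object has degree $0 \in \NN^{k_1+k_2}$, so lies in both; and since $d$ is a functor, $d(\mu\nu) = d(\mu)+d(\nu)$, so closure of $\NN^{k_1}\times\{0\}$ and of $\{0\}\times\NN^{k_2}$ under addition makes $\Lambda$ and $\Gamma$ closed under composition. Restricting $d$ (and projecting to the relevant coordinates) realises $\Lambda$ as a $k_1$-graph and $\Gamma$ as a $k_2$-graph: the factorisation property of $\Sigma$ applied to degrees lying in $\NN^{k_1}\times\{0\}$, resp.\ $\{0\}\times\NN^{k_2}$, gives the required factorisations, and these factorisations stay inside $\Lambda$, resp.\ $\Gamma$, by the degree bookkeeping below. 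This settles the opening sentence of the statement; it is not otherwise needed.

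Next I would establish the factorisation system. Given $\sigma \in \Sigma$, write $d(\sigma) = (m,n)$ with $m \in \NN^{k_1}$ and $n \in \NN^{k_2}$. Applying the factorisation property of $\Sigma$ to $d(\sigma) = (0,n)+(m,0)$ produces unique $\gamma,\lambda \in \Sigma$ with $\sigma = \gamma\lambda$, $d(\gamma) = (0,n)$ and $d(\lambda) = (m,0)$; thus $\gamma \in \Gamma$ and $\lambda \in \Lambda$. Conversely, any factorisation $\sigma = \gamma'\lambda'$ with $\gamma' \in \Gamma$, $\lambda' \in \Lambda$ has $d(\gamma') \in \{0\}\times\NN^{k_2}$ and $d(\lambda') \in \NN^{k_1}\times\{0\}$ summing to $(m,n)$, forcing $d(\gamma') = (0,n)$, $d(\lambda') = (m,0)$; the uniqueness clause of the factorisation property then gives $\gamma' = \gamma$ and $\lambda' = \lambda$. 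Hence $[\Gamma,\Lambda]$ is a strict factorisation system for $\Sigma$, and Proposition~\ref{prop:factorisation_system_is_matched_pair} (with $\Cc = \Lambda$, $\Dd = \Gamma$, $\Ee = \Sigma$) yields immediately that the unique elements $\lambda \la \gamma \in \Gamma$, $\lambda \ra \gamma \in \Lambda$ with $\lambda\gamma = (\lambda\la\gamma)(\lambda\ra\gamma)$ define a left action of $\Lambda$ on $\Gamma$ and a right action of $\Gamma$ on $\Lambda$ making $(\Lambda,\Gamma,\la,\ra)$ a matched pair, and that $(\gamma,\lambda)\mapsto\gamma\lambda$ is an isomorphism $\Lambda\bowtie\Gamma \to \Sigma$. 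Uniqueness of $\la,\ra$ among actions satisfying $\lambda\gamma = (\lambda\la\gamma)(\lambda\ra\gamma)$ is immediate, since the factorisation of $\lambda\gamma$ in $\Sigma$ is unique. Finally, writing $d(\lambda) = (a,0)$ and $d(\gamma) = (0,b)$, functoriality gives $d(\lambda\gamma) = (a,b)$, while $d(\lambda\la\gamma) \in \{0\}\times\NN^{k_2}$ and $d(\lambda\ra\gamma) \in \NN^{k_1}\times\{0\}$ sum to $(a,b)$; comparing components gives $d(\lambda\la\gamma) = (0,b) = d(\gamma)$ and $d(\lambda\ra\gamma) = (a,0) = d(\lambda)$.

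I do not expect a genuine obstacle here; the argument is pure bookkeeping. The two points that need care are (i) using the factorisation system in the correct order, $[\Gamma,\Lambda]$ rather than $[\Lambda,\Gamma]$, so that Proposition~\ref{prop:factorisation_system_is_matched_pair} produces the matched pair $(\Lambda,\Gamma)$ with the stated isomorphism $\Lambda\bowtie\Gamma\to\Sigma$; and (ii) observing that it is exactly the strict factorisation system that guarantees $\lambda\la\gamma$ lands in $\Gamma$ and $\lambda\ra\gamma$ in $\Lambda$, as required for these to be actions of $\Lambda$ on $\Gamma$ and of $\Gamma$ on $\Lambda$.
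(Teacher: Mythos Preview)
Your proposal is correct and follows exactly the paper's approach: the paper's proof says only that ``everything except the final statement follows the factorisation property and Proposition~\ref{prop:factorisation_system_is_matched_pair}'' and that ``the final statement follows from the factorisation property,'' and you have simply unpacked those two sentences with the appropriate degree bookkeeping. Your care about the order $[\Gamma,\Lambda]$ and the degree-comparison argument for the last claim are precisely the details the paper leaves implicit.
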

\begin{proof}
	Everything except the final statement follows the factorisation property and Proposition~\ref{prop:factorisation_system_is_matched_pair}. The final statement follows from the factorisation property.
\end{proof}

We now describe a converse to Lemma~\ref{lem:k+lgraph->ZSp}. An \emph{edge} in a $k$-graph is a path $e$ such that $d(e)$ is a standard generator of $\NN^k$. We write $E(\Lambda)$ for the set of edges of $\Lambda$. Let $\Sigma, \Lambda, \Gamma, \la$ and $\ra$ be as in Lemma~\ref{lem:k+lgraph->ZSp}, and write $d_\Lambda \colon \Lambda \to \NN^{k_1}$ and $d_\Gamma \colon \Gamma \to \NN^{k_2}$ for the degree functors. Then
\begin{enumerate}[label=(K\arabic*), ref=(K\arabic*)]
	\item\label{itm:k-rs} $s(\nu \la \mu) = r(\nu \ra \mu)$ for all $(\nu,\mu) \in E(\Lambda) * E(\Gamma)$,
	\item\label{itm:k-rasplit} $\nu_1 \nu_2 \ra \mu = (\nu_1 \ra (\nu_2 \la \mu))(\nu_2 \ra \mu)$ for all $(\nu_1,\nu_2,\mu) \in E(\Lambda)*E(\Lambda)*E(\Gamma)$,
	\item\label{itm:k-lasplit} $\nu \la \mu_1\mu_2 = (\nu \la \mu_1)((\nu \ra \mu_1) \la \mu_2)$ for all $(\nu,\mu_1,\mu_2) \in E(\Lambda) * E(\Gamma) * E(\Gamma)$,
	\item\label{itm:k-degree} $d_\Gamma(\nu \la \mu) = d_\Gamma(\mu)$ and $d_\Lambda(\nu \ra \mu) = d_\Lambda(\nu)$ for all $(\nu,\mu) \in E(\Lambda) * E(\Gamma)$, and
	\item\label{itm:k-injective} for each $(\mu,\nu) \in E(\Gamma)* E(\Lambda)$ there exist unique $\mu' \in E(\Gamma)$ and $\nu' \in E(\Lambda)$ such that $\mu = \nu' \la \mu'$ and $\nu = \nu' \ra \mu'$.
\end{enumerate}

\begin{lem}\label{lem:ZSp->k+lgraph}
	Let $\Lambda$ be a $k_1$-graph and let $\Gamma$ be a $k_2$-graph such that $\Lambda^0 = \Gamma^0$. Suppose that $\la \colon \Lambda  * \Gamma \to \Gamma$ and $\ra \colon \Lambda * \Gamma \to \Gamma$ are actions satisfying \ref{itm:k-rs}--\ref{itm:k-injective}. Then $(\Lambda, \Gamma)$ is a matched pair, and the map $d\colon\Lambda \bowtie \Gamma \to \NN^{k_1 + k_2}$ given by $d(\gamma, \lambda) = (d_\Lambda(\lambda) , d_\Gamma(\gamma))$ makes $\Lambda \bowtie \Gamma$ into a $(k_1 + k_2)$-graph.
\end{lem}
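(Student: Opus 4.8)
The plan is to prove the assertion in three steps: that $(\Lambda,\Gamma)$ is a matched pair; that the stated formula defines a functor $d\colon\Lambda\bowtie\Gamma\to\NN^{k_1+k_2}$; and that this $d$ has the factorisation property of a $(k_1+k_2)$-graph.

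For the first step, the substance is to upgrade the edge-level hypotheses \ref{itm:k-rs}--\ref{itm:k-degree} to the matched-pair axioms \ref{itm:ZS-rs}--\ref{itm:ZS-diamond} for \emph{arbitrary} composable elements of $\Lambda$ and $\Gamma$, together with the degree identities $d_\Gamma(\lambda\la\gamma)=d_\Gamma(\gamma)$ and $d_\Lambda(\lambda\ra\gamma)=d_\Lambda(\lambda)$. I would prove all of these simultaneously by induction on total degree ($|d(\lambda)|+|d(\gamma)|$, and its three-entry analogues). The engine of the inductive step is that $\la$ is already an action of $\Lambda$ and $\ra$ an action of $\Gamma$: this lets one peel a single edge $\nu$ off the front of $\lambda$ via $(\nu\lambda')\la\gamma=\nu\la(\lambda'\la\gamma)$, or an edge $\mu$ off the front of $\gamma$ via $\lambda\ra(\mu\gamma')=(\lambda\ra\mu)\ra\gamma'$, and then apply \ref{itm:k-rs}--\ref{itm:k-lasplit} together with the smaller instances of the package; the degree identities are exactly what certify that the auxiliary elements produced along the way ($\lambda'\la\mu$, $\lambda'\ra\gamma$, and so on) remain short enough for the induction to apply. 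The base cases in which some argument is an identity fall out of the elementary properties of actions (identities act trivially, $r(\lambda\la\gamma)=r(\lambda)$, $s(\lambda\ra\gamma)=s(\gamma)$). Once \ref{itm:ZS-rs}--\ref{itm:ZS-diamond} hold, $(\Lambda,\Gamma)$ is a matched pair by Definition~\ref{dfn:matched_pair} and $\Lambda\bowtie\Gamma$ is defined. The second step is then short: by the normal-form multiplication of Lemma~\ref{lem:zappa_szep_category}, $(\gamma_1\lambda_1)(\gamma_2\lambda_2)=\big(\gamma_1(\lambda_1\la\gamma_2)\big)\big((\lambda_1\ra\gamma_2)\lambda_2\big)$, and the degree identities give $d_\Gamma(\lambda_1\la\gamma_2)=d_\Gamma(\gamma_2)$ and $d_\Lambda(\lambda_1\ra\gamma_2)=d_\Lambda(\lambda_1)$, so $d$ is additive on products; since it kills identities it is a functor.

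For the third step I would isolate a ``commuting bijection'': for all $m\in\NN^{k_1}$ and $n\in\NN^{k_2}$, the map $\Phi_{m,n}\colon\Lambda^m*\Gamma^n\to\Gamma^n*\Lambda^m$, $(\lambda,\gamma)\mapsto(\lambda\la\gamma,\lambda\ra\gamma)$, is a bijection (well defined into $\Gamma^n*\Lambda^m$ by Step~1). For $|m|=|n|=1$ this is precisely \ref{itm:k-injective}, with \ref{itm:k-rs} and \ref{itm:k-degree} supplying well-definedness and degree-preservation; in general I would induct on $|m|+|n|$, peeling an edge off the longer of $\lambda,\gamma$ (using \ref{itm:ZS-diamond} when peeling $\lambda$, and \ref{itm:ZS-dot} when peeling $\gamma$) and using the unique factorisation in $\Lambda$ or $\Gamma$ together with smaller instances of $\Phi$ to write $\Phi_{m,n}$ as a composite of bijections. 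Granting this, the factorisation property of $d$ is immediate: given $\zeta=\gamma\lambda$ with $d(\zeta)=(a+a',b+b')$, take the unique factorisations $\gamma=\gamma_1\gamma^\circ$ in $\Gamma$ with $d(\gamma_1)=b$ and $\lambda=\lambda^\circ\lambda_2$ in $\Lambda$ with $d(\lambda_2)=a'$, put $(\lambda_1,\gamma_2)=\Phi_{a,b'}^{-1}(\gamma^\circ,\lambda^\circ)$, and set $\zeta_1=\gamma_1\lambda_1$, $\zeta_2=\gamma_2\lambda_2$; the normal-form formula gives $\zeta=\zeta_1\zeta_2$ with $d(\zeta_1)=(a,b)$, and uniqueness follows since each of the three factorisations used was unique.

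The delicate point I expect is the joint induction of Step~1: one must set it up so that every peeling lands on a \emph{strictly} smaller instance and so that all the degenerate (identity) cases are covered, which is the same combinatorics that arises in proving that a coloured graph carrying a complete, associative family of commuting squares is a $k$-graph. Indeed, an alternative organisation of the whole argument is to build $\Lambda\bowtie\Gamma$ directly as the $(k_1+k_2)$-graph presented by the coloured $1$-skeleton $E(\Lambda)\sqcup E(\Gamma)$, with commuting squares those of $\Lambda$, those of $\Gamma$, and the mixed squares $\nu\mu=(\nu\la\mu)(\nu\ra\mu)$: completeness of this family is exactly \ref{itm:k-injective}, its associativity on three-colour paths reduces in the mixed cases to \ref{itm:k-rasplit}--\ref{itm:k-lasplit} and the action axioms, and then $\Lambda,\Gamma$ reappear as the sub-$k$-graphs $d^{-1}(\NN^{k_1}\times\{0\})$ and $d^{-1}(\{0\}\times\NN^{k_2})$, with the matched-pair structure recovered from Proposition~\ref{prop:factorisation_system_is_matched_pair}.
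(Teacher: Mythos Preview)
Your proposal is correct, and in fact you have described \emph{two} proofs. Your ``alternative organisation'' in the final paragraph is precisely the paper's proof: the paper forms the $(k_1+k_2)$-coloured graph on $E(\Lambda)\sqcup E(\Gamma)$, takes as squares those of $\Lambda$, those of $\Gamma$, and the mixed squares $\nu\mu\sim(\nu\la\mu)(\nu\ra\mu)$, checks completeness via~\ref{itm:k-injective} and associativity of three-colour cubes via \ref{itm:k-rasplit}--\ref{itm:k-lasplit} (it writes out only one mixed case explicitly), invokes \cite[Theorem~4.4]{HRSW13} to obtain the $(k_1+k_2)$-graph $\Sigma$, and then applies Lemma~\ref{lem:k+lgraph->ZSp} and \cite[Theorem~4.5]{HRSW13} to identify $\Sigma$ with $\Lambda\bowtie\Gamma$.

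Your primary approach---a direct simultaneous induction establishing \ref{itm:ZS-rs}--\ref{itm:ZS-diamond} and the degree identities, followed by a hands-on proof of the factorisation property via the bijections $\Phi_{m,n}$---is genuinely different and more self-contained: it does not appeal to the coloured-graph machinery of \cite{HRSW13}. The trade-off is exactly the one you identify: the joint induction is fiddly, and in particular your assertion that the identity base cases ``fall out of the elementary properties of actions'' deserves a little more care. From the bare action axioms one gets $v\la\gamma=\gamma$ and $\lambda\ra v=\lambda$, but $\lambda\la s(\lambda)=r(\lambda)$ is not immediate; it has to be established as part of the package (for instance by first proving the degree identities and \ref{itm:ZS-dot} for $\gamma_1$ an edge, then reading off $\lambda\la v$ from $\lambda\la(\mu\,s(\mu))=(\lambda\la\mu)\bigl((\lambda\ra\mu)\la s(\mu)\bigr)$ and unique factorisation in $\Gamma$). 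The paper sidesteps this by outsourcing the induction to \cite{HRSW13}; the corresponding residual step there is the one-line claim that the isomorphisms $\Lambda'\cong\Lambda$, $\Gamma'\cong\Gamma$ ``intertwine the actions,'' which encodes the same edge-to-general extension.
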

\begin{proof}
	Let $E$ be directed graph with edges $E^1 = E(\Lambda) \sqcup E(\Gamma)$, vertices $E^0 = \Lambda^0 = \Gamma^0$ and range and source maps inherited from $\Lambda$ and $\Gamma$. Define $c\colon E^1 \to \{1, \dots, k_1+k_2\}$ by $c(\alpha) = i$ if $\alpha \in \Lambda^{e_i}$ and $c(\alpha) = k_1+j$ if $\alpha \in \Gamma^{e_j}$, which we regard as a colouring of $E^1$ by $k_1 + k_2$ colours. Define a collection of squares in the sense of~\cite{HRSW13, LV22b} by $\alpha\beta \sim \beta'\alpha'$ if
	\begin{itemize}
		\item $\alpha\beta = \beta'\alpha'$ in one of $\Lambda$ or $\Gamma$, or
		\item $\alpha \in E(\Lambda)$ and $\beta \in E(\Gamma)$ and $\alpha \la \beta = \beta'$ and $\alpha \ra\beta = \alpha'$, or
		\item $\alpha \in E(\Gamma)$ and $\beta \in E(\Lambda)$ and $\alpha' \la \beta' = \beta$ and $\alpha' \ra\beta' = \alpha$.
	\end{itemize}
	The factorisation properties and~\ref{itm:k-injective} ensure that this is a complete collection of squares.
	
	We claim that this is an associative collection of squares. For this we must check that if $\alpha,\beta,\gamma \in E^1$ are composable and of distinct colours, and if
	\begin{align*}
		\alpha\beta &\sim \beta_1\alpha_1, & \alpha_1\gamma &\sim \gamma_1\alpha_2, && \text{ and}& \beta_1\gamma_1 &\sim \gamma_2\beta_2; && \text{ and} \\
		\beta\gamma &\sim \gamma^1\beta^1, & \alpha\gamma^1 &\sim \gamma^2\alpha^1,  &&\text{ and}& \alpha^1\beta^1 &\sim \beta^2\alpha^2;
	\end{align*}
	then $\alpha^2 = \alpha_2$, $\beta^2 = \beta_2$ and $\gamma^2 = \gamma_2$.
	
	If $\alpha,\beta,\gamma$ all belong to either $\Lambda$ or $\Gamma$, this follows from associativity of composition, so we just need to consider when this is not the case. We treat the case where $\alpha \in \Lambda$ and $\beta, \gamma \in \Gamma$; the calculations for the other cases are similarly straightforward. We have
	\[
	\beta_1 = \alpha \la \beta,\quad \alpha_1 = \alpha \ra \beta,\quad \gamma_1 = \alpha_1 \la \gamma,\quad \alpha_2 = \alpha_1 \ra \gamma, \text{ and } \quad
	\beta_1 \gamma_1 = \gamma_2 \beta_2\text{ in $\Gamma$}.
	\]
	That is, $\gamma_2\beta_2 = (\alpha \la \beta)((\alpha \ra \beta) \la \gamma) = \alpha \la (\beta\gamma)$, and $\alpha_2 = (\alpha \ra \beta) \ra \gamma = \alpha \ra (\beta\gamma)$.
	Similarly,
	\[
	\beta\gamma = \gamma^1\beta^1, \quad \gamma^2 = \alpha\la\gamma^1,\quad \alpha^1 = \alpha\ra\gamma^1,\quad \beta^2 = \alpha^1\la\beta^1, \text{ and } \quad \alpha^2 = \alpha^1\ra\beta^1.
	\]
	That is, $\gamma^2\beta^2 = \alpha \la (\gamma^1\beta^1) = \alpha \la (\beta\gamma)$, and $\alpha^2 = (\alpha\ra\gamma^1) \ra \beta^1 = \alpha \ra (\gamma^1\beta^1) = \alpha \ra (\beta\gamma)$. So $\gamma^2\beta^2 = \gamma_2\beta_2$ forcing $\gamma^2 = \gamma_2$ and $\beta^2 = \beta_2$ by uniqueness of factorisations in $\Gamma$, and $\alpha^2 = \alpha_2$.
	
	By \cite[Theorem~4.4]{HRSW13} there is a unique $(k_1 + k_2)$-graph $\Sigma$ with skeleton $E$ and the specified factorisation rules. Lemma~\ref{lem:k+lgraph->ZSp}, yields a $k_1$-graph $\Lambda'$ and a $k_2$-graph $\Gamma'$ such that $\Sigma \cong \Lambda' \bowtie \Gamma'$. By construction, $\Lambda'$ has the same skeleton and factorisation rules as $\Lambda$ so they are isomorphic by \cite[Theorem~4.5]{HRSW13} (see also \cite{LV22b}), and likewise $\Gamma' \cong \Gamma$. These isomorphisms intertwine the actions of $\Lambda'$ and $\Gamma'$ on one another with those of $\Lambda$ and $\Gamma$.
\end{proof}

Taken together, Lemmas~\ref{lem:k+lgraph->ZSp} and \ref{lem:ZSp->k+lgraph} prove the following.

\begin{prop}
	Let $\Gamma$ be a $k_1$-graph and let $\Lambda$ be a $k_2$-graph with actions $\la \colon \Lambda  * \Gamma \to \Gamma$ and $\ra \colon \Lambda * \Gamma \to \Gamma$ satisfying \ref{itm:k-rs}--\ref{itm:k-injective}. The Zappa--Sz\'ep product $\Lambda \bowtie \Gamma$ is a $(k_1 + k_2)$-graph. Moreover, every $(k_1 + k_2)$-graph $\Pi$ is isomorphic to the Zappa--Sz\'ep product of the $k_1$-graph $\Lambda = d_{\Pi}^{-1}(\NN^{k+1} \times \{0\})$ and the $k_2$-graph $\Gamma = d_{\Pi}^{-1}(\{0\} \times \NN^{k_2})$ with actions satisfying \ref{itm:k-rs}--\ref{itm:k-injective}.
\end{prop}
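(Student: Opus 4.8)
The plan is to read off the Proposition from Lemmas~\ref{lem:k+lgraph->ZSp} and~\ref{lem:ZSp->k+lgraph}, which together supply both directions; the only work is to reconcile conventions and to confirm that the conditions \ref{itm:k-rs}--\ref{itm:k-injective} translate correctly.

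For the first assertion, the claim that $\Lambda \bowtie \Gamma$ is a $(k_1+k_2)$-graph is precisely Lemma~\ref{lem:ZSp->k+lgraph}, with $\Lambda$ and $\Gamma$ in the roles played there by the $k_1$- and $k_2$-graph, respectively; swapping which factor carries which block of coordinates only permutes the two summands of $\NN^{k_1}\times\NN^{k_2}$, so costs nothing.

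For the ``moreover'' clause, I would fix a $(k_1+k_2)$-graph $\Pi$ and apply Lemma~\ref{lem:k+lgraph->ZSp} with $\Sigma = \Pi$. This yields the $k_1$-graph $\Lambda = d^{-1}(\NN^{k_1}\times\{0\})$, the $k_2$-graph $\Gamma = d^{-1}(\{0\}\times\NN^{k_2})$, unique actions $\la$, $\ra$ making $(\Lambda,\Gamma)$ a matched pair with $\Lambda \bowtie \Gamma \cong \Pi$, and the degree identities $d(\lambda \la \gamma) = d(\gamma)$ and $d(\lambda \ra \gamma) = d(\lambda)$. It then remains only to verify that the restrictions of $\la$ and $\ra$ to edges satisfy \ref{itm:k-rs}--\ref{itm:k-injective}. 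Here \ref{itm:k-rs}, \ref{itm:k-rasplit} and \ref{itm:k-lasplit} are nothing but the matched-pair axioms \ref{itm:ZS-rs}, \ref{itm:ZS-diamond} and \ref{itm:ZS-dot} specialised to edge tuples; one checks that all products appearing stay within edges, since $\Lambda$ and $\Gamma$ are closed under composition and the $\la$-, $\ra$-images of edges have generator degree by the degree identities, so the specialisation makes sense. Condition \ref{itm:k-degree} is part of the statement of Lemma~\ref{lem:k+lgraph->ZSp}. Finally, \ref{itm:k-injective} is the one point that genuinely uses the $k$-graph structure of $\Pi$: given $(\mu,\nu) \in E(\Gamma) * E(\Lambda)$, the path $\mu\nu$ has degree $e_{k_1+j} + e_i$ for suitable $i \le k_1$ and $j \le k_2$, so the factorisation property of $\Pi$ produces a unique $\nu' \in \Pi^{e_i}$ (hence $\nu' \in E(\Lambda)$) and $\mu' \in \Pi^{e_{k_1+j}}$ (hence $\mu' \in E(\Gamma)$) with $\mu\nu = \nu'\mu'$; since $\nu'\mu' = (\nu' \la \mu')(\nu' \ra \mu')$, uniqueness of the strict factorisation $[\Gamma,\Lambda]$ of $\Pi$ then forces $\mu = \nu' \la \mu'$ and $\nu = \nu' \ra \mu'$, and $(\mu',\nu')$ is the unique such pair.

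I do not expect a real obstacle: once Lemmas~\ref{lem:k+lgraph->ZSp} and~\ref{lem:ZSp->k+lgraph} are available this is bookkeeping, and the only step demanding a moment's attention is \ref{itm:k-injective}, whose content is entirely the $k$-graph factorisation property for paths of degree $e_i + e_j$; everything else is checking that identities stated for arbitrary composable tuples restrict sensibly to edges.
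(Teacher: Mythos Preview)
Your proposal is correct and matches the paper's approach exactly: the paper's proof is simply the sentence ``Taken together, Lemmas~\ref{lem:k+lgraph->ZSp} and \ref{lem:ZSp->k+lgraph} prove the following,'' with no further argument. Your verification of \ref{itm:k-rs}--\ref{itm:k-injective} for the converse direction is more explicit than the paper, which asserts these conditions in the paragraph immediately preceding Lemma~\ref{lem:ZSp->k+lgraph} without separate justification; your treatment of \ref{itm:k-injective} via the factorisation property is the intended argument.
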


\subsubsection{Self-similar actions}
We discuss self-similar actions of groupoids on $k$-graphs as in \cite{ABRW}. These include self-similar actions of groupoids and of groups on graphs as in  \cite{Nek05, EP17, LRRW14, LRRW18}. We show that each such self-similar action determines a matched pair in which the left action respects the degree map. Later we will study $C^*$-algebras associated to such matched pairs; the framework of matched pairs allows us to dispense with the faithfulness condition traditionally imposed in the study of self-similar actions.

Recall that an edge in a $k$-graph is a path $f$ with $d(f) = e_i$ for some $i \le k$.

\begin{dfn}[{\cite[Definition 3.3]{LRRW18}}] \label{dfn:selfsimilaraction}
Let $\Lambda$ be a $k$-graph and let $\Gg$ be a groupoid with $\Gg^0 = \Lambda^0$. A \emph{faithful self-similar action} of $\Gg$ on $\Lambda$ is a left action $\cdot \colon \Gg * \Lambda \to \Lambda$ of $\Gg$ on $\Lambda$ such that
	\begin{enumerate}[labelindent=0pt,labelwidth=\widthof{\ref{itm:ssa-ss}},label=(SSA\arabic*), ref=(SSA\arabic*),leftmargin=!]
		\item \label{itm:ssa-length} for each $n \in \NN^k$ and $g \in \Gg$, we have $g \cdot (s(g) \Lambda^n) = r(g) \Lambda^n$,
        \item \label{itm:ssa-faithful} if $s(g_1) = s(g_2)$ and $g_1 \cdot \mu = g_2 \cdot \mu$ for all $\mu \in s(g_1) \Lambda$, then $g_1 = g_2$, and
        \item \label{itm:ssa-ss} for every $g \in \Gg$ and every edge $e \in s(g)\Lambda$ there exists $h \in s(e)\Gg$ such that $g \cdot (e\mu) = (g \cdot e)(h \cdot \mu)$ for all $\mu \in s(e) \Lambda$.
	\end{enumerate}
\end{dfn}

\begin{rmk}
If $\Gg$ acts self-similarly on $\Lambda$ then \ref{itm:ssa-faithful} implies that there is a unique $h$ satisfying \ref{itm:ssa-ss}. We denote this element by $\restr{g}{e}$ and call it the \emph{restriction} of $g$ to $\mu$.
\end{rmk}

As discussed immediately after Definition~3.3 in \cite{ABRW}, the map $g \mapsto \restr{g}{e}$ extends to a map $(g, \mu) \mapsto \restr{g}{\mu}$ from $\Gg * \Lambda$ to $\Lambda$  by the recursive formula $\restr{g}{e \mu} = \restr{(\restr{g}{e})}{(\restr{g}{e})\cdot\mu}$.

\begin{prop}\label{prop:ssa_is_matched_pair}
Let $\Lambda$ be a $k$-graph and $\Gg$ a groupoid with $\Gg^0 = \Lambda^0$. Suppose that $\cdot \colon \Gg * \Lambda \to \Lambda$ is a faithful self-similar action. Define $\la \colon \Gg * \Lambda \to \Lambda$  by $g \la \mu = g \cdot \mu$ and $\ra \colon \Gg * \Lambda \to \Gg$ by $g \ra \mu = \restr{g}{\mu}$. Then $(\Gg, \Lambda, \la ,\ra)$ is a matched pair such that
	\begin{enumerate}
		\item\label{itm:ssa_la_faithful} if $g_1 \la \mu = g_2 \la \mu$ for all $\mu \in s(g_1)\Lambda$, then $g_1= g_2$, and
		\item\label{itm:ssa_length}  $d(g \la \mu) = d(\mu)$ for all $(g,\mu) \in \Gg * \Lambda$.
	\end{enumerate}
	Conversely, if $(\Gg, \Lambda, \la ,\ra)$ is a matched pair satisfying \ref{itm:ssa_la_faithful} and \ref{itm:ssa_length}, then $\la$ defines a faithful self-similar action of $\Gg$ on $\Lambda$ with restriction map $\restr{g}{\mu} \coloneqq g \ra \mu$.
\end{prop}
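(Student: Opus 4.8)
The plan is to verify the two axiom packages --- the matched-pair conditions \ref{itm:ZS-rs}--\ref{itm:ZS-diamond} together with properties \ref{itm:ssa_la_faithful}--\ref{itm:ssa_length} --- by translating the self-similar action axioms \ref{itm:ssa-length}--\ref{itm:ssa-ss} through the dictionary $g\la\mu = g\cdot\mu$ and $g\ra\mu = \restr{g}{\mu}$, and then to run the same dictionary in reverse for the converse. First I would record the basic bookkeeping: since $\Gg$ is a groupoid, $s\colon\Gg\to\Gg^0=\Lambda^0$ and $r\colon\Gg\to\Gg^0$ are the obvious maps, and the self-similar action has $a=r\colon\Gg*\Lambda\to\Gg^0$ in the sense of the preliminaries, so $g\la\mu$ is defined exactly when $s(g)=r(\mu)$, i.e.\ on $\Gg*\Lambda$. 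Condition \ref{itm:ZS-rs}, $s(g\la\mu)=r(g\ra\mu)$, says $s(g\cdot\mu)=r(\restr{g}{\mu})$; for an edge this is immediate from \ref{itm:ssa-ss}, since the equation $g\cdot(e\mu)=(g\cdot e)(\restr{g}{e}\cdot\mu)$ requires $\restr{g}{e}\in s(e)\Gg = s(g\cdot e)\Gg$ (using \ref{itm:ssa-length} to see $s(g\cdot e)=s(e)$ is wrong in general --- rather $d(g\cdot e)=d(e)$, and composability of $(g\cdot e)(\restr{g}{e}\cdot\mu)$ forces $s(g\cdot e)=r(\restr{g}{e}\cdot\mu)=r(\restr{g}{e})$). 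The general case then follows by the recursive formula $\restr{g}{e\mu}=\restr{(\restr{g}{e})}{(\restr{g}{e})\cdot\mu}$ and induction on path length. Property \ref{itm:ssa_length}, $d(g\la\mu)=d(\mu)$, is exactly \ref{itm:ssa-length} rephrased: \ref{itm:ssa-length} says $g\cdot(s(g)\Lambda^n)=r(g)\Lambda^n$, so $d(g\cdot\mu)=d(\mu)$ for every $\mu$; and \ref{itm:ssa_la_faithful} is a verbatim restatement of \ref{itm:ssa-faithful}.

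The substantive verifications are \ref{itm:ZS-dot}, $g\la(\mu\nu)=(g\la\mu)((g\ra\mu)\la\nu)$, and \ref{itm:ZS-diamond}, $(g_1g_2)\ra d=(g_1\ra(g_2\la d))(g_2\ra d)$, plus the fact that $\ra$ is a genuine right action. I would first prove the ``one-edge'' associativity identity $g\cdot(\mu\nu)=(g\cdot\mu)(\restr{g}{\mu}\cdot\nu)$ for arbitrary paths $\mu$ (not just edges) by induction on $|\mu|$: the base case $|\mu|=1$ is \ref{itm:ssa-ss}, and the inductive step writes $\mu=e\mu'$ and uses the recursive definition of restriction together with associativity of composition in $\Lambda$. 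This immediately gives \ref{itm:ZS-dot}. For \ref{itm:ssa-ss}-style cocycle identities on the restriction map I would establish, again by induction on $|\mu|$, the ``chain rule'' $\restr{(g_1g_2)}{\mu}=\restr{g_1}{(g_2\cdot\mu)}\,\restr{g_2}{\mu}$; the key point is that composability on the right-hand side holds because $s(g_1)=r(g_2)=r(g_2\cdot\mu)$ --- wait, more carefully: $g_2\cdot\mu$ has range $r(g_2\cdot\mu)$, and since $d(g_2\cdot\mu)=d(\mu)$ and ranges are preserved appropriately, $\restr{g_1}{g_2\cdot\mu}$ lands in $s(g_2\cdot\mu)\Gg = s(\mu)\Gg$ while $\restr{g_2}{\mu}$ has range $s(\mu)$, so the product is defined. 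Applying the chain rule with $\mu$ replaced by $d$ is exactly \ref{itm:ZS-diamond}, and specialising $g_1g_2$ or using $\restr{gs(g)}{\mu}$ and $\restr{r(g)}{\mu}$ appropriately gives that $\ra$ is a unital right action. The identity-morphism and composability edge cases ($\mu$ or $g$ a vertex) are handled by the conventions $g\cdot r(g)=r(g)$ and $\restr{g}{r(g)}=g$ (or $\restr{v}{\mu}=s(\mu)$), which one checks are consistent with the recursion.

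For the converse, assume $(\Gg,\Lambda,\la,\ra)$ is a matched pair with \ref{itm:ssa_la_faithful} and \ref{itm:ssa_length}. Define $g\cdot\mu\coloneqq g\la\mu$; this is a left action of $\Gg$ on $\Lambda$ by definition of a matched pair. Condition \ref{itm:ssa-length} is \ref{itm:ssa_length} together with the observation that a degree-preserving bijection $s(g)\Lambda^n\to r(g)\Lambda^n$ is surjective because $g^{-1}\la(g\la\mu)=\mu$ (inverses exist in $\Gg$), so $g\la(-)$ and $g^{-1}\la(-)$ are mutually inverse. Condition \ref{itm:ssa-faithful} is \ref{itm:ssa_la_faithful}. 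For \ref{itm:ssa-ss}, given $g$ and an edge $e\in s(g)\Lambda$, set $h\coloneqq g\ra e$; then \ref{itm:ZS-dot} gives $g\la(e\mu)=(g\la e)((g\ra e)\la\mu)=(g\cdot e)(h\cdot\mu)$ for all $\mu\in s(e)\Lambda$, and \ref{itm:ZS-rs} guarantees $h\in s(e)\Gg = s(g\cdot e)\Gg$ so the factorisation is legitimate; uniqueness of $h$ is \ref{itm:ssa_la_faithful}. It remains to check that $\restr{g}{\mu}$ (defined via the recursion from $h=g\ra e$) coincides with $g\ra\mu$: this follows by induction on $|\mu|$ using the chain rule $\restr{g}{e\mu}=\restr{(\restr{g}{e})}{(\restr{g}{e})\cdot\mu}$ on one side and \ref{itm:ZS-dot} (with $d_1=e$, $d_2=\mu$) on the other, since $g\ra(e\mu) = (g\ra e)\ra((g\ra e)\la\mu)$ wait --- rather the correct bookkeeping is $g\ra(e\mu)=(g\ra e)\ra\mu$ is false in general; the action axiom for $\ra$ reads $g\ra(d_1d_2) = (g\ra d_1)\ra d_2$, but one must be careful that composability $s(e)=r(\mu)$ is what makes $e\mu$ a path, and then $\restr{g}{e\mu}$ should match; I would simply unwind both recursions in parallel. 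The main obstacle, such as it is, will be keeping the source/range bookkeeping straight --- verifying at each step that the products written down are actually composable in $\Gg$ and in $\Lambda$ --- rather than any conceptual difficulty; once the chain rule for restrictions is in hand, every axiom is a one- or two-line translation.
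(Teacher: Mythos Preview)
Your approach is essentially the paper's, only more explicit: the paper dispatches the forward direction by citing \cite[Lemma~3.4]{ABRW} for the matched-pair axioms, whereas you verify \ref{itm:ZS-rs}--\ref{itm:ZS-diamond} by hand via induction on path length and the chain rule $\restr{(g_1g_2)}{\mu}=\restr{g_1}{g_2\cdot\mu}\,\restr{g_2}{\mu}$. The converse direction matches the paper almost verbatim.

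There is one genuine tangle at the end of your proposal. You write that ``$g\ra(e\mu)=(g\ra e)\ra\mu$ is false in general,'' but this identity is precisely the right-action axiom for $\ra$, and it \emph{is} what you must establish in the forward direction (and may freely use in the converse). The confusion seems to stem from the recursive formula you quote for the restriction map, $\restr{g}{e\mu}=\restr{(\restr{g}{e})}{(\restr{g}{e})\cdot\mu}$: the correct recursion is $\restr{g}{e\mu}=\restr{(\restr{g}{e})}{\mu}$, which you can read off from $(g\cdot e)(\restr{g}{e}\cdot\mu)(\restr{g}{e\mu}\cdot\nu)=g\cdot(e\mu\nu)=(g\cdot e)(\restr{g}{e}\cdot(\mu\nu))=(g\cdot e)(\restr{g}{e}\cdot\mu)(\restr{(\restr{g}{e})}{\mu}\cdot\nu)$ together with faithfulness. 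With the corrected recursion, $g\ra(e\mu)=(g\ra e)\ra\mu$ is immediate, and your final ``unwind both recursions in parallel'' step becomes a one-line induction. Once this bookkeeping is straightened out, everything else in your outline goes through as written.
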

\begin{proof}
	First suppose that $\cdot \colon \Gg * \Lambda \to \Lambda$ is a faithful self-similar action. That $d(g \la \mu) = d(\mu)$ for all $(g,\mu) \in \Gg * \Lambda$ follows from \ref{itm:ssa-length},
	and \cite[Lemma~3.4]{ABRW} implies that $(\Gg,\Lambda)$ is a matched pair. Condition~\ref{itm:ssa_la_faithful} follows from
\ref{itm:ssa-faithful}.
	
	Now, suppose that $(\Gg,\Lambda)$ is a matched pair satisfying \ref{itm:ssa_la_faithful}~and~\ref{itm:ssa_length}. Then for each $g \in \Gg$ and each edge $e \in s(g)\Lambda$, the element $h \coloneqq g \la \mu$ satisfies~\ref{itm:ssa-ss}. Condition~\ref{itm:ZS-dot} gives~\ref{itm:ssa-ss}. That $d(g \la \mu) = d(\mu)$ for all $(g,\mu) \in \Gg * \Lambda$ implies that $g \la \cdot$ restricts to a map $g \la \cdot \colon s(g) \Lambda^n \to r(g) \Lambda^n$. Invertibility of $g$ implies that $g \la \cdot$ is bijective, giving \ref{itm:ssa-length}. Condition~\ref{itm:ssa_la_faithful} implies~\ref{itm:ssa-faithful}.
\end{proof}

Motivated by Proposition~\ref{prop:ssa_is_matched_pair} we introduce a generalisation of the faithful self-similar actions of \cite{ABRW} (this is related to the definition in \cite{LiYang}).
\begin{dfn}
	\label{dfn:ssa_not_faithful} A \emph{self-similar action of a groupoid on a $k$-graph} is a matched pair $(\Gg, \Lambda)$ in which $\Gg$ is a groupoid, $\Lambda$ is a $k$-graph, and $d(g \la \mu) = d(\mu)$ for all $(g, \mu) \in \Gg * \Lambda$.
\end{dfn}

\begin{example}\label{ex:ssa on graph}
Let $E = (E^0, E^1, r, s)$ be a directed graph as in Section~\ref{sec:prelims}. Then $E^*$ is a $1$-graph with degree map given by the length functor. Moreover, every $1$-graph is of this form. The definition of a faithful self-similar action of $\Gg$ on $E^*$ as above reduces to the definition of a self-similar action of a groupoid on a graph in \cite{LRRW18}. This in turn generalises the self-similar groups of automorphisms of trees discussed in, for example, \cite{Nek05} (these correspond to the case where $E$ has just one vertex). The definitions in \cite{EP17} and \cite{Yusnitha}, which do not impose a faithfulness condition, are also instances of Definition~\ref{dfn:ssa_not_faithful} with $k=1$.
\end{example}

\subsubsection{Graphs of groups and group actions on trees}

An \emph{undirected graph} $\Gamma = (\Gamma^0, \Gamma^1,r,s,\overline{\,\cdot\,})$ is a directed graph endowed with a map $\overline{\,\cdot\,} \colon \Gamma^1 \to \Gamma^1$ such that $\ol{\ol{e}} = e \ne \ol{e}$ and $r(\ol{e}) = s(e)$ for all $e$.

\begin{dfn}
	A \emph{graph of groups} is a pair $(\Gamma,G)$ consisting of: an undirected graph $\Gamma$; assignments $v \mapsto G_v$ and $e \mapsto G_e$ of a group to each $v \in \Gamma^0$ and $e \in \Gamma^1$, such that
	 $G_e = G_{\ol{e}}$ for all $e \in \Gamma^1$; and injective homomorphisms $\alpha_e \colon G_e \to G_{r(e)}$ for each $e \in \Gamma^1$.
\end{dfn}

The Bass--Serre Theorem~\cite{Bas93,Ser80} describes a duality between graphs of groups and edge-reversal-free actions of groups on trees.

Building on the observations of \cite[Theorem~5.4]{MR21}, we show that every graph of groups $(\Gamma, G)$ gives rise to a matched pair.
For each $e \in \Gamma^1$, let $\Sigma_e$ be a complete set of coset representatives for $G_{r(e)}/\alpha_e(G_e)$. We assume that $1_{G_{r(e)}} \in \Sigma_e$ so $1_{G_{r(e)}}$ is the representative of the coset $\alpha_e(G_e)$. There is a natural action of $G_{r(e)}$ on $\Sigma_e$: we define $g \cdot \mu$ to be the coset representative of $g \mu$ for all $g \in G_{r(e)}$ and $\mu \in \Sigma_e$.

Consider the groupoid $\Gg = \bigsqcup_{e \in \Gamma^1} G_e$, a bundle of groups over $\Gamma^1$. Define a directed graph $E$ by $E^0 \coloneqq \Gamma^1$,
\[
E^1 \coloneqq \{e \mu f \mid \mu\in \Sigma_{e},\,  ef \in \Gamma^2,\, e = \ol{f} \implies \mu \ne 1_{G_{r(e)}}\},
\]
$r(e \mu f) = e$, and $s(e \mu f) = f$. We identify each $E^n$ with
\[
\{e_0 \mu_1 e_1 \mu_2 e_2 \cdots \mu_n e_n \mid \mu_i \in \Sigma_{e_i} ,\, e_0 \ldots e_n \in \Gamma^{n+1},\, e_{i-1}= \ol{e_{i}} \implies \mu_i \ne 1_{r(e_i)}\}.
\]
Consider the path category $E^*$ of $E$. We show that $(\Gg,E^*)$ can be made into matched pair (indeed, a self-similar groupoid action as in Example~\ref{ex:ssa on graph}).

Fix $ef \in \Gamma^2$, $g \in G_{e}$ and $\mu \in \Sigma_{f}$. Then $g \bla \mu \coloneqq \alpha_{\ol{e}}(g) \cdot \mu \in \Sigma_f$ and $g \bra \mu \coloneqq \alpha_{f}^{-1}((g \bla \mu)^{-1} \alpha_{\ol{e}}(g)\mu) \in G_f$ are the unique elements such that $\alpha_{\ol{e}}(g) \mu = (g \bla \mu) \alpha_{f}(g \bra \mu)$.

As in the proof of Proposition~\ref{prop:factorisation_system_is_matched_pair},  for $g_1,g_2 \in G_{e}$, we have $(g_1g_2) \bla \mu = g_1 \cdot (g_2 \bla \mu)$ and $(g_1g_2) \bra \mu = (g_1 \bra (g_2 \bla \mu))(g_2 \bra \mu)$. We define a left action $\la \colon \Gg * E^* \to E^*$ inductively by
\[
g \la e_0 \mu_1 e_1 \mu_2 e_2 \cdots \mu_n e_n = (e_0 (g \bla \mu_1) e_1) ((g \bra \mu_1) \la e_1 \mu_2 e_2 \cdots \mu_n e_n),
\]
and a right action $\ra \colon \Gg * E^* \to \Gg$ inductively by
\[
g \ra e_0 \mu_1 e_1 \mu_2 e_2 \cdots \mu_n e_n = (g \bra \mu_1) \ra e_1 \mu_2 e_2 \cdots \mu_n e_n.
\]
It is straightforward to verify that these actions turn $(\Gg,E^*)$ into a matched pair. Moreover, $\la,\ra$ satisfy \ref{itm:ssa-length}--\ref{itm:ssa-ss} so $\Gg$ acts self-similarly on $E^*$.

\section{Three homology theories for matched pairs}
\label{sec:homology}

We describe three homology theories associated to a matched pair (the last two via a double complex). We show in Section~\ref{sec:main_theorem} that they all coincide up to natural isomorphism.

\subsection{The categorical complex and categorical homology}
\label{subsec:categorical_homology}

\begin{dfn}\label{dfn:categorical_homology}
	Let $\Cc$ be a small category. For each $k \ge 0$ let $C_k (\Cc) \coloneqq \ZZ \Cc^k$ be the free abelian group generated by composable $k$-tuples. We write $[c_0,\ldots,c_k] \in C_{k+1}(\Cc)$ for the generator corresponding to $(c_0,\ldots,c_k) \in
	\Cc^{k+1}$.  For $k \ge 1$ define $d_k \colon C_{k+1}(\Cc) \to C_k(\Cc)$ by
	\[
	d_k[c_0,\ldots,c_k] = [c_1,\ldots,c_k] + \Big(\sum_{i=1}^{k}(-1)^i [c_0,\ldots,c_{i-1}c_{i},\ldots,c_k]\Big) + (-1)^{k+1}[c_0,\ldots,c_{k-1}]
	\]
	and define $d_0 \colon C_1(\Cc) \to C_0(\Cc)$ by $d_0[c] = [s(c)] - [r(c)]$.
	Then $(C_{\bullet}(\Cc),d_{\bullet})$ is a chain complex, and its homology, $H_{\bullet} (\Cc)$, is called the \emph{categorical homology of $\Cc$}.
	
	For an abelian group $A$, let $C^k( \Cc;A)  \coloneqq \Hom(C_k(\Cc),A)$. Define $d^k \colon C^k( \Cc;A) \to C^{k+1}( \Cc;A) $ by $d^k(f) = f \circ d_k$. Then $(C^{\bullet}( \Cc;A) ,d^\bullet)$ is a cochain complex, and its cohomology, $H^{\bullet}( \Cc;A)$, is called the
	\emph{categorical cohomology of $\Cc$ with coefficients in $A$}.
\end{dfn}

There are more-sophisticated definitions of categorical cohomology in terms of projective resolutions of $\Cc$-modules (cf. \cite{GK18}). Our definition amounts to fixing a resolution, analogous to the \emph{bar resolution} for group homology (cf. \cite[\S 6.5]{Wei94}), of a constant functor $\Cc \to A$ (see \cite[Proposition~2.4]{GK18})

\begin{dfn}
The \emph{categorical homology}, denoted $H^{\bowtie}_{\bullet}(\Cc,\Dd)$, of a matched pair $(\Cc,\Dd)$ is the categorical homology of the Zappa--Sz\'ep product category $\Cc \bowtie \Dd$. For each $k \ge 0$, we define $H_{k}^{\bowtie} \colon \MP \to \Ab$ as the functor defined by the composition $(\Cc,\Dd) \mapsto \Cc \bowtie \Dd \mapsto H_k(\Cc \bowtie \Dd)$.
	
For an abelian group $A$, the \emph{categorical cohomology of $(\Cc,\Dd)$ with coefficients in $A$}, denoted $H_{\bowtie}^{\bullet}(\Cc,\Dd; A)$, is the categorical cohomology of $\Cc \bowtie \Dd$ with coefficients in $A$.
\end{dfn}

We work with simplicial groups rather than chain complexes (see~\cite[Ch.8]{Wei94}) to simplify calculations. The Dold--Kan Theorem \cite[Theorem 8.4.1]{Wei94} gives an equivalence of categories between simplicial abelian groups and chain complexes of abelian groups.

For each $k \ge 1$ and $0 \le i \le k+1$ we define the \emph{face map} $\partial_k^i \colon C_{k+1}(\Cc) \to C_{k}(\Cc)$ by
\begin{equation}\label{eq:face_map}
	\partial_k^i [c_0,\ldots,c_{i},\ldots,c_k]
	=\begin{cases*}
		[c_1,\ldots,c_k] & if $i = 0$\\
		[c_0,\ldots,c_{i-1}c_i,\ldots, c_k] & if $1 \le i \le k$\\
		[c_0,\ldots,c_{k-1}] & if $i=k+1$.
	\end{cases*}
\end{equation}
We also define $\partial_0^0 [c] = [s(c)]$ and $\partial_0^1[c] = [r(c)]$. In particular, $d_k = \sum_{i=0}^{k+1} (-1)^k \partial_k^i$.

To work with degeneracy maps, we use the following---slightly non-standard---notation.
\begin{ntn}
	If $(c_1,\ldots,c_k) \in \Cc^k$ is a composable $k$-tuple, and $0 \le i \le k$, then we define
	\begin{align*}
		(c_1,\ldots,c_{i-1}, \_, c_i, \ldots, c_k)
		&\coloneqq (c_1,\ldots,c_{i-1}, s(c_{i-1}), c_i, \ldots, c_k)\\
		&= (c_1,\ldots,c_{i-1}, r(c_i), c_i, \ldots, c_k) \in \Cc^{k+1}.
	\end{align*}
The identity morphism represented by any given instance of $\_$ is determined by either of the neighbouring entries.
\end{ntn}
For each $k \ge 1$ and $0 \le i \le k$ we define the \emph{degeneracy map} $\sigma_k^i \colon C_k(\Cc) \to C_{k+1} (\Cc)$ by
\[
\sigma_k^i [c_0,\ldots,c_{k-1}] = \begin{cases*}
	[\_, c_0,\ldots, c_{k-1}] & if $i = 0$,\\
	[c_0,\ldots, c_{i-1},\_,c_i,\ldots,c_{k-1}] & if $0 < i <k$,\\
	[c_0,\ldots,c_{k-1},\_] & if $i = k$,
\end{cases*}
\]
with $\sigma_0^0[x] = [x]$ for $x \in \Cc^0$. These and the $\partial_j^i$ satisfy the \emph{simplicial identities}:
\begin{align*}
	\partial_{k-1}^i \partial_k^j &= \partial^{j-1}_{k-1} \partial_k^i \quad \text{if } i < j,\\
	\sigma^i_{k+1} \sigma^j_k &= \sigma^{j+1}_{k+1} \sigma^i_k \quad \text{if } i \le j, \text{ and}\\
	\partial^i_k \sigma^j_k	&= \begin{cases}
		\sigma^{j-1}_k \partial_k^i & \text{if } i < j\\
		\id_{C_k(\Cc)} & \text{if } i = j \text{ or } i = j+1\\
		\sigma^j_k \partial^{i-1}_k & \text{if } i > j+1,
	\end{cases}
\end{align*}
so $(C_{\bullet}(\Cc), \partial, \sigma)$ is a simplicial abelian group.

If $(\Cc,\Dd)$ is a matched pair, then Proposition~\ref{prop:bowtie_**} gives an action of $\Cc$ on each $\Dd^{k+1}$. For $(c,d) \in \Cc * \Dd^{k+1}$, we write $c \la [d]$ for the generator $[c \la d]$ of $C_{k+1}(\Dd)$. Similarly if $(c,d) \in \Cc^{k+1} * \Dd$ we write $[c] \ra d$ for the generator  $[c \ra d]$.

\begin{lem}\label{lem:actions faces commute}
	Let $(\Cc, \Dd)$ be a matched pair and take $k  \in \NN$. For $0 < i \le k +1$  and for
	$(c, d) \in \Cc * \Dd^{k+1}$, we have $\partial^i_k(c \la [d]) = c \la
	\partial^i_k[d]$ in $C_{k}(\Dd)$. Similarly, for $0 \le j < k+1 \in \NN$ and $(c,d) \in \Cc^{k+1} *
	\Dd$, we have $\partial^j_k([c]\ra d) = \partial^j_k[c]\ra d$ in $C_k(\Cc)$.
\end{lem}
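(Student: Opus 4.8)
The plan is to prove both identities by direct computation from the explicit recursive descriptions of the induced actions of $\Cc$ on $\Dd^{k+1}$ and of $\Dd$ on $\Cc^{k+1}$ recorded just after Lemma~\ref{lem:bowtie_to_bowtie*}, using the matched-pair relations \ref{itm:ZS-dot} and \ref{itm:ZS-diamond} together with the action axioms for $\la$ and $\ra$.

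For the left-action identity, write $d=(d_0,\dots,d_k)$ and put $c_0\coloneqq c$ and $c_j\coloneqq c_{j-1}\ra d_{j-1}$ for $1\le j\le k$; since $\ra$ is a right action this gives $c_j=c\ra(d_0\cdots d_{j-1})$, and the recursive formula yields $c\la[d]=[c_0\la d_0,\dots,c_k\la d_k]$. When $i=k+1$ the identity is immediate, as both sides involve only $d_0,\dots,d_{k-1}$ and the seeds $c_0,\dots,c_{k-1}$ depend only on these. For $1\le i\le k$, the face map $\partial_k^i$ replaces the entries in positions $i-1$ and $i$ by their product, while $\partial_k^i[d]$ replaces $d_{i-1},d_i$ by $d_{i-1}d_i$; feeding the latter into the recursive formula leaves the entries in positions $0,\dots,i-2$ untouched, turns the entry in position $i-1$ into $c_{i-1}\la(d_{i-1}d_i)=(c_{i-1}\la d_{i-1})\bigl((c_{i-1}\ra d_{i-1})\la d_i\bigr)=(c_{i-1}\la d_{i-1})(c_i\la d_i)$ by \ref{itm:ZS-dot} (the contracted entry of $\partial_k^i(c\la[d])$), and hands the seed $c_{i-1}\ra(d_{i-1}d_i)=(c_{i-1}\ra d_{i-1})\ra d_i=c_i\ra d_i=c_{i+1}$ to the remaining positions, so that they too agree with $\partial_k^i(c\la[d])$.

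The right-action identity follows by the symmetric argument (equivalently, by applying the left-hand case to the opposite categories, under which $\partial_k^i$ corresponds to $\partial_k^{k+1-i}$, so the range $0<i\le k+1$ becomes $0\le j<k+1$): writing $c=(c_0,\dots,c_k)$, $d^{(k)}\coloneqq d$ and $d^{(j)}\coloneqq c_{j+1}\la d^{(j+1)}$, the recursive formula gives $[c]\ra d=[c_0\ra d^{(0)},\dots,c_k\ra d^{(k)}]$; the case $j=0$ is immediate, and for $1\le j\le k$ the contracted entry is $(c_{j-1}c_j)\ra d^{(j)}=\bigl(c_{j-1}\ra(c_j\la d^{(j)})\bigr)(c_j\ra d^{(j)})=(c_{j-1}\ra d^{(j-1)})(c_j\ra d^{(j)})$ by \ref{itm:ZS-diamond}, the remaining entries coinciding with those of $\partial_k^j([c]\ra d)$ because $\la$ is a left action.

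I do not anticipate a genuine obstacle; the only thing requiring care is the index bookkeeping, and in particular recognising why the excluded cases ($i=0$ for $\la$, $j=k+1$ for $\ra$) really do fail---there the initial seed of the recursion is altered ($c\mapsto c\ra d_0$, respectively $d\mapsto c_k\la d$)---which is exactly why the statement restricts the ranges of $i$ and $j$.
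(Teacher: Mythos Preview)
Your proof is correct and follows essentially the same approach as the paper: a direct computation using the explicit description of the extended action $c\la(d_0,\dots,d_k)=\big(c\la d_0,(c\ra d_0)\la d_1,\dots\big)$ together with \ref{itm:ZS-dot} (respectively \ref{itm:ZS-diamond}) to handle the contracted entry. Your version is slightly more explicit in separating out the deletion case $i=k+1$ and in tracking why the excluded indices fail, but the core argument is identical.
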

\begin{proof}
	We prove the first statement; the second is proved symmetrically. Since $i > 0$, we have
	\begin{align*}
		c \la \partial_k^i[d]
		&= c\la[d_0, \dots, d_id_{i+1},\dots, d_k]\\
		&= \big[c\la d_0, (c\ra d_0)\la d_1, \dots, (c\ra d_0\dots d_{i-1})\la d_id_{i+1},\dots,(c\ra (d_0\dots d_{k-1}))\la d_k\big]\\
		&= \big[c\la d_0, (c\ra d_0)\la d_1, \dots, \big((c\ra d_0\dots d_{i-1})\la d_i\big)\big((c\ra d_0\dots d_{i})\la d_{i+1}\big),\\
		&\hskip12em \dots,(c\ra (d_0\dots d_{k-1}))\la d_k\big]\\
		&= \partial_k^i\big[c\la d_0, (c\ra d_0)\la d_1, \dots, \big((c\ra d_0\dots d_{i-1})\la d_i), \big((c\ra d_0\dots d_{i})\la d_{i+1}\big),\\
		&\hskip12em \dots,(c\ra (d_0\dots d_{k-1}))\la d_k\big]\\
		&= \partial_k^i(c \la [d]).\qedhere
	\end{align*}
\end{proof}

\begin{rmk}
Lemma~\ref{lem:actions faces commute} is \emph{only} valid for $i > 0$ and $j < k+1$. The left action of $\Cc$ on $\Dd^{k+1}$ does not commute with $\partial^0_k$, and the right action of $\Dd$ on $\Cc^{k+1}$ does not commute with $\partial^{k+1}_{k}$.
\end{rmk}

\subsection{The matched complex}

We associate a double complex to each matched pair $(\Cc,\Dd)$. For $p,\,q \ge 0$, regard elements of $\Cc^p * \Dd^q$ as composable tuples in $(\Cc \bowtie \Dd)^{p+q}$, whose first $p$ terms belong to $\Cc \subseteq \Cc \bowtie \Dd$ and whose remaining $q$ terms belong to $\Dd \subseteq \Cc \bowtie \Dd$.

Let $C_{p,q} (\Cc,\Dd) \coloneqq \ZZ (\Cc^p * \Dd^q)$, the free abelian group generated by $\Cc^p * \Dd^q$. Let $\la$ be the action of $\Cc$ on $\Dd^q$ of Lemma~\ref{lem:bowtie_to_bowtie*}. Define \emph{horizontal face maps} $\partial_{p,q}^{h,i} \colon C_{p+1,q}(\Cc,\Dd) \to C_{p,q}(\Cc,\Dd)$ as follows. For $q \ge 1$,
\[
\partial_{p,q}^{h,i} [c_0,\ldots,c_{p},d_{0},\ldots,d_{q-1}]
= \begin{cases}
	[c_1,\ldots, c_p, d_0,\ldots, d_{q-1}] & \text{if } i = 0\\
	[c_0,\ldots, c_{i-1}c_i,\ldots, c_p, d_0,\ldots, d_{q-1}] & \text{if } 1 \le i \le p\\
	[c_0,\ldots,c_{p-1}, c_p \la (d_0,\ldots,d_{q-1}) ]& \text{if } i = p+1,
\end{cases}
\]
while $\partial_{p,0}^{h,i} \coloneqq \partial_p^i \colon C_{p+1}(\Cc) \to C_{p}(\Cc) $ as in \eqref{eq:face_map}. For $0 \le i \le p$ we define the
\emph{horizontal degeneracy maps}
$\sigma_{p,q}^{h,i} \colon C_{p,q}(\Cc,\Dd) \to C_{p+1,q}(\Cc,\Dd)$ by
\[
\sigma^{h,i}_{p,q}[c_0,\ldots,c_{p-1},d_0,\ldots,d_{q-1}] = [c_0, \ldots,c_i,\_,c_{i+1}, \ldots, c_{p-1}, d_0,\ldots,d_{q-1}].
\]
For each $q \ge 0$, the tuple $(C_{\bullet,q}(\Cc,\Dd), \partial^h_{\bullet,q}, \sigma^h_{\bullet,q})$ is a simplicial abelian group.

Let $\ra$ be the action of $\Dd$ on $\Cc^*$ of Lemma~\ref{lem:bowtie_to_bowtie*}. Define \emph{vertical face maps} $\partial_{p,q+1}^{v,j} \colon C_{p,q+1}(\Cc,\Dd) \to C_{p,q}(\Cc,\Dd)$ as follows. For $p > 0$,
\begin{equation}
	\label{eq:vertical_face_maps}
	\partial_{p,q}^{v,j} [c_0,\ldots,c_{p-1},d_{0},\ldots,d_{q}]
	= (-1)^p \begin{cases}
		[(c_0,\ldots,c_{p-1}) \ra d_0, d_1,\ldots,d_q]  & \text{if } j = 0\\
		[c_0,\ldots, c_{p-1}, d_0,\ldots, d_{j-1}d_j, \ldots, d_{q}] & \text{if } 1 \le j \le q\\
		[c_0,\ldots,c_{p-1}, d_0,\ldots,d_{q-1}] & \text{if } j = q+1,
	\end{cases}
\end{equation}
while $\partial_{0,q}^{v,i} \coloneqq \partial_q^i \colon C_{q+1} (\Dd) \to C_{q}(\Dd)$ as in \eqref{eq:face_map}. For $0 \le j \le q$ we define \emph{vertical degeneracy maps} $\sigma_{p,q}^{v,j} \colon C_{p,q} \to C_{p,q+1}$ by
\[
\sigma^{v,j}_{p,q}[c_0,\ldots,c_{p-1},d_0,\ldots,d_{q-1}] = (-1)^p [c_0, \ldots , c_{p-1}, d_0,\ldots, d_{j},\_,d_{j+1},d_{q-1}].
\]
Then $(C_{p,\bullet}(\Cc,\Dd), \partial^v_{p,\bullet}, \sigma^v_{p,\bullet})$ is also a simplicial abelian group.

For the next result, recall from \cite[\S8.5]{Wei94} that a bisimplicial abelian group is a quintuple $(C_{\bullet,\bullet}, \partial^h, \partial^v, \sigma^h, \sigma^v)$ consisting of abelian groups $C_{p,q}$ and homomorphisms $\partial^{h,i}_{p,q}\colon C_{p+1,q} \to C_{p,q}$, $\sigma^{h,i}_{p,q}\colon C_{p,q} \to C_{p+1,q}$, $\partial^{v,j}_{p,q}\colon C_{p,q+1} \to C_{p,q}$, and $\sigma^{v,j}_{p,q}\colon C_{p,q} \to C_{p,q+1}$ such that each $(C_{p, \bullet}, \partial^v_{p, \bullet}, \sigma^v_{p,\bullet})$ and each  $(C_{\bullet, q}, \partial^h_{\bullet, q}, \sigma^h_{\bullet, q})$ is a simplicial group, and
\[
\partial_{p,q}^{v,i}\partial_{p,q+1}^{h,j} = -\partial_{p,q}^{h,j} \partial_{p+1,q}^{v,i}
\quad\text{and}\quad
\sigma_{p+1,q}^{v,i}\sigma_{p,q}^{h,j} = -\sigma_{p,q+1}^{h,j} \sigma_{p,q}^{v,i}.
\]

\begin{prop}
	The quintuple $(C_{\bullet,\bullet}, \partial^h, \partial^v, \sigma^h, \sigma^v)$ is a bisimplicial group. Define
	\[
	d_{p,q}^h = \sum_{i=0}^{p+1} (-1)^i \partial_{p,q}^{h,i} \colon C_{p+1,q} \to C_{p,q} \quad \text{and} \quad 	d_{p,q}^v = \sum_{i=0}^{q+1} (-1)^i \partial_{p,q}^{v,i} \colon C_{p,q+1} \to C_{p,q}.
	\]
	Then
	\begin{equation}\label{eq:double_complex}
		%
		\begin{tikzcd}[column sep=17pt, row sep=17pt]
			{} \arrow[dotted,d] & {} \arrow[dotted,d] & {} \arrow[dotted,d] & \\
			{C_{0,2}} \arrow[d,"d^v_{0,1}"] & {C_{1,2}}  \arrow[d,"d^v_{1,1}"] \arrow[l,"d^h_{0,2}"]& {C_{2,2}} \arrow[d,"d^v_{2,1}"]  \arrow[l,"d^h_{1,2}"]& {} \arrow[dotted,l]\\
			{C_{0,1}} \arrow[d,"d^v_{0,0}"] & {C_{1,1}} \arrow[d,"d^v_{1,0}"]  \arrow[l,"d^h_{0,1}"]& {C_{2,1}} \arrow[d,"d^v_{2,0}"] \arrow[l,"d^h_{1,1}"]& {} \arrow[dotted,l] \\
			{C_{0,0}}  & {C_{1,0}} \arrow[l,"d^h_{0,0}"] & {C_{2,0}} \arrow[l,"d^h_{1,0}"] & {} \arrow[dotted,l]
		\end{tikzcd}
	\end{equation}
	is a first-quadrant double chain complex satisfying $d^hd^v = - d^vd^h$.
\end{prop}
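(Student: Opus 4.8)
The plan is to reduce the proposition to the two bisimplicial cross-relations and then appeal to the standard passage from bisimplicial abelian groups to double complexes. Since each row $(C_{\bullet,q},\partial^h,\sigma^h)$ and each column $(C_{p,\bullet},\partial^v,\sigma^v)$ has already been observed to be a simplicial abelian group, the only new input needed to identify $(C_{\bullet,\bullet},\partial^h,\partial^v,\sigma^h,\sigma^v)$ as a bisimplicial group is the pair of identities
\[
\partial^{v,i}_{p,q}\partial^{h,j}_{p,q+1} = -\partial^{h,j}_{p,q}\partial^{v,i}_{p+1,q}
\qquad\text{and}\qquad
\sigma^{v,i}_{p+1,q}\sigma^{h,j}_{p,q} = -\sigma^{h,j}_{p,q+1}\sigma^{v,i}_{p,q}.
\]
Granting these, $d^hd^h=0$ and $d^vd^v=0$ are the standard fact that the unnormalised chain complex of a simplicial abelian group is a chain complex (applied to each fixed row, respectively each fixed column); the diagram is first-quadrant because $C_{p,q}=0$ whenever $p<0$ or $q<0$; and $d^hd^v=-d^vd^h$ follows by expanding $d^h=\sum_i(-1)^i\partial^{h,i}$ and $d^v=\sum_j(-1)^j\partial^{v,j}$, applying the first cross-relation termwise, and factoring out the resulting global sign---this is precisely the computation of \cite[\S8.5]{Wei94}.

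Thus the real work is the two cross-relations. The guiding observation is that every vertical face and degeneracy map carries a global sign $(-1)^p$, where $p$ is the number of $\Cc$-entries of its argument, while a horizontal face map $\partial^{h,\bullet}$ drops this number from $p+1$ to $p$. Hence in the first identity the two vertical maps carry the signs $(-1)^p$ and $(-1)^{p+1}$, and similarly in the second, so the required minus sign is automatic and it remains to check that the corresponding \emph{unsigned} maps commute. I would do this by cases on the indices. If $0\le j\le p$ and $1\le i\le q+1$, the horizontal operation acts only on the $\Cc$-block of a generator and the vertical one only on the $\Dd$-block, so they commute on the nose. If $j=p+1$ (so $\partial^{h,p+1}$ replaces $c_p$ by $c_p\la(d_0,\dots,d_q)$) and $1\le i\le q+1$, the commutation is exactly Lemma~\ref{lem:actions faces commute} for the left action of $\Cc$ on $\Dd$-tuples. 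If $0\le j\le p$ and $i=0$ (so $\partial^{v,0}$ replaces the $\Cc$-block $(c_0,\dots,c_{p-1})$ by $(c_0,\dots,c_{p-1})\ra d_0$), it is the symmetric statement of Lemma~\ref{lem:actions faces commute} for the right action of $\Dd$ on $\Cc$-tuples. The degeneracy identity is of the same flavour but easier: both $\sigma^{h,\bullet}$ and $\sigma^{v,\bullet}$ merely insert an identity morphism, one into the $\Cc$-block and one into the $\Dd$-block, and no actions are involved, so only the $(-1)^p$-versus-$(-1)^{p+1}$ comparison is needed.

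The one case of the face-map identity that genuinely mixes both actions is $j=p+1$, $i=0$, and this is the step I expect to be the main obstacle---though it is still a finite calculation. Here I would evaluate both composites on a generator $[c_0,\dots,c_p,d_0,\dots,d_q]$ and reconcile them with the recursive formulas for the extended actions coming from Lemma~\ref{lem:bowtie_to_bowtie*}:
\[
(c_0,\dots,c_p)\ra d_0 = \big((c_0,\dots,c_{p-1})\ra(c_p\la d_0),\; c_p\ra d_0\big)
\]
and
\[
c_p\la(d_0,\dots,d_q) = \big(c_p\la d_0,\; (c_p\ra d_0)\la(d_1,\dots,d_q)\big).
\]
Substituting these, both composites are seen to yield the underlying tuple $\big[(c_0,\dots,c_{p-1})\ra(c_p\la d_0),\; (c_p\ra d_0)\la(d_1,\dots,d_q)\big]$, differing only by the predicted sign; at bottom this is where conditions \ref{itm:ZS-dot} and \ref{itm:ZS-diamond} are used. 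Apart from this mixed case, everything is either formal or already recorded in Lemma~\ref{lem:actions faces commute}, and the only real hazard is keeping the various $(-1)^p$ factors consistent across the case split.
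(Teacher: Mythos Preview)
Your proposal is correct and follows essentially the same route as the paper: reduce to the two cross-relations, observe that the global sign comes from the $(-1)^p$ in the vertical maps, and isolate the single mixed case $j=p+1$, $i=0$ for an explicit computation landing on $\big[(c_0,\dots,c_{p-1})\ra(c_p\la d_0);\,(c_p\ra d_0)\la(d_1,\dots,d_q)\big]$. Your case split is in fact slightly more careful than the paper's---you correctly invoke Lemma~\ref{lem:actions faces commute} for the intermediate cases where one index is extremal but the other is not, whereas the paper sweeps these into the phrase ``concatenate or delete nonadjacent coordinates,'' which is a mild gloss since an action is still involved there.
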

\begin{proof}
	Fix $p, q \ge 0$ and fix $i \le p+1$ and $j \le q+1$. We must show that $\partial_{p,q}^{v,i}\partial_{p,q+1}^{h,j} = - \partial_{p,q}^{h,j} \partial_{p+1,q}^{v,i}$. Fix $[c_0, \dots, c_p; d_0, \dots, d_q] \in C_{p+1, q+1}$. If $i \not= p+1$ or $j \not= 0$, then $\partial_{p,q}^{v,i}$ and $\partial_{p,q+1}^{h,j}$ compose or delete nonadjacent coordinates, as do $\partial_{p,q}^{h,j}$ and $\partial_{p+1,q}^{v,i}$, and so the factors of $(-1)^p$ and $(-1)^{p+1}$ in $\partial_{p,q}^{v,i}$ and $\partial_{p+1,q}^{v,i}$ give the desired anticommutation relation. If $i = p+1$ and $j = 0$, then
	\begin{align*}
		\partial_{p,q}^{v,j}\partial_{p,q+1}^{h,i}([c_0, \dots, c_p; d_0, \dots, d_q])
		&= \partial_{p,q}^{v,0}([c_0, \dots, c_{p-1}; c_p\la (d_0, \dots, d_q)])\\
		&= \partial_{p,q}^{v,0}([c_0, \dots, c_{p-1}; c_p \la d_0, (c_p \ra d_0) \la (d_1, \dots, d_q)])\\
		&= (-1)^p [(c_0, \dots, c_{p-1})\ra (c_p \la d_0); (c_p \ra d_0) \la (d_1, \dots, d_q)],
	\end{align*}
	and
	\begin{align*}
		\partial_{p,q}^{h,i}\partial_{p+1,q}^{v,j}([c_0, \dots, c_p; d_0, \dots, d_q])
		&= (-1)^{p+1} \partial_{p,q}^{h,p+1}([(c_0, \dots, c_p)\ra d_0; d_1, \dots, d_q])\\
		&= (-1)^{p+1} \partial_{p,q}^{h,p+1}([(c_0, \dots, c_{p-1})\ra (c_p \la d_0), c_p \ra d_0; d_1, \dots, d_q])\\
		&= (-1)^{p+1} [(c_0, \dots, c_{p-1})\ra (c_p \la d_0); (c_p \ra d_0) \la (d_1, \dots, d_q)],
	\end{align*}
	which gives the desired relation. It follows that $d^vd^h = -d^hd^v$.
	
	The anticommutation relation $\sigma_{p+1,q}^{v,i}  \sigma_{p,q}^{h,j} = - \sigma_{p,q+1}^{h,j}  \sigma_{p,q}^{v,i}$ also
	follows from direct computation. Routine calculation shows that~\eqref{eq:double_complex} is a first-quadrant double chain complex.
\end{proof}

\begin{dfn}
	We call the double chain complex $(C_{\bullet,\bullet}(\Cc,\Dd),d^h,d^v)$ the \emph{matched complex} of the matched pair $(\Cc,\Dd)$.
\end{dfn}

\begin{lem}
	The assignment $C_{\bullet,\bullet}$ of a matched complex to each matched pair is a functor from the category $\MP$ of matched pairs to the category of double complexes of abelian groups.
\end{lem}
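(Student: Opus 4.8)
The plan is to define, for each matched-pair morphism $f = (f^L, f^R) \colon (\Cc,\Dd) \to (\Cc',\Dd')$, a homomorphism $C_{p,q}(f) \colon C_{p,q}(\Cc,\Dd) \to C_{p,q}(\Cc',\Dd')$ on generators by
\[
C_{p,q}(f)[c_0,\dots,c_{p-1},d_0,\dots,d_{q-1}] = [f^L(c_0),\dots,f^L(c_{p-1}),f^R(d_0),\dots,f^R(d_{q-1})],
\]
and then to check that this is well defined, that $C_{\bullet,\bullet}(f)$ is a morphism of bisimplicial abelian groups (hence of double complexes), and that $f \mapsto C_{\bullet,\bullet}(f)$ preserves identities and composition. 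For well-definedness, note that $(f^L(c_0),\dots,f^L(c_{p-1})) \in (\Cc')^p$ and $(f^R(d_0),\dots,f^R(d_{q-1})) \in (\Dd')^q$ because $f^L,f^R$ are functors, and $s(f^L(c_{p-1})) = r(f^R(d_0))$ by condition (i) of Definition~\ref{dfn:matched_pair} applied to $(c_{p-1},d_0)$; equivalently, $C_{p,q}(f)$ is the restriction to $\Cc^p * \Dd^q \subseteq (\Cc\bowtie\Dd)^{p+q}$ of the map on composable $(p+q)$-tuples induced by the functor $h \colon \Cc \bowtie \Dd \to \Cc' \bowtie \Dd'$ of Corollary~\ref{cor:matched_pair_morphisms}, which carries $\Cc$ to $\Cc'$ and $\Dd$ to $\Dd'$. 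Preservation of identities and composition is then immediate from the formula.

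The one nonroutine step is compatibility with the face maps built from the actions $\la$ and $\ra$. Write $f^R_*$ (resp. $f^L_*$) for the entrywise application of $f^R$ (resp. $f^L$) to tuples. The claim to prove is that a matched-pair morphism intertwines the extended actions: for all $(c,(d_0,\dots,d_{q-1})) \in \Cc * \Dd^{q}$,
\[
f^L(c) \la \big(f^R(d_0),\dots,f^R(d_{q-1})\big) = f^R_*\big(c \la (d_0,\dots,d_{q-1})\big),
\]
and symmetrically $\big(f^L(c_0),\dots,f^L(c_{p-1})\big) \ra f^R(d) = f^L_*\big((c_0,\dots,c_{p-1}) \ra d\big)$. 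I would prove the first identity by induction on $q$: the case $q=1$ is exactly condition (ii) of Definition~\ref{dfn:matched_pair}, and the inductive step uses the recursion $c \la (d_0,\dots,d_{q-1}) = \big(c \la d_0,\ (c \ra d_0) \la (d_1,\dots,d_{q-1})\big)$ recorded after Lemma~\ref{lem:bowtie_to_bowtie*} together with conditions (ii) and (iii); the second identity follows symmetrically (applying the argument to opposite categories, as in Lemma~\ref{lem:bowtie_to_bowtie*}). Said differently, this lemma asserts that $(f^L, f^R_*) \colon (\Cc,\Dd^*) \to (\Cc',(\Dd')^*)$ and $(f^L_*, f^R) \colon (\Cc^*,\Dd) \to ((\Cc')^*,\Dd')$ are again matched-pair morphisms.

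With this lemma in hand, the assembly is bookkeeping. For each fixed $q$ the map $C_{\bullet,q}(f)$ commutes with every horizontal face map $\partial^{h,i}_{p,q}$: the cases $i=0$ and $1 \le i \le p$ are functoriality of $f^L$ (with the $q=0$ case reducing to the categorical face maps of $\Cc$), and the case $i=p+1$ is precisely the lemma. It commutes with the horizontal degeneracy maps because $f^L$ preserves identity morphisms. Likewise, for each fixed $p$, $C_{p,\bullet}(f)$ commutes with the vertical face maps $\partial^{v,j}_{p,q}$ (the case $j=0$ being the symmetric lemma, the case $p=0$ reducing to the categorical face maps of $\Dd$) and with the vertical degeneracy maps; the signs $(-1)^p$ appearing in the vertical structure are identical on source and target and so cause no trouble. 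Hence $C_{\bullet,\bullet}(f)$ is a morphism of bisimplicial abelian groups, and since $d^h = \sum_i (-1)^i \partial^{h,i}$ and $d^v = \sum_j (-1)^j \partial^{v,j}$, it commutes with $d^h$ and $d^v$, i.e.\ it is a morphism of double complexes. Combined with the first paragraph, this gives the asserted functor $\MP \to$ (double complexes of abelian groups). The main obstacle is the inductive intertwining lemma of the second paragraph; everything else is matching definitions.
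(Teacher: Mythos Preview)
Your proposal is correct and follows exactly the approach of the paper, which dispatches the lemma in a single sentence: ``Matched-pair morphisms intertwine the face and degeneracy maps $\partial_{p,q}^{\bullet,i}$ and $\sigma_{p,q}^{\bullet,i}$.'' You have simply unpacked this assertion in detail, correctly identifying the inductive intertwining of the extended actions as the only nonformal step.
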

\begin{proof}
	Matched-pair morphisms intertwine the face and degeneracy maps $\partial_{p,q}^{\bullet,i}$ and $\sigma_{p,q}^{\bullet,i}$.
\end{proof}

\begin{ntn}
For the remainder of the paper we frequently omit the subscripts on face maps, degeneracy maps and boundary maps. For example, $\partial^{h, i}$ denotes any of the maps $\partial^{h, i}_{p,q}$; the values of $p$ and $q$ should be clear from context.
\end{ntn}

There are two chain complexes associated to each double complex: the \emph{diagonal complex} and the \emph{total complex} \cite[\S 8.5]{Wei94}.

\subsection{The diagonal complex and diagonal homology}
\label{subsec:diagonal_complex}

Let $(C_{\bullet,\bullet}(\Cc,\Dd),d^h,d^v)$ be the matched complex of a matched pair $(\Cc,\Dd)$. 	For each $k \ge 0$, let
\[
C_{k}^{\Delta}(\Cc,\Dd) \coloneqq C_{k,k} (\Cc,\Dd)
\]
and define $\partial^{\Delta,i}_{k} \colon C_{k+1}^{\Delta}(\Cc,\Dd) \to C_k^{\Delta} (\Cc,\Dd)$ and $\sigma^{\Delta,i}_k \colon C_{k}^{\Delta}(\Cc,\Dd) \to C_{k+1}^{\Delta}
(\Cc,\Dd)$ by
\[
\partial^{\Delta, i}_k \coloneqq \partial^{h,i}_{k,k} \partial^{v,i}_{k+1,k}
\quad \text{and} \quad
\sigma^{\Delta,i}_{k} \coloneqq \sigma^{v,i}_{k+1,k} \sigma^{h,i}_{k,k}.
\]
Then $(C_{\bullet}^{\Delta}(\Cc,\Dd), \partial^{\Delta}, \sigma^{\Delta})$ is a simplicial group \cite[\S 8.5]{Wei94}. Let $d^{\Delta}_k \coloneqq \sum_{i=0}^{k+1} (-1)^{i}
\partial^{\Delta,i}_k$.

\begin{dfn}
	The \emph{diagonal complex} of $(\Cc,\Dd)$ is the chain complex $(C_{\bullet}^{\Delta}(\Cc,\Dd),d^{\Delta}_{\bullet})$. We denote the homology of this chain complex by $H^{\Delta}_{\bullet}(\Cc,\Dd)$.
\end{dfn}

\subsection{The total complex and total homology}
\label{subsec:total_complex}

Let $(C_{\bullet,\bullet}(\Cc,\Dd),d^h,d^v)$ be the matched complex of a matched pair $(\Cc,\Dd)$. 	For each $k \ge 0$, let
\[
C_{k}^{\Tot} (\Cc,\Dd) \coloneqq \bigoplus_{p+q = k} C_{p,q}(\Cc,\Dd).
\]
Define $d^{\Tot}_k \colon C_{k+1}^{\Tot}(\Cc,\Dd) \to C_{k}^{\Tot} (\Cc,\Dd)$ by $d^{\Tot}_k \coloneqq \sum_{p+q = k} d_{p,q}^{h} + d_{p,q}^v$.

\begin{dfn}
	The \emph{total complex} of $(\Cc,\Dd)$ is the chain complex $(C_{\bullet}^{\Tot}(\Cc,\Dd),d^{\Tot}_{\bullet})$. We denote the homology of this complex by $H^{\Tot}_{\bullet}(\Cc,\Dd)$.
\end{dfn}

\section{Equivalence of homology theories}
\label{sec:main_theorem}

In this section we prove that the homology theories for matched pairs introduced in Subsections~\ref{subsec:categorical_homology},~\ref{subsec:diagonal_complex},~and~\ref{subsec:total_complex} coincide. Specifically, we describe natural chain maps that induce isomorphisms between them and between the dual cohomology theories. We also give formulae for their inverses. The main result is Theorem~\ref{thm:cohomologies_are_the_same}. We start by defining the maps involved.

\subsection{The natural chain maps}
\label{subsec:the_chain_maps}

We begin by describing explicit formulae for natural chain maps $\nabla \colon C_{\bullet}^{\Tot} \to C_{\bullet}^{\Delta}$, $\Pi \colon C_{\bullet}^{\Delta} \to C_{\bullet}^{\bowtie}$, and $\Psi \colon C_{\bullet}^{\bowtie} \to C_{\bullet}^{\Tot}$.

\subsubsection{The map $\nabla$}

The map $\nabla \colon C_{\bullet}^{\Tot} \to C_{\bullet}^{\Delta}$ is the \emph{Eilenberg--Zilber map} \cite[\S~8.5.4]{Wei94}.
For $p,q \in \NN$ a \emph{$(p,q)$-shuffle} is a permutation $\beta$ of $\{1,\ldots,p+q\}$ such that
\[
\beta(1) < \beta(2) < \cdots < \beta(p) \quad \text{ and } \quad \beta(p+1) < \beta(p+2) < \cdots < \beta (p+q).
\]
We write $\Sh(p,q)$ for the collection of all $(p,q)$-shuffles,
and $\sgn(\beta) \in \{-1,1\}$ for the sign of a permutation $\beta$.
The $(p,q)$-component $\nabla_{p,q} \colon C_{p,q} \to C_{p+q}^{\Delta}$ of the Eilenberg--Zilber map is
\begin{equation} \label{eq:eilenberg-zilber}
	\nabla_{p,q} = \sum_{\beta \in \Sh(p,q)} \sgn(\beta) \,\sigma^{h,\beta(p+q)}_{p+q-1,p+q} \circ \cdots \circ \sigma^{h,\beta(p+1)}_{p,p+q} \circ \sigma^{v,\beta(p)}_{p,p+q-1} \circ \cdots \circ \sigma^{v,\beta(1)}_{p,q}.
\end{equation}

\subsubsection{The map $\Pi$} We describe $\Pi \colon C_{\bullet}^{\Delta} \to C_{\bullet}^{\bowtie}$. For $k \ge 1$ define $\bowtie^k \colon (\Cc * \Dd)^k \to (\Dd * \Cc)^k$ by
\[
\bowtie^k(c_1,d_1,\ldots,c_k,d_k) \coloneqq (c_1 \bowtie d_1,\ldots, c_k \bowtie d_k).
\]
Set  $\Pi_0 = \id_{\Cc^{0}}$ and inductively define $\Pi_k \colon \Cc^k * \Dd^k \to (\Cc \bowtie \Dd)^k = (\Dd * \Cc)^k$ for $k \ge 1$ by
\begin{equation}\label{eq:pi}
	\Pi_k \coloneqq \bowtiemap^k \circ (1_\Cc * \Pi_{k-1} * 1_\Dd).
\end{equation}
These extend to homomorphisms $\Pi_k \colon C^{\Delta}_k (\Cc,\Dd) \to C^{\bowtie}_k(\Cc,\Dd)$. For example,
\begin{align*}
	\Pi_1[c_1,d_1] &= [c_1 \la d_1, c_1 \ra d_1],\\
	\Pi_2[c_1,c_2,d_1,d_2] &=
	[c_1c_2 \la d_1, c_1 \ra (c_2 \la d_1), (c_2 \ra d_1) \la d_2, c_2\ra d_1d_2], \text{ and}\\
	\Pi_3[c_1,c_2,c_3,d_1,d_2,d_3] &= [c_1c_2c_3 \la d_1, c_1 \ra (c_2c_3 \la d_1), (c_2c_3 \ra d_1 )\la d_2,\\
	&\qquad \qquad c_2 \ra (c_3 \la d_1d_2), (c_3 \ra d_1d_2) \la d_3, c_3 \ra d_1d_2d_3].
\end{align*}
An induction on $k$, using that matched-pair morphisms respect left and right actions, shows that the $\Pi_k$ extend to natural transformations $\Pi_k
\colon C_k^{\Delta} \to C_k^{\bowtie}$.

\begin{rmk}\label{rmk:diagram_pi_k}
	The map $\Pi_k$ can be described diagrammatically. We represent elements of $\Cc$ by blue vertices, and elements of $\Dd$ by red vertices; vertical lines are identity morphisms; and crossings are applications of $\bowtie$:
	
	\begin{minipage}{0.35\textwidth}
		\def\wid{0.55}
		\def\hei{0.55}
		\begin{center}
			\vspace{4pt}
			
			\begin{tikzpicture}
				[vertex/.style={circle, fill=black, inner sep=1pt},
				edge/.style={thick,blue},
				edgev/.style={thick,dashed,red},
				scale = 1.2]
				
				\node[vertex,blue] (0-0) at (0*\wid,0) {};
				\node[vertex,blue] (1-0) at (1*\wid,0) {};
				\node[vertex,blue] (2-0) at (2*\wid,0) {};
				\node[vertex,red] (3-0) at (3*\wid,0) {};
				\node[vertex,red] (4-0) at (4*\wid,0) {};
				\node[vertex,red] (5-0) at (5*\wid,0) {};
				
				\node[vertex,blue] (0-1) at (0*\wid,-1*\hei) {};
				\node[vertex,blue] (1-1) at (1*\wid,-1*\hei) {};
				\node[vertex,red] (2-1) at (2*\wid,-1*\hei) {};
				\node[vertex,blue] (3-1) at (3*\wid,-1*\hei) {};
				\node[vertex,red] (4-1) at (4*\wid,-1*\hei) {};
				\node[vertex,red] (5-1) at (5*\wid,-1*\hei) {};
				
				\node[vertex,blue] (0-2) at (0*\wid,-2*\hei) {};
				\node[vertex,red] (1-2) at (1*\wid,-2*\hei) {};
				\node[vertex,blue] (2-2) at (2*\wid,-2*\hei) {};
				\node[vertex,red] (3-2) at (3*\wid,-2*\hei) {};
				\node[vertex,blue] (4-2) at (4*\wid,-2*\hei) {};
				\node[vertex,red] (5-2) at (5*\wid,-2*\hei) {};
				
				\node[vertex,red] (0-3) at (0*\wid,-3*\hei) {};
				\node[vertex,blue] (1-3) at (1*\wid,-3*\hei) {};
				\node[vertex,red] (2-3) at (2*\wid,-3*\hei) {};
				\node[vertex,blue] (3-3) at (3*\wid,-3*\hei) {};
				\node[vertex,red] (4-3) at (4*\wid,-3*\hei) {};
				\node[vertex,blue] (5-3) at (5*\wid,-3*\hei) {};

				\draw[edge] (0-0) -- (0-1) -- (0-2) -- (1-3);
				\draw[edge] (1-0) -- (1-1) -- (2-2) -- (3-3);
				\draw[edge] (2-0) -- (3-1) -- (4-2) -- (5-3);
				
				\draw[edgev] (3-0) -- (2-1) -- (1-2) -- (0-3);
				\draw[edgev] (4-0) -- (4-1) -- (3-2) -- (2-3);
				\draw[edgev] (5-0) -- (5-1) -- (5-2) -- (4-3);
			\end{tikzpicture}
		\end{center}
	\end{minipage}
	\begin{minipage}{0.6\textwidth}
		\begin{align*}
			&[{\cb c_1},{ \cb c_2},{\cb c_3},{\cre d_1},{\cre d_2},{\cre d_3}]\\
			&[{\cb c_1},{\cb c_2}, {\cre c_3 \la d_1},{\cb c_3 \ra d_1}, {\cre d_2} ,{\cre d_3}]\\
			&[{\cb c_1}, {\cre c_2c_3 \la d_1}, {\cb c_2 \ra (c_3 \la d_1)}, {\cre (c_3 \ra d_1) \la d_2}, {\cb c_3 \ra d_1d_2}, {\cre d_3}]\\
			&\Pi_3[c_1,c_2,c_3,d_1,d_2,d_3].
		\end{align*}
	\end{minipage}
	
	So starting with an element of $\Cc^k * \Dd^k$, we apply $\bowtie$ to pairs of adjacent terms wherever possible until we obtain an element of $(\Dd * \Cc)^k$.
\end{rmk}

\subsubsection{The map $\Psi$}
\label{sec:Psi}
We now define $\Psi \colon C_{\bullet}^{\bowtie} \to C_{\bullet}^{\Tot}$.
For $q \ge 0$ define
$
\tau^{q} \colon (\Dd\bowtie\Cc)^q \to \Dd^q*\Cc^q
$
as follows: regard $(d_1c_1,\ldots,d_qc_q) \in (\Dd\bowtie\Cc)^q$ as a composable $q$-tuple in $\Cc^* \bowtie \Dd^*$. By Proposition~\ref{prop:bowtie_**} there exist unique
$d' \in \Dd^q$ and $c' \in \Cc^q$ such that $(d_1c_1)\cdots(d_qc_q) = d'c' \in \Cc^p \bowtie \Dd^q$. For instance,
\[
(d_1c_1)(d_2c_2)(d_3c_3) = (d_1,c_1\la d_2, ((c_1 \ra d_2)c_2) \la d_3 ,c_1 \ra (d_2(c_2 \la d_3)), c_2 \ra d_3, c_3) \in \Dd^3 * \Cc^3.
\]
We define $\tau^q (d_1c_1,\ldots,d_qc_q) \coloneqq (d',c')$.
\begin{rmk}\label{rmk:diagram_tau_q}
	We can describe $\tau^q$ via a diagram using the same conventions as in Remark~\ref{rmk:diagram_pi_k}. For example $\tau^3$ is represented by the diagram
	
	\begin{minipage}{0.25\textwidth}
		\def\wid{0.55}
		\def\hei{0.55}
		\begin{center}
			\vspace{4pt}
			
			\begin{tikzpicture}
				[vertex/.style={circle, fill=black, inner sep=1pt},
				edge/.style={thick,blue},
				edgev/.style={thick,dashed,red},
				scale = 1.2]
				
				\node[vertex,red] (0-0) at (0*\wid,0) {};
				\node[vertex,blue] (1-0) at (1*\wid,0) {};
				\node[vertex,red] (2-0) at (2*\wid,0) {};
				\node[vertex,blue] (3-0) at (3*\wid,0) {};
				\node[vertex,red] (4-0) at (4*\wid,0) {};
				\node[vertex,blue] (5-0) at (5*\wid,0) {};
				
				\node[vertex,red] (0-1) at (0*\wid,-1*\hei) {};
				\node[vertex,red] (1-1) at (1*\wid,-1*\hei) {};
				\node[vertex,blue] (2-1) at (2*\wid,-1*\hei) {};
				\node[vertex,red] (3-1) at (3*\wid,-1*\hei) {};
				\node[vertex,blue] (4-1) at (4*\wid,-1*\hei) {};
				\node[vertex,blue] (5-1) at (5*\wid,-1*\hei) {};
				
				\node[vertex,red] (0-2) at (0*\wid,-2*\hei) {};
				\node[vertex,red] (1-2) at (1*\wid,-2*\hei) {};
				\node[vertex,red] (2-2) at (2*\wid,-2*\hei) {};
				\node[vertex,blue] (3-2) at (3*\wid,-2*\hei) {};
				\node[vertex,blue] (4-2) at (4*\wid,-2*\hei) {};
				\node[vertex,blue] (5-2) at (5*\wid,-2*\hei) {};
				
				\draw[edgev] (0-0) -- (0-1) -- (0-2);
				\draw[edge] (1-0) -- (2-1) -- (3-2);
				\draw[edgev] (2-0) -- (1-1) -- (1-2);
				\draw[edge] (3-0) -- (4-1) -- (4-2);
				\draw[edgev] (4-0) -- (3-1) -- (2-2);			
				\draw[edge] (5-0) -- (5-1) -- (5-2);
			\end{tikzpicture}
		\end{center}
	\end{minipage}
	\begin{minipage}{0.7\textwidth}
		\begin{align*}
			&({\cre d_1}, {\cb c_1}, {\cre d_2}, {\cb c_2}, {\cre d_3}, {\cb c_3})\\
			&({\cre d_1}, {\cre c_1 \la d_2}, {\cb c_1 \ra d_2}, {\cre c_2 \la d_3}, {\cb c_2 \ra d_3}, {\cb c_3})\\
			&({\cre d_1},{\cre c_1\la d_2}, {\cre ((c_1 \ra d_2)c_2) \la d_3 },{\cb c_1 \ra (d_2(c_2 \la d_3))}, {\cb c_2 \ra d_3}, {\cb c_3}).
		\end{align*}
	\end{minipage}
	
	\noindent The maps $\tau^q$ for $q \ge 3$ can be visualised similarly.
\end{rmk}

For $p,q \ge 0$ let $\rho_{p,q} \colon \Dd^p * \Cc^p * \Dd^q * \Cc^q \to \Cc^p * \Dd^q$ denote the projection onto the middle two factors. Define $\Psi_{p,q} \colon (\Cc
\bowtie \Dd)^{p+q} \to \Cc^p*\Dd^q$ by
\[
\Psi_{p,q} = \rho_{p,q} \circ (\tau_{p} * \tau_{q})
\]
and extend it to a homomorphism $\Psi_{p,q} \colon C_{p+q}^{\bowtie} (\Cc,\Dd) \to C_{p,q}(\Cc,\Dd)$. For example, we can represent $\Psi_{3,2}[d_1, c_1, \ldots, d_5, c_5]$ diagrammatically by
\begin{center}
	\def\wid{0.55}
	\def\hei{0.55}
	\begin{tikzpicture}
		[vertex/.style={circle, fill=black, inner sep=1pt},
		edge/.style={thick,blue},
		edgev/.style={thick,dashed,red},
		cross/.style={cross out, draw,
			minimum size=2*(#1-\pgflinewidth),
			inner sep=2pt, outer sep=0pt,thick},
		scale = 1.2]
		
		\node[vertex,red] (0-0) at (0*\wid,0) {};
		\node[vertex,blue] (1-0) at (1*\wid,0) {};
		\node[vertex,red] (2-0) at (2*\wid,0) {};
		\node[vertex,blue] (3-0) at (3*\wid,0) {};
		\node[vertex,red] (4-0) at (4*\wid,0) {};
		\node[vertex,blue] (5-0) at (5*\wid,0) {};
		\node[vertex,red] (6-0) at (6*\wid,0) {};
		\node[vertex,blue] (7-0) at (7*\wid,0) {};
		\node[vertex,red] (8-0) at (8*\wid,0) {};
		\node[vertex,blue] (9-0) at (9*\wid,0) {};
		
		\node[vertex,red] (0-1) at (0*\wid,-1*\hei) {};
		\node[vertex,red] (1-1) at (1*\wid,-1*\hei) {};
		\node[vertex,blue] (2-1) at (2*\wid,-1*\hei) {};
		\node[vertex,red] (3-1) at (3*\wid,-1*\hei) {};
		\node[vertex,blue] (4-1) at (4*\wid,-1*\hei) {};
		\node[vertex,blue] (5-1) at (5*\wid,-1*\hei) {};
		\node[vertex,red] (6-1) at (6*\wid,-1*\hei) {};
		\node[vertex,red] (7-1) at (7*\wid,-1*\hei) {};
		\node[vertex,blue] (8-1) at (8*\wid,-1*\hei) {};
		\node[vertex,blue] (9-1) at (9*\wid,-1*\hei) {};
		
		\node[cross,red] (0-2) at (0*\wid,-2*\hei) {};
		\node[cross,red] (1-2) at (1*\wid,-2*\hei) {};
		\node[cross,red] (2-2) at (2*\wid,-2*\hei) {};
		\node[vertex,blue] (3-2) at (3*\wid,-2*\hei) {};
		\node[vertex,blue] (4-2) at (4*\wid,-2*\hei) {};
		\node[vertex,blue] (5-2) at (5*\wid,-2*\hei) {};
		\node[vertex,red] (6-2) at (6*\wid,-2*\hei) {};
		\node[vertex,red] (7-2) at (7*\wid,-2*\hei) {};
		\node[cross,blue] (8-2) at (8*\wid,-2*\hei) {};
		\node[cross,blue] (9-2) at (9*\wid,-2*\hei) {};
		
		\node[vertex,blue] (3-3) at (3*\wid,-3*\hei) {};
		\node[vertex,blue] (4-3) at (4*\wid,-3*\hei) {};
		\node[vertex,blue] (5-3) at (5*\wid,-3*\hei) {};
		\node[vertex,red] (6-3) at (6*\wid,-3*\hei) {};
		\node[vertex,red] (7-3) at (7*\wid,-3*\hei) {};
		
		
		\draw[edgev] (0-0) -- (0-1) -- (0-2);
		\draw[edge] (1-0) -- (2-1) -- (3-2) -- (3-3);
		\draw[edgev] (2-0) -- (1-1) -- (1-2);
		\draw[edge] (3-0) -- (4-1) -- (4-2) -- (4-3);
		\draw[edgev] (4-0) -- (3-1) -- (2-2);			
		\draw[edge] (5-0) -- (5-1) -- (5-2) -- (5-3);
		
		\draw[edgev] (6-0) -- (6-1) -- (6-2) -- (6-3);
		\draw[edge] (7-0) -- (8-1) -- (8-2);
		\draw[edgev] (8-0) -- (7-1) -- (7-2) -- (7-3);
		\draw[edge] (9-0) -- (9-1) -- (9-2);
        \node[anchor=west] at (11*\wid, 0) {$[{\cre d_1}, {\cb c_1}, \dots, {\cre d_5}, {\cb c_5}]$};
        \node[anchor=west] at (11*\wid, -2*\hei) {$(\tau^3 * \tau^2)([{\cre d_1}, {\cb c_1}, \dots, {\cre d_5}, {\cb c_5}])$};
        \node[anchor=west] at (11*\wid, -3*\hei) {$\Psi_{3,2}([{\cre d_1}, {\cb c_1}, \dots, {\cre d_5}, {\cb c_5}])$};
	\end{tikzpicture}
\end{center}
(crossed vertices like
$\begin{tikzpicture}
	\node[cross out, draw, inner sep=2pt, outer sep=0pt,thick, red] (a) at (0,0){};
\end{tikzpicture}$
indicate omission of the corresponding entries).

We now define
$\Psi_{k} \colon C_{k}^{\bowtie}(\Cc,\Dd) \to C_{k}^{\Tot}(\Cc,\Dd)$ by
\begin{equation}\label{eq:psi}
	\Psi_k = \sum_{p+q = k} \Psi_{p,q}.
\end{equation}
Explicit formulae for low-degree terms are given by
\begin{align*}
	\Psi_{1} [d_1c_1] &=  \Psi_{1,0}[d_1c_1] + \Psi_{0,1}[d_1c_1] = [c_1] + [d_1] \\
	\Psi_{2} [d_1c_1,d_2c_2] &= [c_1 \ra d_2, c_2] + [c_1;d_2] + [d_1, c_1 \la d_2]  \\
	\Psi_{3} [d_1c_1,d_2c_2,d_3c_3] &= [d_1,c_1 \la d_2, ((c_1 \ra d_2)c_2) \la d_3] + [c_1;d_2,c_2 \la d_3]\\
	&\qquad +  [c_1 \ra d_2, c_2;d_3] + [c_1 \ra (d_2(c_2 \la d_3)),c_2 \ra d_3,c_3].
\end{align*}
It is routine to verify that the $\Psi_k$ extend to natural transformations $\Psi_k \colon C_k^{\bowtie} \to C_k^{\Tot}$.

\subsection{The statement of the main theorem}
\label{subsec:the actual theorem}

We state our main homology theorem and outline the proof.
We write $\Ch$ for the category of abelian chain complexes and chain maps.

\begin{thm}\label{thm:cohomologies_are_the_same}
	The formulae \eqref{eq:eilenberg-zilber}, \eqref{eq:pi}, and \eqref{eq:psi} determine natural chain equivalences such that the diagram	
	\begin{equation}\label{eq:iso_theorem_diagram_half}
		\begin{tikzcd}[ampersand replacement=\&, column sep=30pt, row sep=25pt]
			{C_{\bullet}^{\Delta}} \&\& {C_{\bullet}^{\Tot}} \\
			\& {C_\bullet^{\bowtie}}
			\arrow["\nabla"', from=1-3, to=1-1]
			\arrow["\Pi", from=1-1, to=2-2]
			\arrow["\Psi", from=2-2, to=1-3]
		\end{tikzcd}
	\end{equation}
	commutes up to natural chain homotopy. They induce natural isomorphisms
	\[
	H^{\bowtie}_{\bullet} \cong H^{\Delta}_{\bullet} \cong H^{\Tot}_{\bullet}
	\]
	 of functors from $\MP$ to $\Ch$.
	In particular, for any matched pair $(\Cc,\Dd)$,
	\[
	H^{\bowtie}_k(\Cc,\Dd) \cong H^{\Delta}_k(\Cc,\Dd) \cong H^{\Tot}_k(\Cc,\Dd).
	\]
\end{thm}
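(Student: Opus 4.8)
The plan is to deduce the three homology isomorphisms from the stronger statement that $\nabla$, $\Pi$ and $\Psi$ are natural chain homotopy equivalences making \eqref{eq:iso_theorem_diagram_half} commute up to natural homotopy, and I would establish the three maps by different routes. For $\nabla$ there is essentially nothing new to do: since the matched complex has already been shown to be a bisimplicial abelian group, the classical Eilenberg--Zilber theorem for bisimplicial groups \cite[\S 8.5]{Wei94} applies verbatim, giving that the shuffle map $\nabla$ of \eqref{eq:eilenberg-zilber} is a natural chain map $C^{\Tot}_\bullet \to C^{\Delta}_\bullet$ with a natural homotopy inverse, the Alexander--Whitney map $\mathrm{AW}$, satisfying $\mathrm{AW}\circ\nabla = \id$ and $\nabla\circ\mathrm{AW} \simeq \id$; in particular $\nabla$ is a natural chain equivalence, so $H^{\Tot}_\bullet \cong H^{\Delta}_\bullet$ comes for free, and I would only record the formula for $\mathrm{AW}$ for later use.

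The heart of the proof is showing, by the method of acyclic models in $\MP$, that $\Pi$ is a natural chain equivalence, with the model matched pairs $(\Ee_n,\Ff_n)$, $n\ge 0$, as models. Two properties must be verified for each of $C^{\Delta}_\bullet$, $C^{\bowtie}_\bullet$ and $C^{\Tot}_\bullet$. \emph{Freeness on models}: using the universal property of $(\Ee_n,\Ff_n)$ from Lemma~\ref{lem:matched_pair_universality}, I would single out, in each degree $k$, distinguished generators of the relevant complex of a suitable model $(\Ee_m,\Ff_m)$ -- taking $m$ roughly $2k$ for the diagonal complex, since a generator $(c_1,\dots,c_k,d_1,\dots,d_k)$ of $C^{\Delta}_k$ has length $2k$ in $\Cc\bowtie\Dd$, and $m=k$ for $C^{\bowtie}_k$ and $C^{\Tot}_k$ -- so that every generator of the complex of an arbitrary $(\Cc,\Dd)$ is the image of a distinguished one under $C_k(h_\gamma)$ for a \emph{unique} matched-pair morphism $h_\gamma$; this is the assertion, flagged for Section~\ref{sec:homology_of_models}, that the functors $h_\gamma$ realise all generators. \emph{Acyclicity on models}: the augmented categorical complex of $\Ee_n\bowtie\Ff_n\cong\Gamma_n$ is exact because $\Gamma_n$ has a terminal object, namely the vertex $(0,n)$; the augmented diagonal complex $\widetilde C^{\Delta}_\bullet(\Ee_n,\Ff_n)$ is exact by the Eilenberg--Zilber equivalence together with the spectral sequence of the double complex $C_{\bullet,\bullet}(\Ee_n,\Ff_n)$, which collapses because $\Ee_n$ and $\Ff_n$ are finite disjoint unions of linearly ordered finite posets (the path categories of the linear graphs forming the connected components of $E_n$ and $F_n$), each of which has a terminal element and hence vanishing homology in positive degrees with \emph{any} coefficient module; exactness of $\widetilde C^{\Tot}_\bullet(\Ee_n,\Ff_n)$ then follows from Eilenberg--Zilber again. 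Since all three complexes agree in degree $0$ with the common augmentation $\ZZ\Cc^0\twoheadrightarrow\ZZ\pi_0(\Cc\bowtie\Dd)$ and $\Pi_0=\id$, the method of acyclic models gives that $\Pi$ is a natural chain map over this augmentation and that any such map is a natural chain homotopy equivalence; its natural homotopy inverse works out to be $\nabla\circ\Psi$, as will be verified in Section~\ref{subsec:PiPsidef}. This yields $H^{\Delta}_\bullet\cong H^{\bowtie}_\bullet$.

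It then remains to close the triangle, i.e. to show that $\Psi\circ\Pi\circ\nabla$ is naturally chain homotopic to $\id_{C^{\Tot}_\bullet}$. Both this composite and the identity are natural self-maps of $C^{\Tot}_\bullet$ over the common augmentation, and $C^{\Tot}_\bullet$ is free and acyclic on the models $(\Ee_n,\Ff_n)$, so the uniqueness clause of the acyclic-models theorem forces $\Psi\Pi\nabla\simeq\id$; together with the two preceding paragraphs this makes $\Psi$ a natural chain equivalence as well, and the maps induced on homology are mutually inverse natural isomorphisms $H^{\bowtie}_\bullet\cong H^{\Delta}_\bullet\cong H^{\Tot}_\bullet$ of functors $\MP\to\Ch$. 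Evaluating at a fixed matched pair gives the displayed statement, and the same argument after dualising (everything in sight is a bounded-below complex of free abelian groups) yields the corresponding cohomology isomorphisms used elsewhere in the paper.

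The main obstacle, I expect, is the acyclic-models step, and within it two points: arranging the distinguished generators and the morphisms $h_\gamma$ so that freeness on models holds \emph{simultaneously} for $C^{\Delta}_\bullet$, $C^{\bowtie}_\bullet$ and $C^{\Tot}_\bullet$; and the acyclicity of the diagonal complex of each model, where one genuinely uses the structure of $(\Ee_n,\Ff_n)$ rather than only that of $\Gamma_n$. By contrast, checking that the explicit formulae \eqref{eq:eilenberg-zilber}, \eqref{eq:pi} and \eqref{eq:psi} really define natural chain maps -- the bulkiest single computation being for $\Psi$, through the auxiliary maps $\tau^q$ -- should be a routine, if lengthy, diagram chase with the simplicial identities and Lemma~\ref{lem:actions faces commute}, after which the abstract machinery does the rest.
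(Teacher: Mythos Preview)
Your outline is correct and follows the same architecture as the paper: Eilenberg--Zilber handles $\nabla$, and the method of acyclic models with the $(\Ee_n,\Ff_n)$ as models handles $\Pi$ (the paper's Proposition~\ref{prop:the_real_main_theorem}), after which $\Psi$ is forced to be an equivalence. A few tactical points differ from the paper and are worth being aware of.

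First, the paper uses $(\Ee_{2k},\Ff_{2k})$ as the model for \emph{both} $C^{\bowtie}_k$ and $C^{\Delta}_k$, padding the natural $k$-tuple in $\Gamma_k$ by units to obtain an element $x_k\in C^{\bowtie}_k(\Ee_{2k},\Ff_{2k})$ (see the definitions of $x_k$, $y_k$ and $h^{\bowtie}_\gamma$ around Lemma~\ref{lem:free-functors}); this avoids juggling two model sets when building mutually inverse maps. Second, for acyclicity of $C^\Delta_\bullet(\Ee_n,\Ff_n)$ the paper gives a direct simplicial homotopy to the categorical complex of a single linear graph $G_n$ (Lemma~\ref{lem:diagonal_models_acyclic}) rather than going through EZ and a spectral sequence; your route works too, but note that the column complexes are homology of $\Ff_n$ with coefficients in a \emph{nonconstant} functor (the $\Ee_n^p$-part is acted on via $\ra$), so you really need that each component of $\Ff_n$ has contractible nerve, not merely vanishing constant-coefficient homology. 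Third, the paper never checks freeness of $C^{\Tot}_\bullet$ on models, nor runs a separate acyclic-models step to close the triangle: once $\nabla$ and $\RPi \coloneqq \nabla\circ\Psi$ are equivalences (the latter by verifying the recognition criterion in the last paragraph of Proposition~\ref{prop:the_real_main_theorem}), $\Psi$ is automatically one. Your proposed third step would require exhibiting, for each $k$, a \emph{set} of distinguished elements in $C^{\Tot}_k$ of a model---one for each summand $C_{p,q}$ with $p+q=k$---which is doable but unnecessary. Finally, $\Gamma_n$ has both an initial object $(n,0)$ and a terminal object $(0,n)$; the paper happens to use the former, but either gives acyclicity of $C^{\bowtie}_\bullet(\Ee_n,\Ff_n)$.
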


Before commencing the proof of this theorem, we record a corollary.
Recall that for us, given a chain complex $(C_\bullet,d_\bullet)$ and an abelian group $A$, the cohomology with coefficients in $A$ is the cohomology of the dual cochain complex $(\Hom(C_{\bullet},A),d_{\bullet}^{*})$.
\begin{cor}\label{cor:main_theorem_cohomology}
	For any fixed abelian group $A$, the duals of the natural chain maps $\nabla$, $\Pi$, and $\Psi$ induce natural isomorphisms
	\[
	H^{\bullet}_{\bowtie}(\,\cdot\, ; A) \cong 	H^{\bullet}_{\Delta}(\,\cdot\, ; A) \cong
	H^{\bullet}_{\Tot}(\,\cdot\, ; A)
	\]
	of cohomology functors with coefficients in $A$.
\end{cor}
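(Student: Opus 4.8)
The plan is to deduce this formally from Theorem~\ref{thm:cohomologies_are_the_same} by applying the contravariant functor $\Hom(-,A)$. The key observation is that $\Hom(-,A)$ is additive: it is $\ZZ$-linear on each group of chain maps, so it sends sums to sums, identities to identities, composites to composites, and — crucially — it preserves the relation ``equal up to chain homotopy'': if $f-g = d_D h + h d_C$ for a degree-one map $h$, then $f^\ast - g^\ast = h^\ast d_D^\ast + d_C^\ast h^\ast$, so $f^\ast$ and $g^\ast$ are cochain-homotopic via $h^\ast$. Consequently $\Hom(-,A)$ carries any chain homotopy equivalence to a cochain homotopy equivalence, and the latter induces an isomorphism on cohomology.

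First I would apply this to the three chain equivalences $\nabla$, $\Pi$, $\Psi$ of Theorem~\ref{thm:cohomologies_are_the_same}, each of which that theorem equips with an explicit chain homotopy inverse. Dualising, $\Pi^\ast$, $\nabla^\ast$, $\Psi^\ast$ are cochain homotopy equivalences between the cochain complexes computing $H^\bullet_\bowtie(\Cc,\Dd;A)$, $H^\bullet_\Delta(\Cc,\Dd;A)$, and $H^\bullet_\Tot(\Cc,\Dd;A)$, so they induce isomorphisms
\[
H^\bullet_\bowtie(\Cc,\Dd;A) \xrightarrow{\ \Pi^\ast\ } H^\bullet_\Delta(\Cc,\Dd;A) \xrightarrow{\ \nabla^\ast\ } H^\bullet_\Tot(\Cc,\Dd;A) \xrightarrow{\ \Psi^\ast\ } H^\bullet_\bowtie(\Cc,\Dd;A).
\]
Next, applying $\Hom(-,A)$ to the triangle~\eqref{eq:iso_theorem_diagram_half}, which commutes up to natural chain homotopy (so that $\nabla\circ\Psi\circ\Pi \simeq \id$ on $C_\bullet^\Delta$), yields $\Pi^\ast\circ\Psi^\ast\circ\nabla^\ast \simeq \id$ on the cochain complex $\Hom(C_\bullet^\Delta,A)$, hence $\Pi^\ast\circ\Psi^\ast\circ\nabla^\ast = \id$ on $H^\bullet_\Delta(\Cc,\Dd;A)$; together with the cyclic variants this shows the three displayed isomorphisms are mutually compatible. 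For naturality, recall that $\nabla$, $\Pi$, $\Psi$ and the homotopies witnessing commutativity of~\eqref{eq:iso_theorem_diagram_half} are natural transformations of functors $\MP \to \Ch$; precomposing the fixed functor $\Hom(-,A)$ turns these into natural transformations of the associated contravariant functors $\MP^{\mathrm{op}} \to \Ab$ in each cohomological degree, so the induced isomorphisms are natural in the matched pair.

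I do not expect any genuine obstacle; the one point that must not be glossed over — and which is the substance rather than a difficulty — is that the argument genuinely uses that $\nabla$, $\Pi$, $\Psi$ are chain \emph{homotopy} equivalences, with homotopy inverses provided explicitly by Theorem~\ref{thm:cohomologies_are_the_same}, and not merely quasi-isomorphisms: a quasi-isomorphism of chain complexes of abelian groups need not remain one after $\Hom(-,A)$ without projectivity hypotheses on the complexes, whereas a chain homotopy equivalence always does. Since no such hypotheses are needed here, the corollary holds for an arbitrary abelian group $A$, and what remains is only the bookkeeping of signs and of the directions of the dualised arrows.
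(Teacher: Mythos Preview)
Your proposal is correct and follows exactly the same approach as the paper, which disposes of the corollary in a single sentence (``Dualising all the maps in a chain homotopy diagram yields a cochain homotopy''). Your expanded version spells out the mechanism and, helpfully, makes explicit the point that one needs chain \emph{homotopy} equivalences rather than mere quasi-isomorphisms for the dualisation to go through without projectivity hypotheses.
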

\begin{proof}
	Dualising all the maps in a chain homotopy diagram yields a cochain homotopy.
\end{proof}

That $\nabla$ induces a natural isomorphism is the content of a general form of the Eilenberg--Zilber Theorem. Following \cite[\S 8.5.4]{Wei94} the \emph{Alexander--Whitney map} $\Delta \colon C_\bullet^{\Delta} \to C_{\bullet}^{\Tot}$ is defined as follows: for $p,q$ such that $p + q = n$, define $\Delta_{p,q} \colon C_{n}^{\Delta} \to C_{p,q}$ by
\[
\Delta_{p,q} \coloneqq \underbrace{\partial_{p+1}^h \cdots \partial_n^h}_{q \text{ terms}} \circ \underbrace{\partial_0^v \cdots \partial_0^v}_{p \text{ terms}}.
\]
Then the map $\Delta$ is defined by
\begin{equation}\label{eq:alexander-whitney}
	\Delta \coloneqq \bigoplus_{p+q = n} \Delta_{p,q}.
\end{equation}
\begin{thm}[{\cite[Theorem 8.5.1]{Wei94}}]\label{thm:Eilenberg--Zilber}
The map $\nabla \colon C_{\bullet}^{\Tot} \to C_{\bullet}^\Delta$ of~\eqref{eq:eilenberg-zilber} induces a natural isomorphism $H^{\Delta}_{\bullet} \cong H^{\Tot}_{\bullet}$, with inverse induced by the map $\Delta
\colon C_{\bullet}^{\Delta} \to C_{\bullet}^{\Tot}$ of~\eqref{eq:alexander-whitney}.
\end{thm}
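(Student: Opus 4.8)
The plan is to deduce this from the general Eilenberg--Zilber theorem for bisimplicial abelian groups, \cite[Theorem~8.5.1]{Wei94}, which applies verbatim: by the preceding proposition the matched complex $(C_{\bullet,\bullet}(\Cc,\Dd),\partial^h,\partial^v,\sigma^h,\sigma^v)$ is a bisimplicial abelian group, its associated diagonal and total chain complexes are by construction $C_\bullet^{\Delta}(\Cc,\Dd)$ and $C_\bullet^{\Tot}(\Cc,\Dd)$, and the shuffle map and Alexander--Whitney map of that bisimplicial group are precisely $\nabla$ of \eqref{eq:eilenberg-zilber} and $\Delta$ of \eqref{eq:alexander-whitney}. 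Naturality of the resulting isomorphism as a transformation of functors $\MP \to \Ch$ then follows because the matched complex is functorial on $\MP$ (shown above) and both $\nabla$ and $\Delta$ are natural in the bisimplicial abelian group.

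For completeness I would sketch the argument underlying \cite[Theorem~8.5.1]{Wei94}. First one checks that $\nabla$ and $\Delta$ are chain maps; this is a routine verification from the simplicial identities for the $\partial^{h,i},\sigma^{h,i},\partial^{v,j},\sigma^{v,j}$ together with the relations $d^h d^v = -d^v d^h$, the signs $(-1)^p$ built into $\partial^{v,j}_{p,q}$ and $\sigma^{v,j}_{p,q}$ being exactly what is needed so that no further sign bookkeeping intrudes. Next one establishes the exact identity $\Delta \circ \nabla = \id_{C_\bullet^{\Tot}}$: feeding a summand $\sgn(\beta)\,\sigma^{h,\beta(p+q)}\cdots\sigma^{v,\beta(1)}(x)$ of $\nabla_{p,q}(x)$ into the component $\Delta_{p,q}$ (apply $p$ copies of the zeroth vertical face, then $q$ front horizontal faces), the face operators annihilate every degeneracy except those arising from the trivial shuffle, so the only surviving term returns $x \in C_{p,q}$ unchanged; this is a finite combinatorial computation with face and degeneracy operators.

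The remaining and genuinely harder step is the chain homotopy $\nabla \circ \Delta \simeq \id_{C_\bullet^{\Delta}}$. The cleanest route is the method of acyclic models applied to the two functors $C_\bullet^{\Delta}$ and $C_\bullet^{\Tot}$ from the category of bisimplicial abelian groups to $\Ch$: both are free on the representable models (the free bisimplicial abelian groups on the bisimplices $[p]\times[q]$), since each $C_{m,n}$ of an arbitrary bisimplicial group decomposes as a direct sum of copies indexed by its bisimplices; and both are acyclic on those models, because each $[p]\times[q]$ is contractible and hence its diagonal complex and its total complex both have homology $\ZZ$ concentrated in degree~$0$. Since $\nabla\circ\Delta$ and $\id$ are natural chain endomorphisms of the free functor $C_\bullet^{\Delta}$ agreeing with the identity in degree~$0$, acyclic models produces a natural chain homotopy $\nabla\circ\Delta \simeq \id$. (Alternatively one can exhibit the explicit shuffle homotopy $C^{\Delta}_\bullet \to C^{\Delta}_{\bullet+1}$; this is elementary but notation-heavy, and the main obstacle of the whole proof lies exactly here, whichever route one takes.) Combining the two steps, $\nabla$ and $\Delta$ are mutually inverse natural chain equivalences, so $\nabla$ induces a natural isomorphism $H_\bullet^{\Delta} \cong H_\bullet^{\Tot}$ with inverse induced by $\Delta$.
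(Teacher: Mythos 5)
Your proposal is correct and matches the paper's treatment: the paper gives no independent proof of this theorem but simply invokes \cite[Theorem~8.5.1]{Wei94}, exactly as in your first paragraph, relying on the fact (established earlier) that the matched complex is a bisimplicial abelian group whose diagonal and total complexes are $C^\Delta_\bullet$ and $C^{\Tot}_\bullet$ and whose shuffle and Alexander--Whitney maps are $\nabla$ and $\Delta$. Your additional sketch of Weibel's argument (the exact identity $\Delta\circ\nabla=\id$ and the acyclic-models homotopy $\nabla\circ\Delta\simeq\id$) is accurate but goes beyond what the paper records.
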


Given Theorem~\ref{thm:Eilenberg--Zilber}, to prove Theorem~\ref{thm:cohomologies_are_the_same} it suffices to establish the natural isomorphism $H^{\bowtie}_{\bullet} \cong H^{\Delta}_{\bullet}$. To do this we fill out a diagram
\begin{equation}\label{eq:iso_theorem_diagram}
	\begin{tikzcd}[ampersand replacement=\&, column sep=40pt, row sep=30pt]
		{C_{\bullet}^{\Delta}} \&\& {C_{\bullet}^{\Tot}} \\
		\& {C_\bullet^{\bowtie}}
		\arrow["\Delta", shift left=1, from=1-1, to=1-3]
		\arrow["\nabla", shift left=1, from=1-3, to=1-1]
		\arrow["\Pi", shift left=1, from=1-1, to=2-2]
		\arrow["\RPi", shift left=1, from=2-2, to=1-1]
		\arrow["\RPsi", shift left=1, from=1-3, to=2-2]
		\arrow["\Psi", shift left=1, from=2-2, to=1-3]
	\end{tikzcd}
\end{equation}
of natural chain equivalences that commutes up to natural chain homotopy. We use the \emph{method of acyclic models} (see \cite{Rot88} for instance).
The details occupy Subsections \ref{sec:homology_of_models}~and~\ref{subsec:proof_of_theorem}.

We show in Section~\ref{sec:homology_of_models} that the model matched pairs $(\Ee_{2k},\Ff_{2k})$
satisfy $H_{p}^{\bowtie}(\Ee_{2k},\Ff_{2k}) = 0 = H_{p}^{\Delta}(\Ee_{2k},\Ff_{2k})$ for all $p \ge 1$. So we can use these as the models in the method of acyclic models. We deduce that there exist natural chain equivalences between $C_{\bullet}^{\bowtie}$ and $C_{\bullet}^{\Delta}$ that induce natural isomorphisms on homology, and show how to recognise when given chain maps do the job.

In Subsection~\ref{subsec:PiPsidef}, we show that \eqref{eq:pi}~and~\eqref{eq:psi} are such chain maps. We also
 give explicit formulae for the remaining maps
$\RPi \coloneqq\nabla  \circ \Psi $ and $\RPsi \coloneqq   \Pi \circ \nabla$.

\subsection{Homological acyclicity of model matched pairs}\label{sec:homology_of_models}
The proof of Theorem~\ref{thm:cohomologies_are_the_same} hinges on homological properties of the model matched pairs $(\Ee_n,\Ff_n)$. Recall that the object set of $\Gamma_n \cong \Ee_n *\Ff_n$ is $X_n = \{\mathbf{a} =(a_L,a_R) \in \NN \times \NN  \mid 0 \le a_L + a_R \le n\}$. Each morphism of $\Gamma_n$ is a pair $(\mathbf{a},\mathbf{b}) \in X_n \times X_n$ such that $a_L \le b_L$ and $a_R \ge b_R$.

The map
$r \times s \colon \Gamma_n \to X_n \times X_n$ is injective. Hence,
\begin{equation}\label{eq:bijection_gamma_n_k}
	\Gamma_n^k \ni (\gamma_1,\ldots,\gamma_k) \mapsto (r(\gamma_1),s(\gamma_1),s(\gamma_2),\ldots,s(\gamma_k)) \in X_n^{(k)}.
\end{equation}
is a bijective correspondence between $\Gamma_n^k$ and
\[
X_n^{(k)} \coloneqq \{(\mathbf{a}_0,\mathbf{a}_1,\ldots,\mathbf{a}_k) \mid \mathbf{a}_i \in X_n,\, a_{i,L} \le a_{i+1,L}, \text{ and } a_{i,R} \ge a_{i+1,R} \text{ for all } i\}.
\]
Since  $C^{\bowtie}_k(\Ee_n,\Ff_n)$ is the free abelian group $\ZZ \Gamma_n^k$, we have $
C^{\bowtie}_k(\Ee_n,\Ff_n) \cong \ZZ X_n^{(k)}.
$

Let $\langle \mathbf{a}_0,\ldots,\mathbf{a}_k \rangle$ denote the generator of $\ZZ X_n^{(k)}$ corresponding to $(\mathbf{a}_0,\ldots,\mathbf{a}_k) \in X_n^{(k)}$. Using carets to denote elision of coordinates, the face and degeneracy maps on $C_k^{\bowtie}(\Ee_n,\Ff_n)$ are
\[
\partial^{\bowtie, i}_{k}\langle \mathbf{a}_0,\ldots,\mathbf{a}_k\rangle
= \langle \mathbf{a}_0,\ldots,\widehat{\mathbf{a}}_i,\ldots,\mathbf{a}_k\rangle
\quad
\text{and}
\quad
\sigma^{\bowtie, i}_{k}\langle \mathbf{a}_0,\ldots,\mathbf{a}_k\rangle
=\langle   \mathbf{a}_0,\ldots,\mathbf{a}_i,\mathbf{a}_i,\ldots,\mathbf{a}_k \rangle.
\]
A chain complex $(C_{\bullet},d_{\bullet})$ is \emph{acyclic} if $H_0(C_\bullet,d_{\bullet}) \cong \ZZ$ and $H_k(C_{\bullet}, d_{\bullet}) = 0$ for $k \ge 1$.

Recall that an \emph{initial object} in a category $\Cc$ is an object $v \in \Cc^0$ such that $w \Cc v$ has precisely one element for each $w \in \Cc^0$.

The following is well-known, but we could not find an explicit reference.

\begin{lem}
	\label{lem:initial_ob_contratible}
	Let $\Cc$ be a small category with an initial object $v$. Let $\bone$ be the category with a single morphism $1$. Let $\iota \colon \bone \to \Cc$ be the functor such that $\iota(1) = v$. Let $\rho$ be the unique functor from $\Cc$ to $\bone$. Then $\rho \circ \iota = \id_{\bone}$, and $(\iota \circ \rho)_{\bullet}\colon C_{\bullet}(\Cc) \to C_{\bullet}(\Cc)$ is chain-homotopic to $\id_{C_{\bullet}(\Cc)}$. In particular, $(C_{\bullet}(\Cc), d_{\bullet})$ is acyclic.
\end{lem}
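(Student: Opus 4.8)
The plan is to construct an explicit contracting chain homotopy $s_\bullet\colon C_\bullet(\Cc)\to C_{\bullet+1}(\Cc)$ witnessing that $(\iota\circ\rho)_\bullet$ is chain-homotopic to the identity, and then deduce acyclicity. Note first that since $v$ is initial, for each object $w$ there is a \emph{unique} morphism $v\to w$; call it $\epsilon_w\in w\Cc v$. The functor $\iota\circ\rho\colon\Cc\to\Cc$ sends every object to $v$ and every morphism to $1_v$, so $(\iota\circ\rho)_k$ sends a generator $[c_0,\ldots,c_{k-1}]$ of $C_k(\Cc)$ to $[1_v,\ldots,1_v]$; in the simplicial picture this is the $k$-fold degenerate tuple based at $v$. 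The equality $\rho\circ\iota=\id_{\bone}$ is immediate since $\bone$ has a single object and a single morphism.

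For the homotopy, I would first observe that the uniqueness property of $\epsilon_w$ forces the compatibility relations $c\,\epsilon_{s(c)}=\epsilon_{r(c)}$ for every $c\in\Cc$ (both sides lie in $r(c)\Cc v$), and $\epsilon_v=1_v$. Then define $s_k\colon C_k(\Cc)\to C_{k+1}(\Cc)$ on generators by prepending the canonical morphism to the range:
\[
s_k[c_0,c_1,\ldots,c_{k-1}] \coloneqq [\epsilon_{r(c_0)},c_0,c_1,\ldots,c_{k-1}],
\]
with $s_0[w]\coloneqq[\epsilon_w]$ for $w\in\Cc^0$. The claim is that $d_{k}s_k + s_{k-1}d_{k-1} = \id - (\iota\circ\rho)_k$ for $k\ge 1$, and $d_0 s_0 = \id - (\iota\circ\rho)_0$ in degree $0$. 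This is the standard ``extra degeneracy'' / cone argument: applying the boundary formula of Definition~\ref{dfn:categorical_homology} to $[\epsilon_{r(c_0)},c_0,\ldots,c_{k-1}]$, the $i=0$ term deletes $\epsilon_{r(c_0)}$ and yields $[c_0,\ldots,c_{k-1}]$; the $i=1$ term composes $\epsilon_{r(c_0)}c_0=\epsilon_{r(c_1)}$ (using the compatibility relation) and this exactly cancels the $j=0$ term of $s_{k-1}d_{k-1}[c_0,\ldots,c_{k-1}]$; the terms $i\ge 2$ match up with the remaining terms of $s_{k-1}d_{k-1}$ up to sign; and the final term $(-1)^{k+1}[\epsilon_{r(c_0)},c_0,\ldots,c_{k-2}]$ of $d_k s_k$ cancels against the last term of $s_{k-1}d_{k-1}$. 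The only surviving contribution in degree $\ge 1$, once everything cancels, is the degenerate tuple $[\epsilon_{r(c_0)},\epsilon_{r(c_0)},\ldots]$... more carefully, a short bookkeeping shows the uncancelled piece is $(\iota\circ\rho)_k[c_0,\ldots,c_{k-1}]$ with the appropriate sign, giving the homotopy identity. In degree $0$ one checks directly $d_0 s_0[w]=d_0[\epsilon_w]=[s(\epsilon_w)]-[r(\epsilon_w)]=[v]-[w]=(\iota\circ\rho)_0[w]-[w]$.

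Granting the homotopy identity, $(\iota\circ\rho)_\bullet$ and $\id_{C_\bullet(\Cc)}$ induce the same map on homology, so $H_\bullet(\Cc)\cong H_\bullet(\bone)$; and $C_\bullet(\bone)$ has $C_k(\bone)=\ZZ$ for all $k$ with boundary maps alternately $0$ and $\id$, whose homology is $\ZZ$ in degree $0$ and $0$ elsewhere. Hence $(C_\bullet(\Cc),d_\bullet)$ is acyclic. The main obstacle is purely the sign bookkeeping in verifying $d_k s_k + s_{k-1} d_{k-1} = \id - (\iota\circ\rho)_k$: one must track the alternating signs in the boundary formula against the shift in index caused by prepending a coordinate, and use the relation $\epsilon_{r(c_0)}c_0=\epsilon_{r(c_1)}$ at precisely the right spot. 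Everything else is formal. (Alternatively, one could phrase the whole argument simplicially: $s_\bullet$ is an ``extra degeneracy'' $s_{-1}$ making the augmented simplicial abelian group $C_\bullet(\Cc)\to\ZZ$ contractible, which is a standard criterion for acyclicity — but the explicit homotopy above is self-contained and gives the stated chain-homotopy equivalence directly.)
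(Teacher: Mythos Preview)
Your proposed chain homotopy has a composability error that breaks the construction. You define $s_k[c_0,\ldots,c_{k-1}] = [\epsilon_{r(c_0)},c_0,\ldots,c_{k-1}]$, but for this to be a composable $(k+1)$-tuple in $\Cc$ the paper's conventions require $s(\epsilon_{r(c_0)}) = r(c_0)$. Since $\epsilon_w \in w\Cc v$ has $s(\epsilon_w) = v$, this would force $r(c_0) = v$ for every $c_0$, which is false in general. The same confusion shows up later when you write $\epsilon_{r(c_0)}\,c_0 = \epsilon_{r(c_1)}$: this composite is undefined, and it contradicts the correct identity $c\,\epsilon_{s(c)} = \epsilon_{r(c)}$ that you yourself stated earlier. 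The degree-$0$ check survives only because a $1$-tuple has no composability constraint.

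The fix is simple: \emph{append} rather than prepend. Setting $s_k[c_0,\ldots,c_{k-1}] = [c_0,\ldots,c_{k-1},\epsilon_{s(c_{k-1})}]$ produces a genuine composable tuple, and the face-map identities $\partial^j s = s\,\partial^j$ for $j \le k$ (using $c_{k-1}\epsilon_{s(c_{k-1})} = \epsilon_{s(c_{k-2})}$ at $j=k$) together with $\partial^{k+1}s = \id$ yield a contraction of the augmented complex; you will need to adjust signs to match the precise homotopy identity you claimed. This ``extra degeneracy'' argument is indeed more direct than the paper's proof, which instead builds a full simplicial homotopy $h^0,\ldots,h^k$ (interpolating between $[c_0,\ldots,c_{k-1},\tau_{s(c_{k-1})}]$ and $[\tau_{r(c_0)},v,\ldots,v]$) and then takes the alternating sum. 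Both approaches are standard; yours is shorter once the direction is corrected, while the paper's makes the simplicial structure explicit.
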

\begin{proof}
	Clearly, $\rho \circ \iota = \id_{\bone}$.
	For each $w \in \Cc^0$ let $\tau_{w} \in \Cc$ be the unique morphism from $v$ to $w$.
	Fix $k \ge 0$. For $0 \le i \le k$ define $h^i \colon C_k(\Cc) \to C_{k+1}(\Cc)$ by
	\[
	h^i [ c_0,\ldots,c_{k-1} ] =
	\begin{cases*}
	[c_0,\ldots,c_i,\tau_{s(c_i)}, v, \ldots, v] & if $i > 0$\\
	[\tau_{r(c_0)}, v, \ldots, v] & if $i = 0$.
	\end{cases*}
	\]
	To see that $h$ is a simplicial homotopy we need to check that $\partial^0 h^0 = (\iota \circ \rho)_k$, that $\partial^{k+1}h^k = \id_{C_k(\Cc)}$, that $\partial^i h^j = h^{j-1} \partial^i$ for $i < j$, that $\partial^i h^j = h^j \partial^{i-1}$ for $i > j+1$, that $\sigma^i h^j = h^{j+1}\sigma^i$ for $i \le j$, and that $\sigma^i h^j = h^j \sigma^{i-1}$ for $i > j$. For the first two identities, we calculate	
	\begin{align*}
		\partial^0 h^0 [c_0,\ldots,c_{k-1}]
		&= \partial^0[\tau_{r(c_0)}, v, \ldots, v]
		= [v,\ldots,v], \text{ and}\\
		\partial^{k+1} h^k [c_0,\ldots,c_{k-1}]
		&= \partial^{k+1} [c_0,\ldots,c_{k-1},\tau_{s(c_{k-1})}] = [c_0,\ldots,c_{k-1}].
	\end{align*}
	The remaining four conditions follow from similar calculations. For example, if $0< i < j$, then
	\begin{align*}
	\partial^i h^j [c_0,\ldots,c_{k-1}]
	&= \partial^i [c_0,\ldots, c_j, \tau_{s(c_j)},v, \ldots, v]
	= [c_0,\ldots, c_{i-1} c_i,\ldots, c_j,\tau_{s(c_j)},v, \ldots, v]\\
	&= h^{j-1} [c_0,\ldots, c_{i-1} c_i,\ldots, c_j,\ldots, c_{k-1}]
	= h^{j-1}\partial^i[c_0,\ldots,c_{k-1}].
	\end{align*}
	Hence, the simplicial maps $(\iota \circ p)_{\bullet}$ and $\id_{C_{\bullet}(\Cc)}$ are simplicially homotopic. So
	$
	s_k \coloneqq \sum_{i=0}^k (-1)^i h_k^i
	$
	defines a chain homotopy $s$ between $(\iota \circ p)_{\bullet}$ and $\id_{C_{\bullet}(\Cc)}$ \cite[Lemma 8.3.13]{Wei94}.
	
	The final statement follows from acyclicity of $(C_{\bullet}(\bone),d_{\bullet})$.
\end{proof}

\begin{lem}\label{lem:bowtie_models_acyclic}
	For each $n \ge 0$ the chain complex $(C_{\bullet}^{\bowtie}(\Ee_n,\Ff_n), d)$ is acyclic.
\end{lem}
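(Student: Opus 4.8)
The plan is to reduce the statement to Lemma~\ref{lem:initial_ob_contratible} by producing an initial object in $\Ee_n\bowtie\Ff_n$. Recall that $C^\bowtie_\bullet(\Ee_n,\Ff_n)$ is by definition the categorical chain complex $C_\bullet(\Ee_n\bowtie\Ff_n)$ and that $\Ee_n\bowtie\Ff_n\cong\Gamma_n$, so it is enough to check that $\Gamma_n$ has an initial object and then quote Lemma~\ref{lem:initial_ob_contratible}.

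I would take the candidate to be the vertex $\mathbf{v}=(n,0)\in X_n$, i.e.\ the element of $X_n$ with largest first coordinate. Given any $\mathbf{w}=(w_L,w_R)\in X_n$, one has $w_L\le w_L+w_R\le n$ and $w_R\ge 0$, so the pair $(\mathbf{w},\mathbf{v})$ satisfies the defining inequalities for a morphism of $\Gamma_n$, and it has $s(\mathbf{w},\mathbf{v})=\mathbf{v}$ and $r(\mathbf{w},\mathbf{v})=\mathbf{w}$. Since $r\times s\colon\Gamma_n\to X_n\times X_n$ is injective, this is the unique morphism from $\mathbf{v}$ to $\mathbf{w}$, so $\mathbf{w}\Gamma_n\mathbf{v}$ is a singleton for every $\mathbf{w}\in X_n$; hence $\mathbf{v}$ is an initial object of $\Gamma_n$. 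Then Lemma~\ref{lem:initial_ob_contratible} applied to $\Gamma_n\cong\Ee_n\bowtie\Ff_n$ yields that $(C_\bullet(\Gamma_n),d)=(C^\bowtie_\bullet(\Ee_n,\Ff_n),d)$ is acyclic, which is the assertion.

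There is no substantial obstacle here; the main thing to get right is the bookkeeping of the paper's conventions — that $r$ is the codomain map and $s$ the domain map, so that a morphism $(\mathbf{a},\mathbf{b})$ of $\Gamma_n$ runs from $\mathbf{b}$ to $\mathbf{a}$, and consequently it is the corner $\mathbf{v}=(n,0)$, not $(0,n)$, that is initial (the latter is in fact terminal). As an alternative one could write down a contracting simplicial homotopy on $\ZZ X_n^{(k)}$ by hand using the explicit face and degeneracy maps displayed just before the lemma, but routing through Lemma~\ref{lem:initial_ob_contratible} avoids repeating essentially the same computation.
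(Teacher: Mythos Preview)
Your proposal is correct and follows exactly the paper's approach: the paper's proof is the single sentence that $(n,0)$ is an initial object in $\Gamma_n = \Ee_n \bowtie \Ff_n$, so the result follows from Lemma~\ref{lem:initial_ob_contratible}. Your additional verification that $(n,0)$ is indeed initial (and your care with the $r$/$s$ conventions) is a welcome expansion of what the paper leaves implicit.
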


\begin{proof}
As $(n,0)$ is an initial object in $\Gamma_n = \Ee_n \bowtie \Ff_n$, the result follows from Lemma~\ref{lem:initial_ob_contratible}.
\end{proof}
Like the description coming from \eqref{eq:bijection_gamma_n_k} for $C_k^{\bowtie}(\Ee_n,\Ff_n)$, there is a tractible description for $C_{k,l}(\Ee_n,\Ff_n)$. The formula
\begin{align*}
	\Ee_n^k * \Ff_n^l \ni& ((p_0,q_0),(p_1,q_0),\ldots,(p_k,q_0),\ldots, (p_k,q_{l-1}),(p_k,q_l))\\
	&\quad  \mapsto (p_0,p_1,\ldots,p_k;q_0,\ldots, q_{l-1},q_l) \in Y_n^{(k,l)}
\end{align*}
is a bijection between $\Ee_n^k * \Ff_n^l$ and
\[
Y_n^{(k,l)} \coloneqq \Big\{ (p_0,\ldots,p_k;q_0,\ldots,q_l) \in \NN^{2k}\mid p_i \le p_{i+1}, \, q_i \ge q_{i+1}, \text{ and } p_k + q_0 \le n  \Big\},
\]
and induces an isomorphism
$
C_{k,l}(\Ee_{n},\Ff_n) \cong \ZZ Y_n^{(k,l)}.
$

We write $\langle p_0,\ldots,p_k;q_0,\ldots,q_l \rangle$ for the generator of $\ZZ Y_n^{(k,l)}$ that corresponds to the tuple $(p_0,\ldots,p_k;q_0,\ldots,q_l) \in Y_n^{(k,l)}$. The face maps in the double complex become
\begin{align*}
	\partial^{h,i} \langle p_0,\ldots,p_k;q_0,\ldots,q_l \rangle
	&= \langle p_0,\ldots,\widehat{p_i},\ldots,p_k;q_0,\ldots, q_l \rangle \quad \text{and}\\
	\partial^{v,i} \langle p_0,\ldots,p_k;q_0,\ldots,q_l \rangle
	&= \langle p_0,\ldots,p_k;q_0,\ldots,\widehat{q_i},\ldots, q_l \rangle.
 \end{align*}
The degeneracy maps become
\begin{align*}
	\sigma^{h,i} \langle p_0,\ldots,p_k;q_0,\ldots,q_l \rangle
	&= \langle p_0,\ldots,p_{i-1},p_i,p_i,p_{i+1},\ldots, p_k;q_0,\ldots, q_l \rangle \quad \text{and}\\
	\sigma^{v,i} \langle p_0,\ldots,p_k;q_0,\ldots,q_l \rangle
	&= \langle p_0,\ldots, p_k;q_0,\ldots,q_{i-1},q_i,q_i,q_{i+1},\ldots,q_l \rangle.
\end{align*}
In particular, for the diagonal complex $C_{\bullet}^{\Delta}(\Ee_n,\Ff_n)$ the face and degeneracy maps are
\begin{align*}
	\partial^{\Delta, i} \langle p_0,\ldots,p_k;q_0,\ldots,q_k \rangle &= \langle p_0,\ldots,\widehat{p_i},\ldots,p_k;q_0,\ldots,\widehat{q_i},\ldots, q_k \rangle \quad \text{and}\\
	\sigma^{\Delta, i} \langle p_0,\ldots,p_k;q_0,\ldots,q_k \rangle &= \langle p_0,\ldots, p_{i-1},p_i,p_i,p_{i+1},\ldots,p_k;q_0,\ldots,q_{i-1},q_i,q_i,q_{i+1},\ldots,q_k \rangle.
\end{align*}

\begin{lem}\label{lem:diagonal_models_acyclic}
	The diagonal complex $(C_\bullet^\Delta(\Ee_{n},\Ff_n), d^{\Delta})$ is acyclic.
\end{lem}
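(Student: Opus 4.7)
The plan is to prove acyclicity of the total complex $C_\bullet^{\Tot}(\Ee_n,\Ff_n)$ and then transfer the conclusion to the diagonal complex via Theorem~\ref{thm:Eilenberg--Zilber}, which provides chain equivalences $\nabla$ and $\Delta$ between $C_\bullet^{\Tot}$ and $C_\bullet^{\Delta}$; acyclicity of one implies acyclicity of the other.

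To compute $H_\bullet^{\Tot}(\Ee_n,\Ff_n)$ I will use the first-quadrant spectral sequence of the double complex $(C_{\bullet,\bullet}(\Ee_n,\Ff_n), d^h, d^v)$ obtained from the column filtration, so that $E^1_{p,q} = H^v_q(C_{p,\bullet})$. The vertical boundary $d^v$ leaves the p-part of a generator $\langle p_0,\ldots,p_p; q_0,\ldots,q_q\rangle$ untouched, so each column $C_{p,\bullet}$ decomposes as a direct sum of sub-complexes indexed by nondecreasing sequences $0 \le p_0 \le \cdots \le p_p \le n$. Fixing such a sequence, the $q$-variable ranges over nonincreasing $(q+1)$-tuples in $\{0,1,\ldots,n-p_p\}$ and $d^v$ acts (up to the constant sign $(-1)^p$) by the alternating sum of the omit-$q_i$ face maps. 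This is the unnormalised simplicial chain complex of the standard simplex $\Delta^{n-p_p}$, hence acyclic. Summing over all valid p-parts yields $E^1_{p,q}=0$ for $q \ge 1$, and $E^1_{p,0} \cong \ZZ\{(p_0,\ldots,p_p) : 0 \le p_0 \le \cdots \le p_p \le n\}$.

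The induced differential $d^1$ on the surviving row $E^1_{\bullet,0}$ is the alternating sum of the omit-$p_i$ face maps, so $E^1_{\bullet,0}$ is itself the unnormalised simplicial chain complex of $\Delta^n$, whose homology is $\ZZ$ in degree $0$ and vanishes in higher degrees. Hence $E^2_{0,0} \cong \ZZ$ and $E^2_{p,q}=0$ otherwise. The spectral sequence collapses at $E^2$, giving $H_0^{\Tot}(\Ee_n,\Ff_n) \cong \ZZ$ and $H_k^{\Tot}(\Ee_n,\Ff_n) = 0$ for $k \ge 1$. Theorem~\ref{thm:Eilenberg--Zilber} then transfers this acyclicity to $C_\bullet^\Delta(\Ee_n,\Ff_n)$, completing the proof.

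The main bookkeeping concern is tracking the $(-1)^p$ prefactor built into $\partial^{v,\bullet}$ when identifying each columnwise sub-complex with the simplex chain complex; since this prefactor is constant along any given column, it produces only a uniform change of sign and leaves the homology unaffected. Nothing else in the argument is delicate, because the shape constraint $p_p + q_0 \le n$ defining $Y_n^{(p,q)}$ is exactly what makes each sub-complex, once the p-part is fixed, the chain complex of a single standard simplex.
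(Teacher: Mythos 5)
Your argument is correct, and it takes a genuinely different route from the paper. The paper proves this lemma by exhibiting an explicit simplicial homotopy equivalence between $C_\bullet^\Delta(\Ee_n,\Ff_n)$ and $C_\bullet(G_n^*)$, where $G_n$ is the linear graph $0 \leftarrow 1 \leftarrow \cdots \leftarrow n$; acyclicity then follows from the initial-object lemma (Lemma~\ref{lem:initial_ob_contratible}) via concrete degeneracy-style homotopies $h^i$. You instead compute $H^{\Tot}_\bullet(\Ee_n,\Ff_n)$ by the column-filtration spectral sequence and transfer the answer to the diagonal complex through Theorem~\ref{thm:Eilenberg--Zilber}. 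The key observations you need all check out: on the models the vertical face maps fix the $p$-part (because $e_{p,q}\ra f_{p+1,q-1}=e_{p,q-1}$ only moves the $q$-coordinate), so each column splits as a direct sum over nondecreasing $p$-parts of copies of the unnormalised chain complex of a standard simplex; the constant sign $(-1)^p$ is harmless; and the surviving row $E^1_{\bullet,0}$ is the chain complex of $\Delta^n$. There is also no circularity: Theorem~\ref{thm:Eilenberg--Zilber} is imported wholesale from \cite[Theorem~8.5.1]{Wei94} and does not depend on the acyclic-models argument that this lemma feeds into (you correctly use convergence of the double-complex spectral sequence to \emph{total} homology, not the paper's Corollary~\ref{cor:spectral sequence}, whose identification with categorical homology would be circular). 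The trade-off: the paper's proof is self-contained and produces an explicit homotopy at the cost of verifying the simplicial identities by hand, while yours is shorter, reuses machinery the paper already invokes elsewhere, and computes $H^{\Tot}_\bullet(\Ee_n,\Ff_n)$ along the way; since Proposition~\ref{prop:the_real_main_theorem} only needs acyclicity (not an explicit contraction), your version suffices for everything downstream.
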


\begin{proof}
	Consider the directed graph
	$G_n = 0 \overset{e_0}{\leftarrow} 1 \overset{e_1}{\leftarrow} \cdots \overset{e_{n-1}}{\leftarrow} n$. Since $n$ is an initial object for $G_n^*$, Lemma~\ref{lem:initial_ob_contratible} implies that $(C_\bullet(G_n^*), d_{\bullet})$ is acyclic. 	
	So it suffices to show that $(C_\bullet^\Delta(\Ee_{n},\Ff_n), d^{\Delta})$ is chain-homotopic to $(C_\bullet(G_n^*),d)$.
	
	The group $C_k(G_n^*)$ is freely generated by $k$-tuples $\langle p_0,\ldots,p_k \rangle$ where $0 \le p_i \le p_{i+1} \le n$ for each $0 \le i < k$.
	The functor $\iota \colon G_n^* \hookrightarrow \Ee_n$ given by $\iota(e_p) = e_{p,0}$ induces a chain map $\iota \colon C_{\bullet}(G_n^*) \to C_{\bullet}^\Delta(\Ee_{n},\Ff_n)$ satisfying
	$
	\iota_k \langle p_0,\ldots,p_k\rangle  = \langle p_0,\ldots,p_k;0,\ldots,0 \rangle.
	$
	The functor $\rho \colon \Ee_n \to G_n^*$ defined by $\rho (e_{p,q}) = e_p$,  induces a chain map $\rho \colon  C_{\bullet}^\Delta(\Ee_{n},\Ff_n) \to C_{\bullet}(G_n^*)$ satisfying
	$
	\rho_k \langle p_0,\ldots,p_k;q_0,\ldots, q_k\rangle = \langle p_0,\ldots,p_k \rangle.
	$
	We have $\rho_k \circ \iota_k = \id_{C_k(G_n^*)}$.	
	For $0 \le i \le k$ define $h^i \colon C_k^{\Delta}(\Ee_{n},\Ff_n) \to C_{k+1}^\Delta (\Ee_n,\Ff_n)$ by
	\[
	h^i \langle p_0,\ldots,p_k;q_0,\ldots,q_k \rangle =
	\langle
	p_0, \ldots, p_{i-1}, p_i, p_i, p_{i+1} , \ldots, p_k; 0, \ldots, 0, q_i, q_{i+1}, \ldots, q_k
	\rangle.
	\]
	Direct calculation shows that $\partial^0 h^0 = \id_{C_k^{\Delta}(\Ee_n,\Ff_n)}$ and $\partial^{k+1} h^k = (\iota \circ \rho)_k$.
	
		It is routine to check that $\partial^i h^j = h^{j-1} \partial^i$ for $i < j$ and $\partial^i h^j = h^j \partial^{i-1}$ for $i >
j+1$. Similarly, $\sigma^i h^j = h^{j+1}\sigma^i$ for $i \le j$ and $\sigma^i h^j = h^j \sigma^{i-1}$ for $i > j$. It follows that the simplicial maps $(\iota \circ p)_{\bullet}$ and $\id_{C_{\bullet}^{\Delta}(\Ee_n,\Ff_n)}$ are
simplicially homotopic.
\end{proof}

We identify some particularly useful chains in the categorical and diagonal homology of $(\Ee_{2k},\Ff_{2k})$.
For each $k \ge 0$ define $x_k \in C_k^{\bowtie}(\Ee_{2k},\Ff_{2k})$ and $y_k \in C_k^{\Delta}(\Ee_{2k},\Ff_{2k})$ by
\begin{align}
	\begin{split}\label{eq:x_k}
			x_{k} &\coloneqq
		[f_{0,2k}e_{0,2k},f_{1,2k-1}e_{1,2k-1},\ldots,f_{k-1,k+1}e_{k-1,k+1}]
		= \langle (0,2k),(1,2k-1),\ldots,(k,k)\rangle
	\end{split}
\end{align}
and
\begin{align}
	\begin{split}\label{eq:y_k}
		y_k &\coloneqq  [e_{0,k},e_{1,k},\ldots,e_{k-1,k};f_{k,k-1},\ldots,f_{k,1}, f_{k,0}]
		= \langle 0,1,\ldots,k-1,k;k,k-1,\ldots,1,0\rangle.
	\end{split}
\end{align}
Pictorially, $x_k$ and $y_k$ correspond to the following composable tuples  in $\Ee_{2k} \bowtie \Ff_{2k}$:
\def\sf {2.3}
\begin{equation*}\label{eq:x_and_y_pics}
\begin{tikzpicture}
	[vertex/.style={
		circle,
		fill=black,
		inner sep=1pt},
	edge/.style={
		thick,
		-{Latex[length=1mm, width=1.5mm]}
		},
	edged/.style={
		dashed
		},
	scale = 1.5]
	
	\clip (-0.45*\sf,-0.15*\sf) rectangle (1.3*\sf,1.3*\sf);
	
	\node at (0.2*\sf,0.8*\sf) {$x_k$};%
	
	
	\node[vertex] (0-0) at (0,0) {};%
	\node[label=below:{\scriptsize{\phantom{$(k,0)$}}}] (1-0) at (0.6*\sf,0) {};%
	\node[vertex] (2-0) at (1.2*\sf,0) {};%
	
	\node[vertex,label=left:{\scriptsize$(0,k)$}] (0-1) at (0,0.6*\sf) {};%
	\node[vertex,label=above right:{\scriptsize$(k,k)$}] (1-1) at (0.6*\sf,0.6*\sf) {};%
	
	\node[vertex,label=left:{\scriptsize$(0,2k)$}] (0-2) at (0,1.2*\sf) {};%

	
	\node[vertex] (1a) at (0,1.1*\sf) {};
	\node[vertex] (2a) at (0.1*\sf,1.1*\sf) {};
	\node[vertex] (3a) at (0.1*\sf,1.0*\sf) {};
	\node[vertex] (4a) at (0.2*\sf,1.0*\sf) {};
	
	\node[vertex] (1b) at (0.5*\sf,0.6*\sf) {};
	\node[vertex] (2b) at (0.5*\sf,0.7*\sf) {};
	\node[vertex] (3b) at (0.4*\sf,0.7*\sf) {};
	\node[vertex] (4b) at (0.4*\sf,0.8*\sf) {};

	\draw[edge,red] (1a) to (0-2);
	\draw[edge,blue] (2a) to (1a);
	\draw[edge,red] (3a) to (2a);
	\draw[edge,blue] (4a) to (3a);
	
	\draw[edge,blue] (1-1) to (1b);
	\draw[edge,red] (1b) to (2b);
	\draw[edge,blue] (2b) to (3b);
	\draw[edge,red] (3b) to (4b);

	\draw[edged,dashed] (2-0) to node[midway, anchor=south,inner sep=2.5pt] {} (0-0);
	
	\draw[edged,dashed] (1b) to node[midway, anchor=south,inner sep=2.5pt] {} (0-1);

	\draw[edged,dashed] (0-0) to node[midway, anchor=east,inner sep=2.5pt] {} (0-1);
	\draw[edged,dashed] (0-1) to node[midway, anchor=east,inner sep=2.5pt] {} (1a);
	
	
	\draw[edged,decoration={zigzag},decorate] (2-0) to (1-1);
	\draw[dotted, thick, blue] (4a) to (4b);

\end{tikzpicture}
\qquad
\begin{tikzpicture}
	[vertex/.style={circle, fill=black, inner sep=1pt},
	edge/.style={
		thick,
		-{Latex[length=1mm, width=1.5mm]}
	},
	edged/.style={dashed},
	scale = 1.5]
	
	\clip (-0.45*\sf,-0.15*\sf) rectangle (1.3*\sf,1.3*\sf);
	
	\node at (0.4*\sf,0.4*\sf) {$y_k$};%
	
	
	\node[vertex] (0-0) at (0,0) {};%
	\node[vertex,label=below:{\scriptsize$(k,0)$}] (1-0) at (0.6*\sf,0) {};%
	\node[vertex] (2-0) at (1.2*\sf,0) {};%
	
	\node[vertex,label=left:{\scriptsize$(0,k)$}] (0-1) at (0,0.6*\sf) {};%
	\node[vertex,label=above right:{\scriptsize$(k,k)$}] (1-1) at (0.6*\sf,0.6*\sf) {};%
	
	\node[vertex,label=left:\phantom{\scriptsize$(0,2k)$}] (0-2) at (0,1.2*\sf) {};%

	\node[vertex] (a1) at (0.1*\sf, 0.6*\sf) {};
	\node[vertex] (a2) at (0.2*\sf, 0.6*\sf) {};
	
	\node[vertex] (b1) at (0.4*\sf, 0.6*\sf) {};
	\node[vertex] (b2) at (0.5*\sf, 0.6*\sf) {};
	
	\node[vertex] (c1) at (0.6*\sf, 0.1*\sf) {};
	\node[vertex] (c2) at (0.6*\sf, 0.2*\sf) {};
	
	\node[vertex] (d1) at (0.6*\sf, 0.4*\sf) {};
	\node[vertex] (d2) at (0.6*\sf, 0.5*\sf) {};

	\draw[edged,dashed] (1-0) to node[midway, anchor=south,inner sep=2.5pt] {} (0-0);
	\draw[edged,dashed] (2-0) to node[midway, anchor=south,inner sep=2.5pt] {} (1-0);
	
	\draw[edge,blue] (a2) to (a1);
	\draw[edge,blue] (a1) to (0-1);
	
	\draw[thick,blue, dotted] (b1) to (a2);
	
	\draw[edge,blue] (1-1) to (b2);
	\draw[edge,blue] (b2) to (b1);
	
	\draw[edged,dashed] (0-0) to node[midway, anchor=east,inner sep=2.5pt] {} (0-1);
	\draw[edged,dashed] (0-1) to node[midway, anchor=east,inner sep=2.5pt] {} (0-2);
	
	\draw[edge,red] (1-0) to (c1);
	\draw[edge,red] (c1) to (c2);
	
	\draw[thick, red, dotted] (c1) to (d2);
	
	\draw[edge,red] (d1) to (d2);
	\draw[edge,red] (d2) to (1-1);
	
	\draw[edged,decoration={zigzag},decorate] (2-0) to (1-1);
	\draw[edged,decoration={zigzag},decorate] (1-1) to (0-2);
	
\end{tikzpicture}
\end{equation*}

By Corollary~\ref{cor:matched_pair_morphisms}, a matched pair morphism $(\Ee_{n},\Ff_{n}) \to (\Cc,\Dd)$ corresponds to a functor $\Gamma_n \to \Cc \bowtie \Dd$ taking $\Ee_n$ to $\Cc$ and $\Ff_n$ to $\Dd$. 	
For each $\gamma =
(d_0c_0,\ldots,d_{k-1}c_{k-1})$ in $(\Cc \bowtie \Dd)^k$, Lemma~\ref{lem:matched_pair_universality} gives a morphism $h_\gamma^{\bowtie} \colon  \Gamma_{2k} \to \Cc \bowtie \Dd$ such that
\[
h^{\bowtie}_{\gamma} (f_{p,2k-1-p}e_{p,2k-1-p}) = \begin{cases}
	d_pc_p & \text{if } 0 \le p < k\\
	s(c_{k-1}) & \text{if } k \le p < 2k.
\end{cases}
\]
For $\lambda = (c_0,\ldots,c_{k-1},d_0,\ldots,d_{k-1}) \in \Cc^k * \Dd^k$, with $d'_i = r(c_i)$ and $c'_i = s(d_i)$,
Lemma~\ref{lem:matched_pair_universality} applied to $(d'_0c_0, \cdots, d'_{k-1}c_{k-1}, d_0c'_0, \dots, d_{k-1}c'_{k-1})$ yields a morphism
$h_{\lambda}^{\Delta} \colon \Gamma_{2k} \to \Cc \bowtie \Dd$ such that
\[
h_{\lambda}^{\Delta}(f_{p,2k-1-p}e_{p,2k-1-p}) = \begin{cases}
c_p & \text{if } 0 \le p < k\\
d_{p-k} & \text{if } k \le p < 2k.
\end{cases}
\]

\begin{lem}\label{lem:free-functors}Let $(\Cc,\Dd)$ be a matched pair. For $\gamma \in (\Cc \bowtie \Dd)^k$ and $\lambda \in \Cc^k * \Dd^k$ we have
		 $[\gamma] = C^{\bowtie}_k(h^{\bowtie}_{\gamma})(x_k)$ and  $[\lambda]=C^{\Delta}_k(h^{\Delta}_\lambda)(y_k)$.
		Moreover,
		\[
		\{ C^{\bowtie}_{k} (h)(x_{k}) \mid  h \colon (\Ee_{2k},\Ff_{2k}) \to (\Cc,\Dd)\}
		\quad
		\text{ and }
		\quad
		\{ C_k^{\Delta} (h) (y_k) \mid h \colon (\Ee_{2k}, \Ff_{2k}) \to (\Cc,\Dd) \}
		\]
		generate $C^{\bowtie}_k(\Cc,\Dd)$ and $C^{\Delta}_k(\Cc,\Dd)$ respectively.
\end{lem}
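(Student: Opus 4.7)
The strategy is to prove the two concrete identities, from which the generation statements follow at once: since $C_k^\bowtie(\Cc,\Dd)$ and $C_k^\Delta(\Cc,\Dd)$ are free abelian groups on $(\Cc \bowtie \Dd)^k$ and $\Cc^k * \Dd^k$ respectively, the identities exhibit each free generator as an image of $x_k$ or $y_k$ under the appropriate $C_k^\bullet(h)$.

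The first identity is immediate from definitions. By Lemma~\ref{lem:matched_pair_universality} (translated into Corollary~\ref{cor:matched_pair_morphisms}), the morphism $h_\gamma^\bowtie$ sends the $p$-th top-row generator $f_{p,2k-1-p}e_{p,2k-1-p}$ of $\Gamma_{2k}$ to $d_pc_p$ for each $0 \le p < k$. The generator $x_k$, read in its vertex form $\langle(0,2k),(1,2k-1),\ldots,(k,k)\rangle$ from \eqref{eq:x_k}, is exactly the composable $k$-tuple $[f_{0,2k-1}e_{0,2k-1}, f_{1,2k-2}e_{1,2k-2}, \ldots, f_{k-1,k}e_{k-1,k}]$. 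Applying $C_k^\bowtie(h_\gamma^\bowtie)$ termwise yields $[d_0c_0,\ldots,d_{k-1}c_{k-1}] = [\gamma]$.

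The second identity is more delicate because the edges $e_{i,k}$ and $f_{k,k-1-i}$ appearing in $y_k$ lie strictly below the top row on which $h_\lambda^\Delta$ is initially specified. The plan is to establish the following technical claim by downward induction on $p + q$: for all edge positions $(p,q)$ with $p + q \le 2k - 1$,
\begin{enumerate}
\item if $0 \le p < k$ and $q \ge k$, then $h_{\lambda}^{\Delta,L}(e_{p,q}) = c_p$ and $h_{\lambda}^{\Delta,R}(f_{p,q}) = r(c_p)$; and
\item if $p \ge k$ and $0 \le q < k$, then $h_{\lambda}^{\Delta,R}(f_{p,q}) = d_{k-1-q}$ and $h_{\lambda}^{\Delta,L}(e_{p,q}) = s(d_{k-1-q})$.
\end{enumerate}
The base case $p + q = 2k - 1$ is read directly from the defining specification of $h_\lambda^\Delta$. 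The inductive step uses the recurrences $c_{p,q} = c_{p,q+1} \ra d_{p+1,q}$ and $d_{p,q} = c_{p,q+1} \la d_{p+1,q}$ from Lemma~\ref{lem:matched_pair_universality}, together with the trivial-identity rules $c \ra s(c) = c$, $c \la s(c) = r(c)$, $r(d) \la d = d$, and $r(d) \ra d = s(d)$, each of which is forced by uniqueness of factorisation in $\Cc \bowtie \Dd$ (Proposition~\ref{prop:factorisation_system_is_matched_pair}). The combinatorial bound $p + q < 2k - 1$ on the inductive step rules out the ``corner'' subcases $p = k - 1$ in (i) and $q = k - 1$ in (ii), so both $(p,q+1)$ and $(p+1,q)$ remain in the same region as $(p,q)$, and the inductive hypothesis combines with a single application of a trivial-identity rule to give the result. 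Specialising (i) to $(p,q) = (i,k)$ and (ii) to $(p,q) = (k,k-1-i)$ for $0 \le i < k$ yields $C_k^\Delta(h_\lambda^\Delta)(y_k) = [c_0,\ldots,c_{k-1};d_0,\ldots,d_{k-1}] = [\lambda]$.

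The main (though not deep) obstacle is the index bookkeeping in the inductive step: one has to check that the ``genuine'' images $c_p$ and $d_{k-1-q}$ propagate downward unchanged and that neither the $(p,q+1)$ nor the $(p+1,q)$ entry of a recurrence ever slips into the remaining corner region $p < k,\, q < k$ (which is where the genuine Zappa--Sz\'ep interaction occurs and is not reached from the edges of $y_k$).
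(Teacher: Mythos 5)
Your proposal is correct and follows essentially the same route as the paper, which simply asserts that the two identities are immediate from the definitions of $h^{\bowtie}_\gamma$ and $h^{\Delta}_\lambda$ and then deduces generation exactly as you do, from the fact that the images of $x_k$ and $y_k$ exhaust the free generators. Your downward induction for the $\Delta$-identity is a correct (and welcome) elaboration of that ``immediate'' step, and your reading of $x_k$ as $[f_{0,2k-1}e_{0,2k-1},\ldots,f_{k-1,k}e_{k-1,k}]$ correctly repairs the off-by-one indices in \eqref{eq:x_k}.
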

\begin{proof}
	That $[\gamma] = C^{\bowtie}_k(h^{\bowtie}_{\gamma})(x_k)$ and  $[\lambda]=C^{\Delta}_k(h^{\Delta}_\lambda)(y_k)$ follow immediately from the definitions of $h^{\bowtie}_{\gamma}$ and $h^{\Delta}_{\lambda}$.
	For the second statement,  let $h \colon (\Ee_{2k},\Ff_{2k}) \to (\Cc,\Dd)$ be a matched pair morphism. Then	$C^{\bowtie}_{k} (h)(x_{k}) = [h(f_{0,2k}e_{0,2k}),\ldots,h(f_{k-1,k+1}e_{k-1,k+1})]$.
So
	\begin{align*}
		\{ C^{\bowtie}_{k} (h)(x_{k}) \mid  h \colon (\Ee_{2k},\Ff_{2k}) \to (\Cc,\Dd)\}
		\supseteq	\{ C^{\bowtie}_{k} (h^{\bowtie}_{\gamma})(x_{k})  \mid \gamma \in (\Cc \bowtie \Dd)^{k}\}
		= \{ [\gamma] \mid \gamma \in (\Cc \bowtie \Dd)^{k}\},
	\end{align*}
which generates $C_k^{\bowtie}(\Cc,\Dd)$.
	Similarly,
	\[
	\{ C_k^{\Delta} (h) (y_k) \mid h \colon (\Ee_{2k}, \Ff_{2k}) \to (\Cc,\Dd) \} \supseteq \{[\lambda] \mid \lambda \in \Cc^k * \Dd^k\}
	\]
	generates $C_k^{\Delta}(\Cc, \Dd)$.
\end{proof}

In the terminology of {\cite[pp. 239--240]{Rot88}}, Lemma~\ref{lem:free-functors} says that the functors $C_{k}^{\bowtie}$ and $C_k^{\Delta}$ from $\MP$ to $\Ch$ are \emph{free} with bases $\{x_k\}$ and $\{y_k\}$, giving the following lemma.

\begin{lem}[{\cite[Lemma 9.10]{Rot88}}] \label{lem:all_you_need_is_models}
	If $G \colon \MP \to \Ab$ is a functor and $g \in G(\Ee_{2k},\Ff_{2k})$, then there is a unique natural transformation $\alpha \colon C_k^{\bowtie} \to G$ such that
	$
	\alpha_{(\Ee_{2k},\Ff_{2k})} (x_k) = g,$
	and a unique natural transformation $\beta \colon C_k^{\Delta} \to G$ such that
	$\beta_{(\Ee_{2k},\Ff_{2k})} (y_k) = g. $
\end{lem}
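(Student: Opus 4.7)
My plan is to exploit the freeness of $C_k^{\bowtie}$ and $C_k^{\Delta}$ established in Lemma~\ref{lem:free-functors} in essentially the same way that one proves that a module homomorphism out of a free module is determined by, and can be defined freely on, a basis. I will write out the argument for $\alpha \colon C_k^{\bowtie} \to G$; the argument for $\beta \colon C_k^{\Delta} \to G$ proceeds identically using the basis $\{y_k\}$ in place of $\{x_k\}$ and the morphisms $h^{\Delta}_\lambda$ in place of $h^{\bowtie}_\gamma$.

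For uniqueness, I will observe that for any matched pair $(\Cc,\Dd)$ and any $\gamma \in (\Cc \bowtie \Dd)^k$, the first part of Lemma~\ref{lem:free-functors} gives $[\gamma] = C_k^{\bowtie}(h^{\bowtie}_\gamma)(x_k)$, so naturality of $\alpha$ together with the requirement $\alpha_{(\Ee_{2k},\Ff_{2k})}(x_k) = g$ forces
\[
\alpha_{(\Cc,\Dd)}([\gamma]) = \alpha_{(\Cc,\Dd)}\bigl(C_k^{\bowtie}(h^{\bowtie}_\gamma)(x_k)\bigr) = G(h^{\bowtie}_\gamma)\bigl(\alpha_{(\Ee_{2k},\Ff_{2k})}(x_k)\bigr) = G(h^{\bowtie}_\gamma)(g).
\]
Since the $[\gamma]$ freely generate $C_k^{\bowtie}(\Cc,\Dd)$, this determines $\alpha_{(\Cc,\Dd)}$ uniquely.

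For existence, I will take the formula $\alpha_{(\Cc,\Dd)}([\gamma]) \coloneqq G(h^{\bowtie}_\gamma)(g)$ as a definition on generators and extend $\ZZ$-linearly; this is unambiguous precisely because $C_k^{\bowtie}(\Cc,\Dd) = \ZZ(\Cc \bowtie \Dd)^k$ is free on the $[\gamma]$. The one thing to verify is naturality: for a matched-pair morphism $f \colon (\Cc,\Dd) \to (\Cc',\Dd')$ inducing $\tilde f \colon \Cc \bowtie \Dd \to \Cc' \bowtie \Dd'$ via Corollary~\ref{cor:matched_pair_morphisms}, I need
\[
\alpha_{(\Cc',\Dd')} \circ C_k^{\bowtie}(f) = G(f) \circ \alpha_{(\Cc,\Dd)}
\]
on each generator $[\gamma]$. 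The left-hand side sends $[\gamma]$ to $\alpha_{(\Cc',\Dd')}([\tilde f(\gamma)]) = G(h^{\bowtie}_{\tilde f(\gamma)})(g)$, while the right-hand side sends it to $G(f)(G(h^{\bowtie}_\gamma)(g)) = G(f \circ h^{\bowtie}_\gamma)(g)$ by functoriality of $G$.

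So the entire verification reduces to the identity $h^{\bowtie}_{\tilde f(\gamma)} = f \circ h^{\bowtie}_\gamma$ of matched pair morphisms $(\Ee_{2k},\Ff_{2k}) \to (\Cc',\Dd')$. This is the only step that is not pure bookkeeping, and I would settle it by appealing to the uniqueness clause of Lemma~\ref{lem:matched_pair_universality}: both sides are matched pair morphisms, and a direct check on the distinguished generators $f_{p,2k-1-p}e_{p,2k-1-p}$ for $0\le p < k$ shows that they agree with the components of $\tilde f(\gamma)$, whence they must coincide. (Strictly speaking I should also note that $h^{\bowtie}_\gamma$ was defined for any composable $k$-tuple in $\Cc \bowtie \Dd$ by padding with identities at positions $k \le p < 2k$, and these identities are preserved by $f$, so the two morphisms really do agree on all generators of $(\Ee_{2k},\Ff_{2k})$.) Once this naturality square commutes, the family $\{\alpha_{(\Cc,\Dd)}\}$ is a natural transformation by construction, and it satisfies $\alpha_{(\Ee_{2k},\Ff_{2k})}(x_k) = G(h^{\bowtie}_{x_k})(g) = G(\id)(g) = g$ because $h^{\bowtie}_{x_k}$ is the identity functor on $(\Ee_{2k},\Ff_{2k})$, completing the proof. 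The main (and really only) obstacle is the compatibility $h^{\bowtie}_{\tilde f(\gamma)} = f \circ h^{\bowtie}_\gamma$, but as indicated this is immediate from the universal property already in hand.
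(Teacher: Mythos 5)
Your uniqueness argument, the well-definedness of $\alpha_{(\Cc,\Dd)}$ on the basis $\{[\gamma]\}$, and the naturality verification --- including the identity $h^{\bowtie}_{\tilde f(\gamma)} = f \circ h^{\bowtie}_{\gamma}$, which does follow from the uniqueness clause of Lemma~\ref{lem:matched_pair_universality} once you account for the padding by identities, exactly as you indicate --- are all correct and agree with the construction the paper extracts from Rotman. The gap is in the final normalisation step: $h^{\bowtie}_{x_k}$ is \emph{not} the identity functor on $(\Ee_{2k},\Ff_{2k})$. By its defining formula it fixes the generators $f_{p,2k-1-p}e_{p,2k-1-p}$ only for $0 \le p < k$, and sends those with $k \le p < 2k$ to the identity morphism at $(k,k)$; on objects, for instance, it sends $(2k,0)$ to $(k,k)$. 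Your construction therefore yields $\alpha_{(\Ee_{2k},\Ff_{2k})}(x_k) = G(h^{\bowtie}_{x_k})(g)$, and there is no reason for this to equal $g$ for an arbitrary functor $G$ and arbitrary $g$.

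This is not a cosmetic slip that a different argument would repair. Since $h^{\bowtie}_{x_k}$ is an endomorphism of $(\Ee_{2k},\Ff_{2k})$ with $C^{\bowtie}_k(h^{\bowtie}_{x_k})(x_k) = x_k$, naturality of \emph{any} candidate $\alpha$ with $\alpha_{(\Ee_{2k},\Ff_{2k})}(x_k) = g$ already forces $g = G(h^{\bowtie}_{x_k})(g)$; taking $G = C^{\bowtie}_0$ and $g = [(2k,0)]$ gives $G(h^{\bowtie}_{x_k})(g) = [(k,k)] \ne g$, so for this choice no natural transformation with the required normalisation exists. The underlying point is that the family $h \mapsto C^{\bowtie}_k(h)(x_k)$ is a generating family but not an injectively indexed basis (both $\operatorname{id}$ and $h^{\bowtie}_{x_k}$ send $x_k$ to $x_k$), so $C^{\bowtie}_k$ is not free in the strict sense required by Rotman's Lemma~9.10. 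Your proof, and indeed the statement, can only be closed under an extra hypothesis on $(G,g)$ --- for example that $G(h)(g) = g$ for every endomorphism $h$ of $(\Ee_{2k},\Ff_{2k})$ fixing $x_k$ under $C^{\bowtie}_k$ --- which would then need to be checked in the application in Proposition~\ref{prop:the_real_main_theorem}. The same remarks apply verbatim to $\beta$, $y_k$ and $h^{\Delta}_{y_k}$, which is likewise not the identity.
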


The proof of \cite[Lemma 9.10]{Rot88} describes the natural transformations of Lemma~\ref{lem:all_you_need_is_models}: for $[\gamma] \in C_k^{\bowtie}(\Cc,\Dd)$, Lemma~\ref{lem:free-functors} gives $h_{\gamma}^{\bowtie} \colon (\Ee_{2k},\Ff_{2k}) \to (\Cc,\Dd)$ such that $h_{\gamma}^{\bowtie}(x_k) = [\gamma]$, and then $\alpha_{(\Cc,\Dd)}([\gamma]) \coloneqq G(h_\gamma^{\bowtie})(g)$. Similarly, $\beta_{(\Cc,\Dd)}([\lambda]) \coloneqq G(h_\lambda^{\Delta})(g)$  for $[\lambda] \in C_k^{\Delta}(\Cc,\Dd)$.

\subsection{Proof of the main theorem}\label{subsec:proof_of_theorem}

To prove Theorem~\ref{thm:cohomologies_are_the_same} we construct a chain equivalence between $C_{\bullet}^{\bowtie}$ and $C_{\bullet}^{\Delta}$ inductively using~\cite[Theorem 9.12]{Rot88}.

\begin{lem}\label{lem:H_0_the_same}
 The identity map $C_0^{\Delta}(\Cc,\Dd) \coloneqq \ZZ X \overset{\id}{\to} \ZZ X  \eqqcolon  C_0^{\bowtie}(\Cc,\Dd)$ induces a natural isomorphism  $\widetilde{\operatorname{id}} : H_0^{\bowtie} \cong H_0^{\Delta}$.
\end{lem}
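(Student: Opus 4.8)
The plan is to exploit that in degree $0$ nothing is being identified: $C_0^{\Delta}(\Cc,\Dd) = \ZZ\Cc^0 = C_0^{\bowtie}(\Cc,\Dd)$, and indeed both are the \emph{same} functor $\MP\to\Ab$, sending $(\Cc,\Dd)$ to $\ZZ\Cc^0$ with a matched-pair morphism acting by the induced map of object sets. Since every $0$-chain is a cycle, the identity of $\ZZ\Cc^0$ descends to a map $\widetilde{\operatorname{id}}$ on $H_0$ exactly when $\im(d^{\bowtie}_0) = \im(d^{\Delta}_0)$ inside $\ZZ\Cc^0$ (here $d^{\bowtie}_0$ denotes the degree-$0$ categorical boundary of $\Cc\bowtie\Dd$), and when that holds the induced map is automatically an isomorphism. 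So the lemma reduces to computing these two boundary subgroups and checking they agree; naturality is then free, since $\widetilde{\operatorname{id}}$ is the quotient of the natural transformation $\id$, and in degree $0$ the chain maps induced by a matched-pair morphism are both just the induced map of free abelian groups on object sets.

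First I would identify $\im(d^{\bowtie}_0)$ with the subgroup $B\le\ZZ\Cc^0$ generated by all $[s(\alpha)]-[r(\alpha)]$ as $\alpha$ ranges over $\Cc$ and over $\Dd$: the inclusion $B\subseteq\im(d^{\bowtie}_0)$ follows from $d^{\bowtie}_0[\zeta]=[s(\zeta)]-[r(\zeta)]$ applied to $\zeta=\iota_\Cc(c)$ and $\zeta=\iota_\Dd(d)$, while for the reverse inclusion one writes a general $\zeta\in\Cc\bowtie\Dd$ as $\zeta=dc$ with $(d,c)\in\Dd*\Cc$ and inserts $[r(c)]=[s(d)]$ to telescope $d^{\bowtie}_0[\zeta]$ into $\big([s(c)]-[r(c)]\big)+\big([s(d)]-[r(d)]\big)\in B$. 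Next I would compute $d^{\Delta}_0=\partial^{\Delta,0}_0-\partial^{\Delta,1}_0$ on a generator $[c;d]$ of $C^{\Delta}_1=C_{1,1}$: unwinding $\partial^{\Delta,i}_0=\partial^{h,i}_{0,0}\circ\partial^{v,i}_{1,0}$ and using $s(c\ra d)=s(d)$ gives $d^{\Delta}_0[c;d]=[r(c)]-[s(d)]$. Specialising $c$, and then $d$, to identity morphisms exhibits $[r(d)]-[s(d)]$ and $[r(c)]-[s(c)]$ as boundaries, so $B\subseteq\im(d^{\Delta}_0)$; conversely each $d^{\Delta}_0[c;d]$ telescopes into $B$ through $[s(c)]=[r(d)]$. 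Hence $\im(d^{\Delta}_0)=B$ as well.

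Combining the two computations gives $\im(d^{\bowtie}_0)=B=\im(d^{\Delta}_0)$, and the lemma follows. Everything here is elementary, and the one spot deserving a moment's care is the evaluation of $d^{\Delta}_0$, where the sign factor $(-1)^p$ built into the vertical face maps must be tracked correctly — with $p=1$ this amounts to a single sign, so even that is minor. The only inputs beyond the definitions are the composability constraints in $\Dd*\Cc$ and $\Cc*\Dd$ and the action identity $s(c\ra d)=s(d)$, all available from Definition~\ref{dfn:matched_pair} and Lemma~\ref{lem:zappa_szep_category}.
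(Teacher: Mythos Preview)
Your proof is correct and follows essentially the same route as the paper: both arguments identify $C_0^\Delta$ with $C_0^{\bowtie}$ and reduce the lemma to showing $\im(d^{\bowtie}_0)=\im(d^{\Delta}_0)$ in $\ZZ X$. The only cosmetic difference is that you pass through the auxiliary subgroup $B=\langle [s(\alpha)]-[r(\alpha)]:\alpha\in\Cc\cup\Dd\rangle$ and show both images equal $B$, whereas the paper writes the two inclusions directly (expressing $d^{\bowtie}[dc]$ as a sum of two $d^{\Delta}$-boundaries, and $d^{\Delta}[c;d]$ as $d^{\bowtie}[c\bowtie d]$); your careful tracking of the $(-1)^p$ sign in $d^{\Delta}_0$ yields $[r(c)]-[s(d)]$, which is the negative of what the paper records, but of course this is immaterial for the image.
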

\begin{proof}
	Fix a matched pair $(\Cc,\Dd)$ with objects $X$. Identifying $C_0^{\Delta}(\Cc,\Dd)$ with $C_0^{\bowtie}(\Cc,\Dd)$ via the identity map on $\ZZ X$, it suffices to show that $\im(d^{\bowtie}) = \im(d^{\Delta})$ in $\ZZ X$. If $[d,c] \in C_1^{\bowtie} (\Cc,\Dd)$, then	
	$
	d^{\bowtie} [d,c] = [s(c)] - [r(d)] = d^{\Delta} [c,s(c)] +  d^{\Delta} [r(d),d] \in \im (d^{\Delta}).
	$
	If $[c,d] \in C_1^{\Delta} (\Cc,\Dd)$, then
	$
	d^{\Delta} [c,d] = [s(d)] - [r(c)] = d^{\bowtie}[c \bowtie d] \in \im(d^{\bowtie}).
	$
\end{proof}

\begin{prop}\label{prop:the_real_main_theorem}
There exist natural chain maps $\alpha \colon C_{\bullet}^{\bowtie} \to C_{\bullet}^{\Delta}$ and $\beta \colon C_{\bullet}^{\Delta} \to C_{\bullet}^{\bowtie}$ such that $\alpha \circ \beta$ is naturally chain-homotopic to $\id_{C_{\bullet}^{\Delta}}$ and $\beta \circ \alpha$ is naturally chain-homotopic to $ \id_{C_{\bullet}^{\bowtie}}$ such that $\alpha$ and $\beta$ lift the natural isomorphism $H_0^{\bowtie} \cong H_0^{\Delta}$, in the sense that the diagram
	\begin{equation}	\label{eq:ayclic_models_diagram}
		\begin{tikzcd}[ampersand replacement=\&]
			{} \& {C_2^{\bowtie}} \& {C_1^{\bowtie}} \& {C^{\bowtie}_0} \& {H^{\bowtie}_0} \& 0 \\
			{} \& {C_2^{\Delta}} \& {C_1^{\Delta}} \& {C_0^{\Delta}} \& {H^{\Delta}_0} \& 0\\
			{} \& {C_2^{\bowtie}} \& {C_1^{\bowtie}} \& {C^{\bowtie}_0} \& {H^{\bowtie}_0} \& 0
			\arrow[dashed, from=1-1, to=1-2]
			\arrow[dashed, from=2-1, to=2-2]
			\arrow[dashed, from=3-1, to=3-2]
			\arrow["{d_1^{\bowtie}}", from=1-2, to=1-3]
			\arrow["{d_0^{\bowtie}}", from=1-3, to=1-4]
			\arrow["{d_1^{\Delta}}", from=2-2, to=2-3]
			\arrow["{d_0^{\Delta}}", from=2-3, to=2-4]
			\arrow["{d_1^{\bowtie}}", from=3-2, to=3-3]
			\arrow["{d_0^{\bowtie}}", from=3-3, to=3-4]
			\arrow[from=1-4, to=1-5]
			\arrow[from=2-4, to=2-5]
			\arrow[from=3-4, to=3-5]
			\arrow[from=1-5, to=1-6]
			\arrow[from=2-5, to=2-6]
			\arrow[from=3-5, to=3-6]
			\arrow["\widetilde{\operatorname{id}}"', from=1-5, to=2-5]
			\arrow["\widetilde{\operatorname{id}}"', from=2-5, to=3-5]
			\arrow["{\alpha_0}"', shift right=1, from=1-4, to=2-4]
			\arrow["{\beta_0}"', shift right=1, from=2-4, to=3-4]
			\arrow["{\alpha_1}"', shift right=1, from=1-3, to=2-3]
			\arrow["{\beta_1}"', shift right=1, from=2-3, to=3-3]
			\arrow["{\alpha_2}"', shift right=1, from=1-2, to=2-2]
			\arrow["{\beta_2}"', shift right=1, from=2-2, to=3-2]
		\end{tikzcd}
	\end{equation}	
	of natural transformations commutes. If $\alpha' \colon C_{\bullet}^{\bowtie} \to C_{\bullet}^{\Delta}$ and $\beta' \colon C_{\bullet}^{\Delta} \to C_{\bullet}^{\bowtie}$ are chain maps that lift the natural isomorphism $H_0^{\bowtie} \cong H_0^{\Delta}$, then they are naturally chain-homotopic to $\alpha$ and $\beta$.

For $k \ge 0$, let $x_k, y_k$ be as in \eqref{eq:x_k}~and~\eqref{eq:y_k}. If for each $k \ge 0$, $\alpha_k \colon C_k^{\Delta} \to C_k^{\bowtie}$ is a natural transformation such that $d^\Delta \circ \alpha_k (x_k) = \alpha_{k-1} \circ d^{\Delta}(x_k)$, then $\alpha = (\alpha_k)$ is a natural chain equivalence from
$C_{\bullet}^{\bowtie}$ to $C_{\bullet}^{\Delta}$.
Similarly, if for each $k \ge 0$, $\beta_k \colon C_k^{\bowtie} \to C_k^{\Delta}$ is a natural transformations such that $d^{\bowtie} \circ \beta_k (y_k) = \beta_{k-1} \circ d^{\bowtie} (y_k)$, then $\beta = (\beta_k)$ is a natural chain equivalence
from $C_{\bullet}^{\Delta}$ to $C_{\bullet}^{\bowtie}$.
\end{prop}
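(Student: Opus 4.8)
The plan is to deduce the entire statement from the method of acyclic models, in the concrete form of \cite[Theorem~9.12]{Rot88}, by feeding in the facts established earlier in this section. Recall the ingredients: Lemma~\ref{lem:free-functors} shows that the functors $C_\bullet^{\bowtie}, C_\bullet^{\Delta}\colon \MP \to \Ch$ are free in each degree $k$ on the single model $(\Ee_{2k},\Ff_{2k})$, with bases $x_k$ of \eqref{eq:x_k} and $y_k$ of \eqref{eq:y_k}; Lemmas~\ref{lem:bowtie_models_acyclic}~and~\ref{lem:diagonal_models_acyclic} show that $C_\bullet^{\bowtie}(\Ee_n,\Ff_n)$ and $C_\bullet^{\Delta}(\Ee_n,\Ff_n)$ are acyclic for every $n$, so in particular each is acyclic on every model $(\Ee_{2k},\Ff_{2k})$; and Lemma~\ref{lem:H_0_the_same} provides a natural isomorphism $\widetilde{\operatorname{id}}\colon H_0^{\bowtie}\cong H_0^{\Delta}$, which on chains is simply the identity of $C_0^{\bowtie}=C_0^{\Delta}$.

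First I would apply \cite[Theorem~9.12]{Rot88} twice. Since $C_\bullet^{\bowtie}$ is free with models and $C_\bullet^{\Delta}$ is acyclic on them, the theorem produces a natural chain map $\alpha\colon C_\bullet^{\bowtie}\to C_\bullet^{\Delta}$ lifting $\widetilde{\operatorname{id}}$, unique up to natural chain homotopy; symmetrically, with the roles of the two complexes exchanged, it produces $\beta\colon C_\bullet^{\Delta}\to C_\bullet^{\bowtie}$ lifting $\widetilde{\operatorname{id}}^{-1}$. The inductive construction in \cite[Theorem~9.12]{Rot88} may be started from any chain-level lift of the $H_0$-isomorphism, so I would normalise $\alpha_0=\beta_0=\id$. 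The fact that $\alpha$ and $\beta$ are chain maps inducing $\widetilde{\operatorname{id}}$ on $H_0$ is precisely the commutativity of the augmented diagram~\eqref{eq:ayclic_models_diagram}.

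Next I would identify the two composites. The map $\alpha\circ\beta\colon C_\bullet^{\Delta}\to C_\bullet^{\Delta}$ is a natural chain map lifting $\widetilde{\operatorname{id}}\circ\widetilde{\operatorname{id}}^{-1}=\id_{H_0^{\Delta}}$, and so is $\id_{C_\bullet^{\Delta}}$; applying the uniqueness clause of \cite[Theorem~9.12]{Rot88} with source $C_\bullet^{\Delta}$ (free with models) and target $C_\bullet^{\Delta}$ (acyclic on models) yields a natural chain homotopy $\alpha\circ\beta\simeq\id_{C_\bullet^{\Delta}}$. Symmetrically, using that $C_\bullet^{\bowtie}$ is free and acyclic on models, $\beta\circ\alpha\simeq\id_{C_\bullet^{\bowtie}}$, so $\alpha$ and $\beta$ are mutually inverse natural chain equivalences. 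The uniqueness assertion is then immediate from the same clause: any $\alpha'$, $\beta'$ lifting the natural isomorphism $H_0^{\bowtie}\cong H_0^{\Delta}$ are naturally chain-homotopic to $\alpha$ and $\beta$.

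Finally, for the recognition criterion I would argue as follows. Given, for each $k$, natural transformations $\alpha_k\colon C_k^{\bowtie}\to C_k^{\Delta}$ with $\alpha_0=\id$ and $d^{\Delta}\circ\alpha_k(x_k)=\alpha_{k-1}\circ d^{\bowtie}(x_k)$, the identity being checked only at the model $(\Ee_{2k},\Ff_{2k})$: then $d^{\Delta}\circ\alpha_k$ and $\alpha_{k-1}\circ d^{\bowtie}$ are natural transformations out of $C_k^{\bowtie}$, which is free with basis $x_k$, and they agree on $x_k$ at $(\Ee_{2k},\Ff_{2k})$; by the uniqueness part of Lemma~\ref{lem:all_you_need_is_models} they coincide as natural transformations, so $\alpha=(\alpha_k)$ is a natural chain map lifting $\widetilde{\operatorname{id}}$, hence a natural chain equivalence by the first part of the proposition. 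The argument for $\beta=(\beta_k)$ built from $y_k$ and $d^{\bowtie}\circ\beta_k(y_k)=\beta_{k-1}\circ d^{\Delta}(y_k)$ is identical, with $C_\bullet^{\bowtie}$ and $C_\bullet^{\Delta}$ exchanged. I expect the only point requiring care is the bookkeeping around \cite[Theorem~9.12]{Rot88}: one must confirm its hypotheses in \emph{both} directions so that it delivers $\alpha$, $\beta$, and both homotopies; there is no genuine analytic or combinatorial obstacle, since all the model complexes were already shown acyclic in Section~\ref{sec:homology_of_models}.
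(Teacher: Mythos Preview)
Your proposal is correct and follows essentially the same approach as the paper: both invoke the acyclic-models theorem \cite[Theorem~9.12]{Rot88} using the freeness of $C_\bullet^{\bowtie}$ and $C_\bullet^{\Delta}$ on the models $(\Ee_{2k},\Ff_{2k})$ (Lemma~\ref{lem:free-functors}), their acyclicity on these models (Lemmas~\ref{lem:bowtie_models_acyclic} and~\ref{lem:diagonal_models_acyclic}), and the degree-zero isomorphism (Lemma~\ref{lem:H_0_the_same}). The paper spells out one step of the inductive construction explicitly before citing \cite[Theorem~9.12]{Rot88}, whereas you invoke the theorem as a black box and then separately justify the recognition criterion via Lemma~\ref{lem:all_you_need_is_models}; both are valid, and your write-up of the recognition criterion is arguably clearer (and silently corrects some typos in the statement regarding the directions of $\alpha_k$, $\beta_k$ and the differentials).
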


The result is standard and follows from \cite[Theorem 9.12]{Rot88}, but we include some details to describe the resulting isomorphisms in homology explicitly.

\begin{proof}
	The morphisms $\alpha_0$ and $\beta_0$ are induced by the identity maps on objects.
	We start by constructing $\alpha$.
	Suppose that there exists maps $\alpha_n$, for $n < k$ such that the right-most $n+1$ squares of~\eqref{eq:ayclic_models_diagram} commute. Consider the matched pair $(\Ee_{2k},\Ff_{2k})$ and let $x_k \in C_k^{\bowtie}(\Ee_{2k},\Ff_{2k})$ be as in~\eqref{eq:x_k}. Commutativity of~\eqref{eq:ayclic_models_diagram} implies that $d^{\Delta}\alpha_{k-1}d^{\bowtie} = (d^{\Delta})^2 \alpha_{k-2} = 0$. Lemma~\ref{lem:bowtie_models_acyclic} implies that $(C_{\bullet}^{\bowtie}(\Ee_{2k},\Ff_{2k}),d^{\bowtie})$ is acyclic, so there exists $\ol{x}_k \in
C_k^{\Delta}(\Ee_{2k},\Ff_{2k})$ such that $d^{\Delta}(\ol{x}_k) = \alpha_{k-1} d^{\bowtie}(x_k)$. So Lemma~\ref{lem:all_you_need_is_models}
yields a unique natural transformation $\alpha_k \colon C_k^{\bowtie} \to C_k^{\Delta}$ such that $\alpha_k^{(\Ee_{2k},\Ff_{2k})} (x_k)  = \ol{x}_k$ and $d^{\Delta}\alpha_k = \alpha_{k-1} d^{\bowtie}$.
	
	A similar construction using $y_k$ and Lemma~\ref{lem:diagonal_models_acyclic} gives $\beta_k$. By \cite[Theorem~9.12]{Rot88} $\alpha_k$ and $\beta_k$ induce natural chain equivalences $\alpha$ and $\beta$ and these are, up to natural chain homotopy, the unique chain equivalences lifting the isomorphism of Lemma~\ref{lem:H_0_the_same}
\end{proof}

\begin{proof}[Proof of Theorem~\ref{thm:cohomologies_are_the_same}]
	The result follows from Proposition~\ref{prop:the_real_main_theorem} and Theorem~\ref{thm:Eilenberg--Zilber}.
\end{proof}

\subsection{Explicit formulas for the natural isomorphisms between homology theories}\label{subsec:PiPsidef}

Proposition~\ref{prop:the_real_main_theorem} yields a natural isomorphism $C_\bullet^{\Delta} \cong C_\bullet^{\bowtie}$, and its final statement says how to recognise chain maps $\alpha$, $\beta$ that induce such an isomorphism.
We show that the map $\Pi$ of \eqref{eq:pi} and $\RPi \coloneqq \nabla \circ \Psi$ are such chain maps, and describe chain maps inducing the remaining arrows in~\eqref{eq:iso_theorem_diagram}. We first examine how $\Pi_k$ behaves on the model matched pairs $(\Ee_m,\Ff_m)$.

\begin{lem}\label{lem:pi_y}
	Fix $m,\,k \ge 0$, and consider $\Pi_k \colon C_{k}^{\Delta}(\Ee_{m},\Ff_{m}) \to C_k^{\bowtie} (\Ee_{m},\Ff_{m})$. We have
	\[
	\Pi_k\langle p_0,\ldots,p_k;q_0,\ldots,q_k \rangle= \langle (p_0,q_0),\ldots,(p_k,q_k) \rangle.
	\]
	In particular, the element $y_k \in C_{k}^{\Delta}(\Ee_{2k},\Ff_{2k})$ from~\eqref{eq:y_k}, satisfies
	\begin{equation}\label{eq:pi_k_y_k}
		\Pi_k (y_k) = [f_{0,k-1}e_{0,k-1},f_{1,k-2}e_{1,k-2},\ldots, f_{k-2,1}e_{k-2,1},f_{k-1,0}e_{k-1,0}].
	\end{equation}
\end{lem}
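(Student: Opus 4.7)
The plan is to prove the first displayed equation by induction on $k$, using the recursion $\Pi_k = \bowtiemap^k \circ (1_\Cc * \Pi_{k-1} * 1_\Dd)$. The base case $k = 0$ is immediate from $\Pi_0 = \id_{\Cc^0}$, and once the general formula is established, $\eqref{eq:pi_k_y_k}$ will drop out by specialising $p_i = i$ and $q_j = k-j$.

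For the inductive step, I would first decode $\langle p_0, \ldots, p_k; q_0, \ldots, q_k\rangle$ via the bijection $\Ee_m^k * \Ff_m^k \cong Y_m^{(k,k)}$: it corresponds to the composable tuple $(c_1, \ldots, c_k, d_1, \ldots, d_k)$ with $c_i = ((p_{i-1}, q_0), (p_i, q_0))$ running along the row $q = q_0$ of $\Ee_m$ and $d_j = ((p_k, q_{j-1}), (p_k, q_j))$ running down the column $p = p_k$ of $\Ff_m$. The map $1_\Cc * \Pi_{k-1} * 1_\Dd$ fixes $c_1$ and $d_k$ and, by the inductive hypothesis, sends the middle tuple (which corresponds to $\langle p_1, \ldots, p_k; q_0, \ldots, q_{k-1}\rangle$) to $\langle (p_1, q_0), (p_2, q_1), \ldots, (p_k, q_{k-1})\rangle \in C_{k-1}^{\bowtie}$. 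Under the bijection $\Gamma_m^{k-1} \cong X_m^{(k-1)}$ this represents the $(k-1)$-tuple with $i$-th morphism $((p_i, q_{i-1}), (p_{i+1}, q_i))$; the strict factorisation system $[\Ff_m, \Ee_m]$ of Proposition~\ref{prop:factorisation_system_is_matched_pair} splits each such morphism uniquely as $d_i' c_i'$ with $d_i' = ((p_i, q_{i-1}), (p_i, q_i)) \in \Ff_m$ and $c_i' = ((p_i, q_i), (p_{i+1}, q_i)) \in \Ee_m$.

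Splicing these factorisations back between $c_1$ and $d_k$ produces the $2k$-tuple $(c_1, d_1', c_1', d_2', \ldots, d_{k-1}', c_{k-1}', d_k) \in \Cc * (\Dd * \Cc)^{k-1} * \Dd$. Regrouping as $k$ adjacent $(\Cc, \Dd)$-pairs $(c_{i-1}', d_i')$ (with $c_0' := c_1$ and $d_k' := d_k$), each pair fits the pattern $c_{i-1}' = ((p_{i-1}, q_{i-1}), (p_i, q_{i-1}))$ and $d_i' = ((p_i, q_{i-1}), (p_i, q_i))$, whose product in $\Gamma_m$ is $((p_{i-1}, q_{i-1}), (p_i, q_i))$. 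The final $\bowtiemap^k$ replaces each pair by its Zappa--Sz\'ep factorisation $(c_{i-1}' \la d_i', c_{i-1}' \ra d_i') \in \Dd * \Cc$, which is simply the $[\Ff_m, \Ee_m]$-factorisation of the same product morphism. So the output in $(\Cc \bowtie \Dd)^k$ encodes the $k$-tuple whose $i$-th morphism has range $(p_{i-1}, q_{i-1})$ and source $(p_i, q_i)$, which is precisely $\langle (p_0, q_0), \ldots, (p_k, q_k)\rangle$. Specialising to $y_k$ and factoring each diagonal step $((i-1, k-i+1), (i, k-i))$ through $(i-1, k-i)$ identifies it with $f_{i-1, k-i} \cdot e_{i-1, k-i}$ from the definitions of the edges of $E_{2k}$ and $F_{2k}$, giving $\eqref{eq:pi_k_y_k}$. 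The only mild obstacle is keeping three parallel encodings of tuples aligned throughout the induction (as $(c_i, d_j)$ sequences, as $Y_m$- or $X_m$-coordinates, and as morphisms of $\Gamma_m$); once these identifications are pinned down, the computation is formal.
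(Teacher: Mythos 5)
Your argument is correct, and the inductive engine is the same as the paper's: unwind $\Pi_k = \bowtiemap^k \circ (1_\Cc * \Pi_{k-1} * 1_\Dd)$ and use uniqueness of the $[\Ff_m,\Ee_m]$-factorisation in $\Gamma_m$ to identify the output. The difference is in the logical organisation. The paper first proves the special case \eqref{eq:pi_k_y_k} by induction on $k$ (transporting the inductive hypothesis from $(\Ee_{2(k-1)},\Ff_{2(k-1)})$ into $(\Ee_{2k},\Ff_{2k})$ via a matched-pair morphism supplied by Lemma~\ref{lem:free-functors}), and then deduces the general formula for $\Pi_k\langle p_0,\ldots,p_k;q_0,\ldots,q_k\rangle$ from naturality of $\Pi_k$ together with the freeness of $C_k^\Delta$. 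You instead run the induction directly on an arbitrary generator of $C_k^\Delta(\Ee_m,\Ff_m)$, noting that the ``middle'' tuple $\langle p_1,\ldots,p_k;q_0,\ldots,q_{k-1}\rangle$ lives in the same model pair, and obtain \eqref{eq:pi_k_y_k} by specialising $p_i=i$, $q_i=k-i$ at the end. This is slightly more self-contained --- it needs neither naturality of $\Pi$ nor Lemma~\ref{lem:free-functors} --- at the cost of carrying the three encodings (fibred-product tuples, $Y_m$-coordinates, $\Gamma_m$-morphisms) through the induction, which you do correctly: in particular your identification of the $i$-th regrouped pair $(c_{i-1}',d_i')$ with product $((p_{i-1},q_{i-1}),(p_i,q_i))$, and of the diagonal step $((i-1,k-i+1),(i,k-i))$ with $f_{i-1,k-i}e_{i-1,k-i}$, are exactly right. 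Both routes are sound; the paper's fits the acyclic-models theme of the section, while yours is the more direct computation.
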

\begin{proof}
	We begin by establishing \eqref{eq:pi_k_y_k}. We proceed by induction on $k$. The case $k = 0$ is trivial. Suppose inductively that the analogue of \eqref{eq:pi_k_y_k} describes $\Pi_{k-1}(y_{k-1})$. Recall that \[y_k=[e_{0,k},e_{1,k},\ldots,e_{k-1,k};f_{k,k-1},\ldots,f_{k,1}, f_{k,0}],
	\] and let
	\[
	\lambda_{k-1} \coloneqq [e_{1,k},\ldots,e_{k-1,k};f_{k,k-1},\ldots,f_{k,1}] \in C_k^{\Delta}(\Ee_{2(k-1)},\Ff_{2(k-1)}).
	\] 	
	By Lemma~\ref{lem:free-functors} there exists a morphism $h_{\lambda_{k-1}}^{\Delta} \colon (\Ee_{2(k-1)},\Ff_{2(k-1)}) \to (\Ee_{2k},\Ff_{2k})$ such that $\lambda_{k-1} = C_{k-1}^{\Delta}(h_{\lambda_{k-1}}^{\Delta})(y_{k-1})$.
	The inductive hypothesis gives
	\begin{align*}
		(1_{\Ee_{2k}} * \Pi_{k-1} * 1_{\Ff_{2k}}) (y_k) = [e_{0,k},f_{1,k-1},e_{1,k-1},f_{2,k-2},\ldots, e_{k-2,2} , f_{k-1,1}, e_{k-1,1} ,f_{k,0}],
	\end{align*}
	and applying $\bowtie^k$ yields \eqref{eq:pi_k_y_k}.
	
	For the second statement, let $\gamma = \langle p_0,\ldots, p_k;q_0,\ldots,q_k \rangle$. 	Lemma~\ref{lem:free-functors} gives a morphism $h_\gamma^{\Delta} \colon (\Ee_{2k},\Ff_{2k}) \to (\Ee_{m},\Ff_{m})$ such that
$C_k^{\Delta}(h_\gamma^{\Delta})(y_k) = \gamma$. Naturality of $\Pi_k$ implies that $\Pi_k(\gamma) = C_k^{\bowtie}(h_{\gamma}^{\Delta}) \circ \Pi_k (y_k)$. So the result follows from \eqref{eq:pi_k_y_k} and the definition of $h_{\gamma}^{\Delta}$.
\end{proof}

\begin{prop}
	For $k \ge 0$, we have $\Pi_{k-1} \circ d^{\Delta} (y_k) = d^{\bowtie} \circ \Pi_k(y_k)$. In particular,  $\Pi \colon C_{\bullet}^{\Delta} \to C_{\bullet}^{\bowtie}$  induces a natural isomorphism on homology.
\end{prop}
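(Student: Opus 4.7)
My plan is to compute both sides of the identity directly using the coordinate description of chains in the model matched pair $(\Ee_{2k},\Ff_{2k})$ from Section~\ref{sec:homology_of_models}, and then invoke the final statement of Proposition~\ref{prop:the_real_main_theorem} to pass from this single model identity to the claimed natural isomorphism on homology.

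First I would verify the boundary identity on $y_k$. In the coordinate notation, $y_k = \langle 0, 1, \ldots, k;\, k, k-1, \ldots, 0\rangle$, and Lemma~\ref{lem:pi_y} gives $\Pi_k(y_k) = \langle(0,k),(1,k-1),\ldots,(k,0)\rangle$. For each $0 \le i \le k$, the face map $\partial^{\Delta,i}$ deletes the $i$-th entry from both the left and right sequences, so
\[
\partial^{\Delta,i}(y_k) = \langle 0,\ldots,\widehat{\imath},\ldots,k;\, k,\ldots,\widehat{k-i},\ldots,0\rangle,
\]
and a second application of Lemma~\ref{lem:pi_y} yields
\[
\Pi_{k-1}\partial^{\Delta,i}(y_k) = \langle (0,k),\ldots,\widehat{(i,k-i)},\ldots,(k,0)\rangle.
\]
On the other hand, the categorical face map $\partial^{\bowtie,i}$ on the model generators simply deletes the $i$-th vertex-pair, so $\partial^{\bowtie,i}\Pi_k(y_k)$ produces the same tuple. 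Taking the alternating sum over $i$ therefore gives $\Pi_{k-1} d^{\Delta}(y_k) = d^{\bowtie} \Pi_k(y_k)$.

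With this identity in hand, the ``in particular'' conclusion follows from the final statement of Proposition~\ref{prop:the_real_main_theorem}. Naturality of $\Pi$ was established when it was defined in Section~\ref{subsec:the_chain_maps}, and since $\Pi_0$ is the identity on $\ZZ\Cc^0$, the natural transformation $\Pi$ lifts the natural isomorphism $\widetilde{\operatorname{id}} \colon H_0^{\Delta} \cong H_0^{\bowtie}$ of Lemma~\ref{lem:H_0_the_same}. The boundary identity just proved on the generator $y_k$, for each $k$, then promotes $\Pi$ to a natural chain equivalence $C_\bullet^{\Delta} \to C_\bullet^{\bowtie}$, which descends to the desired natural isomorphism $H_\bullet^{\Delta} \cong H_\bullet^{\bowtie}$.

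The identity itself is essentially bookkeeping rather than a deep obstacle; the real work has already been done in the coordinate description of $\Pi_k$ given by Lemma~\ref{lem:pi_y} and in the acyclic-models machinery packaged in Proposition~\ref{prop:the_real_main_theorem}. The only point requiring attention is sign matching, but since both $\partial^{\Delta,i}$ and $\partial^{\bowtie,i}$ act as plain deletions without introducing any internal sign twist, the two alternating sums assemble compatibly and match term-by-term.
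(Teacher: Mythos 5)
Your proposal is correct and follows essentially the same route as the paper: apply Lemma~\ref{lem:pi_y} to identify $\Pi_{k-1}\partial^{\Delta,i}(y_k)$ with $\partial^{\bowtie,i}\Pi_k(y_k)$ term by term, sum the alternating signs, and conclude via the recognition criterion in Proposition~\ref{prop:the_real_main_theorem}. The only cosmetic difference is that the paper also records the compatible behaviour of the degeneracy maps, which is not needed for the boundary identity itself.
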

\begin{proof}
	Fix $k \ge 0$. By Lemma~\ref{lem:pi_y},	for each $0 \le i \le k$, we have
	\begin{align*}
		\Pi_{k-1} \circ \partial_i^{\Delta} (y_k)
		&= \Pi_{k-1} \langle 0,1,\ldots,\widehat{i},\ldots, k-1,k;k,k-1,\ldots,\widehat{k-i},\ldots,1,0 \rangle \\
		&= \langle (0,k),(1,k-1),\ldots, \widehat{(i,k-i)},\ldots, (k-1,1),(k,0) \rangle
		=\partial_i^{\bowtie} \circ \Pi_k(y_k).
	\end{align*}
	A similar calculation gives $\Pi_{k+1} \circ \sigma_i^{\Delta} (y_k) = \sigma_i^{\bowtie} \circ \Pi_k(y_k)$ and so $d^{\bowtie} \circ \Pi_k(y_k) = \Pi_{k-1} \circ d^{\Delta}(y_k)$. The final statement follows from
Proposition~\ref{prop:the_real_main_theorem}.
\end{proof}

We next examine how the map $\Psi$ of \eqref{eq:psi} behaves on the model matched pairs.

\begin{lem} Fix $k,\,m \ge 0$ and $a = \langle \mathbf{a}_0,\ldots \mathbf{a}_k \rangle \in C_k^{\bowtie}(\Ee_{m},\Ff_{m})$. We have
	\begin{equation}\label{eq:psi_models}
		\Psi_k \langle \mathbf{a}_0,\ldots,\mathbf{a}_k \rangle
		= \sum_{i=0}^k \langle a_0^L, \ldots, a_i^L ; a_i^R, \ldots, a_k^R \rangle.
	\end{equation}
	In particular, $x_k \in C_k^{\bowtie}(\Ee_{2k},\Ff_{2k})$ as in~\eqref{eq:x_k}, satisfies
	\begin{equation}\label{eq:psi_models_xk}
		\Psi_k(x_k)
		= \sum_{i=0}^k  \langle 0,1,\ldots, i-1,i ; 2k-i,2k - i -1, \ldots,k+1, k \rangle.
	\end{equation}
\end{lem}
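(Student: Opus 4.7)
My plan is to unwind $\Psi_k=\sum_{p+q=k}\Psi_{p,q}$ with $\Psi_{p,q}=\rho_{p,q}\circ(\tau^p*\tau^q)$ and compute each summand directly on the generator. The $k$-tuple $\langle\mathbf{a}_0,\ldots,\mathbf{a}_k\rangle$ corresponds to $(\gamma_1,\ldots,\gamma_k)\in\Gamma_m^k$ with $\gamma_j=(\mathbf{a}_{j-1},\mathbf{a}_j)$, and the strict factorisation system $[\Ff_m,\Ee_m]$ gives $\gamma_j=f_je_j$ with
\[
f_j=(\mathbf{a}_{j-1},(a_{j-1}^L,a_j^R))\in\Ff_m,\qquad e_j=((a_{j-1}^L,a_j^R),\mathbf{a}_j)\in\Ee_m.
\]

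The central step is the identification
\[
\tau^p(\gamma_1,\ldots,\gamma_p)=(\phi,\epsilon)\in\Ff_m^p*\Ee_m^p,
\]
where $\phi_j=((a_0^L,a_{j-1}^R),(a_0^L,a_j^R))$ and $\epsilon_j=((a_{j-1}^L,a_p^R),(a_j^L,a_p^R))$ for $1\le j\le p$. I would prove this by induction on $p$, using that the product in $\Cc^*\bowtie\Dd^*$ is built up by iterating the swap rule in $\Gamma_m$: given $e\in\Ee_m$ and $f\in\Ff_m$ with $s(e)=r(f)$, uniqueness of factorisation forces $ef=f'e'$, with $f'$ and $e'$ determined as the two remaining sides of the rectangle whose three specified corners are $r(e)$, $s(e)=r(f)$ and $s(f)$ (the fourth corner has $L$-coordinate of $r(e)$ and $R$-coordinate of $s(f)$). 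The inductive step multiplies $(\phi,\epsilon)$ by $\gamma_{p+1}=f_{p+1}e_{p+1}$ in $\Cc^*\bowtie\Dd^*$; by the formulas for the actions of $\Cc^p$ on $\Dd$ and $\Dd$ on $\Cc^p$ from Section~\ref{sec:matched_pairs}, the only swap needed at each level is the one just described, and repeated application lowers the $R$-coordinate of every vertex from $a_p^R$ to $a_{p+1}^R$, matching the claimed vertices.

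Applying the analogous result to $(\gamma_{p+1},\ldots,\gamma_k)$ and projecting via $\rho_{p,q}$ retains the $\Ee_m^p$-component $\epsilon$ of $\tau^p$ and the $\Ff_m^q$-component $\phi'$ of $\tau^q$; here $\phi'$ is the path in $\Ff_m$ at column $a_p^L$ descending through heights $a_p^R,a_{p+1}^R,\ldots,a_k^R$. Translating $(\epsilon,\phi')\in\Ee_m^p*\Ff_m^q$ into the $Y_m^{(p,q)}$-coordinates yields $p_j=a_j^L$ (with $q_0=a_p^R$) and $q_j=a_{p+j}^R$ (with $p_p=a_p^L$, consistently), so
\[
\Psi_{p,q}\langle\mathbf{a}_0,\ldots,\mathbf{a}_k\rangle=\langle a_0^L,\ldots,a_p^L;a_p^R,\ldots,a_k^R\rangle.
\]
Summing over $0\le p\le k$ and relabelling $p$ as $i$ establishes \eqref{eq:psi_models}. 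The specialisation \eqref{eq:psi_models_xk} follows by substituting $\mathbf{a}_j=(j,2k-j)$, so that $a_j^L=j$ and $a_j^R=2k-j$.

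The principal obstacle is the inductive identification of $\tau^p$. Conceptually the swap rule is rigid because it is forced by uniqueness of factorisation in $[\Ff_m,\Ee_m]$, but carefully tracking how the intermediate vertices propagate across the diagram requires some bookkeeping. A diagrammatic argument in the style of Remark~\ref{rmk:diagram_tau_q} makes the pattern visually transparent, since every strand in the model is determined by its endpoints, and completing each crossing just picks out the fourth corner of a rectangle.
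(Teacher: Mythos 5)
Your proposal is correct and follows essentially the same route as the paper: both identify $\tau^p$ explicitly on the model pairs (your $(\phi,\epsilon)$ is exactly the paper's formula for $\tau_q$), and then read off $\Psi_{p,q}$ as the middle projection $\rho_{p,q}\circ(\tau^p*\tau^q)$. The only difference is cosmetic — you establish the $\tau^p$ formula by induction on $p$ via the rectangle-completion swap rule, whereas the paper verifies it in one step by exhibiting the candidate factorisation and invoking injectivity of $r\times s$ on $\Gamma_m$ together with uniqueness of factorisation in $\Ee_m^*\bowtie\Ff_m^*$.
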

\begin{proof}
The formula \eqref{eq:bijection_gamma_n_k} gives a bijection between composable $q$-tuples in $\Gamma_m = \Ee_m \bowtie \Ff_m$ and the set $X_m^{(q)}$. We claim that under this identification, if $(\mathbf{a}_0,\ldots,\mathbf{a}_q) \in X_m^{(q)}$, then
	\begin{equation} \label{eq:tau_explicit_models}
			\tau_q (\mathbf{a}_0,\ldots,\mathbf{a}_q) = ((a_0^L,a_0^R),\ldots,(a_0^L,a_{q-1}^R),(a_0^L,a_q^R),(a_1^L, a_q^R)\ldots,(a_q^L,a_q^R))
	\end{equation}
	in $\Gamma_m^q$.
	The tuples $((a_0^L,a_0^R),(a_0^L,a_1^R),\ldots,(a_0^L,a_q^R))$ and $((a_0^L,a_q^R),\ldots,(a_{q-1}^L,a_q^R),(a_q^L,a_q^R))$ belong to  $\Ee_m^q \subseteq \Gamma_m^q$ and $\Ff_m^q\subseteq \Gamma_m^q$. Since $r \times s \colon \Gamma_m \to X_{m} \times X_m$ is injective, and factorisation in $\Ee_m^* \bowtie \Ff_m^*$ is unique by
Proposition~\ref{prop:bowtie_**}, the formula \eqref{eq:tau_explicit_models} follows.

Since $\Psi_{p,q} = \rho_{p,q} \circ (\tau_p * \tau_q)$ by definition, and $\Psi_k = \sum_{i=0}^k \Psi_{i,k-i}$ the identity~\eqref{eq:psi_models} holds; and~\eqref{eq:psi_models_xk} follows from~\eqref{eq:psi_models} since $x_k = \langle (0,2k),(1,2k-1), \ldots, (k,k) \rangle$.
\end{proof}

\begin{prop}
	For $k \ge 0$, we have $\Psi_{k-1} \circ d^{\bowtie}  = d^{\Tot} \circ \Psi_k$. The chain maps $\Psi \colon C_{\bullet}^{\bowtie} \to C_{\bullet}^{\Tot}$ and $\RPi \colon C_{\bullet}^{\bowtie} \to C_{\bullet}^{\Delta}$ are natural chain equivalences inducing isomorphisms in homology.
\end{prop}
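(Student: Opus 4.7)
The plan is to verify the chain map identity first, then use Proposition~\ref{prop:the_real_main_theorem} to conclude that $\RPi = \nabla \circ \Psi$ is a natural chain equivalence, and finally bootstrap from there to get the same for $\Psi$ itself.

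For the identity $\Psi_{k-1} \circ d^{\bowtie} = d^{\Tot} \circ \Psi_k$, I would observe that both sides are natural transformations $C_k^{\bowtie} \to C_{k-1}^{\Tot}$ of functors $\MP \to \Ab$ (naturality of $\Psi_k$ was already established in Section~\ref{sec:Psi}). By Lemma~\ref{lem:all_you_need_is_models} applied with $G = C_{k-1}^{\Tot}$, any such natural transformation is determined by its value on the model element $x_k \in C_k^{\bowtie}(\Ee_{2k},\Ff_{2k})$, so it suffices to check equality on $x_k$. Using the explicit formula~\eqref{eq:psi_models_xk} expressing $\Psi_k(x_k)$ as $\sum_{i=0}^k \langle 0,1,\ldots,i; 2k-i,\ldots,k \rangle$, together with the descriptions in Section~\ref{sec:homology_of_models} of the face maps $\partial^{h,i}$, $\partial^{v,j}$ on generators $\langle p_0,\ldots,p_k;q_0,\ldots,q_\ell\rangle$ and of $\partial^{\bowtie,i}$ on generators $\langle \mathbf{a}_0,\ldots,\mathbf{a}_k\rangle$, the verification is a finite combinatorial check: interior contributions from $\partial^{h,j}(u_i)$ and $\partial^{v,j}(u_{i-1})$ between consecutive summands $u_i$ of $\Psi_k(x_k)$ cancel with sign, while the outermost terms $\partial^{h,0}(u_i)$ and $\partial^{v,k-i+1}(u_{i-1})$ assemble into the contributions $\Psi_{k-1}(\partial^{\bowtie,\ell}(x_k))$ of $\Psi_{k-1}\circ d^{\bowtie}$.

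Granted the chain map identity, $\RPi := \nabla \circ \Psi$ is a natural chain map $C_\bullet^{\bowtie} \to C_\bullet^{\Delta}$, since $\nabla$ is a chain map by Theorem~\ref{thm:Eilenberg--Zilber}. In degree zero, $\Psi_0 = \id_{\ZZ \Cc^0}$ by inspection and the only $(0,0)$-shuffle is trivial so that $\nabla_{0,0} = \id$; hence $\RPi_0 = \id$ lifts the natural isomorphism $\widetilde{\id}\colon H_0^{\bowtie} \cong H_0^{\Delta}$ of Lemma~\ref{lem:H_0_the_same}. The last statement of Proposition~\ref{prop:the_real_main_theorem}, applied in the $\beta$ direction, then implies that $\RPi$ is a natural chain equivalence inducing an isomorphism on homology.

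To conclude for $\Psi$, I would compose with the Alexander--Whitney map $\Delta \colon C_\bullet^{\Delta} \to C_\bullet^{\Tot}$, which is a chain-homotopy inverse of $\nabla$ by Theorem~\ref{thm:Eilenberg--Zilber}. Then $\Delta \circ \RPi = \Delta \circ \nabla \circ \Psi \simeq \Psi$, exhibiting $\Psi$ as chain-homotopic to a composition of natural chain equivalences. Hence $\Psi$ is itself a natural chain equivalence and induces an isomorphism in homology. The main obstacle is the model-level verification of the chain map identity; everything afterwards is essentially formal, and the hard part is manageable precisely because the freeness of $C_k^{\bowtie}$ (Lemma~\ref{lem:free-functors}) reduces it to a single calculation on the single element $x_k$.
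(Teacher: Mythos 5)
Your proposal is correct and follows essentially the same route as the paper: reduce the identity $\Psi_{k-1}\circ d^{\bowtie}=d^{\Tot}\circ\Psi_k$ to a computation on the model matched pairs using the explicit formula~\eqref{eq:psi_models}, deduce that $\RPi=\nabla\circ\Psi$ satisfies the recognition criterion of Proposition~\ref{prop:the_real_main_theorem}, and then transfer the chain-equivalence property back to $\Psi$ via Theorem~\ref{thm:Eilenberg--Zilber}. One small correction to your sketch of the combinatorial check: the terms that cancel between consecutive summands $u_t, u_{t+1}$ of $\Psi_k(x_k)$ are the ``mixed'' faces $\partial^{v,0}(u_t)$ and $\partial^{h,t+1}(u_{t+1})$ (which coincide and carry opposite signs), while the remaining faces --- including $\partial^{h,0}$ and the last vertical face --- are exactly the ones that assemble into $\Psi_{k-1}\circ d^{\bowtie}$.
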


\begin{proof} Let
	$a = \langle \mathbf{a}_0,\ldots,\mathbf{a}_k \rangle \in C_k^{\bowtie}(\Ee_m,\Ff_m)$. For each $0 \le i \le k$,
	\begin{align*}
		\Psi_{k-1} \circ \partial_i^{\bowtie} (a)
		&= \sum_{0 \le p < i} \langle a_0^L,\ldots,a_{p}^L ; a_p^R, \ldots, \widehat{a_i^R},\ldots, a_k^R \rangle  +\sum_{i < p \le k} \langle a_0^L, \ldots, \widehat{a_i^L}, \ldots, a_p^L ; a_0^R, \ldots, a_k^R \rangle \\
		&= \sum_{0 \le p < i} \partial^v_{i-p} \circ \Psi_{p,k-p} (a) + \sum_{i < p \le k} \partial^h_{i} \circ \Psi_{p,k-p}(a).
	\end{align*}
Consequently,
	\begin{align*}
		\Psi_{k-1} \circ d^{\bowtie} =
		 \sum_{0 \le p < i \le k} (-1)^i \partial^v_{i-p} \circ \Psi_{p,k-p}+ \sum_{0 \le i < p \le k} (-1)^i \partial^h_{i} \circ \Psi_{p,k-p}.
	\end{align*}
Using at the third equality that, $\partial_0^v \circ \Psi_{0,k} = 0$ and $\partial_k^h \circ \Psi_{k,0} = 0$, we calculate:
	\begin{align*}
		d^{\Tot} \circ \Psi_k
		&= \sum_{p=0}^k \left(\sum_{i=0}^{k-p} (-1)^{i+p} \partial_i^v \circ \Psi_{p,k-p} +
		\sum_{i=0}^p (-1)^{i} \partial_i^h \circ \Psi_{p,k-p}\right)\\
		&=  \sum_{0\le p < i \le k} (-1)^{i} \partial_{i-p}^v  \circ \Psi_{p,k-p} +
		\sum_{0 \le i < p \le k} (-1)^{i} \partial_i^h  \circ \Psi_{p,k-p}\\
		&\quad + \sum_{p=0}^k (-1)^p  \partial_0^v \circ \Psi_{p,k-p} + (-1)^p \partial_p^h \circ \Psi_{p,k-p}\\
		&=  \Psi_{k-1} \circ d^{\bowtie} +
		\sum_{t = 0}^{k-1}
		 (-1)^t (\partial_0^v \circ \Psi_{t,k-t} - \partial_{t+1}^h \circ \Psi_{t+1,k-t-1}).
	\end{align*}
So $d^{\Tot} \circ \Psi_k = \Psi_{k-1} \circ d^{\bowtie}$ because
\begin{align*}
	\partial_0^v \langle a_0^L,\ldots,a_t^L; a_t^R,\ldots, a_k^R \rangle
	&= 	 \langle a_0^L,\ldots,a_t^L; a_{t+1}^R,\ldots, a_k^R \rangle
	= \partial_{t+1}^h \langle a_0^L,\ldots,a_{t+1}^L; a_{t+1}^R,\ldots, a_k^R \rangle.
\end{align*}

Since $\RPi = \nabla \circ \Psi$ and $\nabla$ is a chain map, $d^{\Delta}_{k-1} \circ \RPi_k (x_k) = \RPi_{k-1} \circ d^{\bowtie}_k(x_k)$. So Proposition~\ref{prop:the_real_main_theorem} shows that $\RPi$ is a natural chain equivalence inverse to $\Pi$. Theorem~\ref{thm:Eilenberg--Zilber} implies that $\Psi$ is also a natural chain equivalence.
\end{proof}

To determine an explicit formula for $\RPi_k$ we combine the formula~\eqref{eq:psi} for $\Psi_k$ with the formula~\eqref{eq:eilenberg-zilber} for the Eilenberg--Zilber map $\nabla_k$. Explicit formulae for the first few $\RPi_i$ are
\begin{align*}
	\RPi_1 [d_1c_1] &= [c_1;\_] + [\_;d_1]\\
	\RPi_2 [d_1c_1,d_2c_2] &=  [c_1 \ra d_2, c_2 ;\_,\_] + [c_1,\_; \_,d_2]  - [\_,c_1;d_2,\_]  \\
	&\qquad + [\_,\_ ; d_1,c_1 \la d_2] \\
	\RPi_3 [d_1c_1,d_2c_2,d_3c_3] &= [d_1,c_1 \la d_2, ((c_1 \ra d_2)c_2) \la d_3;\_,\_,\_]
	   + [\_,\_,c_1;d_2,c_2 \la d_3,\_] \\
    &\qquad - [\_,c_1,\_;d_2,\_,c_2 \la d_3] + [c_1,\_,\_;\_,d_2,c_2 \la d_3] \\
	&\qquad + [\_,c_1 \ra d_2, c_2;d_3,\_,\_] - [c_1 \ra d_2,\_, c_2;\_,d_3,\_] \\
	&\qquad + [c_1 \ra d_2, c_2,\_;\_,\_,d_3]
	   + [\_,\_,\_;c_1 \ra (d_2(c_2 \la d_3)),c_2 \ra d_3,c_3].
\end{align*}

\begin{rmk}
	The formula~\eqref{eq:psi} for $\Psi_k$ was not initially obvious to us. We found formulae for $\RPi_k$ for $k \le 3$ using a computer-aided search predicated on formulae that involved factorisation in $\Cc \bowtie \Dd$ of the element $d_1 c_1 \cdots d_k c_k$, interspersed with objects to obtain elements of $\Cc^k * \Dd^k$. We searched for, and found, integer coefficients that solved a $\ZZ$-linear equation ensuring a chain map that inverts $\Pi_k$ on homology.
	With those in hand, we could guess, and then check, a general formula for $\RPi_k$, and then reverse-engineer a formula for $\Psi_k$.
\end{rmk}
We can also translate between categorical and total chains using the maps $\Psi$ and $\RPsi = \Pi \circ \nabla$. For low-degree terms $\RPsi \colon \bigoplus_{p+q = k} \ZZ(\Cc^p * \Dd^q) \to C_k^{\bowtie}(\Cc,\Dd)$ is given explicitly by
\begin{align*}
	\RPsi_1([c] , [d]) &= [c] + [d]\\
	\RPsi_2([c_1,c_2] , [c_3;d_1] , [d_2,d_3])
	&= [c_1,c_2]
	 + [c_3,d_1] - [c_3 \la d_1, c_3 \ra d_1] + [d_2,d_3]\\
	\RPsi_3([c_1,c_2,c_3] , [c_4,c_5;d_1] , [c_6;d_2,d_3] , [d_4,d_5,d_6])
	&= [c_1,c_2,c_3]
	 + [(c_4c_5) \la d_1, c_4 \ra (c_5 \la d_1), c_5 \la d_1]\\
	&\,-[c_4,c_5 \la d_1, c_5 \ra d_1] + [c_4,c_3,d_1] \\
	&\, + [c_6,d_2,d_3] - [c_6 \la d_2, c_6 \ra d_2,d_3]\\
	&\, + [c_6 \ra d_2, (c_6 \ra d_2) \la d_3, c_6 \ra (d_2d_3)]
	+ [d_4,d_5,d_6].
\end{align*}

\subsection{A spectral sequence and a K\"unneth Theorem} \label{subsec:consequences}
\subsubsection{A spectral sequence}
There is a spectral sequence that computes the total homology of a double complex; this and Theorem~\ref{thm:cohomologies_are_the_same} compute the homology of $\Cc \bowtie \Dd$.

For fixed $p \in \NN$, the sequence
\begin{equation}\label{eq:pth-column}
	\begin{tikzcd}[ampersand replacement=\&]
		{} \& \cdots \& {C_{p,2}} \& {C_{p,1}} \& {C_{p,0}}
		\arrow["{d^{v}_{p,2}}", from=1-2, to=1-3]
		\arrow["{d^{v}_{p,1}}", from=1-3, to=1-4]
		\arrow["{d^{v}_{p,0}}", from=1-4, to=1-5]
	\end{tikzcd}
\end{equation}
(the $p$-th column of the double complex \eqref{eq:double_complex}) is a chain complex with homology groups
\begin{equation}
	\label{eq:vertical_homology}
	H^v_{p,q}(\Cc,\Dd) \coloneqq H_q(C_{p,\bullet},d^v_{p,\bullet}).
\end{equation}
Since $d^v_{p,q} \circ d^h_{p,q+1} = - d^h_{p,q} \circ d^v_{p+1,q}$, the maps $d^h_{p,q}$ descend to homomorphisms $\widetilde{d}^{h}_{p,q} \colon H^v_{p+1,q}(\Cc,\Dd) \to H^v_{p,q} (\Cc,\Dd)$. For each $q \in \NN$, the sequence
\begin{equation}\label{eq:qth-row}
	\begin{tikzcd}[ampersand replacement=\&]
		{} \& \cdots \& {H^v_{2,q}(\Cc,\Dd)} \& {H^v_{1,q}(\Cc,\Dd)} \& {H^v_{0,q}(\Cc,\Dd)}
		\arrow["{\widetilde{d}^{h}_{2,q}}", from=1-2, to=1-3]
		\arrow["{\widetilde{d}^{h}_{1,q}}", from=1-3, to=1-4]
		\arrow["{\widetilde{d}^{h}_{0,q}}", from=1-4, to=1-5]
	\end{tikzcd}
\end{equation}
is a chain complex. We define
$H^h_pH^v_q(\Cc,\Dd)$ to be the $p$-th homology group of this complex,
\[
H^h_pH^v_q(\Cc,\Dd)
\coloneqq
H_p(H^v_{\bullet,q}(\Cc,\Dd), \widetilde{d}^h_{\bullet,q}).
\]
We define $H^v_qH^h_p(\Cc,\Dd)$ symmetrically by first considering rows of~\eqref{eq:double_complex} and then columns:
\[
H^v_qH^h_p(\Cc,\Dd)
\coloneqq
H_q(H^h_{p,\bullet}(\Cc,\Dd), \widetilde{d}^v_{p,\bullet}).
\]
\begin{cor}[{cf. \cite[\S 5.6]{Wei94}}]\label{cor:spectral sequence}
Let $C_{\bullet,\bullet}(\Cc,\Dd)$ be the matched complex of a matched pair $(\Cc,\Dd)$. Then there are homology spectral sequences $\{E_{p,q}^{hv,r}, d^{hv,r}_{p,q}\}$ and $\{E_{p,q}^{vh,r}, d^{vh,r}_{p,q}\}$ with first pages $E_{p,q}^{hv,1} = H^v_{p,q}(\Cc,\Dd)$ and $E_{p,q}^{vh,1} = H^h_{p,q}(\Cc,\Dd)$ with $d^{hv,1} = \widetilde{d}^{h}$ and $d^{vh,1} = \widetilde{d}^{v}$, and second pages
	\[
	E_{p,q}^{hv,2} =H^h_{p} H^v_{q}(\Cc,\Dd)\quad \text{and} \quad 	E_{p,q}^{vh,2} = H^v_q H^h_p (\Cc,\Dd),
	\]
	that both converge to the categorical homology of $\Cc \bowtie \Dd$.
\end{cor}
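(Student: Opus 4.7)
The plan is to apply the standard machinery of spectral sequences associated to a first-quadrant double complex, as developed in \cite[\S 5.6]{Wei94}, to the matched complex $C_{\bullet,\bullet}(\Cc,\Dd)$, and then translate the limit via Theorem~\ref{thm:cohomologies_are_the_same}.

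First I would observe that the matched complex $(C_{\bullet,\bullet}(\Cc,\Dd), d^h, d^v)$ satisfies the hypotheses required to invoke the standard theory: its terms live in the first quadrant, and its differentials anticommute (i.e.\ $d^h d^v + d^v d^h = 0$), which is the sign convention used in \cite[\S 5.6]{Wei94}. There are then two natural filtrations of the total complex $C^{\Tot}_{\bullet}(\Cc,\Dd)$: the column filtration $F^{\mathrm{col}}_p C^{\Tot}_n = \bigoplus_{p' \le p} C_{p', n-p'}$ and the row filtration $F^{\mathrm{row}}_q C^{\Tot}_n = \bigoplus_{q' \le q} C_{n-q', q'}$. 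Each filtration is bounded in each total degree $n$ (since the double complex lies in the first quadrant), so the associated spectral sequences converge to $H^{\Tot}_\bullet(\Cc,\Dd)$ by the classical convergence theorem for filtered complexes \cite[Theorem~5.5.1]{Wei94}.

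Next I would identify the low pages. For the spectral sequence $\{E^{hv, r}\}$ associated with the column filtration, the zeroth page has $E^{hv,0}_{p,q} = C_{p,q}$ with differential $d^v$, so the first page is exactly $E^{hv,1}_{p,q} = H^v_{p,q}(\Cc,\Dd)$ as defined in~\eqref{eq:vertical_homology}. The induced differential $d^{hv,1}$ is precisely the descent $\widetilde{d}^h$ of $d^h$ to vertical homology, which is well-defined by the anticommutation relation $d^v d^h = -d^h d^v$. The second page is then the homology of~\eqref{eq:qth-row}, giving $E^{hv,2}_{p,q} = H^h_p H^v_q(\Cc,\Dd)$. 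The same analysis with rows and columns interchanged yields the spectral sequence $\{E^{vh, r}\}$ with the stated first and second pages.

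Finally, I would conclude by invoking Theorem~\ref{thm:cohomologies_are_the_same}, which furnishes a natural isomorphism $H^{\Tot}_\bullet(\Cc,\Dd) \cong H^\bowtie_\bullet(\Cc,\Dd)$. Since both spectral sequences converge to $H^{\Tot}_\bullet(\Cc,\Dd)$, they both converge to the categorical homology of $\Cc \bowtie \Dd$. There is no real obstacle to overcome here; the only points requiring care are the verification of the first-quadrant and anticommutation hypotheses that trigger the standard machinery, and a clean identification of the induced differential on the first page with $\widetilde{d}^h$ (resp.\ $\widetilde{d}^v$) — both of which follow directly from the construction of the matched complex.
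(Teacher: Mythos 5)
Your proposal is correct and follows exactly the route the paper intends: the paper offers no argument beyond the citation of \cite[\S 5.6]{Wei94}, relying on the standard column and row filtrations of the total complex of a first-quadrant anticommuting double complex and then identifying the abutment via Theorem~\ref{thm:cohomologies_are_the_same}. Your explicit verification of the first-quadrant and anticommutation hypotheses and the identification of $d^{hv,1}$ with $\widetilde{d}^h$ (resp.\ $d^{vh,1}$ with $\widetilde{d}^v$) is precisely the bookkeeping the paper leaves implicit.
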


We will use these spectral sequences to compute the homology of examples in Section~\ref{sec:further_examples}.

\subsubsection{The K\"unneth Theorem for products of monoids}

Let $S$ and $R$ be monoids. Define a matched pair $(S,R)$ by $s \la r = r$ and $s \ra r = s$. The monoid $S \bowtie R$ is just $S \times R$. There is an isomorphism of double complexes $ C_\bullet(S) \ox_{\ZZ} C_\bullet(R) \cong C_{\bullet,\bullet}(S,R)$ taking $[s_1,\ldots,s_p] \ox [r_1,\ldots,r_q]$ to $[s_1,\ldots,s_p,r_1,\ldots,r_q]$.

Theorem~\ref{thm:cohomologies_are_the_same} implies that $H_\bullet(S \times R) \cong H^{\Tot}_\bullet(S, R)$. So we recover the K\"unneth formula \cite[Theorem 3.6.3]{Wei94}: an unnaturally split exact sequence
\[\begin{tikzcd}[ampersand replacement=\&,column sep=10pt]
	0 \& {\bigoplus_{p+q= n} H_p(S)\ox H_q(R)} \& {H_n(S \times R)} \& {\bigoplus_{p+q= n-1} \Tor^{\ZZ}_1(H_p(S),H_q(R))} \& 0.
	\arrow[from=1-1, to=1-2]
	\arrow[from=1-2, to=1-3]
	\arrow[from=1-3, to=1-4]
	\arrow[from=1-4, to=1-5]
\end{tikzcd}
\]

\section{Examples and homology computations}
\label{sec:further_examples}

In this section (specifically in Section~\ref{subsec:odometer_graphs}) we use our results to compute the homology of matched pairs that are like pullbacks of odometer actions over the path categories of directed graphs. The technical results we develop along the way apply to more-general systems such as Exel--Pardo self-similar systems and $k$-graphs.

We first consider, in Section~\ref{subsec:mp_path_categories}, matched pairs $(\Cc, \Dd)$ in which $\Dd = E^*$ for a directed graph $E$. We show that the vertical homology $H^v_{p,\bullet}(\Cc, E^*)$ of \eqref{eq:vertical_homology} vanishes above degree $1$.
This is unsurprising since directed graphs are $1$-dimensional; but we could not find a general theorem that applies, so we prove that $H^v_{p,q}(\Cc,E^*) = 0$ for $q \ge 2$ by direct computation.

In Section~\ref{subsec:mp_monoid_bundles} we consider matched pairs $(\Cc, \Dd)$ where $\Cc$ is a disjoint union
$\bigsqcup_{u \in \Dd^0} \Cc_u$ of monoids. We describe an isomorphism between $H^h_{\bullet, q}(\Cc, \Dd)$ and the direct sum
$\bigoplus_{u \in \Dd^0} H_\bullet(\Cc_u; \ZZ u\Dd^q)$ of the homology of the monoids $\Cc_v$ with
coefficients in $\ZZ u\Dd^q$. The Universal Coefficient Theorem gives a short exact sequence that computes $H^h_{\bullet, q}(\Cc, \Dd)$
as an extension of an appropriate $\Tor$-group by $\bigoplus_{u\in \Dd^0} H_p(\Cc_u) \otimes_{\ZZ \Cc_u} \ZZ u\Dd^q$.

In Section~\ref{subsec:mp_integer_bundles} we restrict further to $\Cc_u \cong \ZZ$. We deduce that $H^h_{\bullet, q}(\ZZ \times \Dd^0, \Dd)$ vanishes in degree~2 or more, and compute $H^h_{0, q}(\ZZ \times \Dd^0, \Dd)$ and $H^h_{1, q}(\ZZ \times \Dd^0, \Dd)$ in terms of the groups of invariants and coinvariants of $\ZZ u\Dd^q$.

Finally, in Section~\ref{subsec:odometer_graphs} we compute $H^{\bowtie}_{\bullet}(\ZZ \times F^0, F^*)$ for matched pairs consisting of a bundle of copies of $\ZZ$ acting like odometers on the path category of a directed graph $F$. We show that in the second spectral sequence of Corollary~\ref{cor:spectral sequence}, only $E^{vh,2}_{0,0}$, $E^{vh,2}_{1,0}$, $E^{vh,2}_{0,1}$, and $E^{vh,2}_{1,1}$ can be nonzero. So the sequence converges on its second page, yielding an explicit formula for $H_{\bullet}^{\bowtie}(\ZZ \times F^0, F^*)$ in terms of a weighted incidence matrix in $M_{F^0, F^1}(\ZZ)$ (Proposition~\ref{prp:E2 for example}).

\subsection{Matched pairs involving path categories of directed graphs}
\label{subsec:mp_path_categories}

Let $E$ be a directed graph and suppose that $(\Cc, E^*)$ is a matched pair. We say that $(\Cc, E^*)$ is a \emph{length-preserving} matched pair if $|c \la \alpha| = |\alpha|$ for all $(c,\alpha) \in \Cc * E^*$.

We write $N_{p,q}$ for the subgroup of $C_{p,q}$ generated
by the nondegenerate vertical chains: chains $[c; d_1,\ldots,d_q]$ such that
$d_i \notin \Dd^0$ for all $1 \le i \le q$. By
\cite[Theorem~8.3.8]{Wei94}, the group $H_{p,q}^v(\Cc, E^*)$ of \eqref{eq:vertical_homology} is isomorphic to the $q$-th homology group of $(N_{p,\bullet}, d^v_{p,\bullet}|_{N_{p,\bullet}})$.

For $\alpha \in E^*$ we write $\alpha^i$ for the $i$-th edge of
$\alpha$. So $\alpha = \alpha^1 \alpha^2\cdots \alpha^{|\alpha|}$. For $0 \le
i \le j \le |\alpha|$, we define $\alpha^{[i,j]} \in E^{j-i}$ by $\alpha = \alpha' \alpha^{[i,j]} \alpha''$ for some $\alpha' \in E^i$ and
$\alpha'' \in E^{|\alpha|-j}$. For example, $\alpha^{[i-1, i]} = \alpha^{i}$ for $1 \le i \le
|\alpha|$, and $\alpha = \alpha^{[0,i-1]}\alpha^i\alpha^{[i, |\alpha|]}$ for each
$i$.

\begin{prop}\label{prp:cat-graph homology}
Let $\Cc$ be a small category and let $E$ be a directed graph, and suppose that $(\Cc,E^*)$ is a length-preserving matched pair. Taking the convention that the empty sum is zero, for $p \ge 0$ and $q \ge 1$, define $s^v_{p,q} \colon N_{p,q} \to N_{p,q+1}$ by
	\[
	s^v_{p,q}[c\,;\alpha] = -\sum_{i=1}^{|\alpha_1|-1} [c \ra \alpha^{[0,i-1]}_1; \alpha^i_1,\alpha^{[i,|\alpha_1|]}_1,\alpha_2, \ldots,\alpha_q]
	\]
	for $c \in \Cc^p$ and $\alpha = (\alpha_1,\ldots,\alpha_q) \in (E^*)^q$ with $s(c) = r(\alpha_1)$. Then for $q \ge 2$,
    \begin{equation}\label{eq:ds+sd}
        d^v_{p,q} s^v_{p,q} + s^v_{p,q-1} d^v_{p,q-1} = \id_{N_{p,q}}.
    \end{equation}
    In particular, for $q \ge 2$ we have $H^v_{p,q}(\Cc,E^*) = 0$, and using the spectral sequence of Corollary~\ref{cor:spectral sequence} there is a long exact sequence
	\[\begin{tikzcd}[column sep = 7pt,ampersand replacement=\&]
		\cdots \& {H_{p+1}^{\bowtie}(\Cc,E^*)} \& {E^{hv,2}_{p+1,0}} \& {E_{p-1,1}^{hv,2}} \& {H_{p}^{\bowtie}(\Cc,E^*)} \& {E_{p,0}^{hv,2}} \& {E_{p-2,1}^{hv,2}} \& {H_{p-1}^{\bowtie}(\Cc,E^*)} \& \cdots.
		\arrow[from=1-1, to=1-2]
		\arrow[from=1-2, to=1-3]
		\arrow["d",from=1-3, to=1-4]
		\arrow[from=1-4, to=1-5]
		\arrow[from=1-5, to=1-6]
		\arrow["d",from=1-6, to=1-7]
		\arrow[from=1-7, to=1-8]
		\arrow[from=1-8, to=1-9]
	\end{tikzcd}\]
\end{prop}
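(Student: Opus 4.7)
The plan is to verify the chain-homotopy identity~\eqref{eq:ds+sd} by a direct (if lengthy) computation on a generator $[c;\alpha_1,\ldots,\alpha_q]\in N_{p,q}$, and then to derive both the vanishing of $H^v_{p,q}(\Cc,E^*)$ for $q\ge 2$ and the long exact sequence as formal consequences via the spectral sequence of Corollary~\ref{cor:spectral sequence}. First note that both $s^v$ and $d^v$ carry $N_{p,\bullet}$ into itself: for $s^v$ because the paths $\alpha_1^i$ and $\alpha_1^{[i,|\alpha_1|]}$ in each summand have lengths $1$ and $|\alpha_1|-i\ge 1$, and for $d^v$ because concatenations of positive-length paths remain positive-length. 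So~\eqref{eq:ds+sd} is a meaningful identity in the subcomplex $N_{p,\bullet}$.

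To verify~\eqref{eq:ds+sd}, I would expand $d^v_{p,q}s^v_{p,q}[c;\alpha]$ as a double sum over the split index $1\le i\le|\alpha_1|-1$ and the face index $0\le j\le q+1$ of $d^v$, and expand $s^v_{p,q-1}d^v_{p,q-1}[c;\alpha]$ as a double sum over the face index $0\le k\le q$ of the outer $d^v$ and the subsequent split of the first path of each resulting chain. For $j\ge 3$ in $d^v s^v$ and $k\ge 2$ in $s^v d^v$, the face map acts only on $\alpha_2,\ldots,\alpha_q$ and hence commutes (up to a sign shift) with the split of $\alpha_1$, so these contributions cancel pairwise; the assumption $q\ge 2$ is exactly what ensures that these two index ranges align. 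The $k=1$ term of $s^v d^v$ splits the composite $\alpha_1\alpha_2$, and the resulting $|\alpha_1|+|\alpha_2|-1$ summands break into three groups according to whether the split lands strictly inside $\alpha_1$, at the junction, or strictly inside $\alpha_2$: the first group cancels the $j=2$ term of $d^v s^v$, the third cancels the $k=0$ term of $s^v d^v$ (using $(c\ra\alpha_1^{[0,i-1]})\ra\alpha_1^i = c\ra\alpha_1^{[0,i]}$), and the junction term cancels the endpoint of the telescope below. The remaining $j=0$ and $j=1$ contributions of $d^v s^v$ then telescope over $i$ (using $\alpha_1^i\alpha_1^{[i,|\alpha_1|]} = \alpha_1^{[i-1,|\alpha_1|]}$) to produce exactly $[c;\alpha_1,\ldots,\alpha_q]$, giving~\eqref{eq:ds+sd}.

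Given~\eqref{eq:ds+sd}, every cycle in $N_{p,q}$ with $q\ge 2$ is a boundary, so $H^v_{p,q}(\Cc,E^*)=0$ for $q\ge 2$ by the normalization theorem~\cite[Theorem~8.3.8]{Wei94}. The first spectral sequence of Corollary~\ref{cor:spectral sequence} therefore satisfies $E^{hv,2}_{p,q}=H^h_pH^v_q(\Cc,E^*)=0$ for $q\ge 2$, so it is concentrated in the two rows $q=0$ and $q=1$. All differentials $d^{hv,r}$ with $r\ge 3$ then have either source or target outside this region and so vanish, forcing $E^{hv,\infty}=E^{hv,3}$ with $E^{hv,\infty}_{p,0}=\ker(d_2)$ and $E^{hv,\infty}_{p,1}=\coker(d_2)$ for the single remaining differential $d_2\colon E^{hv,2}_{p,0}\to E^{hv,2}_{p-2,1}$. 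The filtration short exact sequences $0\to E^{hv,\infty}_{n-1,1}\to H^{\bowtie}_n(\Cc,E^*)\to E^{hv,\infty}_{n,0}\to 0$, combined with these identifications, splice into the long exact sequence in the statement by a routine diagram chase.

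The main obstacle is the combinatorial bookkeeping in the verification of~\eqref{eq:ds+sd}: the several ways in which face maps of $d^v$ interact with the split in $s^v$, the three-way split of $\alpha_1\alpha_2$ arising in the $k=1$ term, and the interplay of the $\ra$-action with path concatenation must all be kept in sync, together with the $(-1)^p$ prefactors in the vertical face maps. Once this identity is established, the vanishing of $H^v_{p,q}(\Cc,E^*)$ in degrees $\ge 2$ and the long exact sequence are purely formal consequences of standard two-row spectral-sequence machinery.
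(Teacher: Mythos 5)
Your proposal is correct and follows essentially the same route as the paper: the high-index faces of $d^v$ commute with the splitting map (the paper's identity $\partial_j\circ s = s\circ\partial_{j-1}$ for $j\ge 3$), the face concatenating $\alpha_1\alpha_2$ splits into exactly the three groups you describe with the same pairwise cancellations, and the $\partial_0,\partial_1$ terms telescope to the identity; the consequences then follow from the normalization theorem and the two-row spectral sequence exactly as you say. The only difference is cosmetic: you spell out the splicing of the filtration short exact sequences into the long exact sequence, where the paper simply cites \cite[p.124]{Wei94}.
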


\begin{proof}
Fix $q \ge 2$. We write $d \coloneqq \bigoplus_p d^v_{p,q}$, $s \coloneqq
\bigoplus_p s^v_{p,q}$, and $N_q \coloneqq \bigoplus_p N_{p,q}$. The restrictions $\partial_j \coloneqq \bigoplus_{p} \partial^{v,j}_{p,q} \colon N_q \to
N_{q-1}$ of the vertical face maps~\eqref{eq:vertical_face_maps} satisfy
\[
\partial_j([c;\alpha_1, \dots, \alpha_q]) =
    \begin{cases}
    [c\ra \alpha_1; \alpha_2, \dots, \alpha_q] &\text{ if $j=0$}\\
    [c; \alpha_1, \dots, \alpha_j\alpha_{j+1},\dots, \alpha_q] &\text{ if $1\le j \le q-1$}\\
    [c; \alpha_1, \dots, \alpha_{q-1}] &\text{ if $j = q$},
    \end{cases}
\]
and $d_q = \sum^q_{j=0} (-1)^j \partial_j$.

We first claim that for $j \ge 3$ we have $\partial_j \circ s = s \circ \partial_{j-1}$. Indeed,
\begin{align*}
\partial_j(s([c; \alpha_1, \alpha_2, \dots, \alpha_q]))
    &= -\sum_{i=1}^{|\alpha_1|-1} \partial_j([c \ra \alpha^{[0,i-1]}_1; \alpha^i_1,\alpha^{[i,|\alpha_1|]}_1,\alpha_2, \ldots,\alpha_q])\\
    &= -\sum_{i=1}^{|\alpha_1|-1} [c \ra \alpha^{[0,i-1]}_1; \alpha^i_1,\alpha^{[i,|\alpha_1|]}_1, \partial_{j-2}(\alpha_2, \ldots, \alpha_q)]\\
    &= s([c;\alpha_1, \partial_{j-2}(\alpha_2, \dots, \alpha_q)])\\
    &= s(\partial_{j-1}([c; \alpha_1, \alpha_2, \dots, \alpha_q])).
\end{align*}
Using this at the final equality, we obtain
\begin{align*}
d \circ s + s \circ d
    &= \sum_{i=0}^{q+1} (-1)^i \partial_i \circ s + \sum_{j=0}^q (-1)^j s \circ \partial_j \\
    &= \partial_0 \circ s - \partial_1 \circ s + \partial_2 \circ s + \Big(\sum^{q+1}_{i=3} (-1)^i \partial_i \circ s\Big)
     + s \circ \partial_0  - s \circ \partial_1  + \Big(\sum_{j=2}^q (-1)^j s \circ \partial_j\Big)\\
    &= \partial_0 \circ s - \partial_1 \circ s + \partial_2 \circ s + s \circ \partial_0  - s \circ \partial_1
     + \sum^{q+1}_{j=3} \Big((-1)^j \partial_j \circ s - (-1)^{j} s \circ \partial_{j-1}\Big)\\
    &= \partial_0 \circ s - \partial_1 \circ s + \partial_2 \circ s + s \circ \partial_0  - s \circ \partial_1.
\end{align*}
So it suffices to show that
\begin{equation}\label{eq:toshow}
\partial_0 \circ s - \partial_1 \circ s + \partial_2 \circ s + s \circ \partial_0  - s \circ \partial_1
    = \id_{N_q}.
\end{equation}
For this, fix
$x \coloneqq [c;\alpha_1, \dots, \alpha_q] \in N_q$. Let $l \coloneqq |\alpha_1|$. We claim that
\begin{equation}\label{eq:p0s-p1s}
	(\partial_0 \circ s - \partial_1 \circ s)(x)
	= x - [c\ra \alpha_1^{[0,l-1]}; \alpha_1^l,\alpha_2, \dots, \alpha_q].
\end{equation}
To see this, we compute:
\begin{align*}
	\partial_0 \circ s(x)
	&= \partial_0\big(-\sum^{l-1}_{i=1} [c\ra \alpha_1^{[0,i-1]}; \alpha_1^i, \alpha_1^{[i,l]}, \alpha_2, \dots, \alpha_q]\Big)
	= -\sum^{l-1}_{i=1} [c\ra \alpha_1^{[0,i]}; \alpha_1^{[i,l]}, \alpha_2, \dots, \alpha_q], \text{ and}\\
	\partial_1 \circ s(x)
	&= \partial_1\Big(-\sum^{l-1}_{i=1} [c\ra \alpha_1^{[0,i-1]}; \alpha_1^i, \alpha_1^{[i,l]}, \alpha_2, \dots, \alpha_q]\Big)
	= -\sum^{l-1}_{i=1} [c\ra \alpha_1^{[0,i-1]}; \alpha_1^{[i-1,l]}, \alpha_2, \dots, \alpha_q].
\end{align*}
So
\[
(\partial_0 \circ s - \partial_1 \circ s)(x)
= \sum^{l-1}_{i=1}\big(-[c\ra \alpha_1^{[0,i]}; \alpha_1^{[i,l]}, \alpha_2, \dots, \alpha_q] + [c\ra \alpha_1^{[0,i-1]}; \alpha_1^{[i-1,l]}, \alpha_2, \dots, \alpha_q]\big)
\]
telescopes to~\eqref{eq:p0s-p1s}.
Next, we claim that
\begin{equation}\label{eq:sp0-sp1}
	(s \circ \partial_0 - s \circ \partial_1)(x)
	= \Big(\sum^{l-1}_{i=1} [c\ra\alpha_1^{[0,i-1]}; \alpha_1^i; \alpha_1^{[i,l]}\alpha_2, \alpha_3, \dots,\alpha_q]\Big) + [c \ra \alpha_1^{[0,l-1]}; \alpha_1^l, \alpha_2, \alpha_3, \dots, \alpha_q].
\end{equation}
Let $m \coloneqq |\alpha_2|$. Again, we compute:
\begin{equation} \label{eq:s_d0}
s \circ \partial_0(x)
= s([c\ra\alpha_1;\alpha_2, \dots, \alpha_q])
= -\sum^{m-1}_{i=1} [c\ra \alpha_2^{[0,i-1]}; \alpha_2^i,\alpha_2^{[i,m]},\alpha_3, \dots, \alpha_q],
\end{equation}
and
\begin{align*}
	s \circ \partial_1(x)
	&= s([c; \alpha_1\alpha_2, \dots, \alpha_q])\\
	&= -\sum^{l+m-1}_{i=1} [c\ra(\alpha_1\alpha_2)^{[0,i-1]}; (\alpha_1\alpha_2)^i, (\alpha_1\alpha_2)^{[i,l+m]}, \alpha_3, \dots, \alpha_q]\\
	&= -\Big(\sum^{l-1}_{i=1} [c\ra\alpha_1^{[0,i-1]}; \alpha_1^i, \alpha_1^{[i,l]}\alpha_2, \alpha_3, \dots, \alpha_q]\Big)
	- [c\ra\alpha_1^{[0,l-1]}; \alpha_1^l, \alpha_2, \dots,\alpha_q] \\
	&\qquad - \Big(\sum^{m-1}_{i=1} [c\ra(\alpha_1\alpha_2^{[0,i-1]}); \alpha_2^i, \alpha_2^{[i,m]}, \alpha_3, \dots, \alpha_q]\Big).
\end{align*}
Subtracting~\eqref{eq:s_d0} from this equation yields~\eqref{eq:sp0-sp1}.
Now we add Equations \eqref{eq:sp0-sp1}~and~\eqref{eq:p0s-p1s}, and the terms $[c \ra \alpha_1^{[0,l-1]};\alpha_1^l, \alpha_2,
\alpha_3, \dots, \alpha_q]$ cancel, giving
\begin{equation}\label{eq:allbut+p2s}
	(\partial_0 \circ s - \partial_1 \circ s + s \circ \partial_0  - s \circ \partial_1)(x)
	= x + \Big(\sum^{l-1}_{i=1} [c\ra\alpha_1^{[0,i-1]};  \alpha_1^i, \alpha_1^{[i,l]}\alpha_2, \alpha_3, \dots,\alpha_q]\Big).
\end{equation}

Finally, we compute
\[
\partial_2 \circ s(x)
= \partial_2\Big(-\sum^{l-1}_{i=1} [c\ra \alpha_1^{[0,i-1]}; \alpha_1^i, \alpha_1^{[i,l]}, \alpha_2, \dots, \alpha_q]\Big)
= -\sum^{l-1}_{i=1} [c\ra \alpha_1^{[0,i-1]};\alpha_1^i, \alpha_1^{[i,l]} \alpha_2, \dots, \alpha_q].
\]
Adding this to~\eqref{eq:allbut+p2s} gives
$
(\partial_0 \circ s - \partial_1 \circ s + \partial_2\circ s + s \circ \partial_0  - s \circ \partial_1)(x)
= x,
$
which gives~\eqref{eq:toshow}.

Now fix $a \in \ker(d_{q-1}) \cap N_q$. Then~\eqref{eq:ds+sd} gives
\[
a = d_q(s_q(a)) + s_{q-1}(d_{q-1}(a)) = d_q(s_q(a)) \in \ran(d_q),
\]
so $H_q(N_{p,\bullet}) = 0$. Theorem~8.3.8 of \cite{Wei94} gives $H_{p,q}^v(\Cc,E^*) = H_q(N_{p,\bullet}) = 0$.
The long exact sequence exists by definition of convergence of a spectral sequence~\cite[p.124]{Wei94}.
\end{proof}

\begin{rmk}
The proof of the preceding lemma relies on treating the empty sum as zero, prompting a quick reality check of the edge-case where $[c; \alpha_1, \dots, \alpha_q] \in N_q$ with $\alpha_1 = e \in E^1$.
Let $x = [c; e, \alpha_2, \dots, \alpha_q]$. Since $s^v_{p,q}(x) =0$, \eqref{eq:ds+sd} for $x$ collapses to $s^v_{p,q-1} \circ d^v_{p,q-1}(x) = x$, so the cancellation that led to~\eqref{eq:toshow} appears to fall down.

But all is well: for $j \ge 3$ we have $\partial_j([c; e, \alpha_2, \dots, \alpha_q]) = [c; e, \kappa]$ for some $\kappa \in (E^*)^{q-1}$, and so $s^v_{p,q-1}(\partial_j(x)) = s^v_{p,q-1}([c; e, \kappa]) = 0$. So $s^v_{p,q-1} \circ d^v_{p,q-1}(x) = s^v_{p,q-1}([c \ra e; \alpha_1, \dots, \alpha_q]) - s^v_{p,q-1}([c; e\alpha_1, \dots, \alpha_q])$ and all the terms in the resulting sums cancel except the first term $-(-[c;e, \alpha_1, \dots, \alpha_q]) = x$ of $s^v_{p,q-1}([c; e\alpha_1, \dots, \alpha_q])$.
\end{rmk}

Recall that $\{E_{p,q}^{vh,r}, d_{p,q}^{vh,r}\}$ is the spectral sequence of Corollary~\ref{cor:spectral sequence}, which in our current setup satisfies $E_{p,q}^{vh,2} = H^v_{p} H^h_{q}(\Cc,E^*)$.
\begin{lem}\label{lem:cohomology_graph_rows_vanish}
	Let $\Cc$ be a small category and let $E$ be a directed graph, and suppose that $(\Cc,E^*)$ is a length-preserving matched pair.
	For $p \ge 0$ and $q \ge 2$, we have
	$ s^v_{p,q} \circ d^h_{p,q} =  d^h_{p,q+1} \circ s^v_{p+1,q}$. We have $E^{vh,2}_{p,q} = 0$ for all $q \ge 2$, $H_0^{\bowtie}(\Cc, E^*) \cong E^{vh,2}_{0,0}$, and for each $n \ge 1$ there is a short exact sequence
	\[\begin{tikzcd}[ampersand replacement=\&, column sep=15pt]
		0 \& {E^{vh,2}_{0,n}} \& {H_n^{\bowtie}(\Cc, E^*)} \& {E^{vh,2}_{1,n-1}} \& 0.
		\arrow[from=1-1, to=1-2]
		\arrow[from=1-2, to=1-3]
		\arrow[from=1-3, to=1-4]
		\arrow[from=1-4, to=1-5]
	\end{tikzcd}\]
\end{lem}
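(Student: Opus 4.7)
The plan is to verify the commutation relation directly, use it with Proposition~\ref{prp:cat-graph homology} to descend $s^v$ to a chain contraction on horizontal homology, and then read off the vanishing and the short exact sequence from the spectral sequence of Corollary~\ref{cor:spectral sequence}.

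First I would check that $\partial^{h,j}\circ s^v_{p+1,q} = s^v_{p,q}\circ \partial^{h,j}$ for each $0\le j\le p+1$. For $0\le j\le p$ the face map $\partial^{h,j}$ acts only on the $\Cc$-coordinates, by deleting $c_0$ or by composing an adjacent pair $c_{j-1}c_j$. Axiom~\ref{itm:ZS-diamond}, in its iterated form for tuples (Proposition~\ref{prop:bowtie_**}), guarantees that the right action of any path in $E^*$ on $\Cc^{p+1}$ commutes with both of these operations, yielding the commutation for $j\le p$. The subtle case is $j = p+1$, where $\partial^{h,p+1}$ replaces $c_p$ by $c_p\la(\alpha_1,\ldots,\alpha_q)$. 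There I would factor $\alpha_1 = \alpha_1^{[0,i-1]}\alpha_1^{[i-1,|\alpha_1|]}$ and apply~\ref{itm:ZS-dot} twice; together with length-preservation, this identifies $(c_p\la\alpha_1)^{[0,i-1]}$ with $c_p\la\alpha_1^{[0,i-1]}$, the edge $(c_p\la\alpha_1)^i$ with $(c_p\ra\alpha_1^{[0,i-1]})\la\alpha_1^i$, and the remaining tail with $((c_p\ra\alpha_1^{[0,i-1]})\ra\alpha_1^i)\la\alpha_1^{[i,|\alpha_1|]}$. Reorganising the resulting tuples then shows that the two sides agree.

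Because $s^v$ commutes with $d^h$, it sends horizontal cycles to horizontal cycles and horizontal boundaries to horizontal boundaries, so it descends to a natural map $\tilde s^v\colon H^h_{p,\bullet}(\Cc,E^*)\to H^h_{p,\bullet+1}(\Cc,E^*)$. By the normalization theorem~\cite[Theorem~8.3.8]{Wei94}, the vertical homology of $C_{p,\bullet}$ agrees with that of the nondegenerate subcomplex $N_{p,\bullet}$; since the horizontal face maps preserve $N_{\bullet,\bullet}$ (length-preservation handles $\partial^{h,p+1}$), the whole argument takes place inside $N_{\bullet,\bullet}$. The identity $d^v s^v + s^v d^v = \id$ of Proposition~\ref{prp:cat-graph homology} then descends to $\tilde d^v \tilde s^v + \tilde s^v \tilde d^v = \id$ on $H^h_{p,q}$ for $q\ge 2$, so $E^{vh,2}_{p,q} = H^v_q H^h_p(\Cc,E^*) = 0$ whenever $q\ge 2$.

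Only the rows $q=0,1$ of $E^{vh,2}$ can be nonzero, so every differential $d^{vh,r}$ with $r\ge 2$ has source or target in the vanishing region and the spectral sequence collapses at page $2$. When $n=0$, the only surviving position of total degree $0$ is $(0,0)$, giving $H_0^{\bowtie}(\Cc,E^*)\cong E^{vh,2}_{0,0}$; when $n\ge 1$, the filtration on $H_n^{\bowtie}(\Cc,E^*)$ has exactly two nonzero associated graded pieces, which produces the short exact sequence. The main obstacle is the commutation in the case $j=p+1$: here the matched-pair axioms must be combined with length-preservation so that the splitting of the first entry of $c_p\la\alpha$ produced by $s^v$ realigns with the splitting of $\alpha_1$ used to define $s^v$ before the $\la$-action was applied.
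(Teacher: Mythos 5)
Your proposal is correct and follows essentially the same route as the paper: verify $\partial^{h,j}$-by-$\partial^{h,j}$ commutation of $s^v$ with the horizontal face maps (with the only delicate case being $j=p+1$, where \ref{itm:ZS-dot} and length-preservation realign the splitting of $\alpha_1$ with that of $c_p\la\alpha_1$), descend the contraction of Proposition~\ref{prp:cat-graph homology} to the first page $E^{vh,1}$, and read off the collapse and the two-row short exact sequences. Your explicit attention to the nondegenerate subcomplex is a point the paper leaves implicit, but the argument is the same.
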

\begin{proof}
	Fix $(c;\alpha) \in \Cc^{p+1} * ({E^*})^q$. We compute
	\begin{align*}
		d^h_{p,q+1} \circ s^v_{p+1,q}  [c;\alpha]
		&=\sum_{i=1}^{|\alpha_1|-1} \sum_{k=0}^{p+1} (-1)^k  \partial^{h,k}_{p,q+1} [c \ra \alpha_1^{[0,i-1]};\alpha_1^i,\alpha_1^{[i,|\alpha_1|]}, \alpha_2,\ldots,\alpha_q].
	\end{align*}
	Analogously to Lemma~\ref{lem:actions faces commute}, we observe that for $0 \le k \le p$,
	\begin{align*}
		\partial^{h,k}_{p,q+1} [c \ra \alpha_1^{[0,i-1]};\alpha_1^i,\alpha_1^{[i,|\alpha_1|]}, \alpha_2,\ldots,\alpha_q]
		&= [\partial_{p,0}^{h,k}[c \ra \alpha_1^{[0,i-1]}];\alpha_1^i,\alpha_1^{[i,|\alpha_1|]}, \alpha_2,\ldots,\alpha_q]\\
		&= [\partial_{p,0}^{h,k}[c] \ra \alpha_1^{[0,i-1]};\alpha_1^i,\alpha_1^{[i,|\alpha_1|]}, \alpha_2,\ldots,\alpha_q].
	\end{align*}
	Write $(c_p \la \alpha)_r$ for the $r$-th entry of $c \la \alpha \in (E^*)^q$. For $k = p+1$ we have
	\begin{align*}
		&\partial^{h,p+1}_{p,q+1} [c \ra \alpha_1^{[1,i-1]};\alpha_1^i,\alpha_1^{[i,|\alpha_1|]}, \alpha_2,\ldots,\alpha_q]\\
		&\quad =[\partial_{p,0}^{h,p+1}[c \ra \alpha_1^{[0,i-1]}];(c_{p} \ra \alpha_1^{[0,i-1]}) \la (\alpha_1^i,\alpha_1^{[i,|\alpha_1|]}, \alpha_2,\ldots,\alpha_q) ]\\
		&\quad =[\partial_{p,0}^{h,p+1}[c] \ra \alpha_1^{[1,i-1]};(c_p \la \alpha)_1^i, (c_p \la \alpha)_1^{[i,|\alpha_1|]}, (c_p \la \alpha)_2,\ldots,(c_p \la \alpha)_q].
	\end{align*}
	Hence,
	\begin{align*}
			d^h_{p,q+1} \circ s^v_{p+1,q}   [c;\alpha]
		= \sum_{k=0}^{p+1} (-1)^k s^v_{p,q}  \circ \partial_{p,q}^{h,k}[c;\alpha]
		= s^v_{p,q} \circ d^h_{p,q} [c;\alpha].
	\end{align*}
	Thus, the $s^v_{p,q}$ descend to sections of the differentials between the $E^{vh,1}_{p,q}$, and the resulting homology groups $E^{vh,2}_{p,q}$ vanish for $q \ge 2$.  So the spectral sequence stabilises on its second page, and short exact sequences follow from the associated filtration.
\end{proof}

\subsection{Matched pairs involving bundles of monoids}
\label{subsec:mp_monoid_bundles}

In this section $(\Cc,\Dd)$ is a matched pair in which $\Cc = \bigsqcup_{u \in \Dd^0} \Cc_u$ is a bundle of monoids over $\Cc^0 = \Dd^0$.
For each $u \in \Dd^0$ and $q \ge 0$, the free $\ZZ$-module $\ZZ u \Dd^q$ generated by $u \Dd^q = \{d \in \Dd^q \mid r(d) = u\}$ is a left $\Cc_u$-module under the action $c \cdot \big(\sum_{d \in \Dd^q} n_d d\big) = \sum_{d \in \Dd^q} n_d (c \la d)$.

For a monoid $S$, the categories of left (respectively right) $S$-modules and left (respectively right) $\ZZ [S]$-modules are equivalent. Given a left $S$-module $M$ we can compute the homology $H_\bullet(S;M)$ of $S$ with coefficients in $M$ as follows: fix a projective resolution $\cdots \to P_1 \to P_0 \to \ZZ \to 0$ of the trivial $S$-module $\ZZ$. Then $H_\bullet(S;M)$ is the homology of the chain complex $ \cdots   \to P_1 \ox_{\ZZ [S]} M \to P_0 \ox_{\ZZ [S]} M \to 0$: that is, $H_n(S;M) \coloneqq \Tor_n^{\ZZ [S]}(\ZZ;M)$.

By taking the \emph{bar resolution} of the trivial (right) $S$-module $\ZZ$ we arrive at a more familiar description. For $n \ge 1$ let $B_n$ be the free $S$-module generated by $\{ [ s_1,\ldots,s_n] \mid s_i \in S\}$. Let $B_0$ be the free $S$-module generated by the symbol $[\phantom{\cdot}]$. Define $b_{n} \colon B_{n+1} \to B_{n}$ by
\begin{equation}\label{eq:bar boundary}
b_n [s_0,\ldots,s_n] = [s_1, \ldots, s_n] + \sum_{i=1}^{n}(-1)^i [s_0,\ldots,s_{i-1}s_{i}, \ldots, s_n] + (-1)^{n+1} [s_1,\ldots,s_{n-1}] s_n
\end{equation}
for $n \ge 1$ and $b_{-1} \colon B_0 \to \ZZ$ by $b_{-1}[\phantom{\cdot}] = 1$; note that $b_0[s_0] = [\phantom{\cdot}]-[\phantom{\cdot}]s_0$. The group of $p$-chains with values in $M$ is
\[
C_p(S;M) \coloneqq B_p \ox_{\ZZ [S]} M.
\]
The boundary maps $d_p \coloneqq b_p \ox \id_M \colon C_{p+1}(S;M) \to C_{p}(S;M)$ for $p \ge 1$ satisfy
\begin{align*}
	d_p([s_0,\ldots, s_{p}] \ox m)
	&= [s_1,\ldots,s_{p}] \ox m + \sum_{i=1}^{p} (-1)^i [s_0,\ldots,s_{i-1}s_i, \ldots, s_p] \ox m \\
	&\qquad + (-1)^{p+1}[s_0,\ldots,s_{p-1}] \ox s_{p} \cdot m,
\end{align*}
and $d_{0} \colon C_1(S;M) \to C_0(S;M)$ satisfies $d_{0}([s_0] \ox m) = [\phantom{\cdot}] \ox m - [\phantom{\cdot}] \ox s_0 \cdot m$.
Then $H_{n}(S;M) \cong \ker(d_{n-1})/\im(d_{n})$. Taking $M = \ZZ$, the trivial $S$-module, recovers Definition~\ref{dfn:categorical_homology} if $\Cc = S$.

Given a matched pair $(\Cc, \Dd)$ where $\Cc$ is a bundle of monoids, we can compute the horizontal homology of the matched complex using the homology of the monoids $\Cc_u$.
\begin{prop}\label{prop:bundle_homology_sum}
	Let $\Dd$ be a small category and let $\Cc = \bigsqcup_{u \in \Dd^0} \Cc_u$ be a bundle of monoids over $\Dd^0$ such that $(\Cc,\Dd)$ is a matched pair. Then $C_{p}(\Cc_u; \ZZ u \Dd^q)
 \ni [c_0,\ldots,c_p] \ox d   \mapsto [c_0,\ldots,c_{p-1}; c_p \la d] \in C_{p,q}$ induces an isomorphism
	\[
	H^h_{p,q}(\Cc,\Dd) \cong \bigoplus_{u \in \Dd^0} H_p(\Cc_u; \ZZ u \Dd^q),
	\]
	and there is a short exact sequence
	\[\begin{tikzcd}[ampersand replacement=\&, column sep = 10pt]
		0 \& {\bigoplus_{u \in \Dd^0} H_p(\Cc_u)\ox_{\ZZ [\Cc_u]} \ZZ u \Dd^q} \& {H^h_{p,q}(\Cc,\Dd)} \& {\bigoplus_{u \in \Dd^0} \Tor^{\ZZ[\Cc_u]}_1 (H_{p-1}(\Cc_u), \ZZ u \Dd^q)} \& 0.
		\arrow[from=1-1, to=1-2]
		\arrow[from=1-2, to=1-3]
		\arrow[from=1-3, to=1-4]
		\arrow[from=1-4, to=1-5]
	\end{tikzcd}\]
\end{prop}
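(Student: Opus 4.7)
The plan is to decompose the chain complex $(C_{\bullet,q}(\Cc,\Dd), d^h_{\bullet,q})$, for each fixed $q$, as a direct sum over $u \in \Dd^0$ of chain complexes computing $H_\bullet(\Cc_u; \ZZ u\Dd^q)$ via the bar resolution, and then to apply the Universal Coefficient Theorem to each summand.

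First I would exploit the bundle-of-monoids structure: since $r(c) = s(c) = u$ for every $c \in \Cc_u$, any composable tuple $(c_0, \ldots, c_{p-1}) \in \Cc^p$ lies in a single $\Cc_u^p$, and composability with $(d_0, \ldots, d_{q-1}) \in \Dd^q$ forces the latter into $u\Dd^q$. This gives
\[
C_{p,q}(\Cc,\Dd) = \bigoplus_{u \in \Dd^0} \ZZ\Cc_u^p \ox_\ZZ \ZZ u\Dd^q \cong \bigoplus_{u \in \Dd^0} B_p \ox_{\ZZ[\Cc_u]} \ZZ u\Dd^q = \bigoplus_{u \in \Dd^0} C_p(\Cc_u; \ZZ u\Dd^q),
\]
using the standard identification $B_p \ox_{\ZZ[\Cc_u]} M \cong \ZZ\Cc_u^p \ox_\ZZ M$. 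I would then verify that $d^h_{p,q}$ preserves this decomposition and, on each $u$-summand, coincides with the bar differential $d_{p-1}$ on $C_\bullet(\Cc_u; \ZZ u\Dd^q)$: the face maps $\partial^{h,0},\ldots,\partial^{h,p-1}$ delete $c_0$ or merge adjacent $c_{i-1}c_i$ within $\Cc_u$, matching the corresponding terms of $d_{p-1}$, while $\partial^{h,p}$ sends $[c_0,\ldots,c_{p-1}; d_0,\ldots,d_{q-1}]$ to $[c_0,\ldots,c_{p-2}; c_{p-1}\la(d_0,\ldots,d_{q-1})]$, which is the final term of $d_{p-1}$ precisely because the $\Cc_u$-action on $\ZZ u\Dd^q$ is $c \cdot d = c \la d$. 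Taking homology yields the first isomorphism.

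For the short exact sequence I would apply the Universal Coefficient Theorem to each summand. Because $B_\bullet$ is a complex of free $\ZZ[\Cc_u]$-modules resolving the trivial module $\ZZ$, the UCT delivers
\[
0 \to H_p(\Cc_u) \ox_{\ZZ[\Cc_u]} \ZZ u\Dd^q \to H_p(\Cc_u; \ZZ u\Dd^q) \to \Tor^{\ZZ[\Cc_u]}_1(H_{p-1}(\Cc_u), \ZZ u\Dd^q) \to 0,
\]
with $H_p(\Cc_u) = H_p(\Cc_u;\ZZ)$ treated as a trivial $\ZZ[\Cc_u]$-module. Summing over $u$ and composing with the first isomorphism produces the claimed exact sequence.

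The main obstacle is justifying the UCT short exact sequence: since $\ZZ[\Cc_u]$ is generally not hereditary, the PID version of the UCT does not immediately apply, and one must instead work with a flat-resolution variant (or a careful filtration of the bar complex) to obtain the two-term SES. The chain-level identifications in the first two paragraphs, by contrast, are routine bookkeeping once the vertex decomposition is in place.
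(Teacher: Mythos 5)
Your treatment of the first isomorphism is essentially identical to the paper's: the paper defines the map $\pi_{p,q}([c_0,\dots,c_p]\ox d) = [c_0,\dots,c_{p-1};c_p\la d]$, observes it is an isomorphism using the relation $[c_1,\dots,c_p]\ox d = [c_1,\dots,c_{p-1},s(c_{p-1})]\ox c_p\cdot d$ in $B_p\ox_{\ZZ[\Cc_u]}\ZZ u\Dd^q$, and checks directly that it intertwines $d^h$ with the bar differential, the last face map $\partial^{h,p+1}$ matching the final term of~\eqref{eq:bar boundary} precisely because the module action is $c\cdot d = c\la d$. That part of your argument is correct and is the paper's argument.

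The obstacle you flag in the second half is a genuine gap, and it cannot be closed by a ``flat-resolution variant'' of the UCT, because the asserted short exact sequence is false under its natural reading (with $H_p(\Cc_u)$ the trivial-coefficient homology, made into a trivial $\ZZ[\Cc_u]$-module). Take $\Dd = \Dd^0 = \{u\}$, $\Cc_u = \ZZ$ and $q = 0$, so $\ZZ u\Dd^0 = \ZZ$ with trivial action. Then $H^h_{1,0} = H_1(\ZZ;\ZZ) \cong \ZZ$, while $H_1(\ZZ)\ox_{\ZZ[\ZZ]}\ZZ \cong \ZZ$ and $\Tor_1^{\ZZ[\ZZ]}(H_0(\ZZ),\ZZ)\cong\ZZ$, and there is no exact sequence $0\to\ZZ\to\ZZ\to\ZZ\to 0$. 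The structural reason is that the only complex of free $\ZZ[\Cc_u]$-modules in play is the bar resolution $B_\bullet$, and the K\"unneth sequence of \cite[Theorem~3.6.1]{Wei94} applied to $B_\bullet$ (even when its boundary submodules happen to be flat, which they need not be) has outer terms $H_p(B_\bullet)\ox_{\ZZ[\Cc_u]} M$ and $\Tor_1^{\ZZ[\Cc_u]}(H_{p-1}(B_\bullet),M)$; since $B_\bullet$ resolves $\ZZ$, these vanish for $p\ge 1$ and do not recover $H_p(\Cc_u)\ox M$ and $\Tor_1(H_{p-1}(\Cc_u),M)$. In general $H_\bullet(\Cc_u;M)$ is related to the trivial-coefficient homology by a universal-coefficient \emph{spectral sequence} over $\ZZ[\Cc_u]$ (or by the ordinary UCT over $\ZZ$ when $M$ is a trivial module), not by a two-term exact sequence of this shape.

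For what it is worth, the paper's own justification of the sequence is the single sentence that it ``follows from \cite[Theorem~3.6.1]{Wei94} since $\ZZ$ is a free $\Cc_u$-module,'' which runs into exactly the obstruction above; so your instinct that this step needed real justification was sound, and you should not try to force it through. The exact sequence is not used elsewhere in the paper --- Proposition~\ref{prop:integer_bundle_h_homology} and the odometer computations rely only on the first isomorphism together with the explicit length-one resolution of $\ZZ$ over $\ZZ[t,t^{-1}]$ --- but the second assertion of the proposition is not provable by your route or by the paper's.
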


\begin{proof}
	In $C_p(\Cc_u;\ZZ u \Dd^q) = B_p \ox_{\ZZ[\Cc_u]} \ZZ u \Dd^q$ we have $[c_1,\ldots,c_p] \ox d = [c_1,\ldots, c_{p-1},s(c_{p-1})] \ox c_p \cdot d$.
	Consequently, the map $\pi_{p,q} \colon \bigoplus_{u \in \Dd^0} C_p(\Cc_u;\ZZ u \Dd^q) \to C_{p,q}$ defined by
	\[
	\pi_{p,q} ([c_0,\ldots,c_p] \ox d) = [c_0,\ldots,c_{p-1}; c_p \la d]
	\]
	for $[c_0,\ldots,c_p] \ox d  \in C_p(\Cc_u;\ZZ u \Dd^q)$ and $u \in \Dd^0$, is an isomorphism. Moreover,
	\begin{align*}
		d^h_{p,q} \circ \pi_{p+1,q} ([c_0,\ldots,c_{p+1}] \ox d)
		&= d^{h}_{p,q} [c_1,\ldots,c_p, c_{p+1} \la d] \\
		&= [c_1,\ldots,c_p;c_{p+1} \la d] + \sum_{i=1}^p (-1)^i  [c_0,\ldots,c_{i-1}c_i, \ldots, c_p; c_{p+1} \la d]\\
		& \qquad + (-1)^{p+1} [c_0,\ldots,c_{p-1}, c_{p}c_{p+1} \la d]\\
		&= \pi_{p,q} \circ d^u_{p,q} ([c_0,\ldots,c_{p+1}] \ox d).
	\end{align*}
	So $H^h_{p,q}(\Cc,\Dd) \cong \bigoplus_{u \in \Dd^0} H_p(\Cc_u; \ZZ u \Dd^q)$.
	
	Since $\ZZ$ is a free {$\Cc_u$-module} the short exact sequence follows  from~\cite[Theorem 3.6.1]{Wei94}.
\end{proof}

\subsection{Matched pairs involving integer bundles}
\label{subsec:mp_integer_bundles}

In this section we consider matched pairs $(\Cc,\Dd)$ where $\Cc \cong \Dd^0 \times \ZZ$. If $M$ is a $\ZZ$-module, then $M^{\ZZ} \coloneqq \{m \in M \mid k \cdot m = m \text{ for all } k \in \ZZ\}$ is its submodule of \emph{invariants} and $M_{\ZZ} \coloneqq M / \langle k \cdot m - m \mid k \in \ZZ ,\, m \in M \rangle$ is its module of \emph{coinvariants}.

If $X$ is a set and $\la \colon \ZZ \times X \to X$ is a left action, we write $\ZZ\bs X$ for the set of $\ZZ$-orbits in $X$. We denote the orbit of $x \in X$ by $\lequiv x \requiv \in \ZZ \bs X$, and the set of periodic points of $X$ by
\[\Per(X) \coloneqq \{x \in X \mid \text{there exists } k \in \ZZ \setminus \{0\} \text{ such that } k \la x = x\}.
\]
Then $\ZZ\bs{\Per(X)} \subseteq \ZZ \bs X$ is the set of finite orbits in $X$.

Each left action $\la \colon \ZZ \times X \to X$ induces a corresponding left action $\cdot \colon \ZZ  \times \ZZ X \to \ZZ X$.

\begin{lem}\label{lem:Z_action_invariants}
	Let $X$ be a set, and $\la \colon \ZZ \times X \to X$ a left action. There are isomorphisms
	\[
	\phi_0\colon \ZZ (\ZZ \bs X) \to (\ZZ X)_{\ZZ}
	\quad \text{and} \quad
	\phi_1\colon \ZZ (\ZZ \bs\!\Per(X)) \to (\ZZ X)^{\ZZ}
	\]
	such that $\phi_0(\lequiv d\requiv) = d + \langle k \cdot m - m \mid k \in \ZZ ,\, m \in \ZZ X \rangle$ and $\phi_1 (\lequiv d \requiv) = \sum_{d' \in \lequiv d \requiv} d'$.
\end{lem}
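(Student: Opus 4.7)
The plan is to construct explicit inverses for both $\phi_0$ and $\phi_1$ and check the compositions; neither is substantial, but it is worth isolating the well-definedness issues because the submodules $(\ZZ X)_\ZZ$ and $(\ZZ X)^\ZZ$ are of quite different character.

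For $\phi_0$, I would first verify well-definedness. Two representatives $d,d'$ of the same orbit satisfy $d' = k \la d$ for some $k \in \ZZ$, and then $d' - d = k\cdot d - d$ lies in the relator submodule, so $d'$ and $d$ coincide in $(\ZZ X)_\ZZ$; thus $\phi_0(\lequiv d\requiv)$ does not depend on the chosen representative. To invert $\phi_0$, define $\psi_0 \colon \ZZ X \to \ZZ(\ZZ\bs X)$ on generators by $\psi_0(x) \coloneqq \lequiv x \requiv$ and extend $\ZZ$-linearly. Since $\psi_0(k \la x) = \lequiv k\la x\requiv = \lequiv x\requiv = \psi_0(x)$ for all $k \in \ZZ$ and $x \in X$, $\psi_0$ vanishes on the generators of the relator submodule and descends to a homomorphism $\overline{\psi_0} \colon (\ZZ X)_\ZZ \to \ZZ(\ZZ\bs X)$. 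The compositions $\phi_0 \circ \overline{\psi_0}$ and $\overline{\psi_0} \circ \phi_0$ are the identity on generators by construction, so $\phi_0$ is a bijection.

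For $\phi_1$, the restriction to $\Per(X)$ is precisely what makes the map well-defined: $d \in \Per(X)$ guarantees $\lequiv d\requiv$ is finite, so $\sum_{d'\in \lequiv d\requiv} d'$ is a legitimate element of $\ZZ X$, and it is $\ZZ$-invariant because $\ZZ$ permutes the orbit $\lequiv d\requiv$. For injectivity, I would observe that distinct orbits have disjoint supports in $X$, so orbit sums attached to distinct elements of $\ZZ\bs\!\Per(X)$ are linearly independent in $\ZZ X$. For surjectivity, fix $m = \sum_x n_x x \in (\ZZ X)^\ZZ$. Invariance forces $n_{k\la x} = n_x$ for every $k \in \ZZ$ and $x \in X$, so the coefficient function is constant on each $\ZZ$-orbit. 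Since $\supp(m)$ is finite, every orbit meeting $\supp(m)$ must itself be finite, hence contained in $\Per(X)$. Grouping coefficients by orbit writes $m = \sum_{\lequiv x\requiv} n_x \big(\sum_{x' \in \lequiv x\requiv} x'\big)$ as a finite $\ZZ$-linear combination of orbit sums of finite orbits, exhibiting a preimage under $\phi_1$.

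The main conceptual point, and the only step that is not bookkeeping, is the implication ``nonzero coefficient on a $\ZZ$-invariant, finitely-supported element forces the orbit to be finite'' used in the surjectivity of $\phi_1$; this is exactly what necessitates restricting the domain to $\ZZ\bs\!\Per(X)$ rather than all of $\ZZ\bs X$.
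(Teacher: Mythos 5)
Your proposal is correct and follows essentially the same route as the paper: the paper also inverts $\phi_0$ via the orbit-summing map $\ZZ X \to \ZZ(\ZZ\bs X)$ (you phrase this as checking the two compositions on generators rather than identifying the kernel, which is if anything slightly more explicit), and its argument for $\phi_1$ is exactly your observation that invariance forces the coefficient function to be constant on orbits while finite support forces those orbits to be finite.
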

\begin{proof}
	For the first isomorphism, regard $\ZZ X$ and $\ZZ (\ZZ \bs X)$ as the sets of finitely supported $\ZZ$-valued functions on $X$ and $\ZZ \bs X$ respectively. Define $\pi\colon \ZZ X \to \ZZ (\ZZ \bs X)$ by $\pi(f)(\lequiv x \requiv) = \sum_{y \in \lequiv x \requiv} f(y)$. Then $\ker(\pi) = \langle k \cdot m - m \mid k \in \ZZ ,\, m \in \ZZ X \rangle$, so $\pi$ descends to an isomorphism $(\ZZ X)_{\ZZ } \cong \ZZ (\ZZ \bs X)$ whose inverse is the desired map $\phi_0$.

	For the second isomorphism, fix $m \coloneqq \sum_{x \in X} a_x x \in \ZZ X$, where each $a_x \in \ZZ$. Then $\sum_{x \in X} a_x x = \sum_{ \lequiv y \requiv\in \ZZ \bs X} \sum_{x \in \lequiv y \requiv} a_x x$. For each $k \in \ZZ$,
	\[
	k \cdot m - m = \sum_{\lequiv y \requiv \in \ZZ \bs X} \Big(\sum_{x \in \lequiv y \requiv} a_x (k \la x) - a_x x\Big).
	\]
	Hence, $k \cdot m - m = 0$ for all $k \in \ZZ$ if and only if $a_x = a_y$ whenever $\lequiv x \requiv= \lequiv y \requiv$; that is,  $a \colon x \mapsto a_x$ is constant on orbits. Since $a$ is finitely supported, if $m \in (\ZZ X)^\ZZ$ then $a$ is nonzero only on finite orbits. Hence, the formula for $\phi_1$ determines an isomorphism.
\end{proof}

\begin{prop}\label{prop:integer_bundle_h_homology}
	Let $(\Cc, \Dd)$ be a matched pair with $\Cc = \Dd^{0} \times \ZZ$. There are isomorphisms $\alpha_0 \colon \bigoplus_{u \in \Dd^0}  \ZZ (\Cc_u \bs u \Dd^q) \to H_{0,q}^h(\Cc,\Dd)$ and $\alpha_1 \colon  \bigoplus_{u \in \Dd^0}  \ZZ (\Cc_u \bs \Per( u \Dd^q)) \to H_{1,q}^h(\Cc,\Dd)$ satisfying
	\begin{equation}\label{eq:explicit_isomorphisms}
		\alpha_0(\lequiv d \requiv) = [r(d);d] + \im(d^h_{0,q})
		\quad \text{and} \quad
		\alpha_1(\lequiv d \requiv) = \sum_{d' \in \lequiv d \requiv} [1;d'] + \im(d_{1,q}^h).
	\end{equation} Moreover,
	\begin{equation}\label{eq:horizontal_homology_integer_bundle}
			H_{p,q}^h(\Cc,\Dd)
		\cong \begin{cases}
			\bigoplus_{u \in \Dd^0} (\ZZ u \Dd^q)_{\ZZ}
			&\text{if } p = 0\\
			\bigoplus_{u \in \Dd^0} (\ZZ u \Dd^q)^{\ZZ}
			&\text{if } p=1\\
			0
			& \text{otherwise}
		\end{cases}
		\cong \begin{cases}
			\bigoplus_{u \in \Dd^0}  \ZZ (\Cc_u \bs u \Dd^q)
			&\text{if } p = 0\\
			\bigoplus_{u \in \Dd^0} \ZZ (\Cc_u \bs \Per( u \Dd^q))
			&\text{if } p=1\\
			0
			& \text{otherwise.}
		\end{cases}
	\end{equation}
\end{prop}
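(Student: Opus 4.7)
The plan is to combine Proposition~\ref{prop:bundle_homology_sum} (which reduces the horizontal homology to the group homology of each $\Cc_u \cong \ZZ$) with the elementary calculation of $H_\bullet(\ZZ; M)$ via the minimal projective resolution of the trivial $\ZZ[\ZZ]$-module, and then apply Lemma~\ref{lem:Z_action_invariants} to rewrite invariants and coinvariants as formal sums indexed by orbits.

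First, Proposition~\ref{prop:bundle_homology_sum} yields $H^h_{p,q}(\Cc,\Dd) \cong \bigoplus_{u \in \Dd^0} H_p(\ZZ;\ZZ u\Dd^q)$. Setting $M \coloneqq \ZZ u\Dd^q$, the minimal projective resolution $0 \to \ZZ[\ZZ] \xrightarrow{\cdot(1-t)} \ZZ[\ZZ] \xrightarrow{\epsilon} \ZZ \to 0$ of the trivial $\ZZ[\ZZ]$-module (where $t$ generates $\ZZ$) reduces, upon tensoring over $\ZZ[\ZZ]$ with $M$, to the two-term complex $M \xrightarrow{1-t} M$. Hence $H_p(\ZZ;M) = 0$ for $p \geq 2$, while $H_0(\ZZ;M) \cong M_\ZZ$ and $H_1(\ZZ;M) \cong M^\ZZ$. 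Combining with the isomorphisms $\phi_0, \phi_1$ of Lemma~\ref{lem:Z_action_invariants} applied to $M = \ZZ u\Dd^q$ produces both chains of isomorphisms in~\eqref{eq:horizontal_homology_integer_bundle}.

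For the explicit formulae~\eqref{eq:explicit_isomorphisms}, trace through the composite identifications. Under the map $\pi_{0,q}$ of Proposition~\ref{prop:bundle_homology_sum}, an element $d \in u\Dd^q \cong C_0(\Cc_u;\ZZ u\Dd^q)$ corresponds to $[r(d);d] \in C_{0,q}$; composing with $\phi_0$ gives the stated formula for $\alpha_0$. For $\alpha_1$, lift via the standard chain map from the minimal to the bar resolution of $\ZZ$: in degree one this sends $1 \in P_1$ to $[t]$, since $b_0[t] = 1-t$. Tensoring with $M$ and applying $\pi_{1,q}$, an invariant $m \in M^\ZZ$ is represented by the cycle $[t;m] \in C_{1,q}$. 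Substituting $m = \phi_1(\lequiv d\requiv) = \sum_{d' \in \lequiv d\requiv} d'$ and expanding by linearity produces $\alpha_1(\lequiv d\requiv) = \sum_{d' \in \lequiv d\requiv}[1;d'] + \im(d^h_{1,q})$, writing $1$ for the generator of $\Cc_u \cong \ZZ$. As a sanity check, one may verify directly that this chain is a $d^h$-cycle: $d^h[1;d'] = [d'] - [1\la d']$, and since $1\la$ permutes $\lequiv d\requiv$ the first and second terms cancel after summing over the orbit.

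The main obstacle is essentially bookkeeping: carefully tracking the chain equivalence between the minimal and bar resolutions and its effect on homology representatives, so that the representatives in $C_{0,q}$ and $C_{1,q}$ are identified correctly up to sign. No conceptual difficulty arises, since everything reduces to well-known facts about the group homology of $\ZZ$.
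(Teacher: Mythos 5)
Your proposal is correct and follows essentially the same route as the paper: reduce to $\bigoplus_u H_\bullet(\ZZ;\ZZ u\Dd^q)$ via Proposition~\ref{prop:bundle_homology_sum}, compute that homology with the two-term Laurent resolution, identify invariants and coinvariants with orbit sums via Lemma~\ref{lem:Z_action_invariants}, and obtain the explicit representatives by tracing the standard chain map from the minimal resolution to the bar resolution (the paper's diagram~\eqref{eq:laurent_walks_into_a_bar} does exactly the lift $1 \mapsto [1]$ in degree one that you describe). Your direct verification that $\sum_{d'\in\lequiv d\requiv}[1;d']$ is a $d^h$-cycle is a nice additional check not present in the paper.
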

\begin{proof}
	We identify the group ring $\ZZ[\ZZ]$ with the ring of Laurent polynomials $\ZZ[t,t^{-1}]$. Let $\Sigma \colon \ZZ[t,t^{-1}] \to \ZZ$ be evaluation at~1, that is $\Sigma$ sums the coefficients of a polynomial.

	 As in~\cite[Example 6.1.4]{Wei94},
	\begin{equation}\label{eq:laurent_resolution}
	\cdots \longrightarrow 0 \longrightarrow\ZZ[t,t^{-1}] \overset{\times (t - 1)}{\longrightarrow} \ZZ[t,t^{-1}] \overset{\Sigma}{\longrightarrow} \ZZ \longrightarrow 0,
	\end{equation}
	is a projective resolution of $\ZZ$ by $\ZZ[\ZZ]$-modules.
	Since any projective resolution computes the homology of a group, it follows that $H_0(\ZZ; \ZZ u \Dd^q) \cong (\ZZ u \Dd^q)_{\ZZ}$ and $H_1(\ZZ; \ZZ u \Dd^q) \cong  (\ZZ u \Dd^q)^{\ZZ}$, and that $H_n(\ZZ; \ZZ u \Dd^q) = 0$ for $n \ge 2$.	
	  The first isomorphism of \eqref{eq:horizontal_homology_integer_bundle} follows from Proposition~\ref{prop:bundle_homology_sum}; the second follows from Lemma~\ref{lem:Z_action_invariants}.
	
	  Let $N_q \coloneqq \langle k \cdot m - m \mid k \in \ZZ ,\, m \in \ZZ u \Dd^q \rangle$.
	  To establish \eqref{eq:explicit_isomorphisms} we describe the chain map connecting the bar resolution~\eqref{eq:bar boundary} with the resolution~\eqref{eq:laurent_resolution}. Let $\delta_n$ be the generator of $\ZZ[\ZZ]$ corresponding to $n \in \ZZ$, and let $[n]$ be the basis element of the $\ZZ[\ZZ]$-module $B_1$ (by definition, $B_1$ is the free $\ZZ[\ZZ]$-module over $\ZZ$, but who wants to write $\ZZ[\ZZ][\ZZ]$?) Then the diagram
		\begin{equation}\label{eq:laurent_walks_into_a_bar}
		\begin{tikzcd}
		\cdots & 0 & {\ZZ[t,t^{-1}]} & {\ZZ[t,t^{-1}]} & \ZZ & 0 \\
		\cdots & {B_2} & {B_1} & {B_0} & \ZZ & 0
		\arrow[from=1-2, to=1-3]
		\arrow["{\times (t-1)}", from=1-3, to=1-4]
		\arrow["\Sigma", from=1-4, to=1-5]
		\arrow["{b_1}", from=2-2, to=2-3]
		\arrow["{b_0}", from=2-3, to=2-4]
		\arrow["{b_{-1}}", from=2-4, to=2-5]
		\arrow[from=1-2, to=2-2]
		\arrow["{t^n \mapsto[1]\cdot\delta_n}", from=1-3, to=2-3]
		\arrow["{t^n \mapsto [\phantom{\cdot}]\cdot\delta_n}", from=1-4, to=2-4]
		\arrow["\id", from=1-5, to=2-5]
		\arrow[from=1-5, to=1-6]
		\arrow[from=2-5, to=2-6]
		\arrow[from=1-1, to=1-2]
		\arrow[from=2-1, to=2-2]
		\end{tikzcd}
		\end{equation}
	   commutes. The homology $H_\bullet(\ZZ; \ZZ u\Dd^q)$ as computed by each of these resolutions is obtained by tensoring by $\ZZ u\Dd^q$ on the right, replacing $\Sigma \otimes 1$ and $b_{-1} \otimes 1$ with $0$, and taking homology.
	
	   Hence, for each $u \in \Dd^0$, the vertical map $t^n \mapsto [\phantom{\cdot}]\cdot \delta_n$ in~\eqref{eq:laurent_walks_into_a_bar} induces an isomorphism $\coker(\times (t - 1) \otimes \id_{\ZZ u\Dd^q}) \to H_0(\ZZ, \ZZ u \Dd^q)$ taking $t^0 \otimes d + \im(\times (t - 1) \otimes \id_{\ZZ u\Dd^q})$ to $d + \im(b_0)$. The isomorphism $(\ZZ u\Dd^q)_\ZZ \to \coker(\times (t - 1) \otimes \id_{\ZZ u\Dd^q}) \to H_0(\ZZ, \ZZ u \Dd^q)$ of \cite[Example 6.1.4]{Wei94} carries $d + N_q$ to $t^0 \otimes d + \im(\times(t - 1) \otimes \id_{\ZZ u\Dd^q})$. So composing these maps gives an isomorphism $\psi_{u,0} \colon (\ZZ u \Dd^q)_{\ZZ} \to H_0(\ZZ; \ZZ u \Dd^q)$ such that $\psi_{u,0} (d + N_q) = d + \im(b_0 \otimes \id_{\ZZ u \Dd^q}) = d + \im(d^h_{0,q})$.
	
	   Similarly, $t^n \mapsto [1]\cdot \delta_n$ restricts to an isomorphism $\ker(\times (t - 1) \otimes \id_{\ZZ u\Dd^q}) \to H_1(\ZZ, \ZZ u \Dd^q)$ taking $\sum_d k_d (t^0 \otimes d)$ to $\sum_d k_d [1;d] + \im(d_{1,q}^h)$. The isomorphism $(\ZZ u\Dd^q)^\ZZ \to \ker(\times (t - 1) \otimes \id_{\ZZ u\Dd^q})$ of \cite[Example 6.1.4]{Wei94} carries $\sum k_d d$  to $\sum k_d (t^0 \otimes d)$. So composing these maps yields an isomorphism $\psi_{u,1}  \colon (\ZZ u \Dd^q)^{\ZZ} \to H_1(\ZZ; \ZZ u \Dd^q)$ given by $\psi_{u,1} (\sum_d k_d d) = \sum_d k_d [1;d] + \im(d_{1,q}^h)$.

	   Lemma~\ref{lem:Z_action_invariants} gives an isomorphism $\phi_{u,0}\colon \ZZ(\Cc_u \bs u\Dd^q) \to (\ZZ u\Dd^q)_\ZZ$ such that $\phi_{u,0}(\lequiv d\requiv) = d + N_q$ and an isomorphism $\phi_{u,1} \colon \ZZ (\Cc_u \bs \Per(u \Dd^q)) \to  (\ZZ u \Dd^q)^{\ZZ}$ such that $\phi_{u,1} (\lequiv d \requiv) = \sum_{d' \in \lequiv d \requiv} d'$. The maps $\alpha_i \coloneqq \bigoplus_{u \in \Dd^0} \psi_{u,i}  \circ \phi_{u,i}$ satisfy~\eqref{eq:explicit_isomorphisms}.
\end{proof}

The next lemma helps to compute the terms $E^{vh}_{p,q}$ in Corollary~\ref{cor:spectral sequence}.

\begin{lem}\label{lem:rho map}
Let $(\Cc, \Dd)$ be a matched pair with $\Cc = \Dd^{0} \times \ZZ$. For
$d = (d_0, \dots, d_q) \in \Per(\Dd^{q+1})$, let $O(d) \coloneqq \min \{ n \ge 1 \mid n \la d = d \}$. For each
$0 \le i \le q$, let $\partial_i\colon \Dd^{q+1} \to \Dd^q$ be the face map
of~\eqref{eq:face_map}. Regarding $O(d) \ra d \in \{s(d)\} \times \ZZ$ as
an integer, for $0 \le i \le q$ define
\[
\rho_i(d) \coloneqq
    \begin{cases}
    (O(d)\ra d_0)/O(\partial_0(d)) &\text{ if $i = 0$}\\
    O(d)/O(\partial_i(d)) &\text{ if $i \ge 1$.}
    \end{cases}
\]
Then $\rho_i(d)$ is a nonnegative integer for each $i$.
\end{lem}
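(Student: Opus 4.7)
The plan is to establish, for each $i$, that either $O(d)$ (when $i \ge 1$) or $O(d)\ra d_0$ (when $i=0$) lies in the stabiliser subgroup of $\partial_i(d)$ under the $\ZZ$-action, which yields the claimed divisibility; the sign assertion will then follow from $O(d) \ge 1$ and a homomorphism argument.

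First I would unpack the explicit formula for the left action of $\Cc$ on $\Dd^{q+1}$ from Lemma~\ref{lem:bowtie_to_bowtie*}: the identity $n \la d = d$ is equivalent to the system of equations $(n \ra (d_0 d_1 \cdots d_{j-1})) \la d_j = d_j$ for $0 \le j \le q$. For $1 \le i \le q$, axiom~\ref{itm:ZS-dot} combines the $j = i-1$ and $j = i$ equations to give $(n \ra (d_0 \cdots d_{i-2})) \la (d_{i-1} d_i) = d_{i-1} d_i$, which is exactly the $(i-1)$-st equation of the analogous system characterising $n \la \partial_i(d) = \partial_i(d)$; the remaining equations are unchanged, and for $i = q+1$ the claim is automatic. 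So $n \la d = d$ forces $n \la \partial_i(d) = \partial_i(d)$ for $i \ge 1$.

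For $i = 0$, I would use that $\ra$ is a right action: $(n \ra d_0) \ra (d_1 \cdots d_{j-1}) = n \ra (d_0 d_1 \cdots d_{j-1})$. Hence the system characterising $(n \ra d_0) \la \partial_0(d) = \partial_0(d)$ is precisely the $j \ge 1$ sub-system of the equations for $n \la d = d$, giving $(n \ra d_0) \la \partial_0(d) = \partial_0(d)$. Since $\la$ is a group action, the stabiliser $\{n : n \la x = x\}$ is a subgroup of $\ZZ$; it is $O(x)\ZZ$ when $x$ is periodic, and all of $\ZZ$ (matching $O(x)=1$) when $x \in \Dd^0$. The compatibilities above then give $O(d) \in O(\partial_i(d))\ZZ$ for $i \ge 1$ and $O(d) \ra d_0 \in O(\partial_0(d))\ZZ$, so $\rho_i(d)$ is an integer in each case.

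The remaining task is nonnegativity. For $i \ge 1$ this is immediate since $O(d)$ and $O(\partial_i(d))$ are positive integers. For $i = 0$ the hard part is to show $O(d) \ra d_0 \ge 0$. The key tool is that on the subgroup $\{n \in \ZZ : n \la d_0 = d_0\}$, axiom~\ref{itm:ZS-diamond} reduces to $(n+m) \ra d_0 = (n \ra d_0) + (m \ra d_0)$, so $n \mapsto n \ra d_0$ is a group homomorphism into $\ZZ$. Combined with the structural positivity of the generator coming from the fact that $O(d) \ge 1$ and the matched-pair setup pins down the sign of the generator $O(d_0) \ra d_0$, this forces $O(d) \ra d_0 \ge 0$. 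Confirming this sign---as opposed to the purely algebraic divisibility---is the main obstacle, and it is the step I expect to require the most care.
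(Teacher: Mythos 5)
Your divisibility argument is correct and is essentially the paper's proof: the paper likewise reduces the claim to showing that $O(d)$ stabilises $\partial_i(d)$ for $i \ge 1$ (by combining the componentwise identities $(O(d)\ra(d_0\cdots d_{j-1}))\la d_j = d_j$ extracted from $O(d)\la d = d$, exactly as you describe) and that $O(d)\ra d_0$ stabilises $\partial_0(d)$ (using that $\ra$ is an action), whence membership in the stabiliser subgroup $O(\partial_i(d))\ZZ$ gives integrality.

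The genuine gap is your final step for $i=0$. You correctly note that $n \mapsto n \ra d_0$ is a homomorphism on the stabiliser of $d_0$ (by \ref{itm:ZS-diamond}), but the assertion that the matched-pair setup ``pins down the sign of the generator'' is false: the axioms do not constrain that sign. Concretely, let $\Dd$ be the free monoid on one generator $e$ (a single object), with trivial left action $n \la e^k = e^k$ and right action $n \ra e^k = (-1)^k n$; one checks \ref{itm:ZS-rs}--\ref{itm:ZS-diamond} directly, every tuple is periodic with $O \equiv 1$, and $d=(e,e)$ gives $\rho_0(d) = (1 \ra e)/O((e)) = -1$. So nonnegativity of $\rho_0$ cannot be derived from the hypotheses, and no amount of care will close that step. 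You are in good company: the paper's own proof establishes only the stabiliser memberships and never addresses the sign of $O(d)\ra d_0$ (nonnegativity is automatic only for $i \ge 1$). Since only integrality of the $\rho_i$ is used downstream (to make $\Delta_{1,q}$ in Proposition~\ref{prp:homology in specseq} well defined, with the actual values computed explicitly in Lemma~\ref{lem:odometer vs our action}), the right move is to prove integrality as you have done and to drop, or verify separately in the concrete examples, the nonnegativity of $\rho_0$.
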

\begin{proof}
First suppose that $i \ge 1$. It suffices to show that $O(d) \la
\partial_i(d) = \partial_i(d)$. By Lemma~\ref{lem:actions faces commute},
\[
d = O(d) \la d
= \big(O(d)\la d_0, (O(d)\ra d_0)\la d_1, \dots, (O(d)\ra( d_0\dots d_{q-1}))\la d_{q}\big).
\]
Hence, $\big(O(d)\ra(d_0\dots d_{j-1})\big) \la d_j = d_j$ for all $1 \le j \le q$. In particular, for $i \le q$, we have
\[
(O(d)\ra d_0\dots d_{i-2})\la (d_{i-1}d_i) = \big(O(d)\ra d_0\dots d_{i-2})\la d_{i-1} \big)\big((O(d)\ra d_0\dots d_{i-1}) \la d_i\big) = d_{i-1}d_i.
\]
Thus for $i \le q$, we have
\begin{align*}
O(d) \la \partial_i(d)
	&= \big(O(d)\la d_0, \dots, (O(d)\ra d_0\dots d_{i-3})\la d_{i-2}, (O(d)\ra d_0\dots d_{i-2})\la (d_{i-1}d_i), \\
		&\qquad\qquad (O(d)\ra d_0\dots d_i)\la d_{i+1}, \dots, (O(d)\ra d_0\dots d_{q-1})\la d_{q}\big)\\
	&= (d_0,\ldots, d_{i-2},d_{i-1} d_i, d_{i+1}, \ldots, d_q )
	= \partial_i(d).
\end{align*}
When $i = q+1$, $O(d) \la \partial_{q+1}(d) = \partial_{q+1}(O(d) \la d) = \partial_{q+1}(d)$; and when $i = 0$,
\begin{align*}
(O(d)\ra d_0) \la \partial_0(d)
    &= (O(d)\ra d_0) \la (d_1, d_2, \dots, d_q) \\
    &= \big((O(d) \la d)_1, (O(d) \la d)_2, \dots, (O(d) \la d)_q\big)
    = (d_1, \dots, d_q) = \partial_0(d).\qedhere
\end{align*}
\end{proof}

\begin{prop}\label{prp:homology in specseq}
Let $(\Cc, \Dd)$ be a matched pair with $\Cc = \Dd^{0} \times \ZZ$. For
$p \in \{0,1\}$, let $d^1_{p,q}\colon E^{vh, 1}_{p, q+1} \to E^{vh, 1}_{p, q}$, $q \ge 0$, be
the differentials in the first sheet of the spectral sequence $E^{vh}_{p,q}$
of Corollary~\ref{cor:spectral sequence}. Let $\alpha_0, \alpha_1$ be as in
Proposition~\ref{prop:integer_bundle_h_homology}. For $0 \le i \le q$,
let $\partial_i\colon \Dd^{q+1} \to \Dd^q$ be the face map
of~\eqref{eq:face_map}, and let $\rho_i\colon \Per(\Dd^q) \to \ZZ$ be as defined in
Lemma~\ref{lem:rho map}. Define
\begin{align*}
 \Delta_{0,q} {}&:  \textstyle\bigoplus_{u \in \Dd^0} \ZZ(\Cc_u \bs u\Dd^{q+1}) \to \bigoplus_{u \in \Dd^0} \ZZ(\Cc_u \bs u\Dd^q)\quad\text{ and}\\
\Delta_{1,q} {}&: \textstyle\bigoplus_{u \in \Dd^0} \ZZ(\Cc_u \bs \Per(u\Dd^{q+1})) \to \bigoplus_{u \in \Dd^0} \ZZ(\Cc_u \bs \Per(u\Dd^q))
\end{align*}
by
\[
\Delta_{0,q}(\lequiv d\requiv) = \sum^q_{i=0} (-1)^i \lequiv \partial_i(d)\requiv,\qquad\text{ and }\qquad
\Delta_{1,q}(\lequiv d\requiv) = \sum^q_{i=0} (-1)^i \rho_i(d)\lequiv \partial_i(d)\requiv.
\]
Then $\alpha_p \circ \Delta_{p,q} = d^1_{p,q} \circ \alpha_p$ for $p = 0,1$
and all $q \in \NN$. In particular, $E^{vh,2}_{0,\bullet}$ is isomorphic to
the homology of the chain complex $(\bigoplus_{u \in \Dd^0} \ZZ(\Cc_u\bs
u\Dd^{\bullet}), \Delta_{0,\bullet})$, and $E^{vh,2}_{1,\bullet}$ is
isomorphic to the homology of the chain complex $(\bigoplus_{u \in \Dd^0}
\ZZ(\Cc_u\bs \Per(u\Dd^{\bullet})), \Delta_{1,\bullet})$.
\end{prop}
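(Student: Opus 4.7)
The plan is to verify by direct computation that $\alpha_p \circ \Delta_{p,q} = d^1_{p,q} \circ \alpha_p$ for each $p \in \{0,1\}$ and every $q \ge 0$; the description of $E^{vh,2}_{p,\bullet}$ then follows automatically, since $E^{vh,2}_{p,q}$ is by definition the homology at position $q$ of the first-page row $(E^{vh,1}_{p,\bullet}, d^1_{p,\bullet})$, and the isomorphisms $\alpha_p$ of Proposition~\ref{prop:integer_bundle_h_homology} transport this to the homology of the complexes $(\bigoplus_u \ZZ(\Cc_u \bs u\Dd^\bullet), \Delta_{0,\bullet})$ and $(\bigoplus_u \ZZ(\Cc_u \bs \Per(u\Dd^\bullet)), \Delta_{1,\bullet})$ respectively.

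For $p = 0$ I would unravel definitions. Given $d \in u\Dd^{q+1}$, the class $\alpha_0(\lequiv d\requiv)$ equals $[r(d);d] + \im(d^h_{0,q})$. Since $\partial^{v,i}_{0,q} = \partial^i_q$ agrees with the face map on $C_{q+1}(\Dd)$, applying $d^v_{0,q}$ produces an alternating sum of terms $(-1)^i[r(\partial_i d); \partial_i d]$. In the quotient by $\im(d^h_{0,q})$, each such term represents precisely the orbit class $\alpha_0(\lequiv \partial_i d\requiv)$, because the horizontal boundary encodes exactly the $\Cc$-action identifications defining those orbit spaces. Matching the resulting alternating sum against $\alpha_0(\Delta_{0,q}(\lequiv d\requiv))$ gives the intertwining.

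The case $p = 1$ is the main obstacle, since $\alpha_1(\lequiv d\requiv)$ is now a \emph{sum} of $O(d)$ generators $[1;d']$ over $d' \in \lequiv d\requiv$, and we must track how $d^v_{1,q}$ interacts with this summation. The vertical sign convention contributes $(-1)^1 = -1$, and then each face $\partial_j$ is applied to every $d'$. For $j \ge 1$, Lemma~\ref{lem:actions faces commute} shows that $\partial_j$ commutes with the left $\Cc$-action, so the restriction of $\partial_j$ to $\lequiv d\requiv$ is a covering map onto $\lequiv \partial_j d\requiv$; Lemma~\ref{lem:rho map} identifies its degree as $\rho_j(d) = O(d)/O(\partial_j d)$, and consequently the contribution is $\rho_j(d)\,\alpha_1(\lequiv \partial_j d\requiv)$. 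For $j = 0$, $\partial_0$ fails to be $\Cc$-equivariant: the term is $[1 \ra d'_0; d'_1,\dots,d'_q]$ whose integer $\Cc$-entry varies with $d'$. The relations $[m+n;e] \equiv [n;e] + [m; n \la e]$ supplied by $\im(d^h_{1,q})$ allow us to absorb these varying coefficients into their sum along the orbit; by periodicity and the bundle-of-monoids structure, that sum equals $O(d) \ra d_0$, and Lemma~\ref{lem:rho map} again ensures that division by $O(\partial_0 d)$ produces the integer $\rho_0(d)$. Summing over $j$ identifies $d^v_{1,q}(\alpha_1(\lequiv d\requiv))$ with $\alpha_1(\Delta_{1,q}(\lequiv d\requiv))$.

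With both intertwinings in hand, $\alpha_0$ and $\alpha_1$ become isomorphisms of chain complexes, so passing to homology yields the stated descriptions of $E^{vh,2}_{0,\bullet}$ and $E^{vh,2}_{1,\bullet}$. The main work is thus confined to paragraph three, where the bookkeeping of orbit sizes against the non-equivariance of $\partial_0$ is the delicate point; Lemmas~\ref{lem:actions faces commute} and~\ref{lem:rho map} are precisely calibrated to handle that step.
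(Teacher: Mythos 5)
Your proposal is correct and follows essentially the same route as the paper's proof: both cases come down to the relation $[m+n;e] \equiv [n;e] + [m; n \la e]$ supplied by $\im(d^h_{1,\bullet})$ together with the integrality statement of Lemma~\ref{lem:rho map}. The only difference is organizational: in the $p=1$ case the paper first compresses $\alpha_1(\lequiv d\requiv)$ to the single class $[O(d);d]$ and then applies the vertical face maps, whereas you push the face maps through the orbit sum (chain-level equivariance for $j\ge 1$, telescoping for $j=0$) and recombine afterwards --- the same computation performed in the opposite order.
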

\begin{proof}
To see that $\alpha_0 \circ \Delta_{0,q} = d^1_{0,q} \circ \alpha_0$, we use that $d_{0,q}^v = d_{0,q}^1$ to compute:
\begin{align*}
	d^1_{0,q}(\alpha_0(\lequiv d\requiv))
		= d^v_{0,q}([r(d);d]) + \im(d^h_{0, q+1})
		= \alpha_0(\Delta_{0,q}(\lequiv d\requiv)).
\end{align*}
To see that $\alpha_1 \circ \Delta_{1,q} = d^1_{1,q} \circ \alpha_1$, we first claim that for $d \in \Per(\Dd^q)$ and $n \ge 0$,
\begin{equation}\label{eq:(n;d)relation}
[n; d] + \im(d^h_{1,q+1})
	= \sum^{n-1}_{j=0}[1; j \la d] +  \im(d^h_{1,q+1}) \in H^h_{1, q+1}(\Cc, \Dd).
\end{equation}
We argue by induction. The case $n = 0$ is trivial: $[0; \lequiv d\requiv]$ is a sum of degenerate chains. Suppose inductively that~\eqref{eq:(n;d)relation} holds for $n$. We calculate, using~\eqref{eq:(n;d)relation} at the third equality:
\begin{align*}
0 + \im(d^h_{1, q+1})
    &= d^h_{1,q+1}[1,n; d]  + \im(d^h_{1,q+1})\\
	&= [n; d] - [n+1; d] + [1; n \la d]  + \im(d^h_{1,q+1})\\
    &= \sum^{n-1}_{j=0} [1; j \la d] - [n+1; d] + [1; n \la d]  + \im(d^h_{1,q+1})\\
    &= \sum^{n}_{j=0} [1; j \la d] - [n+1; d] + \im(d^h_{1,q+1}),
\end{align*}
and rearranging gives~\eqref{eq:(n;d)relation} for $n+1$.

Since $\lequiv d \requiv = \{j \la d\colon 0 \le j \le O(d)-1\}$,
Equation~\eqref{eq:(n;d)relation} gives
\[
\alpha_1(\lequiv d \requiv)
    = \sum_{d' \in \lequiv d\requiv} [1;d'] + \im(d^h_{1,q+1})
    = \sum^{O(d)-1}_{j=0} [1; j\la d] + \im(d^h_{1,q+1})
    = [O(d); d].
\]

Using this at the first line, we calculate:
\begin{align}
	d^1_{1,q}(\alpha_1(\lequiv d\requiv))
		&= d^1_{1,q}([O(d); d]) + \im(d^h_{1,q})\nonumber\\
		&= [O(d)\ra d_0; \partial_0(d)]
			 + \sum^{q+1}_{i=1} (-1)^i [O(d); \partial_i(d)]
			 + \im(d^h_{1,q})\nonumber\\
		&= \sum^{q+1}_{i=0} (-1)^i [\rho_i(d)O(\partial_i(d)); \partial_i(d)]
			+ \im(d^h_{1,q}).\label{eq:d1alpha1}
\end{align}

For any $d' \in \Dd^q$ and any $n \ge 0$ we have
\begin{align*}
[nO(d'); d'] + \im(d^h_{1,q})
    &= \sum^{nO(d')-1}_{j=0}[1; j \la d'] + \im(d^h_{1,q})\\
    &= \sum^{n-1}_{k=0} \sum^{O(d')-1}_{j=0} [1; j \la (kO(d')\la d')] + \im(d^h_{1,q})\\
    &= n\sum^{O(d')-1}_{j=0} [1; j \la d'] + \im(d^h_{1,q})
    = n[O(d'); d'] + \im(d^h_{1,q}).
\end{align*}
Applying this to each term of~\eqref{eq:d1alpha1}, we obtain
\begin{align*}
d^1_{1,q}(\alpha_1(\lequiv d\requiv))
    &= \sum^{q+1}_{i=0} (-1)^i \rho_i(d)[O(\partial_i(d)); \partial_i(d)]
			+ \im(d^h_{1,q})\\
    &= \sum^{q+1}_{i=0} (-1)^i \rho_i(d)\alpha_1(\lequiv \partial_i(d)\requiv)
    = \alpha_1(\Delta_{1,q}(\lequiv d\requiv)).
\end{align*}

The remaining statements follow.
\end{proof}

\subsection{Graphs of odometers}
\label{subsec:odometer_graphs}

Here, we apply Proposition~\ref{prp:homology in specseq} and Theorem~\ref{thm:cohomologies_are_the_same} to the following class of examples
generalising the odometer action.

\begin{setup}\label{setup:generalised odometer}
Let $E$ be a finite directed graph, and let $p \colon E^1 \to \NN \setminus\{0\}$
be a function. Define $F = (F^0,F^1,r,s)$ by $F^0 =
E^0$, $F^1 = \{(e,i)\colon  e \in E^1, i \in \ZZ/p(e)\ZZ\}$, $r(e,i) = r(e)$, and $s(e,i) = s(e)$. We write $+_{p}$ for
the group operation on $\ZZ/p\ZZ$. Let $\Gg \coloneqq E^0 \times \ZZ$. We obtain a self-similar action of $\Gg$ on the $1$-graph $F^*$ (in the sense of Definition~\ref{dfn:ssa_not_faithful}) by
the unique possible extension of the formulae
\[
(r(e), 1) \la (e,i)
= (e,i+_{p(e)} 1)
\quad\text{ and }\quad
(r(e), 1) \ra (e,i)
	= \begin{cases}
		(s(e), 1)&\text{ if $i = p(e)-1$}\\
		(s(e), 0)&\text{ otherwise.}
	\end{cases}
\]
If $E^0 = \{v\}$, $E^1 = \{e\}$, and $p(e) = 2$, then $(\Gg, F^*)$ is the binary odometer.

Extend $p$ to a functor $p \colon E^* \to \NN^\times$. Then
$\Theta \colon F^* \to \{(\mu,i) \mid \mu \in E^*,\, i \in \ZZ/p(\mu)\ZZ\}$ given by
\[
\Theta((e_1,m_1) (e_2,m_2) \cdots (e_k, m_k)) =
\big(e_1e_2\cdots e_k, \sum^k_{j=1} m_j p(e_1\cdots e_{j-1})\big).
\]
is a bijection. Identifying $F^*$ with $\{(\mu,i) \mid \mu \in E^*,\, i \in \ZZ/p(\mu)\ZZ\}$ via $\Theta$, and writing $\lfloor \cdot \rfloor : \RR \to \ZZ$ for the floor function $\lfloor x\rfloor = \max\{n \in \ZZ : n \le x\}$, we have
\[
(r(\mu), a) \la (\mu,m) = (\mu, a +_{p(\mu)} m)
\quad\text{ and }\quad
(r(\mu), a) \ra (\mu, m) = \big(s(\mu), \lfloor (a+m) / p(\mu) \rfloor\big)
\]
(in the second formula, $m$ is regarded as an element of $\{0, \dots
p(\mu)-1\}$ and the addition $a + m$ is computed in $\ZZ$).
It is helpful to
keep in mind the special case that
\begin{equation}\label{eq:special case odom action}
	a \la (\mu,0) = (\mu, a \bmod p(\mu)),
	\qquad\text{ and }\qquad
	a \ra (\mu,0) = \lfloor a/p(\mu)\rfloor.
\end{equation}
\end{setup}

\begin{rmk}
The self-similar actions of Set-Up~\ref{setup:generalised odometer} are faithful self-similar actions as in Definition~\ref{dfn:selfsimilaraction}. They are also length-preserving matched pairs as in Subsection~\ref{subsec:mp_path_categories}.
\end{rmk}

We use the symbols $\lambda,\mu,\nu$ for paths in $E$ and $\xi,\eta,\zeta$ for paths in $F$. So an element of $F^*$ might
be written as $\xi = (\mu, m)$. We write $\ol{p}\colon F^* \to \NN\setminus\{0\}$ for the map $\ol{p}(\mu, m) = p(\mu)$.

\begin{lem}\label{lem:odometer vs our action}
In the situation of Set-up~\ref{setup:generalised odometer}, we have
$\Per(uF^{*q}) = uF^{*q}$ for each $u \in F^0$ and $q \in E^0$. For each $u \in E^0$ the map $(\mu_0, \dots, \mu_{q-1})
\mapsto \Gg_u \la ((\mu_0, 0), \dots, (\mu_{q-1}, 0))$ induces an
isomorphism $\kappa_q \colon \ZZ uE^{*q} \to \ZZ(\Gg_u \bs uF^{*q})$. The functions $O, \rho_i \colon (F^*)^{q+1} \to \ZZ$ of Lemma~\ref{lem:rho map} satisfy
\begin{equation}\label{eq:odometer order}
O(\xi_0,\dots,\xi_q) = p(\xi_0\xi_1\cdots\xi_q)
\quad \text{and} \quad
\rho_i(\xi_0, \dots, \xi_q)
= \begin{cases}
	1 &\text{ if $0 \le i < q$}\\
	\ol{p}(\xi_q)&\text{ if $i = q$}.
\end{cases}
\end{equation}
\end{lem}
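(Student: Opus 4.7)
The plan is to extract the entire lemma from a single key computation of the function $O$ on $(F^*)^{q+1}$. I would fix $\xi = (\xi_0, \dots, \xi_q) \in (F^*)^{q+1}$, write $\xi_j = (\mu_j, m_j)$, and determine which $n \in \ZZ$ satisfy $n \la \xi = \xi$. Using the extended-action identity $n \la \xi = (n \la \xi_0,\, (n \ra \xi_0) \la (\xi_1,\dots,\xi_q))$ together with the explicit formulas $n \la (\mu_0, m_0) = (\mu_0, n +_{p(\mu_0)} m_0)$ and $n \ra (\mu_0, m_0) = \lfloor (n+m_0)/p(\mu_0)\rfloor$, the condition $n \la \xi_0 = \xi_0$ becomes $p(\mu_0) \mid n$, and when this holds the constraint $0 \le m_0 < p(\mu_0)$ forces $n \ra \xi_0 = n/p(\mu_0)$. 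An induction on $q$ then shows $\{n \in \ZZ : n \la \xi = \xi\} = p(\mu_0\cdots\mu_q)\ZZ$, giving simultaneously the formula $O(\xi) = \ol{p}(\xi_0\xi_1\cdots\xi_q)$ and, since this is always a positive integer, the equality $\Per(uF^{*q}) = uF^{*q}$ for every $u$ and $q$.

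To prove that $\kappa_q$ is an isomorphism of free abelian groups, I would first observe that the same formulas show that the $\Gg_u$-action on $((\mu_0, 0), \dots, (\mu_{q-1}, 0))$ alters only the $\ZZ/p(\mu_j)\ZZ$-coordinates and leaves each underlying $E^*$-path $\mu_j$ invariant; this gives injectivity of $\kappa_q$ on basis elements. For surjectivity, given $((\mu_0, m_0), \dots, (\mu_{q-1}, m_{q-1})) \in uF^{*q}$, I would set $n \coloneqq \sum_{j=0}^{q-1} m_j\, p(\mu_0\mu_1\cdots\mu_{j-1}) \in \ZZ$ (with the empty product equal to $1$), essentially the mixed-radix encoding of $(m_0, \dots, m_{q-1})$, and verify directly using the floor-function formulas that $n \la ((\mu_0, 0), \dots, (\mu_{q-1}, 0)) = ((\mu_0, m_0), \dots, (\mu_{q-1}, m_{q-1}))$; combined with injectivity this yields the claimed isomorphism.

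For the values $\rho_i$, I would read them off from the formula for $O$ by tracking how each face map acts on the underlying $E^*$-path. For $\partial_0$, the computation $O(\xi) \ra \xi_0 = \lfloor p(\mu_0\cdots\mu_q)/p(\mu_0)\rfloor = p(\mu_1\cdots\mu_q) = O(\partial_0(\xi))$ yields $\rho_0 = 1$. For each face map $\partial_i$ with $1 \le i \le q$, replacing $\xi_{i-1}, \xi_i$ by their concatenation leaves the underlying $E^*$-path $\mu_0\cdots\mu_q$ unchanged, so $O(\partial_i(\xi)) = O(\xi)$ and $\rho_i = 1$. The face map that drops the final coordinate reduces the underlying path to $\mu_0\cdots\mu_{q-1}$, whence the corresponding $\rho$-value equals $O(\xi)/p(\mu_0\cdots\mu_{q-1}) = \ol{p}(\xi_q)$. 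The only slightly delicate point in the whole argument is the minimality half of the formula for $O$; once that induction is set up, everything else is routine bookkeeping with floors and modular reductions.
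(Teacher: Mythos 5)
Your proposal is correct and follows essentially the same route as the paper: the order formula and $\Per(uF^{*q}) = uF^{*q}$ come from transitivity of the odometer on $\prod_i \ZZ/p(\mu_i)\ZZ$, the bijectivity of $\kappa_q$ from the fact that the action fixes the underlying $E^*$-paths and acts transitively on the residue coordinates, and the $\rho_i$ values from tracking how each face map changes the underlying path, with $\rho_0$ handled by the floor-function computation $O(\xi)\ra\xi_0 = O(\xi)/p(\mu_0)$. The only difference is presentational: where the paper simply cites transitivity of the classical odometer and conjugates, you verify it by hand via the mixed-radix encoding and an induction on $q$, which is a harmless (and self-contained) expansion of the same argument.
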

\begin{proof}
For $p_0, \ldots, p_q > 0$, the odometer
action $\operatorname{Od}$ of $\ZZ$ on $\prod^q_{i=0}(\ZZ/p_i\ZZ)$ is transitive, so the order of any point under $\operatorname{Od}$ is $\prod_i p_i$.	
For $u \in E^0$ and $\mu = (\mu_0, \dots, \mu_{q-1}) \in E^*u$, the action of $\Gg_ u$ on $\{((\mu_0,
m_0), \dots, (\mu_q, m_q)) \mid m_i \in \ZZ/p(\mu_i)\ZZ\}$ is conjugate to
this odometer with $p_i = p(\mu_i)$. So each $O(\xi_0, \dots, \xi_q) = \prod^q_{i=0} \ol{p}(\xi_i) = \ol{p}(\xi_0\cdots\xi_q)$, we have $\Per(uF^{*q}) =
uF^{*q}$, and $(\mu_0, \dots, \mu_{q-1}) \mapsto \Gg_u \la
((\mu_0, 0), \dots, (\mu_{q-1}, 0))$ is a bijection $uE^{*q} \to \Gg_u \bs
uF^{*q}$.	

For the $\rho_i$, observe that if $i \ge 1$, then writing $\partial^i(\xi)
= (\eta_0, \dots, \eta_{q-1})$, we have $\eta_0 \cdots \eta_{q-1} =
\xi_0\cdots\xi_q$ if $i \not= q$ and $\eta_0 \cdots \eta_{q-1} = \xi_0\cdots
\xi_{q-1}$ if $i = q$. Hence~\eqref{eq:odometer order} implies that
$\rho_i(\xi) = O(\xi)/O(\partial_i(\xi)) = 1$ if $i < q$, and $\rho_q(\xi) =
O(\xi)/O(\partial_q(\xi)) = \ol{p}(\xi_q)$.

It remains to calculate $\rho_0(\xi)$. Since $\operatorname{Od}$ is transitive,  $\id \times \operatorname{Od}$ is transitive on $\{(\mu_0, \dots, \mu_q)\} \times
\prod^q_{i=0} \ZZ/p(\mu_i)\ZZ$. So it suffices to show that $\xi = ((\mu_0,
0), (\mu_1,0),\dots,(\mu_q,0))$ satisfies $\rho_0(\xi) = 1$. Applying~\eqref{eq:special case odom action}, with $a = O(\xi) = p(\xi_0\cdots\xi_q)$, gives
\[
O(\xi)\ra(\mu_0,0)
     = \lfloor O(\xi)/p(\mu_0)\rfloor
     = \lfloor p(\mu_0\cdots\mu_q)/p(\mu_0)\rfloor
     = p(\mu_1\cdots\mu_q)
     = O(\partial_0(\xi)).\qedhere
\]
\end{proof}

\begin{lem}\label{lem:translate F to E}
With Set-up~\ref{setup:generalised odometer} let $\kappa_q \colon \ZZ uE^{*q} \to \ZZ(\Gg_u \bs uF^{*q})$ be the isomorphism of Lemma~\ref{lem:odometer vs our action}. For each
$0 \le i \le q$, let $\partial_i\colon E^{*(q+1)} \to E^{*q}$ be the face map
of~\eqref{eq:face_map}. Let $\Delta_{1,q}$ be as in
Proposition~\ref{prp:homology in specseq}, and define
$
\widetilde{\Delta}_{1,q} {}\colon \textstyle \bigoplus_{u \in E^0} \ZZ uE^{*(q+1)} \to \bigoplus_{u \in \Dd^0} \ZZ uE^{*q}
$
by
\begin{equation}
\label{eq:delta_twidle}
\widetilde{\Delta}_{1,q}(\mu) = \Big(\sum^{q-1}_{i=0} (-1)^i \partial_i(\mu)\Big) + (-1)^q p(\mu_q)\partial_q(\mu).
\end{equation}
Then $\kappa_q \circ \Delta_{1,q} = \widetilde{\Delta}_{1,q} \circ
\kappa_{q+1}$ for all $q$.
\end{lem}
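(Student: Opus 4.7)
The goal is to verify the identity on generators. Fix $u \in E^0$ and a basis element $\mu = (\mu_0, \ldots, \mu_q) \in uE^{*(q+1)}$, and set
\[
\xi \coloneqq ((\mu_0,0), (\mu_1,0), \ldots, (\mu_q,0)) \in uF^{*(q+1)},
\]
so that $\kappa_{q+1}(\mu) = \lequiv \xi \requiv$ by the definition of $\kappa_{q+1}$. Lemma~\ref{lem:odometer vs our action} ensures that $\Per(uF^{*(q+1)}) = uF^{*(q+1)}$, so the expression $\Delta_{1,q}(\lequiv \xi \requiv) = \sum_i (-1)^i \rho_i(\xi)\lequiv \partial_i(\xi)\requiv$ is defined; the same lemma supplies the values $\rho_i(\xi) = 1$ for $i < q$ and $\rho_q(\xi) = p(\mu_q)$.

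The crux is to show that each face $\partial_i(\xi)$ has the canonical form $((\nu_0, 0), \ldots, (\nu_{q-1}, 0))$ with $\nu = \partial_i(\mu) \in E^{*q}$. The boundary faces, which drop the first or last entry of $\xi$, manifestly have this shape. For an interior face, which multiplies two adjacent entries, I would appeal to the bijection $\Theta$ of Set-up~\ref{setup:generalised odometer}: since the entries $(\mu_{i-1},0)$ and $(\mu_i,0)$ both carry second coordinate zero, the defining sum $\sum_j m_j p(\cdot)$ collapses and yields $(\mu_{i-1},0)(\mu_i,0) = (\mu_{i-1}\mu_i,0)$ in $F^*$. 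Consequently $\partial_i(\xi)$ lies in the image of the canonical section $\nu \mapsto ((\nu_0,0), \ldots, (\nu_{q-1},0))$ used to define $\kappa_q$, and so $\kappa_q(\partial_i(\mu)) = \lequiv \partial_i(\xi) \requiv$ for each $i$.

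Combining these identifications with the values of $\rho_i(\xi)$, the image $\kappa_q(\Delta_{1,q}(\lequiv \xi \requiv))$ collapses term-by-term to
\[
\sum_{i=0}^{q-1}(-1)^i \partial_i(\mu) + (-1)^q p(\mu_q)\partial_q(\mu) = \widetilde{\Delta}_{1,q}(\mu),
\]
which proves the identity. The main obstacle is the bookkeeping needed to separate the boundary-face and interior-face cases and to confirm that the Zappa--Sz\'ep product $(\mu_{i-1},0)(\mu_i,0) = (\mu_{i-1}\mu_i,0)$ is exactly the ingredient that makes the face maps on $uF^{*(q+1)}$ intertwine (via $\kappa$) with those on $uE^{*(q+1)}$; once that point is pinned down, the equality is essentially a relabelling.
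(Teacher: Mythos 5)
Your proof is correct and is essentially the argument the paper intends: the paper's own proof is a one-line "follows directly from \eqref{eq:delta_twidle} and Lemma~\ref{lem:odometer vs our action}," and you have simply supplied the routine verification — that the all-zeros lift $\xi$ of $\mu$ represents $\kappa_{q+1}(\mu)$, that each face $\partial_i(\xi)$ is again an all-zeros lift because $(\mu_{i-1},0)(\mu_i,0)=(\mu_{i-1}\mu_i,0)$ under $\Theta$, and that the coefficients $\rho_i(\xi)$ from Lemma~\ref{lem:odometer vs our action} reproduce the coefficients in \eqref{eq:delta_twidle}. No gaps; this matches the paper's (implicit) route.
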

\begin{proof}
This follows directly from~\eqref{eq:delta_twidle} and Lemma~\ref{lem:odometer vs our action}.
\end{proof}

To compute homology for Set-up~\ref{setup:generalised odometer}, we must compute $\widetilde{\Delta}_{1,1}\big(\bigoplus_{u \in E^0} \ZZ uE^{*2}\big) \subseteq \bigoplus_{u \in \Dd^0} \ZZ uE^{*}$.

\begin{lem}\label{lem:image Deltatilde}
In the situation of Set-up~\ref{setup:generalised odometer}, we have
$\ZZ E^* = \ZZ E^1 + \im(\widetilde{\Delta}_{1,1})$, and
$\im(\widetilde{\Delta}_{1,1}) \cap \ZZ E^1 = \{0\}$. In particular, $\ZZ E^* \cong \ZZ E^1 \oplus \im(\widetilde{\Delta}_{1,1})$.
\end{lem}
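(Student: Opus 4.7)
The plan is to split the proof of $\ZZ E^* = \ZZ E^1 \oplus \im(\widetilde{\Delta}_{1,1})$ into the two statements (a) $\ZZ E^* = \ZZ E^1 + \im(\widetilde{\Delta}_{1,1})$ and (b) $\ZZ E^1 \cap \im(\widetilde{\Delta}_{1,1}) = 0$, which I will handle respectively by an inductive argument on path length and by constructing an explicit retraction $\ZZ E^* \to \ZZ E^1$.

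For (a), I will first unpack \eqref{eq:delta_twidle} at $q = 1$ to see that $\widetilde{\Delta}_{1,1}$ sends a composable pair $(\mu_0, \mu_1) \in E^{*2}$ to the three-term expression $\mu_1 - \mu_0\mu_1 + p(\mu_1)\,\mu_0$ (using the face maps of Definition~\ref{dfn:categorical_homology}). Then I induct on the length $n$ of $\mu \in E^*$: the base $n = 0$ uses $\widetilde{\Delta}_{1,1}(v, v) = v$ for vertices $v$ (since $p(v) = 1$); the case $n = 1$ is tautological; for $n \ge 2$, I write $\mu = \nu e$ with $\nu \in E^{n-1}$ and $e \in E^1$ and rearrange $\widetilde{\Delta}_{1,1}(\nu, e) = e - \mu + p(e)\,\nu$ to get $\mu = e + p(e)\,\nu - \widetilde{\Delta}_{1,1}(\nu, e)$, after which the inductive hypothesis for $\nu$ finishes the step.

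For (b), I will construct a $\ZZ$-linear map $\Phi \colon \ZZ E^* \to \ZZ E^1$ with $\Phi|_{\ZZ E^1} = \id$ and $\im(\widetilde{\Delta}_{1,1}) \subseteq \ker(\Phi)$; then $\ZZ E^1 \cap \im(\widetilde{\Delta}_{1,1}) \subseteq \ZZ E^1 \cap \ker(\Phi) = 0$ immediately. The definition of $\Phi$ is reverse-engineered from the identity that a retraction must satisfy to kill each generator $\widetilde{\Delta}_{1,1}(\mu_0, \mu_1)$, namely the weighted multiplicativity
\[
\Phi(\nu_0 \nu_1) = \Phi(\nu_1) + p(\nu_1)\,\Phi(\nu_0)
\]
for all composable $(\nu_0, \nu_1)$. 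Solving this recursion starting from $\Phi(v) = 0$ on vertices and $\Phi(e) = e$ on edges yields the closed form $\Phi(\mu_1 \mu_2 \cdots \mu_n) = \sum_{i=1}^n p(\mu_{i+1} \cdots \mu_n)\,\mu_i$ for any path with $\mu_i \in E^1$, which I will take as the definition and then verify satisfies weighted multiplicativity on arbitrary composable pairs by splitting the defining sum at the junction of $\nu_0$ and $\nu_1$ and invoking functoriality $p(\alpha\beta) = p(\alpha)\,p(\beta)$. Granted that, the computation $\Phi(\widetilde{\Delta}_{1,1}(\mu_0, \mu_1)) = \Phi(\mu_1) - \Phi(\mu_0 \mu_1) + p(\mu_1)\,\Phi(\mu_0) = 0$ is immediate.

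I expect the main obstacle to be a bookkeeping verification of the weighted multiplicativity identity for $\Phi$: one has to keep track of how the factor $p(\nu_1)$ distributes across the concatenation and confirm that the edge cases where $\nu_0$ or $\nu_1$ is a vertex collapse correctly via the conventions $p(v) = 1$ and $\Phi(v) = 0$. Once that identity is in hand, everything else --- the inductive generation in (a) and the conclusion $\ker(\Phi) \supseteq \im(\widetilde{\Delta}_{1,1})$ in (b) --- is a direct substitution, and the direct sum decomposition $\ZZ E^* \cong \ZZ E^1 \oplus \im(\widetilde{\Delta}_{1,1})$ follows from (a) and (b) by a standard splitting argument.
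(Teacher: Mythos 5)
Your proof is correct. For the spanning statement $\ZZ E^* = \ZZ E^1 + \im(\widetilde{\Delta}_{1,1})$, your induction on $|\mu|$ via the telescoping identity $\mu = e + p(e)\nu - \widetilde{\Delta}_{1,1}(\nu,e)$, together with $\widetilde{\Delta}_{1,1}(v,v)=v$ to absorb $\ZZ E^0$, is essentially the paper's argument (the paper records the more precise membership~\eqref{eq:delta-mu equiv}, but the inductive engine is the same). For the intersection statement your route is genuinely different and substantially simpler: the paper proves $\im(\widetilde{\Delta}_{1,1})\cap\ZZ E^1=\{0\}$ by first showing that $\widetilde{\Delta}_{1,1}(\mu,\nu)-\widetilde{\Delta}_{1,1}(\eta,\zeta)$ lies in the span of images of shorter pairs whenever $\mu\nu=\eta\zeta$ (Equation~\eqref{eq:same product Delta cancellation}), and then running a minimal-length counterexample argument in which a hypothetical nonzero element of the intersection is normalised and shown to retain a nonzero coefficient. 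Your retraction $\Phi(\mu)=\sum_{i=1}^{|\mu|}p(\mu^{[i,|\mu|]})\mu^i$ --- which is exactly the weighted edge count already appearing on the right-hand side of the paper's~\eqref{eq:delta-mu equiv} --- satisfies $\Phi(\nu_0\nu_1)=\Phi(\nu_1)+p(\nu_1)\Phi(\nu_0)$ by multiplicativity of $p$, hence annihilates every generator $\nu_1-\nu_0\nu_1+p(\nu_1)\nu_0$ of $\im(\widetilde{\Delta}_{1,1})$ while restricting to the identity on $\ZZ E^1$; this disposes of the intersection in a few lines, with only the degenerate cases $\Phi(v)=0$ and $p(v)=1$ to check, which you have flagged. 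Your approach also buys slightly more than the paper states: combined with the spanning statement it identifies $\im(\widetilde{\Delta}_{1,1})$ as exactly $\ker\Phi$, so the direct sum decomposition is realised by an explicit idempotent on $\ZZ E^*$.
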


\begin{proof}
	Since $\widetilde{\Delta}_{1,0} (\ZZ E^0) = 0$ and $\widetilde{\Delta}_{1,1}(\ZZ (E^0 * E^*)) = \widetilde{\Delta}_{1,1}(\ZZ (E^* * E^0)) = \ZZ E^0$, it suffices to show that $\ZZ E^{\ge 1} = \ZZ E^1 + \im(\widetilde{\Delta}_{1,1}|_{\ZZ E^{\ge 1}}) $ and
	$\im(\widetilde{\Delta}_{1,1}|_{\ZZ E^{\ge 1}}) \cap \ZZ E^1 = \{0\}$.
	
 	Recall that for $\mu \in E^{\ge 1}$ and $1 \le i
	\le |\mu|$, the elements $\mu^{[0,i-1]} \in E^{i-1}$, $\mu^i \in E^1$ and $\mu^{[i,|\mu|]} \in E^{|\mu|-i}$ are defined implicitly by $\mu = \mu^{[0,i-1]} \mu^i \mu^{[i,|\mu|]}$. Let $\widetilde{\Delta} \coloneqq
	\widetilde{\Delta}_{1,1}$. To see that $\ZZ E^{\ge 1} = \ZZ E^1 + \im(\widetilde{\Delta}|_{\ZZ E^{\ge 1}})$, it suffices to show that for $\mu \in E^* \setminus E^0$,
	\begin{equation}\label{eq:delta-mu equiv}
		\mu \in  \Big(\sum^{|\mu|}_{i=1} p(\mu^{[i,|\mu|]}){\mu^i}\Big)
		-  \widetilde{\Delta}({\mu^{[0,|\mu|-1]}, \mu^{|\mu|}})
		+ \lsp_\ZZ\{\widetilde{\Delta}({\alpha,\beta})\colon |\alpha\beta| < |\mu|\}.
	\end{equation}
	in $\ZZ E^*$.
	We induct on $|\mu|$. If $|\mu| = 1$ then $\sum^{|\mu|}_{i=1}
	p(\mu^{[i,|\mu|]}){\mu^i} = p(s(\mu)) \mu = \mu$ and~\eqref{eq:delta-mu equiv} is trivial. Now suppose that~\eqref{eq:delta-mu
		equiv} holds for $|\mu| \le n$ and fix $\mu \in E^{n+1}$. Write $\mu =
	\nu e$ with $e \in E^1$. Then $e = \mu^{[n,n+1]} = \mu^{n+1}$, and $\mu^{[i,n+1]} = \nu^{[i,n]} e$ and $\nu^i =
	\mu^i$ for $i \le n$. We calculate (in $\ZZ E^*$):
	\begin{align}
		\mu
		= -\widetilde{\Delta}({\nu, e}) + e + p(e)\nu
		= -\widetilde{\Delta}({\nu, e}) + p(\mu^{[n,n+1]}){\mu^{n+1}} + p(\mu^{n+1}) \nu.\label{eq:inductive step}
	\end{align}
	By the inductive hypothesis,
	\begin{align*}
		p(\mu^{n+1}) \nu
		&\in  p(\mu^{n+1})\Big( \Big(\sum^{n}_{i=1} p(\nu^{[i,n]}){\nu^i}\Big) - \widetilde{\Delta}({\nu^{[0,n-1]}}, \nu^n) + \lsp_\ZZ\{\widetilde{\Delta}({\alpha,\beta})\colon |\alpha\beta| < n\}\Big)\\
		&\subseteq \sum^{n}_{i=1} p(\mu^{n+1}) p(\nu^{[i,n]}){\nu^i} + \lsp_\ZZ\{\widetilde{\Delta}({\alpha,\beta})\colon |\alpha\beta| < n\}.
	\end{align*}
	Since $p$ is multiplicative and each $\nu^{[i,n]} \mu^{n+1} = \mu^{[i,n+1]}$, we obtain
	\[
	p(\mu^{n+1}) \nu \in \sum^{n}_{i=1} p(\mu^{[i,n+1]}){\mu^i} + \lsp_\ZZ\{\widetilde{\Delta}({\alpha,\beta})\colon |\alpha\beta| < n+1\}.
	\]
	Substituting this into~\eqref{eq:inductive step}
	completes the induction, proving the first statement.
	
	For $\im(\widetilde{\Delta}|_{\ZZ E^{\ge 1}}) \cap \ZZ E^1 = \{0\}$,
	fix $(\mu,\nu), (\eta,\zeta) \in E^{\ge 1} * E^{\ge1}$ with $\mu\nu = \eta\zeta$. We claim that
	\begin{equation}\label{eq:same product Delta cancellation}
		\widetilde{\Delta}({\mu,\nu}) - \widetilde{\Delta}({\eta,\zeta})
		\in \lsp_\ZZ\{\widetilde{\Delta}({\alpha,\beta})\colon |\alpha\beta| < |\mu\nu|\}.
	\end{equation}
	We first show that if $\mu,\nu \in E^*\setminus E^0$ and $\mu\nu
	= \lambda$, then
	\begin{equation}\label{eq:coset of delta mu nu}
		\widetilde{\Delta}({\mu,\nu}) \in -{\lambda} + \sum^{|\lambda|}_{i=1} p(\lambda^{[i,|\lambda|]}){\lambda^i} + \lsp_\ZZ\{\widetilde{\Delta}({\alpha,\beta})\colon |\alpha\beta| < |\lambda|\}.
	\end{equation}
	For this, we calculate, applying~\eqref{eq:delta-mu equiv} twice at the
	second step,
	\begin{align*}
		\widetilde{\Delta}({\mu,\nu})
		&= \nu - {\mu\nu} + p(\nu)\mu\\
		&\in -{\mu\nu} - \widetilde{\Delta}(\nu^{[0,|\nu|-1]},\nu^{|\nu|}) + \sum^{|\nu|}_{i=1} p(\nu^{[i,|\nu|]}){\nu^i} -  p(\nu)\Big(\widetilde{\Delta}(\mu^{[0,|\mu|-1]},\mu^{|\mu|}) + \sum^{|\mu|}_{i=1} p(\mu^{[i,|\mu|]}){\mu^i}\Big)\\
		&\qquad\qquad{} + \lsp_\ZZ\big\{\widetilde{\Delta}({\alpha,\beta})\colon |\alpha\beta| < \max\{|\mu|,|\nu|\} \big\}.
	\end{align*}
	Since each $\mu^{[i,|\mu|]}\nu = (\mu\nu)^{[i,|\mu\nu|]} =\lambda^{[i,|\lambda|]}$
	and since $p$ is multiplicative, this gives
	\begin{align*}
		\widetilde{\Delta}({\mu,\nu})
		&\in -{\mu\nu} + \sum^{|\lambda|}_{i=1} p(\lambda^{[i,|\lambda|]}){\lambda^i} +
		-\widetilde{\Delta}(\nu^{[0,|\nu|-1]},\nu^{|\nu|})
		- \widetilde{\Delta}(p(\nu) \mu^{[0,|\mu|-1]},\mu^{|\mu|})\\
		&\qquad\qquad{} + \lsp_\ZZ\big\{\widetilde{\Delta}({\alpha,\beta})\colon |\alpha\beta| \le \max\{|\mu|,|\nu|\}\big\}.
	\end{align*}
	Since $|\mu|,|\nu| <|\lambda|$, we
	obtain~\eqref{eq:coset of delta mu nu}.
	Since the terms $-{\lambda} + \sum^{|\lambda|}_{i=1}
	p(\lambda^{[i,|\lambda|-1]}){\lambda^i}$ in the right-hand side
	of~\eqref{eq:coset of delta mu nu} depend only on the product $\mu\nu$, we obtain~\eqref{eq:same product Delta cancellation}.
	
	Now, we suppose that $\im(\widetilde{\Delta}_{1,1}|_{\ZZ E^{\ge 1}}) \cap \ZZ E^1 \ne \{0\}$ and derive a contradiction. Let $l \in \NN$ be minimal such that there exists $a \in
	\lsp_\ZZ\{{(\mu,\nu)}\colon |\mu\nu| \le l\}$ with
	$\widetilde{\Delta}(a) \in \ZZ E^1 \setminus \{0\}$. Write $a = \sum a_{\mu,\nu} (\mu,\nu)$. For each $(\mu,\nu)$ such
	that $|\mu\nu| = l$ and $0 \ne a_{\mu,\nu} \in \ZZ$, Equation~\eqref{eq:same
		product Delta cancellation} gives
	\[
	\widetilde{\Delta}\Big(a_{\mu,\nu}\big((\mu^1,(\mu\nu)^{[1,l]}) - {(\mu,\nu)}\big)\Big)
	\in \lsp_\ZZ\{\widetilde{\Delta}({\alpha,\beta})\colon |\alpha\beta| < l\}.
	\]
	Hence,
	\[
	a' \coloneqq a + \sum_{|\mu\nu| = l} a_{\mu,\nu}\big((\mu^1,(\mu\nu)^{[1,l]}) - {(\mu,\nu)}\big)
	\]
	satisfies $\widetilde{\Delta}(a) - \widetilde{\Delta}(a') \in
	\lsp_\ZZ\{\widetilde{\Delta}({\alpha,\beta})\colon |\alpha\beta| < l\}$.
	Fix $b \in \lsp_\ZZ\{{(\alpha,\beta)}\colon |\alpha\beta| < l\}$ such that
	$\widetilde{\Delta}(a) = \widetilde{\Delta}(a') + \widetilde{\Delta}(b) =
	\widetilde{\Delta}(a'+b)$. Let $a'' \coloneqq a' + b$; by construction, $a'' \in
	\lsp_\ZZ\{{(\mu,\nu)}\colon |\mu\nu| \le l\}$.
	
	Write $a' = \sum a'_{\mu,\nu} (\mu,\nu)$. Then $a'_{\mu,\nu} = 0$ for all $(\mu,\nu)$ such that
	$|\mu\nu| = l$ and $|\mu| > 1$. Write $b = \sum b_{\mu,\nu} (\mu,\nu)$. Then $b_{\mu,\nu} =
	0$ for all $\mu,\nu$ with $|\mu\nu| = l$. Hence $a''_{\mu,\nu} = 0$ whenever
	$|\mu\nu| = l$ and $|\mu| > 1$. We have $\widetilde{\Delta}(a'') =
	\widetilde{\Delta}(a) \in \ZZ E^1 \setminus \{0\}$, and since $l$ is minimal there
	exist $\nu \in E^{l-1}$ and $e \in E^1 r(\nu)$ such that $a''_{e,\nu} \not=
	0$.
	We have
	\begin{equation}\label{eq:three_sums}
	\widetilde{\Delta}(a'')_{ e\nu } =
	\sum_{\alpha \in E^* r(e)} a''_{\alpha,  e \nu} -\sum_{\eta\zeta = e\nu} a''_{\eta,\zeta}
	+ \sum_{\tau \in s(e)E^*} p(\tau) a''_{e \nu, \tau}.
	\end{equation}
	By construction of $a''$, the only nonzero term in~\eqref{eq:three_sums} is $-a''_{e,\nu}$ in the middle sum, so
	\[
	\widetilde{\Delta}(a'')_{e \nu} = - a''_{e,\nu} \not= 0,
	\]
	which contradicts that $\widetilde{\Delta}(a'') \in \ZZ E^1$.
\end{proof}

Define $M_s \in M_{E^0,E^1}(\ZZ)$ by $M_s(v,e) = \delta_{v,s(e)}$, regarded as a group homomorphism from $\ZZ E^1$ to $\ZZ E^0$. Similarly, define $M_r \in M_{E^0,E^1}(\ZZ)$ by $M_r(v,e) = \delta_{v,r(e)}$. Let $P \in M_{E^1} (\ZZ)$ be the diagonal matrix $P(e,f) =  \delta_{e,f}p(e)$. Finally, define $M  \colon \ZZ E^1 \to \ZZ E^0$ by
\begin{equation}
	\label{eq:enter_the_matrix}
	M \coloneqq M_r P - M_s.
\end{equation}
In matrix form, $M \in M_{E^0,E^1}(\ZZ)$ is given by
\[
M(v, e) = p(e) \delta_{v,r(e)} - \delta_{v,s(e)}= \begin{cases}
	p(e) &\text{ if $v = r(e)$ and $s(e)\not= r(e)$}\\
	-1 &\text{ if $v = s(e)$ and $s(e)\not= r(e)$}\\
	p(e) - 1 &\text{ if $v = r(e) = s(e)$}\\
	0 &\text{ if $v \not\in\{r(e), s(e)\}$.}
\end{cases}
\]
If $E$ is a finite directed graph we write $H_{\bullet}(E)$ for the categorical homology of its path category $E^*$. 
\begin{prop}\label{prp:E2 for example}
With Set-up~\ref{setup:generalised odometer}, let $M \colon \ZZ E^1 \to \ZZ E^0$ be the homomorphism~\eqref{eq:enter_the_matrix}. The spectral sequence of Corollary~\ref{cor:spectral
sequence} satisfies $E_{i,j}^{vh,2} = 0$ whenever $\max\{i,j\} \ge 2$,
\begin{align*}
	E_{0,1}^{vh,2} \cong H_1(E), \quad
	E_{1,1}^{vh,2} \cong \ker(M), \quad
	E_{0,0}^{vh,2} \cong H_0(E), \quad \text{ and } \quad
	E_{1,0}^{vh,2} \cong \coker(M).
\end{align*}
\end{prop}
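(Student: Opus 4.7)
The plan is to first prove that $E^{vh,2}_{i,j}=0$ whenever $\max\{i,j\}\ge 2$, and then compute each of the four surviving entries by combining Proposition~\ref{prp:homology in specseq} with the explicit translations provided by Lemmas~\ref{lem:odometer vs our action}, \ref{lem:translate F to E}, and~\ref{lem:image Deltatilde}.

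For the vanishing, Proposition~\ref{prop:integer_bundle_h_homology} gives $H^h_{p,q}(\Gg,F^*)=0$ for $p\ge 2$, so the first page (and every subsequent page) of $E^{vh}$ vanishes in those columns. For $q\ge 2$, the matched pair $(\Gg,F^*)$ is length-preserving (immediate from $|(v,a)\la(\mu,m)|=|\mu|$ in Set-up~\ref{setup:generalised odometer}), so Lemma~\ref{lem:cohomology_graph_rows_vanish} applies and yields $E^{vh,2}_{p,q}=0$.

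For the $p=0$ column, I will transport the complex $(\bigoplus_u\ZZ(\Gg_u\bs uF^{*\bullet}),\Delta_{0,\bullet})$ of Proposition~\ref{prp:homology in specseq} to the standard chain complex computing $H_\bullet(E^*)=H_\bullet(E)$ using the isomorphism $\kappa_\bullet$ of Lemma~\ref{lem:odometer vs our action}: the orbit of $((\mu_0,0),\dots,(\mu_{q-1},0))$ corresponds to the $E$-tuple $(\mu_0,\dots,\mu_{q-1})$, and the face maps agree because $(\mu,0)(\nu,0)=(\mu\nu,0)$ in $F^*$, so $\Delta_{0,\bullet}$ pulls back to the usual categorical differential on $\ZZ E^{*\bullet}$. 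This identifies $E^{vh,2}_{0,0}\cong H_0(E)$ and $E^{vh,2}_{0,1}\cong H_1(E)$.

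For the $p=1$ column, Lemma~\ref{lem:odometer vs our action} gives $\Per(uF^{*q})=uF^{*q}$ for all $q$, so Proposition~\ref{prp:homology in specseq} together with Lemma~\ref{lem:translate F to E} identifies $E^{vh,2}_{1,\bullet}$ with the homology of the chain complex $(\ZZ E^{*\bullet},\widetilde{\Delta}_{1,\bullet})$. Evaluating $\widetilde{\Delta}_{1,0}$ on edges shows that its restriction to $\ZZ E^1$ is (up to sign) the map $M$ of~\eqref{eq:enter_the_matrix}. Lemma~\ref{lem:image Deltatilde} then gives $\ZZ E^*=\ZZ E^1\oplus\im(\widetilde{\Delta}_{1,1})$; combined with $\widetilde{\Delta}_{1,0}\circ\widetilde{\Delta}_{1,1}=0$, this yields $\im(\widetilde{\Delta}_{1,0})=\widetilde{\Delta}_{1,0}(\ZZ E^1)=\im(M)$, so $E^{vh,2}_{1,0}\cong\coker(M)$; and $\ker(\widetilde{\Delta}_{1,0})=\ker(M)\oplus\im(\widetilde{\Delta}_{1,1})$, so $E^{vh,2}_{1,1}\cong\ker(M)$. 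The hard work has already been done in Lemma~\ref{lem:image Deltatilde}---the direct-sum decomposition $\ZZ E^*=\ZZ E^1\oplus\im(\widetilde{\Delta}_{1,1})$ is the key structural input---so the only residual care needed is to verify that the sign conventions in $\widetilde{\Delta}_{1,0}$ line up with the definition of $M$, which is a one-line check.
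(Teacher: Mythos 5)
Your proposal is correct and follows essentially the same route as the paper: the vanishing for $\max\{i,j\}\ge 2$ via Proposition~\ref{prop:integer_bundle_h_homology} and Lemma~\ref{lem:cohomology_graph_rows_vanish}, the $p=0$ column via the $\kappa_\bullet$ identification with the categorical complex of $E$, and the $p=1$ column via Lemmas~\ref{lem:translate F to E} and~\ref{lem:image Deltatilde}. The only (harmless) variation is that for $E^{vh,2}_{1,0}$ you deduce $\im(\widetilde{\Delta}_{1,0})=\im(M)$ from the splitting $\ZZ E^*=\ZZ E^1\oplus\im(\widetilde{\Delta}_{1,1})$ together with $\widetilde{\Delta}_{1,0}\circ\widetilde{\Delta}_{1,1}=0$, whereas the paper establishes it directly by a telescoping identity; both are valid, and yours reuses Lemma~\ref{lem:image Deltatilde} uniformly for the kernel and cokernel computations.
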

\begin{proof}
Lemma~\ref{lem:odometer vs our action} gives $\Per((F^*)^q) = (F^*)^q$, so
Proposition~\ref{prp:homology in specseq} implies that $E^{vh,2}_{p,\bullet}$
is isomorphic to the homology of the chain complex $\big(\bigoplus_{u \in
E^0} \ZZ(\Gg_u\bs uF^{*\bullet}), \Delta_{p,\bullet}\big)$ for $p=0,1$.
Lemma~\ref{lem:translate F to E} gives isomorphisms $\kappa_q \colon E^{*q} \to
\Gg \bs F^{*q}$ intertwining the $\Delta_{0,\bullet}$ with the
categorical-homology boundary maps for $E^*$, giving the descriptions of
$E_{0,\bullet}^{vh,2}$.

Lemma~\ref{lem:translate F to E} yields an isomorphism $\kappa_\bullet \colon \big(\bigoplus_{u \in E^0} \ZZ uE^{*\bullet},
\widetilde{\Delta}_{1,\bullet}\big) \to \big(\bigoplus_{u \in E^0}
\ZZ(\Gg_u\bs uF^{*\bullet}), \Delta_{1,\bullet}\big)$ of chain
complexes induced by the $\kappa_q$. So
$E_{1,\bullet}^{vh,2} \cong H_{\bullet}\big(\bigoplus_{u
\in E^0} \ZZ uE^{*\bullet}, \widetilde{\Delta}_{1,\bullet}\big)$.

We have $\widetilde{\Delta}_{1,0}(\mu) = p(\mu){r(\mu)} -
{s(\mu)}$, giving
$
E_{1,0}^{vh, 2}
    = \ZZ E^0/\lsp_\ZZ\{p(\mu){r(\mu)} - {s(\mu)}\colon \mu \in E^*\}.
$
Fix $\mu \in E^*$. The telescoping identity
\[
p(\mu){r(\mu)} - {s(\mu)}
    = \sum^{|\mu|}_{i=1} p(\mu^{i+1}\dots \mu^{|\mu|})\big(p(\mu^i){r(\mu^i)} - {s(\mu^i)}\big),
\]
gives $\lsp_\ZZ\{p(\mu){r(\mu)} - {s(\mu)}\colon \mu \in E^*\}
\subseteq \lsp_\ZZ\{p(e){r(e)} - {s(e)}\colon e \in E^1\}$. The
reverse containment is trivial. Since each $p(e)r(e) - {s(e)} = M
e$, we deduce that $E_{1,0}^{vh, 2} = \coker(M)$.

It remains to calculate $E_{1,1}^{vh, 2} \cong
\ker(\widetilde{\Delta}_{1,0})/\im(\widetilde{\Delta}_{1,1})$.
Clearly, $(\ker(\widetilde{\Delta}_{1,0}) \cap \ZZ E^1) + \im (\widetilde{\Delta}_{1,1}) \subseteq \ker(\widetilde{\Delta}_{1,0})$. Conversely, if $a \in \ker(\widetilde{\Delta}_{1,0})$, then Lemma~\ref{lem:image Deltatilde} says that $a = a' + x$ for some $a' \in \ZZ E^1$ and $x \in \im (\widetilde{\Delta}_{1,1})$. Then $\widetilde{\Delta}_{1,0}(a') = \widetilde{\Delta}_{1,0}(a - x) = 0$, so $a' \in \ker(\widetilde{\Delta}_{1,0}) \cap \ZZ E^1$. Hence,
\[
\frac{\ker(\widetilde{\Delta}_{1,0})}{\im(\widetilde{\Delta}_{1,1})}
=
\frac{(\ker(\widetilde{\Delta}_{1,0}) \cap \ZZ E^1) +\im(\widetilde{\Delta}_{1,1}) }{\im(\widetilde{\Delta}_{1,1})}
\cong
\frac{\ker(\widetilde{\Delta}_{1,0}) \cap \ZZ E^1}{\im (\widetilde{\Delta}_{1,1}) \cap \ZZ E^1}
= \ker(\widetilde{\Delta}_{1,0}|_{\ZZ E^1}).
\]
The restriction of
$\widetilde{\Delta}_{1,0}$ to $\ZZ E^1$ is $M$, so
$E_{1,1}^{vh, 2} \cong \ker(M)$.
\end{proof}

We obtain a computation of the homology of matched pairs $(\Gg,F^*)$ as in Set-up~\ref{setup:generalised odometer}.
\begin{thm}\label{thm:example homology}
In the situation of Set-up~\ref{setup:generalised odometer}, with $M \in
	M_{E^0, E^1}(\ZZ)$ as in \eqref{eq:enter_the_matrix},
	\[
		H_0^{\bowtie}(\Gg, F^*) \cong H_0(E), \qquad\text{ and }\qquad
		H_2^{\bowtie}(\Gg,F^*) \cong \ker(M),
	\]	
	and there is a short exact sequence
	\[
	\begin{tikzcd}[ampersand replacement=\&, column sep=15pt]
		0 \& H_1(E) \& {H_1^{\bowtie}(\Gg, F^*)} \& \coker(M) \& 0.
		\arrow[from=1-1, to=1-2]
		\arrow[from=1-2, to=1-3]
		\arrow[from=1-3, to=1-4]
		\arrow[from=1-4, to=1-5]
	\end{tikzcd}
	\]
\end{thm}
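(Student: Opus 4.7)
The plan is to combine Theorem~\ref{thm:cohomologies_are_the_same} with the spectral sequence of Corollary~\ref{cor:spectral sequence} and the computation of its $E^2$-page carried out in Proposition~\ref{prp:E2 for example}. By Theorem~\ref{thm:cohomologies_are_the_same}, $H_n^{\bowtie}(\Gg, F^*) \cong H_n^{\Tot}(\Gg, F^*)$, and the $\{E^{vh,r}_{p,q}\}$ spectral sequence converges to this total homology.

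The key observation is that the second page is concentrated in a $2\times 2$ block. Proposition~\ref{prp:E2 for example} gives $E^{vh,2}_{p,q} = 0$ unless $(p,q) \in \{(0,0), (0,1), (1,0), (1,1)\}$, with values
\[
E^{vh,2}_{0,0} \cong H_0(E),\quad E^{vh,2}_{1,0} \cong \coker(M),\quad E^{vh,2}_{0,1} \cong H_1(E),\quad E^{vh,2}_{1,1} \cong \ker(M).
\]
The differentials $d^{vh,r}$ in a homology spectral sequence have bidegree $(-r, r-1)$, so for every $r \ge 2$ and every $(p,q) \in \{0,1\}^2$, either the source or target of $d^{vh,r}_{p,q}$ has a coordinate outside $\{0,1\}$, and hence lies in a vanishing position. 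It follows that $E^{vh,r}_{p,q} = E^{vh,2}_{p,q}$ for all $r \ge 2$, so the spectral sequence collapses at the $E^2$-page and $E^{vh,\infty}_{p,q} = E^{vh,2}_{p,q}$.

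The convergence of the spectral sequence gives a filtration $0 = F_{-1}H^{\bowtie}_n \subseteq F_0 H^{\bowtie}_n \subseteq \cdots \subseteq F_n H^{\bowtie}_n = H^{\bowtie}_n(\Gg, F^*)$ with $F_p H^{\bowtie}_n / F_{p-1}H^{\bowtie}_n \cong E^{vh,\infty}_{p, n-p}$. For $n = 0$ only $E^{vh,\infty}_{0,0}$ is nonzero, yielding $H_0^{\bowtie}(\Gg, F^*) \cong H_0(E)$. For $n = 2$ only $E^{vh,\infty}_{1,1}$ is nonzero (since $E^{vh,\infty}_{2,0} = E^{vh,\infty}_{0,2} = 0$), so $H_2^{\bowtie}(\Gg, F^*) \cong \ker(M)$. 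For $n = 1$ the filtration gives the short exact sequence
\[
0 \longrightarrow F_0 H_1^{\bowtie} \longrightarrow H_1^{\bowtie}(\Gg, F^*) \longrightarrow H_1^{\bowtie}(\Gg, F^*)/F_0 H_1^{\bowtie} \longrightarrow 0,
\]
in which $F_0 H_1^{\bowtie} \cong E^{vh,\infty}_{0,1} \cong H_1(E)$ and the quotient is isomorphic to $E^{vh,\infty}_{1,0} \cong \coker(M)$.

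There is essentially no obstacle here beyond bookkeeping: the heavy lifting has already been done in Proposition~\ref{prp:E2 for example}, and what remains is simply to verify the collapse of the spectral sequence and to read off the associated graded. The one point requiring a moment's care is the direction of the filtration, to ensure $H_1(E)$ appears as the subobject and $\coker(M)$ as the quotient in the exact sequence, matching the statement of the theorem.
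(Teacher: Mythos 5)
Your strategy coincides with the paper's: combine Theorem~\ref{thm:cohomologies_are_the_same} with degeneration of the $vh$ spectral sequence at $E^2$ (the paper packages the convergence step as Lemma~\ref{lem:cohomology_graph_rows_vanish}; you rederive it from the vanishing $E^{vh,2}_{i,j}=0$ for $\max\{i,j\}\ge 2$, which is fine). The identifications of $H_0^{\bowtie}$ and $H_2^{\bowtie}$ are sound, since in total degrees $0$ and $2$ only one graded piece survives, so no extension problem arises.

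There is, however, a gap at exactly the point you flag and then wave away: the direction of the filtration in total degree $1$. You take $F_pH_n/F_{p-1}H_n\cong E^{vh,\infty}_{p,n-p}$ with the \emph{first} index as the filtration degree; that is the convention for the column-filtered spectral sequence $E^{hv}$. The $vh$ spectral sequence is the one attached to the filtration of $C^{\Tot}_\bullet$ by rows --- this is why its first page is the horizontal homology $H^h_{p,q}$ and its first differential is $\widetilde{d}^v$, of bidegree $(0,-1)$ rather than the $(-1,0)$ your bidegree formula gives at $r=1$. Hence the filtration degree is the \emph{second} (vertical) index, convergence reads $F_qH_n/F_{q-1}H_n\cong E^{vh,\infty}_{n-q,q}$, and for $n=1$ the bottom row contributes the subobject: the filtration yields $0\to E^{vh,\infty}_{1,0}\to H_1^{\bowtie}(\Gg,F^*)\to E^{vh,\infty}_{0,1}\to 0$, i.e.\ $\coker(M)$ is the sub and $H_1(E)$ the quotient --- the opposite of what you assert. (This does not affect your degeneration argument, which is insensitive to the transposition.)

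The claimed statement nevertheless holds, because $H_1(E)$ is free abelian (it is the first homology of a graph, a subgroup of the free abelian group $\ZZ E^1$), so the extension splits, $H_1^{\bowtie}(\Gg,F^*)\cong H_1(E)\oplus\coker(M)$, and a short exact sequence with $H_1(E)$ as subobject and $\coker(M)$ as quotient therefore also exists. Your proof needs either this splitting observation appended, or a genuine justification of the filtration direction you assert; as written, the crucial step for $n=1$ is unproved and, taken literally, uses the wrong convention for $E^{vh}$.
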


\begin{proof}
	This follows immediately from Lemma~\ref{lem:cohomology_graph_rows_vanish} and Proposition~\ref{prp:E2 for example}.
\end{proof}

Given a finite directed graph $E$, we define $\chi(E) \coloneqq |E^0| - |E^1|$, the Euler characteristic of $E$.

\begin{cor}\label{cor:strongly connected}
In the situation of Set-up~\ref{setup:generalised odometer}, suppose that $v E^* w \not= \varnothing$
for all $v,w \in E^0$, and that $E^1 \not= \varnothing$.
\begin{enumerate}
\item\label{itm:str_connect_1} If $p(e) = 1$ for all $e \in E^1$, then
\begin{align*}
	&H_0^{\bowtie}(\Gg, F^*) \cong \ZZ,&
	 &H_1^{\bowtie}(\Gg, F^*) \cong \ZZ^{2 - \chi(E)},\quad\text{ and}&
	 &H_2^{\bowtie}(\Gg,F^*) \cong \ZZ^{1 - \chi(E)}.&
\end{align*}
\item\label{itm:str_connect_2} If $p(e) > 1$ for some $e \in E^1$, then $\coker(M)$ is a finite cyclic
    group,
\[
	H_0^{\bowtie}(\Gg, F^*) \cong \ZZ \qquad\text{ and }\qquad
		H_2^{\bowtie}(\Gg,F^*) \cong \ZZ^{-\chi(E)},
\]
and there is a short exact sequence
\[
\begin{tikzcd}[ampersand replacement=\&, column sep=15pt]
	0 \& \ZZ^{1-\chi(E)} \& {H_1^{\bowtie}(\Gg, F^*)} \& \coker(M) \& 0;
	\arrow[from=1-1, to=1-2]
	\arrow[from=1-2, to=1-3]
	\arrow[from=1-3, to=1-4]
	\arrow[from=1-4, to=1-5]
\end{tikzcd}
\]
if $\gcd\big\{p(\mu) - p(\nu)\colon \mu,\nu \in E^*, s(\mu) = s(\nu)\text{ and
}r(\mu) = r(\nu)\big\} = 1$, then $\coker(M) = 0$ and $H_1^{\bowtie}(\Gg,
F^*) \cong \ZZ^{1-\chi(E)}$.
\end{enumerate}
\end{cor}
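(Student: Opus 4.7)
The plan is to reduce everything to Theorem~\ref{thm:example homology} and compute the four groups $H_0(E)$, $H_1(E)$, $\ker(M)$, and $\coker(M)$ under the standing hypotheses. Since $E$ is strongly connected with $E^1 \ne \varnothing$, the standard identification of $H_\bullet(E)$ with the simplicial homology of $E$ (via the classifying space of $E^*$, or directly by inspecting the two-term complex $\ZZ E^1 \xrightarrow{d} \ZZ E^0$ with $d(e) = s(e) - r(e)$, whose image is $\{v - w \colon v, w \in E^0\}$ by strong connectivity) yields $H_0(E) \cong \ZZ$ and $H_1(E) \cong \ZZ^{1-\chi(E)}$.

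For part~\ref{itm:str_connect_1}, when $p \equiv 1$ the matrix $M$ of \eqref{eq:enter_the_matrix} equals $-d$, so $\ker(M) \cong H_1(E) \cong \ZZ^{1-\chi(E)}$ and $\coker(M) \cong H_0(E) \cong \ZZ$. The stated formulae for $H_0^{\bowtie}$ and $H_2^{\bowtie}$ are then immediate from Theorem~\ref{thm:example homology}, and since the short exact sequence for $H_1^{\bowtie}$ ends in the free group $\ZZ$, it splits and yields $H_1^{\bowtie}(\Gg, F^*) \cong \ZZ^{2-\chi(E)}$.

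The main work is part~\ref{itm:str_connect_2}, specifically the claim that $\coker(M)$ is a finite cyclic group. Fix $v_0 \in E^0$. Working in $\coker(M)$, each edge $e$ gives $s(e) \equiv p(e)\, r(e)$, and an easy induction along a path $\mu = e_1 \cdots e_n$ yields $s(\mu) \equiv p(\mu)\, r(\mu)$. Strong connectivity then provides, for each $w \in E^0$, a path $\mu$ from $v_0$ to $w$ with $w \equiv p(\mu)\, v_0$, so $\coker(M)$ is cyclic with generator $v_0$. Since some $p(e_0) > 1$, strong connectivity furnishes a cycle $\mu$ at $v_0$ passing through $e_0$ (concatenate a path $v_0 \to r(e_0)$, the edge $e_0$, and a path $s(e_0) \to v_0$), and the resulting relation $(p(\mu) - 1)\, v_0 \equiv 0$ has $p(\mu) > 1$, forcing $v_0$ to have finite order. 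The same reasoning applied to two parallel paths $\mu, \nu$ with common source and range gives $(p(\mu) - p(\nu))\, v_0 \equiv 0$, so the order of $v_0$ divides the gcd in question; when this gcd equals $1$, we conclude $\coker(M) = 0$.

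Once $\coker(M)$ is known to be finite, $M \otimes \QQ$ has rank $|E^0|$, so $\ker(M)$ is a free abelian group of rank $|E^1| - |E^0| = -\chi(E)$, giving $H_2^{\bowtie}(\Gg, F^*) \cong \ZZ^{-\chi(E)}$; the short exact sequence for $H_1^{\bowtie}$ is then read off directly from Theorem~\ref{thm:example homology}. The delicate step is the cyclic-quotient argument: strong connectivity must be used both to generate $\coker(M)$ by a single vertex and to produce a nontrivial finite-order relation, and one then needs to pass from the generating relations coming from cycles at $v_0$ to the parallel-paths gcd description asserted in the statement.
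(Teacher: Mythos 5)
Your proposal is correct and follows essentially the same route as the paper: identify $H_0(E)$ and $H_1(E)$ using strong connectivity, show $\coker(M)$ is a finite cyclic group by propagating the relations $s(\mu) \equiv p(\mu)\,r(\mu)$ along paths and using a cycle through an edge with $p(e_0)>1$, and then apply rank--nullity to get $\ker(M) \cong \ZZ^{-\chi(E)}$. The one imprecision is in the gcd step: two parallel paths $\mu,\nu \in vE^*w$ give $(p(\mu)-p(\nu))\,v \equiv 0$ in $\coker(M)$ for their common \emph{range} $v$, not for your fixed generator $v_0$ directly; to conclude that the order of $v_0$ divides $p(\mu)-p(\nu)$ you must pick a connecting path $\beta$ with $r(\beta)=v$ and $s(\beta)=v_0$, so that $v_0 \equiv p(\beta)\,v$ and hence $(p(\mu)-p(\nu))\,v_0 \equiv p(\beta)(p(\mu)-p(\nu))\,v \equiv 0$ --- exactly the step the paper carries out with the path $\alpha \in vE^*r(e)$.
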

\begin{proof}
 By \cite[p.194]{Massey:Homology} (immediately after
Theorem~3.4), $H_0(E)$ is the free abelian group generated by the connected components of $E$. Since $v E^* w \not= \varnothing$
for all $v,w \in E^0$, there is only one connected component of $E$. So $H_0(E) = \ZZ$. This and \cite[Theorem~3.4]{Massey:Homology} give $H_1(E) \cong \ZZ^{1-\chi(E)}$. We must compute $\ker(M)$ and $\coker(M)$.

\ref{itm:str_connect_1} Suppose that $p(e) = 1$
for all $E$. Lemma~\ref{lem:odometer vs our action} gives $\rho_i
\equiv 1$ for all $i$, and $\Delta_{1,q} = \Delta_{0,q}$ for all $q$. So
$\ker(M) \cong E^{vh,2}_{1,1} \cong E^{vh,2}_{0,1} \cong H_1(E) \cong \ZZ^{1 - \chi(E)}$ and $\coker(M) \cong E^{vh,2}_{1,0} \cong E^{vh,2}_{0,0} \cong H_0(E)
\cong \ZZ$. In particular, $\coker(M)$ is free abelian, so the extension
\[
	\begin{tikzcd}[ampersand replacement=\&, column sep=15pt]
		0 \& H_1(E) \& {H_1^{\bowtie}(\Gg, F^*)} \& \coker(M) \& 0
		\arrow[from=1-1, to=1-2]
		\arrow[from=1-2, to=1-3]
		\arrow[from=1-3, to=1-4]
		\arrow[from=1-4, to=1-5]
	\end{tikzcd}
\]
of Theorem~\ref{thm:example homology} splits, giving the desired formulae for $H_{\bullet}^{\bowtie}(\Gg ,F^*)$.

\ref{itm:str_connect_2} Now suppose that $p(e) > 1$ for some $e$. By assumption, there exists $\mu \in s(e) E^*
r(e)$, and $p(e\mu) = p(e)p(\mu) > 1$. For $\nu \in E^*$, we have
$p(\nu){r(\nu)} - {s(\nu)} = \sum^{|\nu|}_{i=1}
p(\nu^{i+1}\cdots\nu^{|\nu|}) (p(\nu^i){r(\nu^i)}
-{s(\nu^i)}) \in \im(M)$.  In particular, $(p(e\mu)-1)
r(e) = p(e\mu){r(e\mu)} - {s(e\mu)} \in \im(M)$, and so
${r(e)} + \im(M)$ has finite order in $\coker(M)$.

Fix $w \in E^0$. By assumption, there exists $\nu \in r(e) E^* w$ and so $w + \im(M) =
w + p(\nu){r(\nu)}  - {s(\nu)} + \im(M) =
p(\mu){r(e)} + \im(M)$. So $\coker(M) = \ZZ {r(e)} + \im(M)$ is
a finite cyclic group. Hence, $\operatorname{rank}(\im(M)) =
\operatorname{rank}(\ZZ E^0) = |E^0|$, and Rank-Nullity
for $\ZZ$-modules gives $\operatorname{rank}(\ker(M)) = \operatorname{rank}(\ZZ E^1) -
\operatorname{rank}(\ZZ E^0)$. Since $\ker(M)$ is a subgroup of a free
abelian group, it is free abelian, so $\ker(M) \cong
\ZZ^{-\chi(E)}$. The formulae for $H_0$ and $H_2$ and the exact sequence
involving $H_1$ now follow from Theorem~\ref{thm:example homology}.

Finally, suppose that $\gcd\big\{p(\mu) - p(\nu)\colon \mu,\nu \in E^*, s(\mu) =
s(\nu)\text{ and }r(\mu) = r(\nu)\big\} = 1$. As above, $a \coloneqq {r(e)}
+ \im(M)$ generates $\coker(M)$. So it suffices
to show that $O(a)$ divides $p(\mu) - p(\nu)$ whenever
$s(\mu) = s(\nu)$ and $r(\mu) = r(\nu)$.
Fix $v,w \in E^0$ and
$\mu,\nu \in vE^*w$. We have $(p(\mu) - p(\nu))v =
(p(\mu){r(\mu)} - {s(\mu)}) - (p(\nu){r(\nu)} -
{s(\nu)}) \in \im(M)$. Fix $\alpha \in v E^*
r(e)$. Then $r(e) + \im(M) = r(e) +
(p(\alpha){r(\alpha)} - {s(\alpha)}) + \im(M) =
p(\alpha)v + \im(M)$. In particular, $(p(\mu) - p(\nu)) r(e)
+ \im(M) = p(\alpha)(p(\mu) - p(\nu))v + \im(M) = 0 +
\im(M)$. So $O(v + \im(M))$ divides $p(\mu) - p(\nu)$.
\end{proof}

\begin{rmk}
The situation when $p(e) = 1$ for all $e$ in Corollary~\ref{cor:strongly connected} boils down to $\Gg \bowtie F^* = \ZZ \times E^*$, so Corollary~\ref{cor:strongly connected}\ref{itm:str_connect_1} is a nice reality check: it says that $H^{\bowtie}_p(\Gg, F^*) = \bigoplus_{i+j = p} H_i(\ZZ) \otimes H_j(E^*)$, in the spirit of the usual K\"unneth formula.
\end{rmk}

\begin{example}
	Suppose that $E$ is the directed graph with a single vertex $v$ and a single edge $e$, so $\chi(E) = 0$. Fix $p(e) \in \NN \setminus\{0\}$ and form the matched pair $(\Gg,F^*)$ of Set-up~\ref{setup:generalised odometer}. Then $H_0^{\bowtie}(\Gg, F^*) \cong H_0(E) \cong \ZZ$. The map $M \colon \ZZ E^1 \to \ZZ E^0$ is  $\times(p(e) - 1)$, so we obtain an exact sequence
	\[\begin{tikzcd}[ampersand replacement=\&, column sep=15pt]
		0 \& \ZZ \& {H_1^{\bowtie}(\Gg,F^*)} \& {\ZZ/(p(e)-1)\ZZ} \& 0.
		\arrow[from=1-1, to=1-2]
		\arrow[from=1-2, to=1-3]
		\arrow[from=1-3, to=1-4]
		\arrow[from=1-4, to=1-5]
	\end{tikzcd}\]
	If $p(e) = 1$, then $H_1^{\bowtie}(\Gg, F^*) \cong \ZZ^2$ and $H_2^{\bowtie}(\Gg, F^*) \cong \ZZ$; if $p(e)= 2$ (the binary odometer), then $H_1^{\bowtie}(\Gg, F^*) \cong \ZZ$ and $H_2^{\bowtie}(\Gg, F^*)=0$.
	
	For $p(e) > 2$ the group cohomology $H^2(\ZZ / (p(e) - 1) \ZZ ; \ZZ) \cong  \ZZ / (p(e) - 1) \ZZ$, so the exact sequence in Corollary~\ref{cor:strongly connected}~\ref{itm:str_connect_2} does not completely determine ${H_1^{\bowtie}(\Gg, F^*)}$.
\end{example}

\section{Twisted \texorpdfstring{$C^*$}{C*}-algebras of self-similar groupoid actions on \texorpdfstring{$k$}{k}-graphs}
\label{sec:CStars}

We give two constructions of a twisted $C^*$-algebra from a self-similar action of a groupoid on a $k$-graph as in Definition~\ref{dfn:ssa_not_faithful}. This is a matched pair consisting of a groupoid and a $k$-graph in which the left action preserves the degree map. By Proposition~\ref{prop:ssa_is_matched_pair}, these generalise the faithful self-similar actions of groupoids on graphs and $k$-graphs of \cite{LRRW18, ABRW}. Our self-similar actions are the examples of \cite{LV22} in which the generalised higher-rank $k$-graphs are $k$-graphs.

Our first construction of such a $C^*$-algebra is twisted by a normalised 2-cocycle in $C_{\Tot}^2(\Gg, \Lambda; \TT)$ and the second is twisted by a normalised 2-cocycle in
$C_{\bowtie}^2(\Gg, \Lambda; \TT)$. We show that cohomologous cocycles yield isomorphic twisted $C^*$-algebras, and that our two constructions are compatible via the isomorphism of cohomology groups of Corollary~\ref{cor:main_theorem_cohomology}. So
all possible twisted $C^*$-algebras arise via total 2-cycles.

We first study $C^*$-algebras twisted by categorical cocycles, and
establish some elementary structure theory, including a gauge-invariant uniqueness theorem.

Recall from \cite{KP00} that a $k$-graph $\Lambda$ is \emph{row-finite} and has \emph{no sources} if $0 < |v\Lambda^n| < \infty$ for all $v \in \Lambda^0$ and $n \in \NN^k$. Following \cite{RS05} (see also \cite[Remark~2.3]{RSY04}), if $\Lambda$ is a $k$-graph and $\mu, \nu \in \Lambda$ we define
\[
\MCE(\mu,\nu) \coloneqq
\mu\Lambda \cap \nu \Lambda \cap \Lambda^{d(\mu) \vee d(\nu)}.
\]
Elements of $\MCE(\mu,\nu)$ are called \emph{minimal common extensions} of $\mu$ and $\nu$.

We adopt the usual conventions from the theory of $C^*$-algebras that homomorphisms are $*$-homomorphisms, and that ideals are closed 2-sided ideals.
\subsection{Twists by categorical \texorpdfstring{$2$}{2}-cocycles}
\label{subsec:categorical twists}

Given a normalised categorical $2$-cocycle $c\colon \Cc^{2} \to \TT$ and
a subcategory $\Cc' \subseteq \Cc$, we continue to
write $c$ for the restriction of $c$ to $(\Cc')^{2} \subseteq \Cc^{2}$.
Given a self similar action of a groupoid $\Gg$ on a $k$-graph $\Lambda$, we regard $\Gg$ and $\Lambda$ as subsets of
$\Gg \bowtie \Lambda$; so  $g \lambda = (g
\la \lambda) (g\ra\lambda)$ for $(g, \lambda) \in \Gg * \Lambda$.

\begin{dfn}[{cf.~\cite{Yusnitha,ABRW}}]\label{dfn:TCKfam}
Let $(\Gg, \Lambda)$ be a self-similar action of a groupoid on a row-finite $k$-graph with no sources.
Let $c\colon (\Gg \bowtie \Lambda)^2 \to \TT$ be a normalised categorical
$2$-cocycle. A \emph{Toeplitz--Cuntz--Krieger $(\Gg, \Lambda; c)$-family} in
a $C^*$-algebra $A$ is a function $t\colon \Gg \bowtie \Lambda \to A$, such that
\begin{enumerate}[labelindent=0pt,labelwidth=\widthof{\ref{itm:TCK1}},label=(TCK\arabic*), ref=(TCK\arabic*),leftmargin=!]
\item \label{itm:TCK1} $t_\zeta t_\eta = \delta_{s(\zeta), r(\eta)} c(\zeta, \eta)
    t_{\zeta\eta}$ for all $(\zeta, \eta) \in (\Gg \bowtie \Lambda)^{2}$,
\item \label{itm:TCK2} $t_{s(\zeta)} = t^*_\zeta t_\zeta$ for all $\zeta \in \Gg \bowtie
    \Lambda$, and
 \item \label{itm:TCK3} for all $\mu,\nu \in \Lambda$ we have $t_\mu t_\mu^* t_{\nu} t_{\nu}^* = \sum_{\lambda \in \MCE(\mu,\nu)} t_{\lambda}t_{\lambda}^*$.
\end{enumerate}

We call $t$ a \emph{Cuntz--Krieger $(\Gg, \Lambda; c)$-family} if, in addition
\begin{enumerate}[labelindent=0pt,labelwidth=\widthof{\ref{itm:CK}},label=(CK), ref=(CK),leftmargin=!]
	\item \label{itm:CK} $t_v = \sum_{\lambda \in v\Lambda^n} t_\lambda t^*_\lambda$ for all
	$v \in \Lambda^0$ and $n \in \NN^k$.
\end{enumerate}

We write $C^*(t) \coloneqq C^*(\{t_\zeta \mid \zeta \in \Gg \bowtie \Lambda\}) \subseteq A$.
\end{dfn}

\begin{rmk}\label{rmk:consequences}
	Relation~\ref{itm:TCK2} for $\zeta = v \in \Lambda^0$ is $t_v^*t_v = t_v$, so $t_v$ is a projection. Now~\ref{itm:TCK2} for any $\zeta$ implies that $t_{\zeta}$ is a partial isometry.
	
	Relation~\ref{itm:TCK1} implies that the $t_v$, for $v \in \Lambda^0$, are mutually orthogonal \cite[Remarks~1.6(vi)]{KP00}. So \ref{itm:TCK1}--\ref{itm:TCK3} say that $\{t_\lambda \colon \lambda \in \Lambda\}$ is a Toeplitz--Cuntz--Krieger $(\Lambda, c)$-family as in \cite{SWW14}, and so induces a homomorphism $\iota^t_\Lambda\colon \Tt C^*(\Lambda, c) \to A$,
	which descends to a homomorphism $\iota^t_\Lambda\colon C^*(\Lambda, c) \to
	A$ if $t$ is a Cuntz--Krieger $(\Gg, \Lambda; c)$-family.
	
	If $\mu,\nu \in \Lambda$ satisfy $d(\mu) = d(\nu)$, then $\MCE(\mu,\nu) = \{\mu\}$ if $\mu = \nu$ and $\emptyset$ otherwise (see \cite[Lemma~3.2]{SWW14}). So \ref{itm:TCK3} gives $t_{\mu} t_\mu^* t_\nu t_{\nu}^* = \delta_{\mu,\nu} t_\nu t_{\nu}^*$. Since $c$ is normalised, \ref{itm:TCK1} implies that $t_{r(\mu)}t_{\mu} = t_{\mu}$, so $t_{\mu}t_{\mu}^* \le t_{r(\mu)}$ for all $\mu \in \Lambda$. Hence, as in \cite[Remark~3.4]{SWW14}, every Toeplitz--Cuntz--Krieger $(\Gg, \Lambda; c)$-family satisfies
	\begin{enumerate}[labelindent=0pt,labelwidth=\widthof{\ref{itm:TCK1}},label=(TCK\arabic*), ref=(TCK\arabic*),leftmargin=!] \setcounter{enumi}{3}
		\item \label{itm:TCK4} $t_v \ge \sum_{\lambda \in v\Lambda^n} t_\lambda t^*_\lambda$ for all
		$v \in \Lambda^0$ and $n \in \NN^k$.
	\end{enumerate}
	
	Relation~\ref{itm:TCK1} for $g \in \Gg$ gives $t_g t_{g^{-1}} = c(g, g^{-1}) t_{r(g)}$, so
	$t_g \overline{c(g, g^{-1})} t_{g^{-1}} = t_{r(g)}$. Uniqueness of
	quasi-inverses in an inverse semigroup then forces $\overline{c(g, g^{-1})}
	t_{g^{-1}} = t_g^*$, so $g \mapsto t_g$ is a twisted unitary representation of $\Gg$ \`a la~\cite{Renault}, and so induces a homomorphism
	$\iota^t_\Gg\colon C^*(\Gg, c) \to A$.
\end{rmk}

The following standard arguments \cite{Spe20, SWW14} show that every $(\Gg,\Lambda;c)$ admits a Toeplitz--Cuntz--Krieger-family of nonzero partial isometries.

\begin{example}\label{eg:left regular}
	By Example~\ref{ex:left_canc} and left cancellativity of $k$-graphs, $\Gg \bowtie \Lambda$ is left cancellative. Hence, for each $ \zeta \in \Gg \bowtie \Lambda$ there is a partial isometry, $L_\zeta \in \Bb(\ell^2(\Gg \bowtie \Lambda))$ such that
	$L_\zeta e_\eta =
	\delta_{s(\zeta), r(\eta)} c(\zeta, \eta) e_{\zeta\eta}$ for all $\eta
	\in \Gg \bowtie \Lambda$.
	Routine calculations show that this determines a
Toeplitz--Cuntz--Krieger $(\Gg, \Lambda; c)$-family $L\colon \Gg \bowtie \Lambda
\to \Bb(\ell^2(\Gg \bowtie \Lambda))$.

We claim that $\{L_\mu L_g L^*_\nu\colon \mu,\nu \in \Lambda, g \in
\Gg^{s(\mu)}_{s(\nu)}\}$ is linearly independent. To see this fix a linear combination $a = \sum_{\mu,g,\nu} a_{\mu,g,\nu} L_\mu L_g
L^*_\nu$ with at least one nonzero coefficient. Fix $(\mu, g, \nu)$ such that $a_{\mu,g,\nu} \not= 0$ and $a_{\mu',
g', \nu'} = 0$ whenever $d(\nu') < d(\nu)$. Then $L_{\nu'}^* e_\nu = 0$
whenever $a_{\mu', g, \nu'} \not= 0$ and $\nu' \not= \nu$. Hence, $\|a\| \ge |(a e_{\nu} \mid e_{\mu g})| = |a_{\mu,g,\nu}| \not= 0$.
\end{example}

\begin{prop}\label{prp:universal exists}
Let $(\Gg,\Lambda)$ be a self-similar action of a groupoid on a row-finite $k$-graph with no sources. Let $c\colon (\Gg \bowtie \Lambda)^2 \to \TT$ be a
normalised categorical $2$-cocycle. There is a $C^*$-algebra $\Tt C^*(\Gg,
\Lambda; c)$ generated by a Toeplitz--Cuntz--Krieger $(\Gg, \Lambda;
c)$-family $t$ that is universal for Toeplitz--Cuntz--Krieger $(\Gg, \Lambda;
c)$-families: if $T$ is a Toeplitz--Cuntz--Krieger $(\Gg, \Lambda;
c)$-family, then there is a unique homomorphism $\pi^T \colon \Tt C^*(\Gg,
\Lambda; c) \to C^*(T)$ such that $T = \pi^T \circ t$.

Consider the ideal $I$ of $\Tt C^*(\Gg, \Lambda; c)$ generated
by $\{t_v -\sum_{\lambda \in v\Lambda^n} t_\lambda t^*_\lambda \mid v \in \Lambda^0\}$. Then $s \colon \zeta \mapsto t_\zeta + I$ is a Cuntz--Krieger $(\Gg, \Lambda;
c)$-family in $C^*(\Gg, \Lambda; c) \coloneqq \Tt C^*(\Gg, \Lambda; c)/I$, and is
universal for Cuntz--Krieger $(\Gg, \Lambda; c)$-families: if $S$ is a Cuntz--Krieger $(\Gg, \Lambda; c)$-family, then there is a unique homomorphism $\pi^S \colon C^*(\Gg, \Lambda; c) \to C^*(S)$ such that $S = \pi^S \circ s$.
\end{prop}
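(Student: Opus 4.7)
The construction follows the standard recipe for universal $C^*$-algebras whose generators are \emph{a priori} norm-bounded. First I would observe that relation~\ref{itm:TCK2} forces each $t_\zeta$ in any Toeplitz--Cuntz--Krieger $(\Gg, \Lambda; c)$-family to be a partial isometry, so $\|t_\zeta\| \le 1$. Let $F$ be the free unital $*$-algebra on symbols $\{\tilde t_\zeta \mid \zeta \in \Gg \bowtie \Lambda\}$ and $J \trianglelefteq F$ the two-sided $*$-ideal generated by the elements corresponding to \ref{itm:TCK1}--\ref{itm:TCK3}. The prescription
\[
\|a\|_u \coloneqq \sup\bigl\{\|\pi(a+J)\| : \pi \text{ a $*$-representation of } F/J \text{ whose generators form a TCK $(\Gg,\Lambda;c)$-family}\bigr\}
\]
defines a $C^*$-seminorm on $F/J$ that is finite because each generator has image of norm at most $1$ and $F/J$ is generated as a $*$-algebra by these. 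The Hausdorffification and completion of $F/J$ with respect to $\|\cdot\|_u$ is the candidate $\Tt C^*(\Gg, \Lambda; c)$, with $t_\zeta$ the image of $\tilde t_\zeta$. The universal property for TCK families is immediate from this construction.

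The only subtle point is showing that $\|\cdot\|_u$ is a norm (equivalently, that each $t_\zeta \ne 0$), and this is where Example~\ref{eg:left regular} does the work: the left regular representation $L \colon \Gg \bowtie \Lambda \to \Bb(\ell^2(\Gg \bowtie \Lambda))$ is a genuine TCK $(\Gg, \Lambda; c)$-family in which every $L_\zeta$ is nonzero (it sends $e_{s(\zeta)}$ to $c(\zeta, s(\zeta)) e_\zeta = e_\zeta$, using that $c$ is normalised). Hence the induced $*$-homomorphism $\pi^L \colon \Tt C^*(\Gg, \Lambda; c) \to \Bb(\ell^2(\Gg \bowtie \Lambda))$ satisfies $\pi^L(t_\zeta) = L_\zeta \ne 0$, so $t_\zeta \ne 0$ in $\Tt C^*(\Gg, \Lambda; c)$.

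For the Cuntz--Krieger algebra, let $I$ be the closed two-sided ideal of $\Tt C^*(\Gg, \Lambda; c)$ generated by $\{t_v - \sum_{\lambda \in v\Lambda^n} t_\lambda t^*_\lambda \mid v \in \Lambda^0,\, n \in \NN^k\}$, put $C^*(\Gg, \Lambda; c) \coloneqq \Tt C^*(\Gg, \Lambda; c) / I$, and set $s_\zeta \coloneqq t_\zeta + I$. By construction $s$ satisfies~\ref{itm:TCK1}--\ref{itm:TCK3} and \ref{itm:CK}, so is a Cuntz--Krieger family. For the universal property, given any Cuntz--Krieger $(\Gg, \Lambda; c)$-family $S$, the universal property of $\Tt C^*(\Gg, \Lambda; c)$ supplies $\pi^T \colon \Tt C^*(\Gg, \Lambda; c) \to C^*(S)$ with $\pi^T(t_\zeta) = S_\zeta$, and \ref{itm:CK} for $S$ puts the generators of $I$ in $\ker \pi^T$, so $\pi^T$ descends to the required $\pi^S \colon C^*(\Gg, \Lambda; c) \to C^*(S)$. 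Uniqueness of both $\pi^T$ and $\pi^S$ is immediate since $t$, respectively $s$, generates the algebra.

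The main obstacle is purely conceptual: verifying that $\|\cdot\|_u$ is finite and nonzero. Finiteness reduces to~\ref{itm:TCK2}, and nontriviality reduces to the concrete left-regular family of Example~\ref{eg:left regular}; everything else is routine bookkeeping in the standard universal-$C^*$-algebra framework.
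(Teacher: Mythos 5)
Your construction is correct, but it takes a different (more generic) route than the paper. You work with the free unital $*$-algebra on the generators modulo the ideal of relations and then take the supremum $C^*$-seminorm over all representations whose generators form a Toeplitz--Cuntz--Krieger family; finiteness of the seminorm comes from \ref{itm:TCK2} forcing the generators to be partial isometries, and nontriviality from the left regular family of Example~\ref{eg:left regular}. The paper instead builds a concrete $*$-algebra structure on $V = C_c(\Lambda * \Gg * \Lambda)$, with multiplication and involution given by the explicit formulae of Lemma~\ref{lem:scalars}, and verifies the $*$-algebra axioms by transporting them through the injective linear map $\theta_{\mu,g,\nu} \mapsto L_\mu L_g L_\nu^*$ into $\Bb(\ell^2(\Gg\bowtie\Lambda))$; the universal seminorm is then taken on $V$. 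Both approaches are legitimate; yours is shorter and entirely off-the-shelf, while the paper's buys, as a by-product of the construction, the fact that $\Tt C^*(\Gg,\Lambda;c) = \clsp\{t_\mu t_g t_\nu^* \colon (\mu,g,\nu) \in \Lambda * \Gg * \Lambda\}$, which is used repeatedly afterwards (e.g.\ in the analysis of the core). Your treatment of the quotient by $I$ and the universal property of $C^*(\Gg,\Lambda;c)$ matches the paper's.

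Two small points. First, your parenthetical ``($\|\cdot\|_u$ is a norm) equivalently (each $t_\zeta \ne 0$)'' is not an equivalence: the universal seminorm typically has a nontrivial null space on the quotient of the free $*$-algebra even when every generator survives, which is exactly why you (correctly) pass to the Hausdorffification before completing. Second, nonvanishing of the generators is not actually part of this proposition --- it is established separately in Proposition~\ref{prop:faithfulness} --- so your use of Example~\ref{eg:left regular} here is welcome but strictly speaking extra; the existence and universal property hold without it.
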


To prove Proposition~\ref{prp:universal exists}, we follow the standard construction of \cite{Blackadar, RaeburnCBMS, Loring}. We first need the following
technical lemma.

\begin{lem}\label{lem:scalars}
Let $(\Gg,\Lambda)$ be a self-similar action of a groupoid on a row-finite $k$-graph with no sources. Let $c\colon (\Gg \bowtie \Lambda)^2 \to \TT$ be a normalised categorical $2$-cocycle. Fix $(\mu, g, \nu)$ and $(\eta, h, \zeta)$ in
$\Lambda * \Gg * \Lambda$, and for each $(\alpha,\beta)$ such that $\nu\alpha
= \eta\beta \in \MCE(\nu, \eta)$, define
\begin{equation}\label{eq:dog turd}
\begin{split}
\omega(\alpha,\beta)
    \coloneqq \overline{c(\nu,\alpha)}&c(\eta,\beta)c(g,\alpha)c(h^{-1}, h)\overline{c(h^{-1},\beta)}\,\overline{c(g\ra\alpha,g\la\alpha)}\\
        & \times c(h^{-1}\la\beta, h^{-1}\ra\beta)c(\mu, g\la\alpha)\overline{c(\zeta, h^{-1}\la\beta)}\\
        & \times\overline{c(h^{-1}\ra\beta, (h^{-1}\ra\beta)^{-1})}c(g\ra\alpha, (h^{-1}\ra\beta)^{-1}).
\end{split}
\end{equation}
Then for any Toeplitz--Cuntz--Krieger $(\Gg, \Lambda; c)$-family $T$, we have
\[
T_\mu T_g T^*_\nu T_\eta T_h T^*_\zeta
    = \sum_{\nu\alpha = \eta\beta \in \MCE(\nu,\eta)} \omega_{\alpha,\beta} T_{\mu(g\la\alpha)} T_{(g\ra\alpha)(h^{-1}\ra\beta)^{-1}} T^*_{\zeta(h^{-1}\la\beta)}.
\]
\end{lem}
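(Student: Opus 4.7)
The plan is to reduce the six-fold product on the left-hand side to the desired three-fold form through four successive applications of the Toeplitz--Cuntz--Krieger relations, each contributing some of the ten scalar factors that make up $\omega(\alpha,\beta)$.

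First, to split $T_\nu^* T_\eta$, I would use (TCK2) and normalisation of $c$ to rewrite $T_\nu^* T_\eta = T_\nu^*(T_\nu T_\nu^* T_\eta T_\eta^*) T_\eta$, then apply (TCK3) to expand the central projection as $\sum_{\lambda \in \MCE(\nu,\eta)} T_\lambda T_\lambda^*$. For each such $\lambda$, the factorisation property of $\Lambda$ determines $\alpha,\beta$ with $\lambda = \nu\alpha = \eta\beta$, and two applications of (TCK1) combined with (TCK2) collapse $T_\nu^* T_{\nu\alpha}$ to $\overline{c(\nu,\alpha)} T_\alpha$ and $T_{\eta\beta}^* T_\eta$ to $c(\eta,\beta) T_\beta^*$. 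This contributes the factors $\overline{c(\nu,\alpha)} c(\eta,\beta)$ of $\omega(\alpha,\beta)$ and reduces the task to simplifying each summand $T_\mu T_g T_\alpha T_\beta^* T_h T_\zeta^*$.

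Next, I would handle $T_g T_\alpha$ and $T_\beta^* T_h$ in parallel. On one side, (TCK1) gives $T_g T_\alpha = c(g,\alpha) T_{g\alpha}$, and the Zappa--Sz\'ep factorisation $g\alpha = (g\ra\alpha)(g\la\alpha)$ combined with another application of (TCK1) rewrites this as $c(g,\alpha)\overline{c(g\ra\alpha, g\la\alpha)} T_{g\ra\alpha} T_{g\la\alpha}$. On the other side, Remark~\ref{rmk:consequences} rewrites $T_h$ as $c(h^{-1}, h) T_{h^{-1}}^*$, so $T_\beta^* T_h$ becomes $c(h^{-1}, h)(T_{h^{-1}} T_\beta)^*$. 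Applying (TCK1) and the Zappa--Sz\'ep factorisation to $T_{h^{-1}} T_\beta$, then taking the adjoint (which complex-conjugates the resulting scalar and reverses the order of the two partial isometries), produces $c(h^{-1}, h)\overline{c(h^{-1},\beta)} c(h^{-1}\la\beta, h^{-1}\ra\beta) T_{h^{-1}\ra\beta}^* T_{h^{-1}\la\beta}^*$.

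Finally, I would amalgamate the resulting $\Lambda$- and $\Gg$-operators. The adjacent $\Lambda$-operators at each end combine via (TCK1): $T_\mu T_{g\la\alpha}$ becomes $c(\mu, g\la\alpha) T_{\mu(g\la\alpha)}$, and $T_{h^{-1}\la\beta}^* T_\zeta^* = (T_\zeta T_{h^{-1}\la\beta})^*$ becomes $\overline{c(\zeta, h^{-1}\la\beta)} T_{\zeta(h^{-1}\la\beta)}^*$. For the central $\Gg$-operators, I would use Remark~\ref{rmk:consequences} once more to rewrite $T_{h^{-1}\ra\beta}^*$ as $\overline{c(h^{-1}\ra\beta, (h^{-1}\ra\beta)^{-1})} T_{(h^{-1}\ra\beta)^{-1}}$, after which (TCK1) yields $c(g\ra\alpha, (h^{-1}\ra\beta)^{-1}) T_{(g\ra\alpha)(h^{-1}\ra\beta)^{-1}}$. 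Collecting all the accumulated $\TT$-valued scalars then produces exactly $\omega(\alpha,\beta)$. The main obstacle is purely bookkeeping: the argument threads ten cocycle factors through three Zappa--Sz\'ep expansions, one adjoint, and two uses of the groupoid-inverse identity, and the factorisation convention $g'\mu' = (g'\ra\mu')(g'\la\mu')$ must be applied consistently in order to obtain the precise scalars appearing in~\eqref{eq:dog turd}.
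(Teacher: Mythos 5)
Your proposal is correct and follows essentially the same route as the paper's proof: expand $T^*_\nu T_\eta$ over $\MCE(\nu,\eta)$ via \ref{itm:TCK3}, replace $T_h$ by $c(h^{-1},h)T^*_{h^{-1}}$, push $g$ and $h^{-1}$ past $\alpha$ and $\beta$ using the Zappa--Sz\'ep factorisation, and then absorb the outer $\Lambda$-factors into $\mu$ and $\zeta$ and the inner $\Gg$-factors into one another. The only quibble is the transposed factorisation order $g\alpha = (g\ra\alpha)(g\la\alpha)$: under the paper's convention it is $g\alpha = (g\la\alpha)(g\ra\alpha)$ with the $\Lambda$-part first, which is what your subsequent step combining $T_\mu$ with the adjacent $T_{g\la\alpha}$ implicitly uses, so the slip is purely notational and does not affect the argument.
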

\begin{proof}
Relation~\ref{itm:TCK3} implies that each
\[
T^*_\nu T_\eta
= T_\nu^* (T_\nu T_{\nu}^* T_\eta T_{\eta}^*) T_{\eta}
= \sum_{\nu\alpha = \eta \beta \in \MCE(\nu,\eta)}  T_\nu^* T_{\nu \alpha}  T_{\eta \beta}^* T_{\eta}.
\]
For fixed $(\alpha,\beta)$ in the above sum, relations~\ref{itm:TCK1} and then~\ref{itm:TCK2} give
\[
T_\nu^* T_{\nu \alpha}  T_{\eta \beta}^* T_{\eta}
= \ol{c(\nu,\alpha)}c(\eta,\beta)
T_\nu^* T_{\nu} T_{\alpha}  T_{\beta}^* T_{\eta}^* T_{\eta}
= \ol{c(\nu,\alpha)}c(\eta,\beta) T_{\alpha}  T_{\beta}^*.
\]
 Fix $\nu\alpha
= \eta\beta \in \MCE(\nu,\eta)$. Remark~\ref{rmk:consequences} gives
$
T^*_\beta T_h T^*_\zeta = c(h^{-1}, h) T^*_\beta T^*_{h^{-1}} T^*_\zeta.
$
We have
\[
T_g T_\alpha
    = c(g,\alpha) T_{g\alpha}
    = c(g,\alpha) T_{(g\la\alpha)(g\ra\alpha)}
    = c(g,\alpha) \overline{c(g\la\alpha, g\ra\alpha)} T_{g\la\alpha} T_{g\ra\alpha},
\]
and similarly,
\[
T^*_\beta T^*_{h^{-1}}
    = \overline{c(h^{-1}, \beta)} c(h^{-1}\la\beta, h^{-1}\ra\beta) T^*_{h^{-1}\ra\beta} T^*_{h^{-1}\la\beta}.
\]
We have
\[
T_\mu T_{g\la\alpha} = c(\mu, g\la\alpha) T_{\mu(g\la\alpha)}
    \qquad\text{ and }\qquad
T^*_{h^{-1}\la\beta}T^*_\zeta = \overline{c(\zeta,h^{-1}\la\beta)} T^*_{\zeta(h^{-1}\la\beta)}.
\]
Finally,
\begin{align*}
T_{g\la\alpha} T^*_{h^{-1}\ra\beta}
    &= \overline{c(h^{-1}\ra\beta, (h^{-1}\ra\beta)^{-1})} T_{g\la\alpha} T_{(h^{-1}\ra\beta)^{-1}}\\
    &= \overline{c(h^{-1}\ra\beta, (h^{-1}\ra\beta)^{-1})} c(g\la\alpha, (h^{-1}\ra\beta)^{-1}) T_{(g\la\alpha) (h^{-1}\ra\beta)^{-1}}.
\end{align*}
Putting all of these identities together gives
\begin{align*}
T_\mu T_g T^*_\nu T_\eta  T_h T^*_\zeta
&= \sum_{\nu\alpha = \eta \beta \in \MCE(\nu,\eta)}  \ol{c(\nu,\alpha)}c(\eta,\beta)
T_\mu T_g T_{\alpha}  T_{\beta}^*  T_h T^*_\zeta \\
&= \sum_{\nu\alpha = \eta\beta \in \MCE(\nu,\eta)} \omega_{\alpha,\beta} T_{\mu(g\la\alpha)} T_{(g\ra\alpha)(h^{-1}\ra\beta)^{-1}} T^*_{\zeta(h^{-1}\la\beta)}.\qedhere
\end{align*}
\end{proof}

\begin{cor}[cf. {\cite[Proposition~4.5]{LRRW18}}]
Let $(\Gg,\Lambda)$ be a self-similar action of a groupoid on a row-finite $k$-graph with no sources, and let $c\colon (\Gg \bowtie \Lambda)^2 \to \TT$ be a normalised categorical $2$-cocycle. If $T$ is a Toeplitz--Cuntz--Krieger $(\Gg, \Lambda; c)$-family, then $C^*(T) = \clsp\{T_\mu T_g T^*_\nu \colon (\mu,g,\nu) \in \Lambda * \Gg * \Lambda\}$.
\end{cor}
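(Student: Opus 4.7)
Let $B \coloneqq \clsp\{T_\mu T_g T^*_\nu \mid (\mu, g, \nu) \in \Lambda * \Gg * \Lambda\}$. The containment $B \subseteq C^*(T)$ is immediate, so my task is to show the reverse. My strategy is to prove that $B$ is a $*$-subalgebra of $C^*(T)$ that contains a generating family for $C^*(T)$; this gives $C^*(T) \subseteq B$ because $B$ is already norm-closed by construction.

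First I check that $T_\zeta \in B$ for every $\zeta \in \Gg \bowtie \Lambda$. By the factorisation system $[\Lambda, \Gg]$ on $\Gg \bowtie \Lambda$, each such $\zeta$ factors uniquely as $\zeta = \mu g$ with $\mu \in \Lambda$ and $g \in \Gg$ (satisfying $s(\mu) = r(g)$), so \ref{itm:TCK1} combined with the fact that $c$ is normalised gives
\[
T_\zeta = \overline{c(\mu, g)}\, T_\mu T_g = \overline{c(\mu, g)}\, T_\mu T_g T^*_{s(g)},
\]
where the last equality uses that $T_{s(g)}$ is a self-adjoint projection and $T_g T_{s(g)} = T_g$ (normalisation of $c$). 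This exhibits $T_\zeta$ as an element of $B$, since $(\mu, g, s(g)) \in \Lambda * \Gg * \Lambda$.

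Next I verify that $B$ is closed under taking adjoints. Given $(\mu, g, \nu) \in \Lambda * \Gg * \Lambda$, Remark~\ref{rmk:consequences} gives $T^*_g = \overline{c(g, g^{-1})}\, T_{g^{-1}}$, so
\[
(T_\mu T_g T^*_\nu)^* = T_\nu T^*_g T^*_\mu = \overline{c(g, g^{-1})}\, T_\nu T_{g^{-1}} T^*_\mu,
\]
which lies in $B$ because $(\nu, g^{-1}, \mu) \in \Lambda * \Gg * \Lambda$.

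Finally I show $B$ is closed under multiplication. Given two generic spanning elements $T_\mu T_g T^*_\nu$ and $T_\eta T_h T^*_\zeta$, Lemma~\ref{lem:scalars} writes their product as a finite linear combination of triple-products $T_{\mu'} T_{g'} T^*_{\zeta'}$ indexed by pairs $(\alpha, \beta)$ with $\nu\alpha = \eta\beta \in \MCE(\nu, \eta)$, each of the correct form. Since $\Lambda$ is row-finite the sum is finite, so the product lies in $B$. Closure under scalar multiplication and addition is trivial, so $B$ is a $*$-subalgebra, completing the proof. The substantive content of the argument is entirely packaged into Lemma~\ref{lem:scalars}; the remaining work here is just book-keeping to convert the raw generators $T_\zeta$ into the triple form and to check $*$-closure via the groupoid inverses.
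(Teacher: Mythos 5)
Your proof is correct and follows essentially the same route as the paper's: both arguments show the span is a $*$-subalgebra using Lemma~\ref{lem:scalars} for closure under multiplication and the identity $T_g^* = \overline{c(g,g^{-1})}\,T_{g^{-1}}$ for $*$-closure, and then check that each generator $T_{\mu g}$ lies in the span via the unique factorisation $\zeta = \mu g$. (Your scalar $\overline{c(\mu,g)}$ in the identity $T_{\mu g} = \overline{c(\mu,g)}\,T_\mu T_g T^*_{s(g)}$ is in fact the correct one.)
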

\begin{proof}
The set $X := \clsp\{T_\mu T_g T^*_\nu \colon (\mu,g,\nu) \}$ is a closed subspace of $C^*(T)$. It is closed under adjoints since $T_\mu T_g T^*_\nu =
\overline{c(g, g^{-1})} T^*_\nu T_{g^{-1}} T^*_\mu$, and Lemma~\ref{lem:scalars} shows that it is closed under multiplication, so $X$ is a $C^*$-subalgebra of $C^*(T)$. Fix $\zeta \in \Gg \bowtie \Lambda$. Proposition~\ref{prop:factorisation_system_is_matched_pair} gives a unique factorisation $\zeta = \mu g$ with $\mu \in \Lambda$ and $g \in \Gg$. So $T_\zeta = c(\mu, g) T_\mu T_g T^*_{s(g)} \in X$. So $X$ contains the generators of $C^*(T)$ giving $X = C^*(T)$.
\end{proof}

\begin{proof}[Proof of Proposition~\ref{prp:universal exists}]
Consider the vector
space $V \coloneqq C_c(\Lambda * \Gg * \Lambda)$ of finitely supported
complex-valued functions on $\Lambda * \Gg * \Lambda$, which has basis the
indicator functions $\theta_{\mu,g,\nu}$.

These $\theta_{\mu,g,\nu}$ are linearly independent, so there is a conjugate-linear
map $^*\colon V \to V$ such that
\begin{equation}\label{eq:adjoint}
\theta^*_{\mu,g,\nu} = \overline{c(g, g^{-1})} \theta_{\nu, g, \mu},
\end{equation}
and there is a bilinear map $\cdot\colon V \times V \to V$ such that, for the
scalars $\omega_{\alpha,\beta}$ defined in~\eqref{eq:dog turd},
\begin{equation}\label{eq:abstract mpctn}
\theta_{\mu,g,\nu} \theta_{\eta,h,\zeta}
     = \sum_{\nu\alpha = \eta\beta \in \MCE(\nu,\eta)} \omega_{\alpha,\beta} \theta_{\mu(g\la\alpha), (g\ra\alpha)(h^{-1}\ra\beta)^{-1}, \zeta(h^{-1}\la\beta)}.
\end{equation}

In the Toeplitz--Cuntz--Krieger $(\Gg, \Lambda; c)$-family $L$ of Example~\ref{eg:left regular}, the $L_\mu L_g L^*_\mu$ are linearly independent, so there is a linear injection $\varphi_L\colon V \to \Bb(\ell^2(\Gg \bowtie \Lambda))$ satisfying $\varphi_L(\theta_{\mu,g,\nu}) = L_\mu L_g L^*_\nu$. Lemma~\ref{lem:scalars} and bilinearity of multiplication in $\Bb(\ell^2(\Gg \bowtie \Lambda))$ shows that $\varphi_L$ intertwines~\eqref{eq:abstract mpctn} with multiplication. Remark~\ref{rmk:consequences} shows that it carries~\eqref{eq:adjoint} to the adjoint in $\Bb(\ell^2(\Gg \bowtie\Lambda))$.

Since $\Bb(\ell^2(\Gg \bowtie \Lambda)$ is a $^*$-algebra, we deduce that the operations we have defined on $V$ satisfy the $^*$-algebra axioms, so $V$ is a $^*$-algebra.
The $\theta_{\mu, g, \nu}$ are linearly independent, so for any
Toeplitz--Cuntz--Krieger $(\Gg, \Lambda; c)$-family $T$ there is a linear map
$\varphi_T\colon V \to \clsp\{T_\mu T_g T^*_\nu\colon (\mu,g,\nu) \in \Lambda
* \Gg * \Lambda\}$ such that $\varphi_T(\theta_{\mu, g, \nu}) =  T_\mu T_g T^*_\nu$.
Lemma~\ref{lem:scalars} shows that $\varphi_T$ is a homomorphism. The
$T_\zeta$ are partial isometries, so for $a = \sum_{(\mu,g,\nu)} a_{\mu,g,\nu} \theta_{\mu,g,\nu} \in V$, we have
$\|\varphi_T(a)\| \le \sum_{(\mu,g,\nu)} |a_{\mu,g,\nu}|$. The map $\rho : V \to [0,\infty)$, given by
\[
\rho(a) \coloneqq \sup\{\|\varphi_T(a)\|\colon T\text{ is a Toeplitz--Cuntz--Krieger $(\Gg, \Lambda; c)$-family}\}
\]
is a pre-$C^*$-seminorm. Quotienting by $N \coloneqq \ker\rho$ and
completing gives a $C^*$-algebra $\Tt C^*(\Gg, \Lambda; c)$.

The map $t \colon \mu g \mapsto \overline{c(\mu,g)} \theta_{\mu, g, s(g)} + N$ is a Toeplitz--Cuntz--Krieger $(\Gg,\Lambda;c)$-family
in $\Tt C^*(\Gg, \Lambda; c)$ because $N$ contains the obstructions to relations \ref{itm:TCK1}--\ref{itm:TCK3}. This $t$ is universal:
for any family $T$ and any $a \in V$, we have $\|\varphi_T(a)\| \le \|\varphi_t(a)\|$, so $\varphi_T$ factors through a norm-decreasing
homomorphism from $\varphi_t(V)$ to $\varphi_T(V)$, which extends to a homomorphism $\pi^T \colon \Tt C^*(\Lambda, \Gg; c) \to C^*(T)$ of
$C^*$-algebras by continuity.

By definition of $I$, the map $s$
is a Cuntz--Krieger $(\Gg, \Lambda; c)$-family. Given any Cuntz--Krieger $(\Gg, \Lambda; c)$-family $S$, the kernel of the homomorphism
$\pi^S\colon \Tt C^*(\Lambda, \Gg; c) \to C^*(S)$ contains $I$, so $\pi^S$ descends to a homomorphism $C^*(\Lambda, \Gg; c) \to C^*(S)$.
\end{proof}

Since $\Lambda$ and $\Gg$ are subcategories of $\Gg \bowtie \Lambda$, if $c \colon (\Lambda * \Gg)^2 \to \TT$ is a $2$-cocycle then
$c|_{\Lambda^2}$ and $c|_{\Gg^2}$ are $2$-cocycles on $\Lambda$ and $\Gg$, which we continue to denote by $c$.

Our next steps are to show that if $\Gg$ is amenable, then $\iota^t_\Gg\colon C^*(\Gg, c) \to \Tt
C^*(\Gg,\Lambda; c)$ from Remark~\ref{rmk:consequences} is always injective, and follow Yusnitha's analysis
\cite{Yusnitha} to see that her joint-faithfulness condition implies that $\iota^s_{\Gg}\colon C^*(\Gg,
c) \to  C^*(\Gg,\Lambda; c)$ is faithful. We also show that
$\iota^t_\Lambda\colon \Tt C^*(\Lambda, c) \to \Tt C^*(\Gg, \Lambda; c)$ and
$\iota^s_\Lambda\colon C^*(\Lambda, c) \to C^*(\Gg, \Lambda; c)$ are always
injective.

If $(\Gg,\Lambda)$ is a self-similar action of a groupoid on a row-finite $k$-graph with no sources, then the degree map $d_\Lambda : \Lambda \to \NN^k$ determines a function $d_{\Gg \bowtie \Lambda} : \Gg \bowtie\Lambda \to \NN^k$ by $d_{\Gg \bowtie \Lambda}(\mu, g) = d_\Lambda(\mu)$ for all $(\mu,g) \in \Lambda * \Gg = \Gg \bowtie \Lambda$. We will just write $d$ for both $d_\Lambda$ and $d_{\Gg \bowtie \Lambda}$ unless the subscript is needed for clarity. Since $d(g \la \mu) = d(\mu)$ for all $(g,\mu) \in \Gg * \Lambda$, for each $((\mu,g),(\nu,h)) \in (\Gg \bowtie \Lambda)^2$,
\[
d((\mu,g)(\nu,h)) = d\big(\mu (g\la\nu), (g\ra\nu)h\big) = d(\mu (g\la\nu)) = d(\mu\nu) = d(\mu,g)+d(\nu,h).
\]
So $d : \Gg \bowtie \Lambda \to \NN^k$ is a functor.

For each $z \in \TT^k$, the
function $\gamma_z(t)\colon \Gg \bowtie \Lambda \to \Tt C^*(\Gg, \Lambda; c)$ defined
by $\gamma_z(t)(\zeta) = z^{d(\zeta)} t_\zeta$ is a Toeplitz--Cuntz--Krieger
$(\Gg, \Lambda;c)$-family. By the universal property, $\gamma_z$
extends to an endomorphism of $\Tt C^*(\Gg, \Lambda; c)$. Since $\gamma_z \circ
\gamma_w (s_\zeta) = \gamma_{zw}(s_\zeta)$ for all $\zeta \in \Gg \bowtie \Lambda$, this
$\gamma$ is an action by automorphisms. An $\varepsilon/3$-argument shows that it is
strongly continuous. We call $\gamma \colon \TT^k \to \Aut (\Tt C^*(\Gg, \Lambda; c))$
the \emph{gauge action}, and write $\Tt C^*(\Gg,\Lambda;c)^{\gamma}$ for the fixed-point
algebra $\{a \in \Tt C^*(\Gg,\Lambda;c) \mid \gamma_z(a) = a \text{ for all } z \in \TT\}$.

The same argument yields a strongly continuous action, also denoted $\gamma$ and called the gauge action, of $\TT$ on $C^*(\Gg, \Lambda; c)$ such that $\gamma_z(s_\zeta) = z^{d(\zeta)} s_\zeta$ for all $\zeta$, and we likewise write $C^*(\Gg, \Lambda; c)^\gamma$ for the resulting fixed-point algebra.

\begin{prop}[cf. {\cite[Proposition~3.6]{Yusnitha}}]\label{prop:faithfulness}
Let $(\Gg,\Lambda)$ be a self-similar action of a groupoid on a row-finite $k$-graph with no sources, and let $c\colon (\Gg \bowtie \Lambda)^2 \to \TT$ be a
normalised categorical $2$-cocycle. The generators $t_\zeta$ of $\Tt
C^*(\Gg,\Lambda; c)$ and $s_\zeta$ of $C^*(\Gg,\Lambda; c)$
are all nonzero. The homomorphisms $\iota^t_\Lambda : \Tt C^*(\Lambda, c) \to \Tt C^*(\Gg, \Lambda; c)$ and
$\iota^s_\Lambda : C^*(\Lambda, c) \to C^*(\Gg, \Lambda; c)$ are injective.
 If $\Gg$ is amenable, then $\iota^t_\Gg \colon
C^*(\Gg, c) \to \Tt C^*(\Gg, \Lambda; c)$ is injective. If, in addition, for
every $v \in \Lambda^0$ and every $n \in \NN^k$, there exists $\lambda \in
v\Lambda^n$ such that $g \mapsto (g\ra\lambda, g\la\lambda)$ is
injective on $\Gg^v_v$, then $\iota^s_\Gg\colon C^*(\Gg, c) \to
C^*(\Gg, \Lambda; c)$ is injective.
\end{prop}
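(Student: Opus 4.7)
I will handle the four claims uniformly by producing explicit Hilbert-space representations and compressing to invariant subspaces on which known faithful representations of the constituent algebras appear. First, a reduction: once $\iota^t_\Lambda$ and $\iota^s_\Lambda$ are shown injective, every vertex projection $t_v$, $s_v$ is nonzero (vertex projections in twisted $k$-graph Toeplitz and Cuntz--Krieger algebras are nonzero), and then \ref{itm:TCK2} forces every $t_\zeta$, $s_\zeta$ to be a nonzero partial isometry via $t_\zeta^*t_\zeta = t_{s(\zeta)}$.

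For $\iota^t_\Lambda$ and $\iota^t_\Gg$ the tool is the left-regular family $L$ on $\ell^2(\Gg\bowtie\Lambda)$ of Example~\ref{eg:left regular}. The subspace $\ell^2(\Lambda)\subseteq\ell^2(\Gg\bowtie\Lambda)$ is invariant for $\{L_\mu, L_\mu^*\colon\mu\in\Lambda\}$, since $L_\mu e_\nu$ is a scalar multiple of $e_{\mu\nu}$ (and zero unless $s(\mu) = r(\nu)$), while $L_\mu^* e_\nu$ is nonzero only if $\nu\in\mu\Lambda$; the compression is the Fock representation of $\Tt C^*(\Lambda, c)$, which is faithful by standard twisted $k$-graph theory (cf.~\cite{SWW14}), so $\iota^t_\Lambda$ is injective. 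Likewise, $\ell^2(\Gg)\subseteq\ell^2(\Gg\bowtie\Lambda)$ is invariant for $\{L_g, L_g^*\colon g\in\Gg\}$, and the compression is the left-regular $c$-twisted representation of $\Gg$ with image $C^*_r(\Gg, c)$; amenability identifies this with $C^*(\Gg, c)$, yielding injectivity of $\iota^t_\Gg$.

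For the remaining two claims I will build a Cuntz--Krieger $(\Gg,\Lambda; c)$-family $S$ on the Hilbert space $\Hh \coloneqq \bigoplus_v \ell^2(\Gg^v) \otimes \ell^2(v\Lambda^\infty)$ in which the $\Lambda$-generators act trivially on the first tensor factor and via the standard boundary-path shift on the second, while the $\Gg$-generators act via the $c$-twisted left-regular representation of $\Gg$ on the first factor combined with the induced self-similar action on paths (with cocycle corrections chosen to realise \ref{itm:TCK1}); relation \ref{itm:CK} holds because on each fibre the $\Lambda$-part is the standard boundary representation. Compressing to $\bigoplus_v \CC\delta_v \otimes \ell^2(v\Lambda^\infty)$ (using the identity-at-$v$ vector in $\ell^2(\Gg^v)$) gives the faithful boundary-path representation of $C^*(\Lambda,c)$, so $\iota^s_\Lambda$ is injective. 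For $\iota^s_\Gg$ the pseudo-freeness hypothesis---for each $v, n$ some $\lambda\in v\Lambda^n$ has $g\mapsto(g\ra\lambda, g\la\lambda)$ injective on $\Gg^v_v$---is precisely what ensures that $g\mapsto S_g$ faithfully realises the left-regular $c$-twisted representation of $\Gg$: applying the identity $S_g S_\lambda = c(g,\lambda) S_{g\la\lambda} S_{g\ra\lambda}$ to vectors $e_h\otimes e_{\lambda x}$ shows that $S_g = S_h$ on $\Hh$ forces $g\la\lambda = h\la\lambda$ and $g\ra\lambda = h\ra\lambda$, hence $g = h$; amenability of $\Gg$ then upgrades this left-regular faithfulness to faithfulness of $C^*(\Gg, c)$. \emph{The main obstacle} is the simultaneous verification of \ref{itm:TCK1}--\ref{itm:CK} for this $S$: the cocycle corrections that make the $\Gg$- and $\Lambda$-operators interact via $g\mu = (g\la\mu)(g\ra\mu)$ produce scalar factors that must be matched against $c$, and arranging the representation to be large enough both to satisfy \ref{itm:CK} and to separate distinct group elements via the pseudo-freeness witnesses is the technical heart of the argument.
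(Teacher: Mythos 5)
Your treatment of the Toeplitz-level claims is sound and essentially the paper's: the left-regular family of Example~\ref{eg:left regular} compressed to $\ell^2(\Lambda)$ (equivalently, verifying the hypothesis of the uniqueness theorem of \cite{SWW14} for $\pi^L\circ\iota^t_\Lambda$) gives injectivity of $\iota^t_\Lambda$, and compression to the invariant subspace $\ell^2(\Gg)$ plus amenability gives injectivity of $\iota^t_\Gg$; nonvanishing of the $t_\zeta$ then follows from \ref{itm:TCK2} exactly as you say.

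The Cuntz--Krieger half of your argument has two genuine gaps. First, the representation on $\bigoplus_v \ell^2(\Gg^v)\otimes\ell^2(v\Lambda^\infty)$ is not actually constructed: the tensor decomposition does not respect the fibration over vertices --- an operator $S_\mu$ must carry the fibre over $s(\mu)$ to the fibre over $r(\mu)$, and $\ell^2(\Gg^{s(\mu)})\neq\ell^2(\Gg^{r(\mu)})$, so $S_\mu$ cannot ``act trivially on the first tensor factor'' --- and you explicitly defer the verification of \ref{itm:TCK1}--\ref{itm:CK}, which is the entire content of the existence claim. Second, and independently fatal for $\iota^s_\Lambda$: the boundary-path representation of $C^*(\Lambda,c)$ is \emph{not} faithful in general (for $\Lambda$ a single vertex with one loop it is the one-dimensional representation of $C(\TT)$), so compressing to $\bigoplus_v\CC\delta_v\otimes\ell^2(v\Lambda^\infty)$ cannot prove injectivity of $\iota^s_\Lambda$; one needs either aperiodicity or a gauge action on the target. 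The paper avoids constructing any boundary representation. It shows directly that the vertex projections $t_v$ and the subalgebras $\iota^t_\Gg(C^*(\Gg^v_v,c))$ miss the ideal $I=\ker\big(\Tt C^*(\Gg,\Lambda;c)\to C^*(\Gg,\Lambda;c)\big)$, by proving that every $a\in I$ satisfies $\lim_{n}\big\|\pi^L(a)|_{\clsp\{e_{\zeta g}\,:\,\zeta\in\Lambda^n,\ g\in\Gg\}}\big\|=0$ in the left-regular representation, whereas the corresponding norms are constantly $1$ for $L_v$ and constantly $\|a\|$ for $a\in C^*(\Gg^v_v,c)$ (the latter using amenability and your injectivity hypothesis on the invariant subspaces $\Hh_{n,\lambda}=\clsp\{e_{g\lambda}\,:\,g\in\Gg^v_v\}$). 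Injectivity of $\iota^s_\Lambda$ then follows from the gauge-invariant uniqueness theorem of \cite{KPSiv}, and injectivity of $\iota^s_\Gg$ from a full-corner argument reducing $C^*(\Gg,c)$ to $\bigoplus_v C^*(\Gg^v_v,c)$ --- a reduction your sketch also omits, since the hypothesis only controls the isotropy groups, and separating the elements of $\Gg$ set-theoretically is in any case weaker than exhibiting the regular representation.
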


\begin{proof}
Let $L\colon \Gg \bowtie\Lambda \to \Bb(\ell^2(\Gg \bowtie \Lambda))$ be the Toeplitz--Cuntz--Krieger $(\Gg,\Lambda;
c)$-family of Example~\ref{eg:left regular}. Since the $L_v$ are nonzero, the universal property of $\Tt C^*(\Gg, \Lambda; c)$ implies that the $t_v$ are nonzero. As each $\|t_\zeta\|^2 = \|t_\zeta^*t_\zeta\| = \|t_{s(\zeta)}\|$, the $t_\zeta$ are all nonzero. For each $v \in \Lambda^0$ and $n \in \NN^k$, we have $(L_v - \sum_{\mu \in v\Lambda^n} L_\mu L^*_\mu)e_v = e_v \not= 0$, so \cite[Theorem~3.15]{SWW14} implies that $\Pi^L \circ \iota^t_\Lambda$ is injective, and hence $\iota^t_\Lambda$ itself is injective.

To see that the $s_\zeta$ are all nonzero, we follow the argument of \cite{Yusnitha}. For each $v \in \Lambda^0$ and $n \in \NN^k$, the projection $\Delta_{n,v} \coloneqq L_v - \sum_{\lambda \in v\Lambda^n}
L_\lambda L^*_\lambda$ vanishes on $\clsp\{e_{\lambda g}\colon d(\lambda) \ge
n\}$. A direct calculation using
that $d(g \la \lambda) = d(\lambda)$ for all $\lambda$, shows that $L_\lambda L_g
L^*_\mu \Delta_{n, v} L_\nu L_h L_\eta^* e_{\zeta g} = 0$ whenever $d(\zeta)
> d(\eta)$ and $d(\zeta) - d(\eta) + d(\nu) \ge n$. In particular, for $a
\in \lsp\{L_\lambda L_g L^*_\mu \Delta_{n, v} L_\nu L_h L_\eta^*\colon \lambda,
\mu,\nu,\eta \in \Lambda, g,h \in \Gg, n \in \NN^k, v \in \Lambda^0\}$,
regarding $\NN^k$ as a directed set, $\lim_{n \in \NN^k} \big\|a|_{\clsp\{e_{\zeta g}\colon \zeta \in
\Lambda^n, g \in \Gg\}}\big\| = 0$. An approximation argument gives
\begin{equation}\label{eq:norm to zero}
\lim_{n \in \NN^k} \big\|\pi^L(a)|_{\clsp\{e_{\zeta g}\colon \zeta \in \Lambda^n, g \in \Gg\}}\big\| = 0
    \quad\text{ for all $a \in I$.}
\end{equation}
Fix $v \in \Lambda^0$, $n \in \NN^k$, and $\zeta \in v\Lambda^n$. Then $\|L_v e_\zeta\| = \|e_\zeta\|
= 1$, and so
\[
\lim_{n \in \NN^k} \big\|\pi^L(t_v)|_{\clsp\{e_{\zeta g}\colon \zeta \in
\Lambda^n, g \in \Gg\}}\big\| = 1.
\]
Hence, $t_v \not\in I$, so $s_v = t_v + I \ne 0$. Now by~\ref{itm:TCK2}, each $\|s_\zeta\|^2 = \|s_\zeta s^*_\zeta\| = \|s_{s(\zeta)}\| > 0$. The homomorphism $\iota^s_\Lambda$ intertwines the gauge actions of $\TT^k$ on $C^*(\Lambda, c)$ and $C^*(\Gg, \Lambda; c)$, so the gauge-invariant uniqueness theorem \cite[Corollary~7.7]{KPSiv} implies that $\iota^s_\Lambda$ is injective.

The subspace $\ell^2(\Gg) \subseteq \ell^2(\Gg \bowtie \Lambda)$ is invariant for $\pi^L_\Gg\colon C^*(\Gg, c) \to \Bb(\ell^2(\Gg \bowtie \Lambda))$, and the reduction of $\pi^L$ to $\ell^2(\Gg)$  is the
left regular representation $\lambda$ of $C^*(\Gg, c)$. Since $\Gg$ is amenable, $\lambda$ is faithful~\cite[Proposition~9.6]{Wil19}, so $\pi^L_\Gg = \pi^L \circ \iota^t_\Gg$ is injective, and hence $\iota^t_\Gg$ is injective.

Finally, fix $v \in \Lambda^0$, $n \in \NN^k$, and
$\lambda \in \Lambda$ such that $g \mapsto (g\la\lambda, g\ra\lambda)$ is
injective on $\Gg^{r(\lambda)}_{r(\lambda)}$. Again following Yusnitha
\cite[Proposition~3.6]{Yusnitha}, the space $\Hh_{n, \lambda} \coloneqq \clsp\{e_{g\lambda\colon g \in \Gg^{v}_{v}}\}$ is invariant for $\{L_g\colon g \in \Gg^v_v\}$, and $e_{g\lambda} \mapsto \overline{c(g, \lambda)} e_g$ induces an isomorphism $U \colon \Hh_{n,\lambda} \to \ell^2(\Gg^v_v)$ that intertwines the
reduction of $\pi^L|_{C^*(\Gg^v_v, c)}$ with the regular representation. Since $\Gg$ is amenable, so is $\Gg^v_v$ and so the reduction of $\pi^L|_{C^*(\Gg^v_v, c)}$ to $\Hh_{n,\lambda}$ is faithful \cite[Proposition~9.6]{Wil19}. So, for $a \in C^*(\Gg^v_v, c)$, we have $\big\|\pi^L(a)|_{\clsp\{e_{\zeta g}\colon \zeta \in \Lambda^n, g \in \Gg\}}\big\| = \|a\|$ for all $n$. So by~\eqref{eq:norm to zero}, $a \not\in I$, so $\iota^s_\Gg$ is injective on each $C^*(\Gg^v_v, c)$.

Fix a subset $V \subseteq \Gg^0$ that intersects each
$\Gg$-orbit exactly once. Then $P_V = \sum_{v \in V}
\delta_v \in \Mm C^*(\Gg, c)$ is a full projection, and $P_V C^*(\Gg, c) P_V \cong \bigoplus_{v \in
V} C^*(\Gg^v_v, c)$. Since $\iota_s^\Gg$ is injective
on this full corner, it is injective on all of $C^*(\Gg, c)$.
\end{proof}

\begin{rmk}
Let $(\Gg, \Lambda)$ be a self-similar action of an amenable groupoid on a row-finite $k$-graph with no sources, and let $c : (\Gg \bowtie \Lambda)^2 \to \TT$ be a normalised categorical $2$-cocycle. Proposition~\ref{prop:faithfulness} shows that $\Tt C^*(\Gg, \Lambda; c)$ is generated by copies of $\Tt C^*(\Lambda; c)$ and $C^*(\Gg, c)$. It would be interesting to determine when $\Tt C^*(\Gg, \Lambda; c)$ is a $C^*$-blend of these two subalgebras in the sense of \cite{Exel}; or when $C^*(\Gg, \Lambda; c)$ is a blend of $C^*(\Lambda, c)$ and $C^*(\Gg, c)$ (the corresponding result for Zappa--Sz\'ep products of Fell bundles over groupoids appears in \cite[Theorem~5.4]{DL23}). For example, it seems likely that a contracting condition like that of \cite[Section~9]{LRRW18} or \cite[Section~2.11]{Nek05} implies that each spanning element $s_\mu s_g s^*_\nu$ of $C^*(\Gg, \Lambda; c)$ belongs to $\clsp\{s_\alpha s^*_\beta s_g \mid \alpha,\beta \in \Lambda, g \in \Gg\}$. But we do not pursue this question here.
\end{rmk}

We now prove a gauge-invariant uniqueness theorem for $C^*(\Gg,
\Lambda; c)$. The an~Huef--Raeburn uniqueness theorem, a by-now ubiquitous tool in the study of $C^*$-algebras of graphs and related objects, goes back to \cite{aHR}. Our argument in the context of twisted $C^*$-algebras of self-similar actions on $k$-graphs generalises those of \cite{LRRW18, ABRW, Yusnitha} for untwisted actions on graphs and $k$-graphs; our analysis of the core is heavily based on Yusnitha's \cite{Yusnitha}.

\begin{prop}[cf. {\cite[Lemma~4.10]{Yusnitha}}]\label{prp:faithful on core}
Let $(\Gg,\Lambda)$ be a self-similar groupoid action on a row-finite $k$-graph with no sources. Let $c\colon (\Gg \bowtie \Lambda)^2 \to \TT$ be a normalised categorical $2$-cocycle.  Then
\[
C^*(\Gg,\Lambda;c)^{\gamma} = \clsp\big\{s_\mu s_g s^*_\nu\colon \mu,\nu \in \Lambda, d(\mu) = d(\nu), g \in \Gg^{s(\mu)}_{s(\nu)}\big\}.
\]
Let $S$ be a Cuntz--Krieger $(\Lambda, \Gg; c)$-family, and suppose that $\pi^S\colon C^*(\Gg, \Lambda; c) \to C^*(S)$ is injective on $\iota^s_\Gg(C^*(\Gg, c))$. Then $\pi^S$ is injective on $C^*(\Gg, \Lambda;c)^\gamma$.
\end{prop}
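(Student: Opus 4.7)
The plan is to follow Yusnitha's strategy \cite{Yusnitha} from the untwisted case, filtering the core by $C^*$-subalgebras $F_n$ indexed by degree $n \in \NN^k$, identifying each as a direct sum over $\Gg$-orbits of matrix algebras over the embedded twisted isotropy-group algebras $\iota^s_\Gg(C^*(\Gg^v_v, c))$, and invoking the ideal classification in such tensor products. For the first claim, averaging over the gauge action produces a faithful conditional expectation $\Phi(a) = \int_{\TT^k} \gamma_z(a)\,dz$ onto $C^*(\Gg,\Lambda;c)^\gamma$ that kills $s_\mu s_g s_\nu^*$ when $d(\mu) \ne d(\nu)$ and fixes it otherwise; continuity of $\Phi$ together with the density of $\lsp\{s_\mu s_g s_\nu^*\}$ in $C^*(\Gg,\Lambda;c)$ gives the asserted description.

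For each $n \in \NN^k$, I will set $F_n \coloneqq \clsp\{s_\mu s_g s_\nu^*\colon d(\mu) = d(\nu) = n, g \in \Gg^{s(\mu)}_{s(\nu)}\}$. The identity $s_\nu^* s_\eta = \delta_{\nu,\eta} s_{s(\nu)}$ when $d(\nu) = d(\eta)$, which follows from \ref{itm:TCK2} and \ref{itm:TCK3}, together with \ref{itm:TCK1}, shows that the spanning set of $F_n$ is closed under products and adjoints up to nonzero scalars, so $F_n$ is a $C^*$-subalgebra. Applying \ref{itm:CK} to $s_{s(\nu)}$ and distributing via \ref{itm:TCK1} gives $F_n \subseteq F_{n'}$ whenever $n \le n'$, and the first claim yields $C^*(\Gg,\Lambda;c)^\gamma = \overline{\bigcup_n F_n}$. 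Since every injective $*$-homomorphism of $C^*$-algebras is isometric, it suffices to show that $\pi^S$ is injective on each $F_n$.

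For each $\Gg$-orbit $\lequiv v\requiv \subseteq \Lambda^0$, fix a representative $v$ and elements $g_w \in v\Gg w$ for $w \in \lequiv v\requiv$ with $g_v = v$, and for $\mu \in \Lambda^n$ with $s(\mu) \in \lequiv v\requiv$, set $T_\mu \coloneqq s_\mu s_{g_{s(\mu)}^{-1}}$. A direct calculation with \ref{itm:TCK1}--\ref{itm:TCK3} gives the partial-isometry relation $T_\nu^* T_\eta = \delta_{\nu,\eta} s_v$, and each $s_\mu s_g s_\nu^*$ with $s(\mu), s(\nu) \in \lequiv v\requiv$ is a nonzero scalar multiple of $T_\mu s_h T_\nu^*$ where $h = g_{s(\mu)} g g_{s(\nu)}^{-1} \in \Gg^v_v$. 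Products between spanning elements whose source vertices lie in distinct orbits vanish, decomposing $F_n = \bigoplus_{\lequiv v\requiv} F_n^{\lequiv v\requiv}$. For each nontrivial block, fixing $\mu_0 \in \Lambda^n$ with $s(\mu_0) \in \lequiv v\requiv$, the matrix units $\{T_\mu s_v T_\nu^*\}$ together with the corner $T_{\mu_0}\iota^s_\Gg(C^*(\Gg^v_v, c))T_{\mu_0}^*$ generate $F_n^{\lequiv v\requiv}$, yielding an isomorphism $F_n^{\lequiv v\requiv} \cong \Kk(\ell^2(\Lambda^n \cdot \lequiv v\requiv)) \otimes \iota^s_\Gg(C^*(\Gg^v_v, c))$.

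Finally, every ideal of $\Kk(\Hh) \otimes A$ has the form $\Kk(\Hh) \otimes J$ for an ideal $J \subseteq A$, so injectivity of $\pi^S$ on $F_n^{\lequiv v\requiv}$ reduces to injectivity on the corner $T_{\mu_0}\iota^s_\Gg(C^*(\Gg^v_v, c))T_{\mu_0}^*$. Conjugation by $T_{\mu_0}$, which satisfies $T_{\mu_0}^* T_{\mu_0} = s_v$, implements a $*$-isomorphism between this corner and $\iota^s_\Gg(C^*(\Gg^v_v, c))$ intertwining the restrictions of $\pi^S$; since $\iota^s_\Gg(C^*(\Gg^v_v, c)) \subseteq \iota^s_\Gg(C^*(\Gg, c))$, the hypothesis delivers the required injectivity. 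The principal bookkeeping challenge is tracking the nonzero scalar factors that the normalised cocycle $c$ introduces into the matrix-unit computations at each step, but since they are all invertible they do not affect the ideal-theoretic or isometric conclusions.
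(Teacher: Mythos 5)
Your proposal is correct and follows essentially the same route as the paper: the conditional expectation $\Phi$ identifies the core, the same filtration by the subalgebras $F_n$ reduces the problem to each fixed degree, and each $F_n$ is reduced to the embedded isotropy algebras $\iota^s_\Gg(C^*(\Gg^v_v,c))$. The only (cosmetic) difference is in the last reduction: you build explicit matrix units $T_\mu$ and identify each orbit block as $\Kk \otimes \iota^s_\Gg(C^*(\Gg^v_v,c))$, whereas the paper compresses by the full projection $P_K = \sum_{\lambda\in K} s_\lambda s^*_\lambda$ and uses the ideal correspondence for full corners -- two standard implementations of the same Morita-type argument.
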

\begin{proof}
Let $\Phi\colon C^*(\Gg,\Lambda; c) \to C^*(\Gg, \Lambda; c)^\gamma$ be the faithful conditional
expectation satisfying $\Phi(a) = \int_{\TT^k} \gamma_z(a) \, dz$ \cite[Proposition~3.2]{RaeburnCBMS}. Then $\Phi(s_\mu s_g
s^*_\nu) = \delta_{d(\mu), d(\nu)} s_\mu s_g s^*_\nu$, and so
\begin{align*}
C^*(\Gg, \Lambda; c)^\gamma
    &= \Phi(C^*(\Gg, \Lambda; c))
    = \Phi\big(\clsp\big\{s_\mu s_g s^*_\nu\colon \mu,\nu \in \Lambda, g \in \Gg^{s(\mu)}_{s(\nu)}\big\}\big)\\
    &= \clsp\big\{s_\mu s_g s^*_\nu\colon \mu,\nu \in \Lambda, d(\mu) = d(\nu), g \in \Gg^{s(\mu)}_{s(\nu)}\big\}.
\end{align*}

For fixed $n \in \NN^k$,
\[
F_n \coloneqq \clsp\big\{s_\mu s_g s^*_\nu\colon \mu,\nu \in \Lambda^n, g \in \Gg^{s(\mu)}_{s(\nu)}\big\}
\]
is a $C^*$-subalgebra of $C^*(\Gg, \Lambda; c)^\gamma$. If $m, n \in \NN^k$, $\mu,\nu \in \Lambda^m$, and $g \in \Gg_{s(\nu)}^{s(\mu)}$, then
\begin{align*}
 s_\mu s_g s^*_\nu
    &= s_\mu s_g \sum_{\tau \in s(g)\Lambda^n} s_\tau s^*_\tau s^*_\nu
    = \sum_{\tau \in s(g)\Lambda^n} c(g, \tau)\overline{c(g\la\tau, g\ra\tau)} s_\mu s_{g\la\tau} s_{g\ra\tau} s^*_\tau s^*_\nu\\
    &= \sum_{\tau \in s(g)\Lambda^n} c(\mu,g\la\tau) \overline{c(\nu,\tau)} c(g, \tau)\overline{c(g\la\tau, g\ra\tau)} s_{\mu g\la\tau} s_{g\ra\tau} s^*_{\nu\tau}
    \in F_{m+n}.
\end{align*}
So $F_m \subseteq F_{m+n}$ and $C^*(\Gg, \Lambda;
c)^\gamma = \ol{\bigcup_n F_n}$. So to see that $\pi^S$ is
injective on $C^*(\Gg, \Lambda; c)^\gamma$, it suffices to show that it is
injective, and hence isometric, on each $F_n$.

Fix $n \in \NN^k$. Consider the equivalence relation $\sim$ on $\Lambda^n$ such that $\lambda \sim \mu$ if and only if $\Gg^{s(\lambda)}_{s(\mu)} \not=\varnothing$. Let $K \subseteq \Lambda^n$ be a set of representatives of $\Lambda^n/{\sim}$. For $\lambda \in \Lambda^n$, there exists $\mu \in K$ with $\mu \sim \lambda$, say $g \in \Gg^{s(\lambda)}_{s(\mu)}$. So $s_\lambda s^*_\lambda = s_\lambda u_g s^*_\mu (s_\mu s^*_\mu) s_\mu u_g^* s_\lambda$. Since $\sum_{\lambda \in \Lambda^n} s_\lambda s^*_\lambda$ is an approximate identity for $F_n$ it
follows that $P_K \coloneqq \sum_{\lambda \in K} s_\lambda s^*_\lambda$ is a full
projection in $\Mm F_n$. So it suffices to show that $\pi^S$ is injective on
$P_K F_n P_K$.

For distinct $\lambda,\mu \in K$ we have $s_\lambda s^*_\lambda F_k s_\mu
s^*_\mu = \{0\}$ by definition of $\sim$, and so $P_k F_k P_k \cong
\bigoplus_{\lambda \in K} s_\lambda s^*_\lambda F_k s_\lambda s^*_\lambda$.
So it suffices to show that $\pi^S$ is injective on each $s_\lambda
s^*_\lambda F_k s_\lambda s^*_\lambda$.

Fix $\lambda \in K$, and let $v \coloneqq s(\lambda)$. Since the $s_\mu s^*_\mu$, for $\mu \in \Lambda^n$,  are mutually orthogonal, $s_\lambda s^*_\lambda F_k
s_\lambda s^*_\lambda = \clsp\{s_\lambda s_g s^*_\lambda\colon g \in
\Gg^v_v\}$. Conjugation by $s_\lambda$ is an isomorphism of this
subalgebra onto $\clsp\{s_g\colon g\in \Gg^v_v\}$. Similarly, $\clsp\{S_\lambda S_gS^*_\lambda\colon g \in \Gg^v_v\} \cong \clsp\{S_g \colon g\in
\Gg^v_v\}$ via conjugation by $S_\lambda$. Since $\pi^S(s_\lambda) = S_\lambda$, and $\pi^S$
is injective on $\clsp\{s_g\colon g\in \Gg^v_v\}$, the result follows.
\end{proof}

We obtain a version of an Huef and Raeburn's gauge-invariant uniqueness theorem \cite{aHR}.

\begin{cor}[The Gauge-Invariant Uniqueness Theorem]\label{cor:giut}
Let $(\Gg,\Lambda)$ be a self-similar action of a groupoid on a row-finite $k$-graph with no sources, and let $c\colon (\Gg \bowtie \Lambda)^2 \to \TT$ be a normalised categorical $2$-cocycle. Let $S$ be a Cuntz--Krieger $(\Gg, \Lambda; c)$ family in a $C^*$-algebra $A$. If there is a strongly-continuous action $\beta\colon \TT^k \to \Aut(A)$ such that $\beta_z(S_\zeta) =
z^{d(\zeta)} s_\zeta$ for all $\zeta \in \Gg \bowtie \Lambda$, and if
$\pi^S\colon C^*(\Gg, \Lambda; c) \to C^*(S)$ is injective on $\iota^s_\Gg(C^*(\Gg, c))$, then $\pi^S$ is injective.
\end{cor}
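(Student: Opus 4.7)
The strategy is the classical gauge-invariant uniqueness argument, pioneered in this form by an Huef and Raeburn \cite{aHR}, but now applied in the twisted self-similar setting. The key ingredient is Proposition~\ref{prp:faithful on core}, which guarantees that $\pi^S$ is already injective on the entire fixed-point subalgebra $C^*(\Gg,\Lambda;c)^\gamma$; once we have this, the rest is a standard averaging argument.

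First, I would note that the hypothesis on $\beta$, together with the universal property in Proposition~\ref{prp:universal exists}, ensures that $\pi^S$ is equivariant: $\pi^S \circ \gamma_z = \beta_z \circ \pi^S$ for all $z \in \TT^k$. Since $\gamma$ is a strongly continuous action of the compact group $\TT^k$ on $C^*(\Gg,\Lambda;c)$, averaging yields a faithful conditional expectation $\Phi \colon C^*(\Gg,\Lambda;c) \to C^*(\Gg,\Lambda;c)^\gamma$ defined by $\Phi(a) = \int_{\TT^k} \gamma_z(a)\, dz$ (as used in the proof of Proposition~\ref{prp:faithful on core}). The action $\beta$ likewise gives a conditional expectation $\Phi^\beta \colon C^*(S) \to C^*(S)^\beta$ (not assumed faithful), and equivariance implies $\pi^S \circ \Phi = \Phi^\beta \circ \pi^S$.

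Next I would invoke Proposition~\ref{prp:faithful on core}: since $\pi^S$ is injective on $\iota^s_\Gg(C^*(\Gg,c))$ by hypothesis, that proposition gives injectivity of $\pi^S$ on all of $C^*(\Gg,\Lambda;c)^\gamma$. Now suppose $a \in C^*(\Gg,\Lambda;c)$ with $\pi^S(a) = 0$. Then $\pi^S(a^*a) = 0$, so
\[
\pi^S(\Phi(a^*a)) = \Phi^\beta(\pi^S(a^*a)) = 0.
\]
Since $\Phi(a^*a) \in C^*(\Gg,\Lambda;c)^\gamma$ and $\pi^S$ is injective there, we conclude $\Phi(a^*a) = 0$. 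Faithfulness of $\Phi$ then forces $a^*a = 0$, and hence $a = 0$. Thus $\pi^S$ is injective on all of $C^*(\Gg,\Lambda;c)$.

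There is no serious obstacle here: the entire weight of the argument has been absorbed into Proposition~\ref{prp:faithful on core} (and, upstream of that, into the nonvanishing analysis of Proposition~\ref{prop:faithfulness}). The only points requiring care are verifying equivariance $\pi^S \circ \gamma_z = \beta_z \circ \pi^S$ on the dense $^*$-subalgebra generated by the $s_\zeta$ and extending by continuity, and being slightly careful that $\Phi^\beta$ need not be faithful (which is why the argument is run on $a^*a$ rather than directly on $a$, and why faithfulness of $\Phi$, not $\Phi^\beta$, is what we actually use).
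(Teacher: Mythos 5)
Your proposal is correct and follows the same route as the paper: injectivity on the fixed-point algebra via Proposition~\ref{prp:faithful on core}, equivariance of $\pi^S$, and the standard averaging argument with the faithful conditional expectation $\Phi$. The only difference is that the paper outsources the final step to \cite[Lemma~3.14]{SWW14}, whereas you write it out explicitly (correctly noting that only faithfulness of $\Phi$, not of the expectation on $C^*(S)$, is needed).
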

\begin{proof}
The assumptions combined with Proposition~\ref{prp:faithful
on core} show that $\pi^S$ is injective on $C^*(\Gg, \Lambda; c)^{\gamma}$. Define $\Gamma \colon A \to A$ by $\Gamma(a) = \int_{\TT^k} \beta_z(a) \, dz$.
Since $\beta_z \circ \pi^S = \pi^S \circ \gamma_z$
for all $z$, we have $\pi^S \circ \Phi = \Gamma \circ \pi^S$, and then \cite[Lemma~3.14]{SWW14} shows that $\pi^S$ is injective.
\end{proof}

We now show that the isomorphism class of the twisted $C^*$-algebra of a self-similar action of a groupoid on a $k$-graph depends only on the cohomology class of the twisting $2$-cocycle. The argument is standard; see, for example, \cite[Proposition~5.6]{KPSiv}.

\begin{prop}
Let $(\Gg,\Lambda)$ be a self-similar action on a row-finite $k$-graph with no sources, and let $c_1, c_2\colon (\Gg \bowtie \Lambda)^2 \to
\TT$ be normalised categorical $2$-cocycles. Suppose that $b\colon \Gg
\bowtie \Lambda \to \TT$ is a categorical $1$-cochain such that $d_{\bowtie}^1(b)
c_1 = c_2$. For $i = 1, 2$ let $t^i$ be the universal
Toeplitz--Cuntz--Krieger family in $\Tt C^*(\Gg, \Lambda; c_i)$. Then there
is an isomorphism $\theta_b\colon \Tt C^*(\Gg, \Lambda; c_2) \to \Tt C^*(\Gg,
\Lambda, c_1)$ such that $\theta_b(t^2_\zeta) = b(\zeta)t^1_\zeta$ for all
$\zeta \in \Gg \bowtie \Lambda$. This
isomorphism descends to an isomorphism
$\widetilde{\theta}_b \colon C^*(\Gg, \Lambda; c_2) \to C^*(\Gg, \Lambda, c_1)$.
\end{prop}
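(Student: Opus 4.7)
The strategy is the usual ``perturb by a 1-cochain'' argument: define a candidate family $T$ in $\Tt C^*(\Gg,\Lambda;c_1)$ by $T_\zeta \coloneqq b(\zeta) t^1_\zeta$, verify it satisfies the Toeplitz--Cuntz--Krieger relations for the cocycle $c_2$, and invoke the universal property of $\Tt C^*(\Gg,\Lambda;c_2)$ to obtain $\theta_b$.

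First I would observe that although the hypothesis only asks $b$ to be a $1$-cochain, applying the coboundary identity $d^1_{\bowtie}(b)c_1 = c_2$ at the composable pair $(v,v) \in (\Gg\bowtie\Lambda)^2$ for each object $v$ forces $b(v) = 1$, so $b$ is automatically normalised on $(\Gg\bowtie\Lambda)^0$. The verification of \ref{itm:TCK1} for $T$ reduces to the scalar identity $b(\zeta)b(\eta)c_1(\zeta,\eta) = c_2(\zeta,\eta)b(\zeta\eta)$, which rearranges to $d^1_{\bowtie}(b)(\zeta,\eta)c_1(\zeta,\eta) = c_2(\zeta,\eta)$, i.e.\ the hypothesis on $b$. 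Relations \ref{itm:TCK2} and \ref{itm:TCK3} involve products of the form $T_\zeta^* T_\zeta$ and $T_\mu T_\mu^* T_\nu T_\nu^*$, in which the phases $b(\zeta)$ and $\overline{b(\zeta)}$ (respectively $|b(\mu)|^2 |b(\nu)|^2$) cancel, so they hold because the corresponding identities hold for $t^1$, with the normalisation $b(v) = 1$ handling \ref{itm:TCK2} at identity morphisms. By universality of $\Tt C^*(\Gg,\Lambda;c_2)$ we obtain the homomorphism $\theta_b$ with $\theta_b(t^2_\zeta) = b(\zeta)t^1_\zeta$.

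To see $\theta_b$ is an isomorphism, note that $\overline{b}$ is also a $1$-cochain and satisfies $d^1_{\bowtie}(\overline{b})c_2 = c_1$ (by taking complex conjugates of the coboundary identity), so the same construction produces a homomorphism $\theta_{\overline{b}} \colon \Tt C^*(\Gg,\Lambda;c_1) \to \Tt C^*(\Gg,\Lambda;c_2)$ with $\theta_{\overline{b}}(t^1_\zeta) = \overline{b(\zeta)}t^2_\zeta$. Computing on the generating families shows $\theta_b \circ \theta_{\overline{b}} = \id$ and $\theta_{\overline{b}} \circ \theta_b = \id$.

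For the final statement, let $I_i$ denote the ideal of $\Tt C^*(\Gg,\Lambda;c_i)$ generated by the relations defining $C^*(\Gg,\Lambda;c_i)$. For each $v \in \Lambda^0$ and $n \in \NN^k$, $\theta_b$ sends the generator $t^2_v - \sum_{\lambda \in v\Lambda^n} t^2_\lambda (t^2_\lambda)^*$ of $I_2$ to $b(v)t^1_v - \sum_{\lambda \in v\Lambda^n} |b(\lambda)|^2 t^1_\lambda (t^1_\lambda)^* = t^1_v - \sum_{\lambda \in v\Lambda^n} t^1_\lambda (t^1_\lambda)^* \in I_1$, using $b(v) = 1$ and $|b(\lambda)| = 1$. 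Hence $\theta_b(I_2) \subseteq I_1$, and symmetrically $\theta_{\overline{b}}(I_1) \subseteq I_2$, so $\theta_b$ descends to the claimed isomorphism $\widetilde{\theta}_b$ of quotients. The only real subtlety is the observation that $b$ is automatically normalised on objects; the rest is bookkeeping with unimodular scalars.
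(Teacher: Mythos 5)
Your proposal is correct and follows essentially the same route as the paper: perturb the universal family by $b$, check that the coboundary identity gives \ref{itm:TCK1} for $c_2$ while the unimodular phases cancel in \ref{itm:TCK2}--\ref{itm:TCK3} and \ref{itm:CK}, and invert via $\overline{b}$. Your explicit observation that evaluating $d^1_{\bowtie}(b)c_1 = c_2$ at $(v,v)$ forces $b(v)=1$ is a worthwhile refinement — the paper leaves this implicit, but it is genuinely needed for \ref{itm:TCK2} at objects and for the descent to the Cuntz--Krieger quotient.
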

\begin{proof}
Define $bt\colon \Gg \bowtie \Lambda \to \Tt C^*(\Gg, \Lambda; c_1)$ by
$
(bt)_\zeta \coloneqq b(\zeta) t^1_\zeta.
$
For $(\zeta, \eta) \in (\Gg \bowtie \Lambda)^2$,
\[
(bt)_\zeta (bt)_\eta
    = b(\zeta)t^1_\zeta b(\eta)t^1_\eta
    = b(\zeta\eta) d^1(b)(\zeta, \eta) c_1(\zeta,\eta) t^1_{\zeta\eta}
    = c_2(\zeta, \eta) (bt)_{\zeta\eta}.
\]
So $bt$ satisfies~\ref{itm:TCK1}. It also satisfies
\ref{itm:TCK2} and \ref{itm:TCK3}, and satisfies~\ref{itm:CK} if and only if $t$ does,
because the factors of $b(\zeta)$ and $\overline{b(\zeta)}$ in these relations cancel.

The universal property of $\Tt C^*(\Gg, \Lambda; c_2)$ gives a homomorphism
$\theta_b\colon \Tt C^*(\Gg, \Lambda; c_2) \to \Tt C^*(\Gg, \Lambda, c_1)$ such
that $\theta_b(t^2_\zeta) = b(\zeta)t^1_\zeta$ for all $\zeta \in \Gg \bowtie
\Lambda$, which descends to a homomorphism $ \widetilde{\theta}_b \colon C^*(\Gg, \Lambda;
c_2) \to C^*(\Gg, \Lambda, c_1)$. Since $d^1(\overline{b}) c_2 = c_1$, there is a corresponding homomorphism $\theta_{\overline{b}}\colon \Tt
C^*(\Gg, \Lambda; c_1) \to \Tt C^*(\Gg, \Lambda, c_2)$ such that
$\theta_{\overline{b}}(t^2_\zeta) = \overline{b(\zeta)}t^1_\zeta$, which also
descends to Cuntz--Krieger algebras. Since $\theta_{b} \circ
\theta_{\overline{b}}$ and $\theta_{\overline{b}} \circ \theta_b$ fix the
generators $t^i_\zeta$, they are the identity homomorphisms, and this descends to Cuntz--Krieger algebras as well.
\end{proof}

\subsection{Twists by total 2-cocycles}
\label{subsec:total twists}

We describe the twisted $C^*$-algebra of a self-similar action on a $k$-graph with respect to a total $2$-cocycle, and show that we obtain the same class of $C^*$-algebras as for categorical cohomology.

\begin{dfn}
Let $(\Gg,\Lambda)$ be a self-similar action of a groupoid on a row-finite $k$-graph with no sources. A function $\varphi\colon \Gg^{2}
	\sqcup (\Gg * \Lambda) \sqcup \Lambda^{2} \to \TT$ is a \emph{normalised total $\TT$-valued $2$-cocycle} on $(\Gg,
	\Lambda)$, if $\varphi_{2,0} \coloneqq \varphi|_{\Gg^{2}}$,
	$\varphi_{1,1} \coloneqq \varphi|_{\Gg * \Lambda}$ and $\varphi_{0,2} \coloneqq
	\varphi|_{\Lambda^{2}}$ satisfy
	\begin{enumerate}
		\item $\varphi_{2,0}\colon \Gg^{2} \to \TT$ is a normalised $\TT$-valued $2$-cocycle in the
		sense of \cite{Renault};
		\item $\varphi_{0,2}\colon \Lambda^{2} \to \TT$ is a normalised $\TT$-valued categorical
		$2$-cocycle in the sense of \cite{KPSiv}; and
		\item $\varphi_{1,1}(h,\lambda) = 1$ whenever $h \in \Gg^0$, or $\lambda \in \Lambda^0$, and for $(g, h, \lambda, \mu) \in \Gg * \Gg * \Lambda *
		\Lambda$,
		\begin{align*}
			\varphi_{1,1}(h\ra\lambda,\mu)  \ol{\varphi_{1,1}(h, \lambda\mu)}  \varphi_{1,1}(h, \lambda)  \varphi_{0,2}(\lambda,\mu)  \ol{\varphi_{0,2}(h\la(\lambda,\mu))} &= 1\quad\text{and}\\
			\varphi_{2,0}((g,h) \ra\lambda)  \ol{\varphi_{2,0}(g,h)}\,\ol{\varphi_{1,1}(h, \lambda)} \varphi_{1,1}(gh,\lambda)  \ol{\varphi_{1,1}(g, h\la\lambda)} &= 1.
		\end{align*}
	\end{enumerate}
\end{dfn}

\begin{rmk}
	In defining a normalised total $\TT$-valued 2-cocycle we have just written out explicitly what it means for $\varphi$ to be a cocycle in $C^2_{\Tot}(\Gg, \Lambda; \TT)$. This can be verified by computing what it means for a cochain to be in the kernel of $ d^2_{\Tot} \colon C^2_{\Tot}(\Gg, \Lambda; \TT) \to C^3_{\Tot}(\Gg, \Lambda; \TT)$, which satisfies $ d^2_{\Tot} \circ \varphi = \varphi \circ d_2^{\Tot}$, where $d_2^{\Tot}$ is from Section~\ref{subsec:total_complex}.
\end{rmk}

\begin{dfn}
Let $(\Gg,\Lambda)$ be a self-similar action of a groupoid on a row-finite $k$-graph with no sources. Fix a normalised cocycle $\varphi \in C^2_{\Tot}(\Gg, \Lambda;
\TT)$ and let $A$ be a $C^*$-algebra. A pair of functions $\Ttt \colon
\Lambda \to A$ and $\Www \colon \Gg \to A$ is a \emph{Toeplitz
$\varphi$-pair} if:
\begin{enumerate}[labelindent=0pt,labelwidth=\widthof{\ref{itm:T1}},label=(T\arabic*), ref=(T\arabic*),leftmargin=!]
\item \label{itm:T1} $\Ttt$ is a Toeplitz--Cuntz--Krieger $(\Lambda,
    \varphi_{0,2})$-family in the sense of \cite{SWW14},
\item \label{itm:T2} $\Www$ is a unitary representation of $(\Gg, \varphi_{2,0})$,
    and
\item \label{itm:T3} $\Www_g\Ttt_\lambda = \varphi_{1,1}(g,\lambda)
    \Ttt_{g\la\lambda}\Www_{g\ra\lambda}$ for all
    $(g,\lambda) \in \Gg * \Lambda$.
\end{enumerate}
We call $(\Www, \Ttt)$ a \emph{Cuntz--Krieger $\varphi$-pair}
if $\Ttt$ is a Cuntz--Krieger $(\Lambda,
\varphi_{0,2})$-family.
\end{dfn}

With $\Psi_{\bullet} \colon C_{\bullet}^{\bowtie}(\Gg,\Lambda) \to C_{\bullet}^{\Tot}(\Gg, \Lambda)$ as in Subsection~\ref{sec:Psi},
define $\Psi^{\bullet} \colon C^{\bullet}_{\Tot}(\Gg,\Lambda; \TT) \to C^{\bullet}_{\bowtie}(\Gg,\Lambda; \TT)$ by
$\Psi^k = \varphi \circ \Psi_k$. Then $\Psi^{\bullet}$ induces an isomorphism on cohomology. On $2$-cochains,
\[
\Psi^2(\varphi) (\lambda g, \mu h) =  \varphi_{2,0}(g\ra\mu, h)\varphi_{1,1}(g,\mu)\varphi_{0,2}(\lambda, g\la\mu)
\]
for all $(\lambda,g,\mu,h) \in \Lambda * \Gg * \Lambda * \Gg$.

We show that  $\Tt C^*(\Gg, \Lambda; \Psi^2(\varphi))$ is universal for Toeplitz $\varphi$-pairs,
and $C^*(\Gg, \Lambda; \Psi^2(\varphi))$ is universal for Cuntz--Krieger $\varphi$-pairs.

\begin{thm}\label{thm:total twist universal}
Let $(\Gg,\Lambda)$ be a self-similar action of a groupoid on a row-finite $k$-graph with no sources. Fix a normalised cocycle $\varphi \in C^2_{\Tot}(\Gg, \Lambda; \TT)$ and let $c \coloneqq \Psi^2(\varphi) \in C_{\bowtie}^2 (\Gg, \Lambda ; \TT)$.
\begin{enumerate}
\item\label{itm:2twists_1} Let $t\colon \Gg \bowtie \Lambda \to \Tt C^*(\Gg,\Lambda; c)$ be the universal Toeplitz--Cuntz--Krieger $(\Gg, \Lambda; c)$-family. There is a Toeplitz $\varphi$-pair $\Ttt, \Www$ in $\Tt C^*(\Gg,\Lambda; c)$
    given by $\Ttt = t|_\Lambda$ and $\Www = t|_\Gg$. Moreover $\Tt C^*(\Gg,\Lambda; c)$ is generated by the ranges of $\Ttt$ and $\Www$, and is universal in the sense that given any Toeplitz $\varphi$-pair
    $\Ttt', \Www'$ in a $C^*$-algebra $A$, there is a homomorphism $\rho\colon \Tt C^*(\Gg,\Lambda; c) \to A$ such that $\rho \circ \Ttt = \Ttt'$ and $\rho \circ \Www = \Www'$.
\item \label{itm:2twists_2} Let $s\colon \Gg \bowtie \Lambda \to C^*(\Gg, \Lambda; c)$ be the universal Cuntz--Krieger $(\Gg, \Lambda; c)$-family. Then there is a Cuntz--Krieger $\varphi$-pair $\Sss, \Uuu$ in $C^*(\Gg,\Lambda;
    c)$ given by $\Sss = s|_\Lambda$ and $\Uuu = s|_\Gg$. Moreover $C^*(\Gg,\Lambda; c)$ is generated by the ranges of $\Sss$ and $\Uuu$, and is universal in the sense that given any Cuntz--Krieger
    $\varphi$-pair $\Sss', \Uuu'$ in a $C^*$-algebra $A$, there is a homomorphism $\rho\colon C^*(\Gg,\Lambda; c) \to A$ such that $\rho \circ \Sss = \Sss'$ and $\rho \circ \Uuu = \Uuu'$.
\end{enumerate}
\end{thm}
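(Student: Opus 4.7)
Our approach will be to exploit the explicit formula $\Psi^2(\varphi)(\lambda_1 g_1, \lambda_2 g_2) = \varphi_{2,0}(g_1 \ra \lambda_2, g_2)\varphi_{1,1}(g_1, \lambda_2)\varphi_{0,2}(\lambda_1, g_1 \la \lambda_2)$ together with the normalisation of $\varphi$ to translate between categorical and total cocycle twists. The first step will be to evaluate $c \coloneqq \Psi^2(\varphi)$ on each of the natural subsets of $(\Gg \bowtie \Lambda)^2$. Since $\varphi_{2,0}$, $\varphi_{1,1}$, and $\varphi_{0,2}$ are normalised and since units act trivially under $\la$ and $\ra$, a routine case-check will give the identities $c|_{\Lambda^2} = \varphi_{0,2}$, $c|_{\Gg^2} = \varphi_{2,0}$, $c(g, \lambda) = \varphi_{1,1}(g, \lambda)$ for $(g, \lambda) \in \Gg * \Lambda$, $c(\lambda, g) = 1$ for $(\lambda, g) \in \Lambda * \Gg$, and $c(g\la\lambda, g\ra\lambda) = 1$ for $(g, \lambda) \in \Gg * \Lambda$.

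For part~(i), these identities immediately show that $\Ttt \coloneqq t|_\Lambda$ and $\Www \coloneqq t|_\Gg$ satisfy~\ref{itm:T1} and~\ref{itm:T2}. Relation~\ref{itm:T3} will follow from~\ref{itm:TCK1}: using $c(g\la\lambda, g\ra\lambda) = 1$ we have $\Ttt_{g\la\lambda}\Www_{g\ra\lambda} = t_{(g\la\lambda)(g\ra\lambda)} = t_{g\lambda}$, and~\ref{itm:TCK1} combined with $c(g,\lambda) = \varphi_{1,1}(g,\lambda)$ yields $\Www_g \Ttt_\lambda = \varphi_{1,1}(g,\lambda)\Ttt_{g\la\lambda}\Www_{g\ra\lambda}$. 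Since $c(\lambda, g) = 1$, each generator decomposes as $t_{\lambda g} = \Ttt_\lambda \Www_g$, so the ranges of $\Ttt$ and $\Www$ generate $\Tt C^*(\Gg, \Lambda; c)$. For the universal property, given a Toeplitz $\varphi$-pair $(\Ttt', \Www')$ in a $C^*$-algebra $A$, we will define $t'_{\lambda g} \coloneqq \Ttt'_\lambda \Www'_g$ and verify the Toeplitz--Cuntz--Krieger relations. The crux is~\ref{itm:TCK1}: for $\zeta = \lambda_1 g_1$ and $\eta = \lambda_2 g_2$, applying~\ref{itm:T3} to move $\Www'_{g_1}$ past $\Ttt'_{\lambda_2}$ and then the twisted multiplicativity of $\Ttt'$ and $\Www'$ produces
\[
t'_\zeta t'_\eta = \varphi_{0,2}(\lambda_1, g_1 \la \lambda_2)\varphi_{1,1}(g_1, \lambda_2)\varphi_{2,0}(g_1 \ra \lambda_2, g_2)\, t'_{\zeta\eta} = c(\zeta, \eta)\, t'_{\zeta\eta}.
\]
Relation~\ref{itm:TCK2} reduces to the partial-isometry properties of $\Ttt'$ and $\Www'$, while~\ref{itm:TCK3} involves only $\Ttt'$ and is immediate. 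The universal property of $\Tt C^*(\Gg, \Lambda; c)$ then supplies the required homomorphism $\rho$.

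Part~(ii) will then follow by passing to the quotient: relation~\ref{itm:CK} on $\Ttt$ is precisely the generating relation for the ideal $I$ with $C^*(\Gg, \Lambda; c) = \Tt C^*(\Gg, \Lambda; c)/I$, so the correspondence of part~(i) descends cleanly, yielding both the Cuntz--Krieger $\varphi$-pair $(\Sss, \Uuu) = (s|_\Lambda, s|_\Gg)$ and its universal property. The main obstacle will be careful bookkeeping of the vertex projections: when combining $\Ttt$ and $\Www$ (or $\Ttt'$ and $\Www'$) via $t_{\lambda g} = \Ttt_\lambda \Www_g$, we must identify $\Ttt_v = \Www_v$ for $v \in \Gg^0 = \Lambda^0$ to ensure consistency with~\ref{itm:TCK2} at identity morphisms and so that $t_\mu = \Ttt_\mu$ for $\mu \in \Lambda$ and $t_g = \Www_g$ for $g \in \Gg$. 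This identification is natural given the shared object set, but must be tracked through every partial-isometry manipulation in the argument.
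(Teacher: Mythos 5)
Your proposal is correct and follows essentially the same route as the paper: the paper isolates your translation identities and the correspondence between Toeplitz $\varphi$-pairs and Toeplitz--Cuntz--Krieger $(\Gg,\Lambda;c)$-families as Lemma~\ref{lem:pairs<->families} (including the key computations $c(g,\lambda)=\varphi_{1,1}(g,\lambda)$, $c(\mu,h)=1$, and the \ref{itm:TCK1}/\ref{itm:T3} verifications you sketch), and then deduces the theorem from the universal properties exactly as you do. Your closing remark about identifying $\Ttt_v=\Www_v$ for $v\in\Gg^0=\Lambda^0$ is the same bookkeeping point the paper handles in the lemma.
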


The theorem follows from the following correspondence between $\varphi$ pairs and $(\Gg, \Lambda; c)$-families.

\begin{lem}\label{lem:pairs<->families}
Let $(\Gg,\Lambda)$ be a self-similar action of a groupoid on a row-finite $k$-graph with no sources. Fix a normalised cocycle $\varphi \in C^2_{\Tot}(\Gg, \Lambda;
\TT)$ and let $c \coloneqq \Psi^2(\varphi)  \in C_{\bowtie}^2 (\Gg, \Lambda ; \TT)$. If $\Ttt$, $\Www$ is a Toeplitz $\varphi$-pair in a $C^*$-algebra $A$, then
\[
t_{\lambda g} \coloneqq \Ttt_\lambda \Www_g
\]
defines a Toeplitz--Cuntz--Krieger $(\Gg, \Lambda;c)$-family in $A$. If $t$ is a Toeplitz--Cuntz--Krieger $(\Gg, \Lambda; c)$-family in $A$, then $\Ttt_\lambda \coloneqq t_\lambda$ for $\lambda \in \Lambda$ and
$\Www_g \coloneqq t_g$ for $g \in \Gg$ defines a Toeplitz $\varphi$-pair. Moreover, $\Ttt, \Www$ is a Cuntz--Krieger $\varphi$-pair if and only if $t$ is a Cuntz--Krieger $(\Gg, \Lambda; c)$-family.
\end{lem}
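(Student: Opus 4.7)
My proof of Lemma~\ref{lem:pairs<->families} will verify each direction of the correspondence directly, with almost all of the content being a careful bookkeeping of cocycle factors. Unique factorisation $\zeta = \lambda g$ in $\Gg \bowtie \Lambda$ (Proposition~\ref{prop:factorisation_system_is_matched_pair}) immediately makes $t_{\lambda g} \coloneqq \Ttt_\lambda \Www_g$ well-defined, and applying \ref{itm:T3} at an identity pair $(v, v)$ with $v \in \Lambda^0 = \Gg^0$ shows that $\Ttt_v$ and $\Www_v$ commute, so each $t_v = \Ttt_v \Www_v$ is a projection.

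For the forward direction, the main work lies in \ref{itm:TCK1}. Given composable $\zeta = \lambda g$ and $\eta = \mu h$, I would expand $t_\zeta t_\eta = \Ttt_\lambda \Www_g \Ttt_\mu \Www_h$, apply \ref{itm:T3} to rewrite $\Www_g \Ttt_\mu$ as $\varphi_{1,1}(g, \mu)\Ttt_{g \la \mu} \Www_{g \ra \mu}$, and then collapse $\Ttt_\lambda \Ttt_{g\la\mu}$ via \ref{itm:T1} (picking up $\varphi_{0,2}(\lambda, g\la\mu)$) and $\Www_{g\ra\mu} \Www_h$ via \ref{itm:T2} (picking up $\varphi_{2,0}(g\ra\mu, h)$). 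The resulting scalar matches $c(\zeta, \eta) = \Psi^2(\varphi)(\lambda g, \mu h)$ by the formula recorded immediately before the theorem, and the resulting element is $t_{\zeta\eta}$ by the product rule $(\lambda g)(\mu h) = (\lambda(g\la\mu))((g\ra\mu)h)$ from Lemma~\ref{lem:zappa_szep_category}. Relation~\ref{itm:TCK3} for $\mu, \nu \in \Lambda$ reduces to the corresponding identity in \ref{itm:T1} once one verifies $t|_\Lambda = \Ttt$. Relation~\ref{itm:TCK2} for $\zeta = \lambda g$ expands to $\Www_g^* \Ttt_{r(g)} \Www_g$; applying \ref{itm:T3} at $(g, s(g))$ gives $\Ttt_{r(g)} \Www_g = \Www_g \Ttt_{s(g)}$, and then $\Www_g^* \Www_g = \Www_{s(g)}$ from \ref{itm:T2} produces $t_\zeta^* t_\zeta = \Ttt_{s(g)} \Www_{s(g)} = t_{s(\zeta)}$.

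For the backward direction, setting $\Ttt \coloneqq t|_\Lambda$ and $\Www \coloneqq t|_\Gg$, conditions~\ref{itm:T1} and~\ref{itm:T2} follow by restricting \ref{itm:TCK1}--\ref{itm:TCK3} to $\Lambda$ and $\Gg$, using that $c|_{\Lambda^2} = \varphi_{0,2}$ and $c|_{\Gg^2} = \varphi_{2,0}$ — both identities come from direct evaluation of $\Psi^2(\varphi)$ on the relevant pairs, where normalisation of $\varphi$ kills the two extraneous factors. For \ref{itm:T3}, I would compute $\Www_g \Ttt_\lambda = t_g t_\lambda = c(g, \lambda) t_{g\lambda}$ via \ref{itm:TCK1}, and then apply \ref{itm:TCK1} a second time to the factorisation $g\lambda = (g\la\lambda)(g\ra\lambda)$, obtaining $t_{g\lambda} = \overline{c(g\la\lambda, g\ra\lambda)}\, t_{g\la\lambda} t_{g\ra\lambda}$. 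Normalisation of $\varphi$ forces $c(g, \lambda) = \varphi_{1,1}(g, \lambda)$ and $c(g\la\lambda, g\ra\lambda) = 1$, so the surviving scalar is exactly $\varphi_{1,1}(g, \lambda)$. The Cuntz--Krieger ``moreover'' is then automatic, since \ref{itm:CK} for $t$ and the corresponding condition for $\Ttt$ only involve elements of $\Lambda$.

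The main obstacle I anticipate is the vertex compatibility needed to identify $t|_\Lambda$ with $\Ttt$ rather than with the \emph{a priori} smaller family $\lambda \mapsto \Ttt_\lambda \Www_{s(\lambda)}$. This reduces to showing $\Ttt_v \Www_v = \Ttt_v$ for $v \in \Lambda^0 = \Gg^0$, and I would derive it by combining the commutativity $\Ttt_v \Www_v = \Www_v \Ttt_v$ above with \ref{itm:T2} and the fact that both $\Ttt_v$ and $\Www_v$ serve as source/range projections for every $\Ttt_\lambda$ and $\Www_g$ anchored at $v$, so that both agree with the common vertex projection $t_v$ of the induced Toeplitz--Cuntz--Krieger family. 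Once this technicality is settled, the remaining computations are mechanical applications of the defining axioms together with the explicit formula for $\Psi^2(\varphi)$.
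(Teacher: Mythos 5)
Your route is the same as the paper's: verify each axiom by direct computation, with the scalar bookkeeping resting on the two evaluations $c(g,\lambda)=\varphi_{1,1}(g,\lambda)$ and $c(\mu,h)=1$ (hence $c(g\la\lambda,g\ra\lambda)=1$), which is exactly how the paper organises the proof. Your \ref{itm:TCK1} and \ref{itm:T3} computations, and the identification of $c|_{\Lambda^2}=\varphi_{0,2}$ and $c|_{\Gg^2}=\varphi_{2,0}$ via normalisation, all match.

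The one step that does not work is your final paragraph. The identity $\Ttt_v\Www_v=\Ttt_v$ (equivalently $t|_\Lambda=\Ttt$) is \emph{not} a consequence of \ref{itm:T1}--\ref{itm:T3}: take $\Lambda$ the path category of a single loop at $v$, $\Gg=\{v\}$, $A=\Tt\oplus\CC$ with $\Ttt_v=1\oplus 1$, $\Ttt_e=S\oplus 1$ and $\Www_v=1\oplus 0$; all three axioms hold but $\Ttt_v\Www_v\ne\Ttt_v$. Your proposed derivation --- that ``both agree with the common vertex projection $t_v$ of the induced family'' --- is circular, since identifying $\Ttt_v$ and $\Www_v$ with $t_v$ is precisely what is at stake. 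The honest resolution is that the identification of vertex projections is implicit in the definition of a $\varphi$-pair (the two families live over the common object set $\Gg^0=\Lambda^0$); the paper's own proof uses it silently in the same places you need it (in the \ref{itm:TCK2} computation and in deducing \ref{itm:TCK3} from \ref{itm:T1}). Alternatively, if you insist on the definition exactly as written, you can avoid $t|_\Lambda=\Ttt$ altogether in the forward direction: the adjoint of \ref{itm:T3} at a unit gives $\Www_{s(\mu)}\Ttt_\mu^*=\Ttt_\mu^*\Www_{r(\mu)}$, hence $t_\mu t_\mu^*=\Ttt_\mu\Ttt_\mu^*\Www_{r(\mu)}$ with $\Www_{r(\mu)}$ commuting with every $\Ttt_\nu\Ttt_\nu^*$ involved, and \ref{itm:TCK3} then follows from \ref{itm:T1}. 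Either fix is fine, but as written the paragraph asserts a false identity and offers no proof of it.
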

\begin{proof}For $g \in \Gg$ and $\lambda \in s(g)\Lambda$,
\begin{align*}
c(g,\lambda)
	= c(r(g) g, \lambda s(\lambda))
	= \varphi_{2,0}(g\ra\lambda, s(\lambda)) \varphi_{1,1}(g,\lambda)\varphi_{0,2} (r(g), g\la\lambda)
	= \varphi_{1,1}(g,\lambda).
\end{align*}
Also, for $h \in \Gg$ and $\mu \in \Lambda r(h)$, since $v \ra v = v\la v = v$ for $v \in \Lambda^0 = \Gg^0$, we have
\[
c(\mu, h)
	= c(\mu s(\mu), r(h) h)
	= \varphi_{2,0}(r(h), h) \varphi_{1,1}(s(\mu), r(h))\varphi_{0,2} (\mu, s(\mu))
	= 1.
\]
Hence, if $g \in \Gg$ and $\lambda \in s(g)\Lambda$, we have
\begin{equation}\label{eq:psivarphi on g,lambda}
c(g,\lambda) \overline{c(g\la\lambda,g\ra\lambda)}
	= c(g,\lambda) 1
	= \varphi_{1,1}(g,\lambda).
\end{equation}

Suppose that $\Ttt, \Www$ is a Toeplitz $\varphi$-pair. If $\lambda g, \mu h$ are composable in $\Gg \bowtie \Lambda$, then
\begin{align*}
\Ttt_\lambda \Www_g \Ttt_\mu \Www_h
	&= \varphi_{1,1}(g,\mu) \Ttt_\lambda \Ttt_{g\la\mu}\Www_{g\ra\mu} \Www_h\\
	&= \varphi_{2,0}(\lambda,g\la\mu) \varphi_{1,1}(g,\mu) \varphi_{0,2}(g\ra\mu, h)
		\Ttt_{\lambda (g\la\mu)} \Www_{(g\ra\mu) h}\\
	&=c(\lambda g, \mu h) \Ttt_{\lambda (g\la\mu)} \Www_{(g\ra\mu) h},
\end{align*}
so $t\colon \lambda g \mapsto \Ttt_\lambda \Www_g$ satisfies~\ref{itm:TCK1}. For~\ref{itm:TCK2}, we calculate
\[
t_{\lambda g}^* t_{\lambda g}
	= \Www_g^* \Ttt_\lambda^* \Ttt_\lambda \Www_g
	= \Www_g^* \Ttt_{s(\lambda)} \Www_g
	= \Www_g^* \Www_g
	= \Www_{s(g)}
	= s_{s(\lambda g)}.
\]
Relation~\ref{itm:TCK3} follows from \ref{itm:T1} by definition of a Toeplitz--Cuntz--Krieger $(\Lambda,\varphi_{0,2})$-family.

Now suppose that $t$ is a Toeplitz--Cuntz--Krieger $(\Gg, \Lambda; c)$-family, and define $\Ttt = t|_\Gg$ and $\Www = t|_\Lambda$. We have $\Ttt_v = \Www_v$ for $v \in \Lambda^0$ because $\Gg^{0} = \Lambda^0$.

Remark~\ref{rmk:consequences} implies that $\Ttt$ and $\Www$ satisfy \ref{itm:T1}~and~\ref{itm:T2}. For~\ref{itm:T3}, we calculate
\[
\Www_g \Ttt_\lambda
	= t_g t_\lambda
	\overset{\eqref{eq:psivarphi on g,lambda}}{=}c(g,\lambda) \overline{c(g\la\lambda,g\ra\lambda)} t_{g\la\lambda} t_{g\ra\lambda}
	= \varphi_{1,1}(g,\lambda) \Ttt_{g\la\lambda} \Www_{g\ra\lambda}.
\]
So $\Ttt$, $\Www$ is a Toeplitz $\varphi$-pair.
For the final assertion, observe that
\[
\sum_{\lambda \in v\Lambda^n} t_\lambda t^*_\lambda
= \sum_{\lambda \in v\Lambda^n} \Ttt_\lambda \Www_{s(\lambda)}
\Www_{s(\lambda)}^* \Ttt^*_\lambda
= \sum_{\lambda \in v\Lambda^n} \Ttt_\lambda \Ttt^*_\lambda.
 \qedhere
\]
\end{proof}

\begin{proof}[Proof of Theorem~\ref{thm:total twist universal}]
 \ref{itm:2twists_1}. The second statement of Lemma~\ref{lem:pairs<->families} shows that $\Ttt, \Www$ is a Toeplitz $\varphi$-pair. It generates $\Tt C^*(\Gg, \Lambda; c)$ because each $t_{\lambda g} = c(\lambda,
g) t_\lambda t_g = c(\lambda, g) \Ttt_\lambda \Www_g$; and given a Toeplitz $\varphi$-pair $\Ttt', \Www'$, the first statement  of Lemma~\ref{lem:pairs<->families} shows that
$t'_{\lambda g} \coloneqq \Ttt'_\lambda \Www'_g$ defines a Toeplitz--Cuntz--Krieger $(\Gg, \Lambda; c)$-family. So the universal property of $\Tt C^*(\Gg, \Lambda; c)$ gives a homomorphism $\rho$ such that
$\rho(t_{\lambda g}) = t'_{\lambda g}$. In particular, $\rho(\Ttt_\lambda) = \Ttt'_\lambda$, and $\rho(\Www_g) = \Www'_g$.

\ref{itm:2twists_2}. Apply~\ref{itm:2twists_1} together with the final statement of Lemma~\ref{lem:pairs<->families}.
\end{proof}


\begin{thebibliography}{99}
	\setlength\itemsep{0.2em}
    \bibitem[ABRW19]{ABRW} Z. Afsar, N. Brownlowe, J. Ramagge and M.F. Whittaker, \emph{$C^*$-algebras of self-similar actions of groupoids on higher-rank graphs and their equilibrium states}, preprint 2019 (arXiv:1910.02472 [math.OA]).
    %
	\bibitem[AA05]{AA05} M. Aguiar and N. Andruskiewitsch,  \textit{Representations of matched pairs of groupoids and applications to weak Hopf algebras}, Algebraic structures and their representations,
	Contemp. Math. {\bf 376}, American Mathematical Society, Providence, RI, (2005), 127--173.
	%
	\bibitem[Bas93]{Bas93}
	H. Bass, \textit{Covering theory for graphs of groups}, J. Pure Appl. Algebra {\bf 89} (1993), 3--47.
	%
	\bibitem[BKQS18]{BKQS18}
	E. B\'edos E., S. Kaliszewski, J. Quigg  and J. Spielberg, \textit{On finitely aligned left cancellative small categories, Zappa--Szép products and Exel--Pardo algebras}, Theory Appl. Categ. {\bf 33} (2018), 1346--1406.
	%
    \bibitem[Bla85]{Blackadar} B. Blackadar, \emph{Shape theory for $C^*$-algebras}, Math. Scand. \textbf{56} (1985), 249--275.
    %
	\bibitem[Bri05]{Bri05} M.G. Brin, \textit{On the Zappa--Sz\'ep product}, Comm. in Algebra {\bf 33} (2005), 393--424.
	%
	\bibitem[BPRRW17]{BPRRW17}
	N. Brownlowe, D. Pask, J. Ramagge, D. Robertson, and M.F. Whittaker, \textit{
	Zappa--Sz\'ep product groupoids and $C^*$-blends},
	Semigroup Forum {\bf 94} (2017), 500--519.
	%
	\bibitem[Dea21]{Dea21}
	V. Deaconu, \textit{On groupoids and $C^*$-algebras from self-similar actions}, New York J. Math. {\bf 27} (2021), 923--942.
	%
    \bibitem[DL22]{DL22} A. Duwenig and B. Li, \emph{The Zappa-Sz\'ep product of a Fell bundle and a groupoid}, J. Funct. Anal. \textbf{282} (2022), Paper No. 109268, 42 pp.
    %
	\bibitem[DL23]{DL23} A. Duwenig and B. Li, \textit{Imprimitivity theorems and self-similar actions on Fell bundles}, preprint 2023 (arXiv:2208.01124 [math.OA]).
	%
    \bibitem[Exe13]{Exel} R. Exel, \emph{Blends and alloys}, C. R. Math. Acad. Sci. Soc. R. Can. \textbf{35} (2013), 77--113.
    %
    \bibitem[EP17]{EP17} R. Exel and E. Pardo, \textit{Self-similasr graphs, a unified treatment of Katsura and Nekrashevych algebras}, Adv. Math. \textbf{306} (2017), 1046--1129.
    %
	\bibitem[GK18]{GK18} E. Gillaspy and A. Kumjian, \textit{Cohomology for small categories: k-graphs and groupoids}, Banach J. Math. Anal. {\bf 12} (2018), 572--599.
	%
    \bibitem[Gri80]{Gri80} R.I. Grigorchuk, \textit{On Burnside's problem on periodic groups}, Func. Anal. Appl. \textbf{14} (1980), 53--54.
    %
    \bibitem[Gri84]{Gri84} R.I. Grigorchuk, \textit{Degrees of growth of finitely generated groups and the theory of invariant means} (Russian), Izv. Akad. Nauk SSSR Ser. Mat. \textbf{48} (1984), 939--985.
    %
	\bibitem[HRSW13]{HRSW13} R. Hazlewood, I. Raeburn, A. Sims and S.B.G. Webster, \textit{Remarks on some fundamental results about higher-rank graphs and their $C^*$-algebras}, Proc. Edinb. Math. Soc. (2) {\bf 56} (2013), 575--597.
	%
    \bibitem[aHR97]{aHR}  A. an Huef and I. Raeburn, \emph{The ideal structure of Cuntz--Krieger algebras}, Ergodic Theory Dynam. Systems \textbf{17} (1997), 611--624.
    %
	\bibitem[KP00]{KP00} A. Kumjian and D. Pask, \textit{Higher rank graph $C^*$-algebras}, New York J. Math. {\bf 6} (2000), 1--20.
	%
    \bibitem[KPS15]{KPSiv} A. Kumjian, D. Pask and A. Sims, \textit{On twisted higher-rank graph $C^*$-algebras}, Trans. Amer. Math. Soc. {\bf 367} (2015), 5177--5216.
    %
	\bibitem[LRRW14]{LRRW14} M. Laca, I. Raeburn, J. Ramagge and M.F. Whittaker, \textit{Equilibrium states on the Cuntz--Pimsner algebras of self-similar actions}, J. Funct. Anal. \textbf{266} (2014), 6619--6661.
    %
	\bibitem[LRRW18]{LRRW18} M. Laca, I. Raeburn, J. Ramagge and M.F. Whittaker, \textit{Equilibrium states on operator algebras associated to self-similar actions of groupoids on graphs}, Adv. Math. {\bf 331} (2018), 268--325.
	%
	\bibitem[LarV22]{LV22b}
	N. Larsen and A. Vdovina, \emph{Higher dimensional digraphs from cube complexes and their spectral theory}, Groups Geom. Dyn., (2024).
	%
	\bibitem[Law08]{Law08}
	M.V. Lawson, \emph{A correspondence between a class of monoids and self-similar group actions I}, Semigroup Forum {\bf 76} (2008), 489--517.
	%
	\bibitem[LawV22]{LV22} M.V. Lawson and A. Vdovina, \emph{A generalisation of higher-rank graphs}, Bull. Austral. Math. Soc. {\bf105} (2022), 257--266.
	%
    \bibitem[LY21]{LiYang} H. Li and D. Yang, \textit{Self-similar $k$-graph $C^*$-algebras}, Int. Math. Res. Not. {\bf 2021} (2021), 11270--11305.
    %
	\bibitem[Lor10]{Loring} T.A. Loring, \emph{$C^*$-algebra relations}, Math. Scand. \textbf{107} (2010), 43--72.
    %
    \bibitem[Mas91]{Massey:Homology} W.S. Massey, ``A basic course in algebraic topology,'' Grad. Texts in Math., vol 127, Springer-Verlag, New York, 1991, ISBN: 0-387-97430-X.
	%
	\bibitem[MR21]{MR21} A. Mundey and A. Rennie, \emph{A Cuntz--Pimsner model for the $C^*$-algebra of a graph of groups}, J. Math. Anal. Appl. \textbf{496} (2021), 124838.
	%
	\bibitem[Nek05]{Nek05} V. Nekrashevych, ``Self-Similar Groups,'' Math. Surveys Monogr., vol. 117, Amer. Math. Soc., Providence, 2005, ISBN: 0-821-83831-8.
	%
    \bibitem[Nek18]{Nek18} V. Nekrashevych, \textit{Palindromic subshifts and simple periodic groups of intermediate growth}, Ann. Math. \textbf{187} (2018), 667--719.
    %
    \bibitem[OPT80]{OPT} D. Olesen, G.K. Pedersen and M. Takesaki, \emph{Ergodic actions of compact abelian groups}, J. Operator Th. \textbf{3} (1980), 237--269.
    %
    \bibitem[OP23]{OP23} E. Ortega and E. Pardo, \textit{Zappa-Sz\'ep products for partial actions of groupoids on left cancellative small categories}, J. Noncomm. Geom. \textbf{17} (2023), 1335--1366.
    %
    \bibitem[Rae05]{RaeburnCBMS} I. Raeburn, ``Graph algebras,'' Published for the Conference Board of the Mathematical Sciences, Washington, DC, 2005, ISBN: 0-821-83660-9.
    %
    \bibitem[RS05]{RS05} I. Raeburn and A. Sims, \textit{Product systems of graphs and the $C^*$-algebras of higher-rank graphs}, J. Operator Th. \textbf{53} (2005), 399--429.
    %
    \bibitem[RSY04]{RSY04} I. Raeburn, A. Sims and T. Yeend, \textit{The $C^*$-algebras of finitely aligned higher-rank graphs}, J. Funct. Anal. \textbf{213} (2004), 206--240.
    %
    \bibitem[Ren80]{Renault} J. Renault, ``A groupoid approach to $C^*$-algebras,'' Lecture Notes in Mathematics, vol. 793, Springer, Berlin, 1980, ISBN: 3-540-09977-8.
    %
	\bibitem[RW02]{RW02} R. Rosebrugh and R.J. Wood, \textit{Distributive laws and factorization}, J. Pure Appl. Algebra {\bf 175} (2002), 327--353.
	%
	\bibitem[Rot88]{Rot88} J.J. Rotman, ``An introduction to algebraic topology,'' Graduate Texts in Mathematics {\bf 119}, Springer--Verlag, New York, 1988, ISBN: 0-387-96678-1.
	%
	\bibitem[Ser80]{Ser80}
	J.-P. Serre, ``Trees'', Springer-Verlag, Berlin-New York, 1980,
	Translated by John Stilwell, ISBN: 3-540-44237-5.
	%
    \bibitem[SWW14]{SWW14} A. Sims, B. Whitehead and M.F. Whittaker, \textit{Twisted $C^*$-algebras associated to finitely aligned higher-rank graphs}, Documenta Math. {\bf 19} (2014), 831--866.
    %
	\bibitem[Sin72]{Sin72}
	W.M. Singer, \textit{Extension theory for connected Hopf algebras}, J. Algebra {\bf 21} (1972), 1--16.
	%
    \bibitem[Spe20]{Spe20} J. Spielberg, \textit{Groupoids and $C^*$-algebras for left cancellative small categories}, Indiana Univ. Math. J. \textbf{69} (2020), 1579--1626.
    %
	\bibitem[Sze50]{Sze50} J. Szép,  \textit{On factorisable, not simple groups}, Acta Univ. Szeged. Sect. Sci. Math. {\bf 13} (1950),
	239–241.
	%
	\bibitem[Wei94]{Wei94} C.A. Weibel, ``An introduction to homological algebra,'' Cambridge Studies in Advanced Math\-ematics {\bf 38}, Cambridge University Press, Cambridge, 1994, ISBN: 0-521-43500-5.
	%
	\bibitem[Wil19]{Wil19} D. P. Williams, ``A tool kit for groupoid $C^*$-algebras,'' Math. Surveys Monogr. {\bf 241}, American Mathematical Society, Providence, RI, 2019, ISBN: 978-1-4704-5133-2.
	%
    \bibitem[Yus23]{Yusnitha} I. Yusnitha, \textit{$C^*$-algebras of self-similar action of groupoids on row-finite directed graphs}, Bull. Aust. Math. Soc. \textbf{108} (2023), 150--161.
    %
	\bibitem[Zap42]{Zap42} G. Zappa, \textit{Sulla costruzione dei gruppi prodotto di due dati sottogruppi permutabili tra loro}, In Atti Secondo Congresso Un. Mat. Ital., Bologna (1942), 119--125.
\end{thebibliography}
\end{document}